\documentclass[tbtags,reqno]{amsart}

\usepackage{geometry}
\usepackage[normalem]{ulem}%Agregado para tachar, para comentarios

\usepackage{graphicx, subfigure}
\usepackage{epstopdf}
\usepackage{caption}
\usepackage{pythonhighlight} %Para codigo

\usepackage{xfrac}
\usepackage{tikz-cd}
\usetikzlibrary{decorations.pathmorphing}

\usepackage{mathtools}
\usepackage[...]{youngtab}
\usepackage[boxsize=1.25em, centerboxes]{ytableau}
\newsavebox\yTAAB
\newsavebox\yTAAC
\newsavebox\yTABB
\newsavebox\yTACB
\newsavebox\yTABC
\newsavebox\yTACC
\newsavebox\yTBBC
\newsavebox\yTBCC
\newsavebox\yTADB
\newsavebox\yTADC
\newsavebox\yTADD
\newsavebox\yTBDC
\newsavebox\yTBDD
\newsavebox\yTAAD
\newsavebox\yTABD
\newsavebox\yTBBD
\newsavebox\yTACD
\newsavebox\yTBCD
\newsavebox\yTCCD
\newsavebox\yTCDD

\newsavebox\yTTABC
\newsavebox\yTTACB
\newsavebox\yTTBAC
\newsavebox\yTTBCA
\newsavebox\yTTCAB
\newsavebox\yTTCBA

\newsavebox\yTTABCw
\newsavebox\yTTACBw
\newsavebox\yTTBACw
\newsavebox\yTTBCAw
\newsavebox\yTTCABw
\newsavebox\yTTCBAw

\usepackage{cleveref} %enlaces

\usepackage{mathrsfs}
\usepackage{latexsym}
\usepackage{amsthm,amsopn,tabmac,amsfonts,amssymb,epsfig,color,pstricks,pb-diagram,mathdots,stmaryrd}
\usepackage[active]{srcltx}
\numberwithin{equation}{section}
\makeatletter
\renewcommand{\subsubsection}{\@startsection
{subsubsection}
{3}
{0mm}
{\baselineskip}
{-0.5\baselineskip}
{\normalfont\normalsize\bfseries}}
\makeatother

\newtheorem{theorem}{Theorem}
\newtheorem{lemma}[theorem]{Lemma}
\newtheorem{proposition}[theorem]{Proposition}
\newtheorem{example}[theorem]{Example}

\newtheorem{corollary}[theorem]{Corollary}

\newtheorem{definition}[theorem]{Definition}

\newtheorem{remark}[theorem]{Remark}

\newcommand{\cercle}[1]{\ensuremath{\setlength{\unitlength}{1ex}\begin{picture}(2.8,2.8)\put(1.4,1.4){\circle{2.8}\makebox(-5.6,0){#1}}\end{picture}}}

\newcommand{\tcercle}[1]{\ensuremath{\setlength{\unitlength}{1ex}\begin{picture}(2.8,2.8)\put(1.4,1.4){\circle{2.8}\makebox(-5.6,0){#1}}\end{picture}}}

\title{A new characterization of right keys, and the $m$-symmetric Schur functions at $t=0$}
\begin{document}

\author{Luc Lapointe}
\address{Instituto de Matem\'aticas, Universidad de
Talca, 2 norte 685, Talca, Chile.}
\email{llapointe@utalca.cl }

\author{Luis Pena}
\address{Instituto de Matem\'aticas, Universidad de
Talca, 2 norte 685, Talca, Chile.}
\email{luis.cardenas@utalca.cl }

\begin{abstract}
The ring $R_m$ of $m$-symmetric functions consists of the formal power series that are symmetric in the variables $x_{m+1},x_{m+2},\dots$ but carry no symmetry in the first $m$ variables. We develop a combinatorial theory for the specialization at $t=0$ of the Schur functions of $R_m$.
Our main tool is a new characterization of right key tableaux as suprema of the sets of decreasing subwords of the reading words of the subtableaux of $T$.  Being invariant under elementary Knuth transformations, this characterization is compatible with the RSK correspondence.  We obtain in this way a generating function over semistandard tableaux for the $m$-symmetric Schur functions at $t=0$, together with a combinatorial proof of a Cauchy identity in $R_m$. The $m$-symmetric Schur functions and their dual are then respectively identified with Demazure atoms and Demazure characters.  Restricted to the last $m$ variables, our correspondence specializes to a proof, by ordinary RSK, of Lascoux's nonsymmetric Cauchy identity for Demazure characters and atoms.  As further applications, we relate the $m$-symmetric Schur functions at $t=0$ to the almost symmetric Schur functions through a unitriangular change-of-basis matrix, obtain tableau generating functions and Cauchy identities for both families, and derive Jacobi-Trudi type determinantal formulas for three different bases.

\end{abstract}
  
\keywords{Key tableaux, right keys, Demazure characters, Demazure atoms, key polynomials, Schur functions, $m$-symmetric functions, RSK correspondence, Cauchy identity}

\thanks{Funding: this work was supported by
the Fondo Nacional de Desarrollo Cient\'{\i}fico y Tecnol\'ogico de Chile (FONDECYT) Regular Grant \#1250376 and by the Beca de Doctorado Nacional ANID \#21211108.}

\maketitle

\savebox\yTAAB{\tableau[scY]{ 1 & 1 \\ 2
 } }%
\savebox\yTABB{\tableau[scY]{ 1 & 2 \\ 2
 } }%
\savebox\yTAAC{\tableau[scY]{ 1 & 1 \\ 3
 } }%
\savebox\yTACB{\tableau[scY]{ 1 & 3 \\ 2
 } }%
\savebox\yTABC{\tableau[scY]{ 1 & 2 \\ 3
 } }%
\savebox\yTACC{\tableau[scY]{ 1 & 3 \\ 3
 } }% 
\savebox\yTBBC{\tableau[scY]{ 2 & 2 \\ 3
 } }% 
\savebox\yTBCC{\tableau[scY]{ 2 & 3 \\ 3
 } }%  
\savebox\yTADB{\tableau[scY]{ 1 & 4 \\ 2
 } }%  
\savebox\yTADC{\tableau[scY]{ 1 & 4 \\ 3
 } }%  
\savebox\yTADD{\tableau[scY]{ 1 & 4 \\ 4
 } }%   
\savebox\yTBDC{\tableau[scY]{ 2 & 4 \\ 3
 } }%    
\savebox\yTBDD{\tableau[scY]{ 2 & 4 \\ 4
 } }%    
\savebox\yTCDD{\tableau[scY]{ 3 & 4 \\ 4
 } }%  
\savebox\yTAAD{\tableau[scY]{ 1 & 1 \\ 4
 } }%   
\savebox\yTABD{\tableau[scY]{ 1 & 2 \\ 4
 } }%   
\savebox\yTBBD{\tableau[scY]{ 2 & 2 \\ 4
 } }%  
\savebox\yTACD{\tableau[scY]{ 1 & 3 \\ 4
 } }%  
\savebox\yTBCD{\tableau[scY]{ 2 & 3 \\ 4
 } }%   
\savebox\yTCCD{\tableau[scY]{ 3 & 3 \\ 4
 } }%    

\savebox\yTTABC{\tableau[scY]{
1 & 3 & 4 \\
2 & 6 \\
5
}
}%    

\savebox\yTTABCw{\tableau[scY]{
3 & 1 & 2  \\
\bl &  4 & 5 \\
\bl & \bl & 6
}
}%    

\savebox\yTTACB{ \tableau[scY]{
\bl & \bl & 4 \\
1 & 3 & 6 \\
2 \\
5
}
}%    

\savebox\yTTACBw{ \tableau[scY]{
\bl & \bl & 2 \\
\bl & \bl & 5 \\
1   & 4   & 6 \\
3   & \bl  \\
\bl & \bl 
}
}%    

\savebox\yTTBAC{ \tableau[scY]{
\bl & 1 & 4 \\
2 & 3 \\
5 & 6
}
}%    

\savebox\yTTBACw{ \tableau[scY]{
\bl   & 1 & 2 \\
\bl & 4 & 5 \\
3 & 6 
}
}%    

\savebox\yTTBCA{ 
\tableau[scY]{
\bl & 1 & 4 \\
\bl & 3 & 6 \\
2 & 5
}
}%    

\savebox\yTTBCAw{ 
\tableau[scY]{
\bl & 2 & 5 \\
1   & 4 \\
3   & 6
}
}%    

\savebox\yTTCAB{ 
\tableau[scY]{
\bl & \bl & 1 \\
\bl & \bl & 4 \\
2 & 3 & 6 \\
5 
}
}%    

\savebox\yTTCABw{ 
\tableau[scY]{
\bl & \bl & 2 \\
1   & 4   & 5 \\
3 \\
6 
}
}%    

\savebox\yTTCBA{ \tableau[scY]{
\bl & \bl & 1 \\
\bl & 3 & 4 \\
2 & 5 & 6 
}
}%    

\savebox\yTTCBAw{ \tableau[scY]{
1 & 2 & 5 \\
3 & 4 \\
6 
}
}%    

\section{Introduction} \label{secintro}

The Cauchy identity
\begin{equation} \label{usualCauchy}
    \frac{1}{ \prod_{i,j} (1- x_i y_j)  }   =\sum_{\lambda} s_\lambda(x) \, s_\lambda(y),
\end{equation}
where the sum is over all partitions, is one of the basic identities in the theory of symmetric functions.  It has a well-known combinatorial proof.  On the one hand, the Schur function $s_\lambda(x)$ is the generating function
$$
s_\lambda(x) = \sum_T x^T
$$
of the semistandard tableaux $T$ of shape $\lambda$.  On the other hand, the RSK correspondence is a bijection between biwords and pairs $(P,Q)$ of semistandard tableaux of the same shape.  Reading the weight of a biword in two different ways, on the left through the biword itself and on the right through the pair $(P,Q)$, gives \eqref{usualCauchy}.

There is a nonsymmetric counterpart to this picture.  The role of the Schur functions is played there by two families of polynomials which connect to representation theory: the key polynomials $K_\alpha(x)$, also known as Demazure characters \cite{Demazure1974a,Demazure1974b}, and the dual key polynomials $\hat K_\alpha(x)$, also known as Demazure atoms, which refine the former.  Both are indexed by weak compositions, and both are obtained from a dominant monomial by acting with an isobaric divided difference operator ($\pi_i$ or $\hat \pi_i$).  Lascoux and Schützenberger \cite{Lascoux1990a} gave a tableau model for the key polynomials in terms of the right key $K_+(T)$ of a tableau $T$, and this model has an obvious counterpart for the atoms.  Lascoux \cite{Lascoux2003} then established the nonsymmetric Cauchy identity
\begin{equation} \label{nonsymCauchy}
\frac{1}{\prod_{i+j \leq n+1}(1- x_i y_j)} = \sum_{\alpha} \hat K_{w_0 \alpha}(x) \, K_\alpha(y),
\end{equation}
in which the rectangle  of the classical kernel has been replaced by a staircase.  His proof proceeds through double crystal graphs, that is, by decomposing the set of biwords supported on the staircase into crystal components.  Several other proofs and extensions have appeared since: Fu and Lascoux \cite{FuLascoux2009} treat the other classical groups, Azenhas and Emami \cite{AzenhasEmami2015} obtain \eqref{nonsymCauchy} and its truncated-staircase versions through a restriction of Mason's skyline analogue of RSK \cite{Mason2009}, and Choi and Kwon \cite{ChoiKwon2018} give a bijective proof using Lakshmibai-Seshadri paths.

The present article is concerned with a family of rings that interpolates between the two settings just described.  For a nonnegative integer $m$, the ring $R_m$ of $m$-symmetric functions consists of the formal power series in $x_1,x_2,x_3,\dots$ that are symmetric in the variables $x_{m+1},x_{m+2},\dots$ but carry no symmetry in the first $m$ variables \cite{mSym}.  Thus $R_0$ is the ring of symmetric functions, while letting $m$ grow recovers, in the limit, the polynomials in infinitely many variables.  Bases of $R_m$ are indexed by $m$-partitions $\Lambda=(\pmb a;\lambda)$, a weak composition with $m$ parts followed by a partition, and the $t$-symmetrization of the nonsymmetric Macdonald polynomials yields a basis $P_\Lambda(x;q,t)$ of $m$-symmetric Macdonald polynomials \cite{mSym,ConchaLapointe}.

The $m$-symmetric Schur functions $s_\Lambda(x;t)$ that we study here were introduced in \cite{mSym} for the sake of the following conjecture.  Let $J_\Lambda(x;q,t)$ be the integral form of $P_\Lambda(x;q,t)$, and extend to $R_m$ the plethysm $X \mapsto X/(1-t)$ relevant to Macdonald polynomials.  The $m$-symmetric Macdonald positivity conjecture then asserts that
\begin{equation} \label{Kostkaintro}
  J_\Lambda\left[\frac{X}{1-t};q,t \right]=
  \sum_{\Omega} K_{\Omega \Lambda}(q,t) \, s_\Omega(x;t),
\end{equation}
with $K_{\Omega \Lambda}(q,t) \in \mathbb N[q,t]$.  For $m=0$ this is Macdonald's positivity conjecture \cite{Macdonald1995}, established by Haiman \cite{Haiman2001}, and the usual $(q,t)$-Kostka coefficients occur among the $K_{\Omega \Lambda}(q,t)$'s as special cases, so that \eqref{Kostkaintro} does extend the classical statement. 

The $m$-symmetric Schur functions are, however, only defined implicitly: one first constructs a dual family $s_\Lambda^*(x;t)$ out of multi-Schur functions and the Hecke algebra generators, and only then defines $s_\Lambda(x;t)$ by duality with respect to a scalar product on $R_m$.  No tableau generating series is available in general, and this is the gap that the present work fills at $t=0$.  We see this as a first step towards a resolution of the case $t=0$ of \eqref{Kostkaintro}.

The other specialization of the parameter $t$ that lends itself to an explicit combinatorial treatment is $t=1$.  The $m$-symmetric Schur functions at $t=1$ were studied in \cite{Positivityt1}, where they are the main ingredient in a proof of the $m$-symmetric Macdonald positivity conjecture at $t=1$.  We should stress that the combinatorics governing the case $t=0$ treated here, namely that of key tableaux, is of a rather different nature.

The starting point of this article is the following generalization of the Cauchy identity 
\begin{equation*} 
\frac{1}{\left[\prod_{i+j \leq m+1}(1- x_i y_j) \right] \left[\prod_{i,j} (1- x_i y_j) \right] }   =\sum_{\Lambda} s_\Lambda(x;0) \,  {\mathfrak s}_\Lambda^*(y), 
\end{equation*}
where  $s_\Lambda(x;0)$ is the $m$-symmetric Schur function at $t=0$, and where
$\mathfrak s_\Lambda^*(y)$ is the limit $t \to 0$ of a slightly modified version of a dual  $m$-symmetric Schur function.  As we will see, this identity easily follows from a reproducing kernel for a natural scalar product in the ring $R_m$ of $m$-symmetric functions.  Observe that its kernel is the product of the two kernels occurring in \eqref{usualCauchy} and \eqref{nonsymCauchy}: a staircase in the first $m$ variables, and a full rectangle in all of them.  This suggests that a proof should combine the symmetric and the nonsymmetric mechanisms rather than merely juxtapose them.

The goal of this article is to prove our  generalization of the Cauchy identity along the same lines as the classical argument recalled above.  Fortunately, the functions  ${\mathfrak s}_\Lambda^*(x)$
turn out to be key polynomials, and as such, have an expansion as a sum over tableaux.  On the other hand, the definition of the $m$-symmetric Schur functions $s_\Lambda(x;t)$ is not very explicit, and does not yield such a tableau expansion for the functions $s_\Lambda(x;0)$. To circumvent this problem, we define a new family of $m$-symmetric Schur functions $\mathfrak s_\Lambda(x)$
as a sum over tableaux, which will be related to dual key polynomials, and then prove a new version of our generalization of the Cauchy identity:
\begin{equation} \label{newversion} 
\frac{1}{\left[\prod_{i+j \leq m+1}(1- x_i \hat y_j) \right] \left[\prod_{i,j} (1- x_i y_j) \right] }   =\sum_{\Lambda} \mathfrak s_\Lambda(x) \,  {\mathfrak s}_\Lambda^*(y;\hat y),
\end{equation}
where $\hat y =\hat y_1,\dots, \hat y_m$ is  another alphabet, and where
$ {\mathfrak s}_\Lambda^*(y;\hat y)$ reduces to $ {\mathfrak s}_\Lambda^*(y)$
when $\hat y_i =y_i$ for $i=1,\dots,m$.
By comparing the two generalizations at $\hat y_i= y_i$, it is then immediate that
$\mathfrak s_\Lambda(x)=s_\Lambda(x;0)$, which gives the characterization of $s_\Lambda(x;0)$ as a sum over tableaux that we were seeking.

In order to prove the identity \eqref{newversion}, we need to obtain a bijection between families of biwords that we call admissible and pairs of tableaux $(P,Q)$ of the same shape that satisfy a certain admissibility condition.  The bijection is quite easy to get as it is provided by the usual RSK insertion (leading to the RSK correspondence).  But proving that it is indeed the correct bijection turns out not to be straightforward at all.  The difficulty is that admissibility is a condition on right keys, and that the right key is defined by a procedure --- an action of the symmetric group on the columns of a tableau --- that interacts poorly with a single insertion step.

Given the connection with key polynomials mentioned earlier, it is not surprising that the key tableaux play a fundamental role in our proof. As their usual definition is not very amenable to proving our bijection, we will have to introduce a new characterization better suited for our purposes.  It is only once equipped with this new definition that we will be able to prove  \eqref{newversion}. This new characterization of right key tableaux, which may be of independent interest, deserves a few words.  The classical definition of the right key rests on an action of the symmetric group on the columns of a tableau, implemented through local jeu de taquin moves.  The other descriptions available in the literature, such as the scanning procedure of \cite{Willis2013}, are algorithmic in nature as well.  We replace these local procedures by a global extremal condition.  To be more precise, we introduce a tableau invariant obtained from suprema of decreasing subwords: to a tableau $T$ we associate the supremum, in a suitable order on decreasing words, of the set of decreasing subwords of the reading word of $T$.  We then prove that this invariant coincides with the classical right key (see Propositions~\ref{IgualdadKeys0} and \ref{IgualdadKeys}).  Suprema of sets of decreasing subwords are easily seen to be invariant under elementary Knuth transformations (Lemma~\ref{SupKnuthEquivalent}), so that our invariant is compatible with Knuth equivalence, and hence with the RSK correspondence, a compatibility that is not apparent from the usual definitions of the right key.  The mechanism at work is the one behind Greene's invariants, and our proof of Knuth invariance is modelled on the classical one.

We should mention when the  alphabet $y=y_1,y_2,\dots$ is left out in \eqref{newversion}, our correspondence then specializes to a new proof, using nothing but ordinary RSK, of the nonsymmetric Cauchy identity \eqref{nonsymCauchy} for Demazure characters and atoms (see Corollary~\ref{coroKeys}).

The tableau expansion for $s_\Lambda(x;0)$ that comes out of this analysis also has consequences that go beyond the proof of \eqref{newversion}.  In the final section, we relate the $m$-symmetric Schur functions at $t=0$ to the almost symmetric Schur functions $\mathfrak{as}_\Lambda(x)$ introduced in \cite{AlmostSym}.  We show that these two bases of $R_m$ are connected by a change-of-basis matrix which is unitriangular with respect to the natural order on key tableaux.  Three consequences follow: a tableau generating function for the almost symmetric Schur functions, the construction of the family dual to them with respect to a natural scalar product on $R_m$, and the corresponding Cauchy identities.  We close with Jacobi-Trudi type determinantal formulas for  $\mathfrak s_\Lambda(x)$, $\mathfrak{as}_\Lambda(x)$ and $\mathfrak{as}^*_\Lambda(x)$ in the dominant case (in the antidominant case for the latter), the main tool being the compatibility, provided by the Schützenberger involution, between left keys and right keys.

The article is organized as follows.  Section~\ref{secprelim} gathers the material that we will need on partitions, symmetric functions, $m$-symmetric functions, the Hecke algebra, key polynomials, and tableaux.  Section~\ref{secdualS} introduces the (dual) $m$-symmetric Schur functions, derives the reproducing kernel and the generalization \eqref{genCauchy} of the Cauchy identity, and identifies the functions $\mathfrak s_\Lambda^*(x)$ with key polynomials.  Section~\ref{seckeys} is devoted to right key tableaux: after recalling the classical definition, we establish our new characterization in terms of suprema of decreasing subwords.  Section~\ref{secRSK} contains the heart of the article, namely the restricted RSK correspondence and the combinatorial proof of \eqref{newversion}.  Finally, Section~\ref{Sec:AlmostSym} deals with the almost symmetric Schur functions and with the determinantal formulas.

\section{Preliminaries} \label{secprelim}

This section gathers the definitions and the notation that will be used throughout the article.  The material on $m$-symmetric functions is taken from \cite{mSym,ConchaLapointe}, to which we refer for the proofs.

\subsection{Partitions and symmetric functions} \label{ssecsym}

A partition $\lambda=(\lambda_1 \geq \lambda_2 \geq \cdots \geq \lambda_k>0)$ is a weakly decreasing sequence of positive integers.  Its degree is $|\lambda|=\lambda_1+\cdots+\lambda_k$ and its length is $\ell(\lambda)=k$.  We represent $\lambda$ by a Young diagram with $\lambda_i$ lattice squares in the $i^{th}$ row, from top to bottom (English notation).  Any lattice square $(i,j)$ in the $i$th row and $j$th column of a Young diagram is called a cell.  The conjugate partition $\lambda'$ is the partition whose diagram is obtained from that of $\lambda$ by interchanging rows and columns.  Given a composition $\pmb a$ and a partition $\lambda$, we let $\pmb a \cup \lambda$ be the partition obtained by reordering the entries of the concatenation of $\pmb a$ and $\lambda$.  Finally, the dominance order $\geq$ on partitions of a same degree is such that $\lambda \geq \mu$ whenever $\lambda_1+\cdots+\lambda_i\geq \mu_1+\cdots+\mu_i$ for all $i$.

Let $\mathbf \Lambda$ be the ring of symmetric functions in the variables $x_1,x_2,x_3,\dots$, and let $h_r$, $e_r$ and $p_r$ stand respectively for the homogeneous, elementary and power-sum symmetric functions of degree $r$.  The Schur function $s_\lambda(x)$ is given by either of the two Jacobi-Trudi determinants
\begin{equation} \label{JacobiTrudiSym}
s_{\lambda}(x)=\det \Bigl( h_{\lambda_i-i+j}(x) \Bigr)_{1 \leq i,j \leq \ell(\lambda)}
=\det \Bigl( e_{\lambda'_i-i+j}(x) \Bigr)_{1 \leq i,j \leq \ell(\lambda')},
\end{equation}
where $h_k(x)=e_k(x)=0$ if $k<0$.

Throughout the article, it will prove convenient to use the plethystic notation in which, for a symmetric function $f$ and $X=x_1+x_2+\cdots$, we let $f[X]=f(x)=f(x_1,x_2,\dots)$.  More generally, we have using this notation that $f[X+x_1+\cdots+x_k]=f(x_1,\dots,x_k,x_1,x_2,\dots)$.

Let $\nu$ be a partition of length $\ell$.  For a sequence of alphabets $X_1,\dots,X_\ell$, where $\ell=\ell(\nu)$, the multi-Schur function (or flagged Schur function)  $s_\nu(X_1,\dots,X_\ell)$ is defined as \cite{mSym}
\begin{equation} \label{eqmulti}
s_\nu(X_1,\dots,X_\ell) = \det \Bigl( h_{\nu_i-i+j}[X_i] \Bigr)_{1 \leq i,j \leq \ell}.
\end{equation}
Observe, from \eqref{JacobiTrudiSym},
that $s_\nu(X_1,\dots,X_\ell)$ is equal to the usual Schur function
$s_\nu(x)$ whenever $X=X_1=X_2=\cdots=X_\ell $.

\subsection{The ring of $m$-symmetric functions} \label{ssecRm}

For a nonnegative integer $m$, the ring $R_m$ of $m$-symmetric functions is the subring of $\mathbb Q(t)[[x_1,x_2,x_3,\dots]]$ made of formal power series that are symmetric in the variables $x_{m+1},x_{m+2},x_{m+3},\dots$, the first $m$ variables thus playing a distinguished, non-symmetric role.  In other words,
$$R_m \simeq \mathbb Q(t)[x_1,\dots,x_m] \otimes \mathbf \Lambda_m,$$
where $\mathbf \Lambda_m$ is the ring of symmetric functions in the variables $x_{m+1},x_{m+2},x_{m+3},\dots$.  It is immediate that $R_0=\mathbf \Lambda$ is the usual ring of symmetric functions and that $R_0 \subseteq R_1 \subseteq R_2 \subseteq \cdots $.

Bases of $R_m$ are naturally indexed by $m$-partitions, which are pairs $\Lambda=(\pmb a;\lambda)$, where $\pmb a= (a_1,\dots,a_m) \in \mathbb Z_{\geq 0}^m$ is a composition with $m$ parts, and where $\lambda$ is a partition.  The entries of $\pmb a$ and $\lambda$ will be called respectively the non-symmetric and the symmetric entries of $\Lambda$.  Unless stated otherwise, $\Lambda$ and $\Omega$ will always stand respectively for the $m$-partitions $\Lambda=(\pmb a;\lambda)$ and $\Omega=(\pmb b;\mu)$.  The degree of $\Lambda$ is $|\Lambda|=a_1+\cdots+a_m+\lambda_1+\lambda_2+\cdots$, while its length is $\ell(\Lambda)=m+\ell(\lambda)$.  We will say that $\pmb a$ is dominant (resp. antidominant) if $a_1 \geq a_2 \geq \cdots \geq a_{m}$ (resp. $a_1 \leq a_2 \leq \cdots \leq a_{m}$), and by extension we will say that $\Lambda=(\pmb a;\lambda)$ is dominant (resp. antidominant) if $\pmb a$ is.  If $\pmb a$ is not dominant, we let $\pmb a^+$ be the dominant composition obtained by reordering its entries.

There is a natural way to represent an $m$-partition by a Young diagram.  The diagram corresponding to $\Lambda$ is the Young diagram of $\pmb a \cup \lambda$ with an $i$-circle added to the right of the row of size $a_i$ for $i=1,\dots,m$ (if there are many rows of size $a_i$, the circles are ordered from top to bottom in increasing order).  For instance, given $\Lambda=(2,0,2,1; 3,2 )$, we have
 $$
\Lambda \quad \longleftrightarrow  \quad {\tableau[scY]{&& & \bl \\& & \bl \cercle{1} \\& & \bl \cercle{3}\\ & & \bl  \\ & \bl \cercle{4} \\ \bl \cercle{2} }}
$$
Observe that when $m=0$, the diagram associated to $\Lambda=( ; \lambda)$ coincides with the Young diagram associated to $\lambda$.  We let $\Lambda^{(0)}=\pmb a \cup \lambda$, that is, $\Lambda^{(0)}$ is the partition obtained from the diagram of $\Lambda$ by discarding all the circles.  In the example above, $\Lambda^{(0)}=(3,2,2,2,1)$.

Given a composition $\pmb a=(a_1,\dots,a_m)$, we let $|\pmb a|=a_1+\cdots+a_m$ and we let ${\rm Inv}(\pmb a)$ be its number of inversions, that is,
$$
{\rm Inv}(\pmb a)=\#\{(i,j) \, | \, 1\leq i<j\leq m {\rm ~and~} a_{i} < a_j  \}.
$$
Finally, we let $x^{\pmb a}=x_1^{a_1} \cdots x_m^{a_m}$, and we define
\begin{equation} \label{eqk}
k_\Lambda(x) := x^{\pmb a} s_\lambda(x),
\end{equation}
which provides a basis of $R_m$.  It should be observed that the variables in $s_\lambda(x)$ start at $x_1$ instead of $x_{m+1}$.

\subsection{The Hecke algebra and key polynomials} \label{ssecHecke}

All the families of polynomials that we will consider are obtained from a dominant one by acting with divided-difference type operators.  We now introduce those operators.  Let the exchange operator $K_{i,j}$ be such that
$$K_{i,j} f(\dots, x_i,\dots,x_j,\dots)= f(\dots, x_j,\dots,x_i,\dots).$$
We then define the generators $T_i$ of the Hecke algebra $\mathcal H_N$ as
\begin{equation} \label{eqTi}
T_i=t+\frac{tx_i-x_{i+1}}{x_i-x_{i+1}}(K_{i,i+1}-1),\quad i=1,\ldots,N-1.
\end{equation}
More generally, given a reduced decomposition $w=\sigma_{i_1} \cdots \sigma_{i_\ell}$ of the permutation $w \in \mathfrak S_N$, we let $T_w=T_{i_1} \cdots T_{i_\ell}$.  This is well-defined due to the
relations:
\begin{align*} &(T_i-t)(T_i+1)=0\nonumber\\
&T_iT_{i+1}T_i=T_{i+1}T_iT_{i+1}, \quad i=1,\dots,N-2\nonumber\\
&T_iT_j=T_jT_i \, ,\quad |i-j| > 1. 
\end{align*}
The operator $T_j$ is seen to be invertible from the quadratic relation
$(T_i-t)(T_i+1)=0$.  Explicitly, we can check that
$$
\bar T_j :=  T_j^{-1}=t^{-1}-1+t^{-1}T_j.
$$
We will denote by $\omega_m=[m,m-1,\dots,1]$ the longest permutation in ${\mathfrak S}_m$.

The non-symmetric Hall-Littlewood polynomials $H_{\pmb a}(x_1,\dots,x_m;t)$ can be constructed recursively as follows.  If 
$\pmb a$ is dominant then $H_{\pmb a}(x;t)=x^{\pmb a}$.  Otherwise, we define recursively by
$T_i H_{\pmb a}(x;t)=H_{s_i \pmb a}(x;t)$ if $a_i > a_{i+1}$ (with $s_i \pmb a=(a_1\dots,a_{i+1},a_i,\dots,a_m)$).  Combining a non-symmetric Hall-Littlewood polynomial and a Schur function, we define for $\Lambda=(\pmb a;\lambda)$ the function
$$
k_\Lambda(x;t) = H_{\pmb a}(x_1,\dots,x_m;t) s_\lambda(x),
$$
which is the $t$-deformation of the basis \eqref{eqk}.  Since $H_{\pmb a}(x;1)=x^{\pmb a}$, the $k_\Lambda(x;t)$'s still provide a natural basis for the ring $R_m$ of $m$-symmetric functions.

At $t=0$, the Hecke algebra generators degenerate into the operators governing key polynomials.  We have that $\lim_{t \to 0} T_i =\hat \pi_i $, where
$$
\hat \pi_i = \frac{x_{i+1}}{x_i-x_{i+1}} (1-K_{i,i+1}).
$$
We also have that $\lim_{t \to 0} \left(t \bar T_i\right) = \pi_i $, where $\pi_i=\hat \pi_i+1$.  One checks directly that $\hat \pi_i^2=-\hat \pi_i$ and $\pi_i^2=\pi_i$. We define $\pi_{\mathrm{Id}}=\hat{\pi}_{\mathrm{Id}}=\mathrm{Id}$ and, recursively, if
$\sigma'=s_i\sigma$ and $\sigma^{-1}(i)<\sigma^{-1}(i+1)$ (which is to say that $\ell(\sigma')=\ell(\sigma)+1$), then
\[
\pi_{\sigma'}=\pi_i\pi_\sigma,
\qquad
\hat{\pi}_{\sigma'}=\hat{\pi}_i\hat{\pi}_\sigma.
\]

Using these operators, we can define the key polynomials $K_{\pmb a}(x_1,\dots,x_N)$ and dual key polynomials $\hat K_{\pmb a}(x_1,\dots,x_N)$ recursively as:
\begin{equation} \label{defdualkey}
\hat K_{\pmb a}(x) = H_{\pmb a}(x;0)= \left \{ 
\begin{array}{ll}
x^{\pmb a} & {\rm if~} \pmb a {\rm ~is~dominant} \\
\hat \pi_i \hat K_{s_i \pmb a}(x) & {\rm if~} a_i < a_{i+1}
\end{array} \right .,
\end{equation}
and
\begin{equation}
\label{defkey}
 K_{\pmb a}(x) = \left \{ 
\begin{array}{ll}
x^{\pmb a} & {\rm if~} \pmb a {\rm ~is~dominant} \\
 \pi_i  K_{s_i \pmb a}(x) & {\rm if~} a_i < a_{i+1}
\end{array} \right .,
\end{equation}
where  $\pmb a$ is the composition $\pmb a=(a_1,\dots,a_N)$.  Key polynomials were introduced in \cite{Demazure1974a,Demazure1974b}, and are also known as Demazure characters, while the dual key polynomials are also known as Demazure atoms.

\subsection{Tableaux, words and the RSK correspondence} \label{ssectab}

For any totally ordered alphabet $\mathcal B$, let ${\rm Tab}_{\mathcal B}$ be the set of semistandard Young tableaux in the alphabet $\mathcal B$, that is, the set of fillings of a Young diagram with letters of $\mathcal B$ that are weakly increasing along rows (from left to right) and strictly increasing along columns (from top to bottom).  The elements of ${\rm Tab}_{\mathcal B}$ will simply be called tableaux, and we will denote by ${\rm Tab}_{\mathcal B}(\lambda)$ the set of those of shape $\lambda$.  Skew tableaux, that is, fillings of a skew diagram $\lambda/\mu$ obeying the same conditions, will always be qualified as such.  We let $T(i,j)$ stand for the entry of $T$ in cell $(i,j)$, and we let $x^T$ be the monomial in which the power of $x_i$ is the number of occurrences of the letter $i$ in $T$. Two tableaux $P$ and $Q$ of the same shape $\lambda$ are such that $P \leq Q$ iff $P(i,j) \leq Q(i,j)$ for any $(i,j) \in \lambda$.
Given a tableau $T$, we let $w(T)$ be the word obtained by reading the entries of $T$ from left to right and from bottom to top.  We will also let $T_{r}$ be the subtableau of $T$ obtained by considering only the columns of $T$ weakly to the right of column $r$.

Recall that two words $u$ and $w$ are Knuth equivalent, which we denote by $u \equiv w$, if one can be obtained from the other by a sequence of elementary Knuth transformations $(K')$ or $(K'')$:
\begin{align*}
  (K'): \quad y x z& \equiv  y z x, {\rm~for~}x < y \leq z \\
  (K''): \quad x zy & \equiv  z x y, {\rm~for~}x \leq y < z. 
\end{align*}  

\begin{definition}
Let $S$ be a skew semistandard tableau. Its \emph{rectification},
denoted by $\operatorname{rect}(S)$, is the unique semistandard tableau
of straight shape whose reading word is Knuth equivalent to the reading
word of $S$.
\end{definition}
See \cite{Fulton1996} for a detailed treatment.

We finally recall the row insertion algorithm and the RSK correspondence, referring to \cite{Stanley_Fomin_1999,Fulton1996} for the details.  Given a tableau $T$ and a letter $j$, the tableau $T \leftarrow j$ is obtained by inserting $j$ in the first row of $T$, where it bumps the leftmost entry strictly larger than $j$ (if there is no such entry, $j$ is appended at the end of the row and the algorithm stops); the bumped entry is then inserted in the next row in the same fashion, and so on until an entry is appended at the end of a row.  The entries $c_{k-1},\dots,c_1,c_0(=j)$ successively inserted in this process, where $c_{k-1}$ is the entry appended in row $k$, form what we call the insertion path of $T \leftarrow j$.  Note that $w(T \leftarrow j)$ is Knuth equivalent to $w(T)j$.

Given a biword
$$
\left(
\begin{array}{cccc}
i_1 & i_2 & \cdots & i_r \\
j_1 & j_2 & \cdots & j_r
\end{array}  
\right)
$$
in lexicographic order (that is, such that $i_1 \leq i_2 \leq \cdots \leq i_r$, and such that $j_k\leq j_{k+1}$ whenever $i_k = i_{k+1}$), the RSK correspondence builds a pair $(P,Q)$ of tableaux of the same shape by inserting successively the letters $j_1,\dots,j_r$ in $P$, the tableau $Q$ recording in which cell the shape has grown at each step.  It is a bijection between biwords and pairs of tableaux of the same shape.

\section{The (dual) $m$-symmetric Schur functions} \label{secdualS}
We introduce in this section the (dual) $m$-symmetric Schur functions and derive the generalization of the Cauchy identity that the rest of the article will be devoted to proving combinatorially.
We will first define the dual $m$-symmetric Schur functions, and then, by duality, obtain the  
$m$-symmetric Schur functions.  Recall that the multi-Schur functions were introduced in \eqref{eqmulti} and that the Hecke algebra generators $T_i$ were introduced in \eqref{eqTi}.

The dual $m$-symmetric Schur functions are defined first in the dominant case, where they are multi-Schur functions, and then in general by acting with the $T_i$'s.
\begin{definition} \label{defdualS}
  The dual $m$-symmetric Schur functions $ s_{\Lambda}^*(x;t)$
  are defined recursively in the following way.  If $\Lambda=(\pmb a;\lambda)$ is dominant ($\Lambda=(\pmb a;\lambda)$ with  $a_1 \geq a_2 \geq \cdots \geq a_{m}$), then
$$
  s_\Lambda^*(x;t)=s_\nu(X_1,\dots,X_\ell),
$$
where $\nu=\Lambda^{(0)}=\pmb a \cup \lambda$, and where
 $X_i$ stands for the alphabet $X+x_1+\cdots +x_k$ with $k$ the number of circles weakly above row $i$ in the diagram corresponding to $\Lambda$.
 Otherwise, if $a_i<a_{i+1}$  then
 \begin{equation} \label{recursis}
s^*_\Lambda(x;t)= T_i s_{\tilde \Lambda}^*(x;t),
 \end{equation}
where $\tilde \Lambda=s_i \Lambda$, and where $T_i$ is a Hecke algebra generator. This amounts to saying that
\begin{equation} \label{stsigma}
s^*_\Lambda(x;t)=  T_{\sigma^{-1}} s_{\Lambda^+}^*(x;t),
\end{equation}
where $\sigma$ is the shortest permutation such that $\sigma(\pmb a)=(a_{\sigma^{-1}(1)},\dots,a_{\sigma^{-1}(m)}) = \pmb a^+$. 
\end{definition}

\begin{example}
\label{ExamplemSchurdual}
The diagram associated to $(2,1;3,1)$ is 
$$
{\tableau[scY]{&& & \bl \\& & \bl \cercle{\rm 1} \\&  \bl \cercle{\rm 2}\\ & \bl  \\ }}
$$
from which we deduce that
   $$
s_{2,1;3,1}^*(x;t)= \left|
\begin{array}{cccc}
  h_3[X] & h_4[X] & h_5[X] & h_6[X ]\\
   h_1[X+x_1 ]& h_2[X+x_1] & h_3[X+x_1] & h_4[X+x_1] \\
   0 & h_0[X+x_1+x_2]  & h_1 [X+x_1+x_2]& h_2[X+x_1+x_2]\\
   0 & 0 & h_0[X+x_1+x_2] & h_1[X+x_1+x_2]
\end{array}   
\right| .
$$
\end{example}

A bilinear scalar product $\langle \cdot, \cdot \rangle_m$ on $R_m$ is defined by requiring that the $\{k_\Lambda(x;t)\}_\Lambda$ basis be such that
\begin{equation} \label{scalprod1}
\langle k_\Lambda(x;t),  k_\Omega(x;t)  \rangle_m = \delta_{\Lambda \Omega} t^{{\rm Inv} (\pmb a)},
\end{equation}
where we recall that
${\rm Inv} (\pmb a)$
is the number of inversions in $\pmb a$.
It can be shown that the dual $m$-symmetric Schur functions form a basis of $R_m$ \cite{ConchaLapointe}.  The $m$-symmetric Schur functions $s_\Lambda(x;t)$ can thus be defined as the unique basis of $R_m$ such that
\begin{equation} \label{scalprodS}
\langle s_\Lambda(x;t),  s^*_\Omega(x;t)  \rangle_m = \delta_{\Lambda \Omega} t^{{\rm Inv} (\pmb a)}.
\end{equation}

We will now give a reproducing kernel for the scalar product \eqref{scalprod1}.
It was established in \cite{ConchaLapointe} that
 $$
{{t^{-\ell({\mathbf \omega}_m)} }} T^{(y)}_{{\mathbf \omega}_m} \frac{\prod_{i+j \leq m} (1-t x_i y_j)}{\prod_{i+j \leq m+1}(1- x_i y_j)}   =\sum_{\pmb a} t^{-{\rm Inv}(\pmb a)} H_{\pmb a} (x;t) H_{\pmb a} (y;t),
$$
where $\omega_m=[m,m-1,\dots,1]$ is the longest permutation in ${\mathfrak S}_m$, and where $ T^{(y)}_{{\mathbf \omega}_m} $ stands for $ T_{{\mathbf \omega}_m}$
acting on the variables $y_1,\dots,y_m$. Adding the Cauchy kernel 
$$
\frac{1}{\prod_{i,j} (1-x_i y_j)} = \sum_\lambda s_\lambda(x) s_\lambda(y)
$$
to the previous equation then leads, using $ k_\Lambda(x;t)= H_{\pmb a} (x;t) s_\lambda(x)$, to
 \begin{equation} \label{reprokernel}
{{t^{-\ell({\mathbf \omega}_m)} }} T^{(y)}_{{\mathbf \omega}_m} \frac{\prod_{i+j \leq m} (1-t x_i y_j)}{\prod_{i+j \leq m+1}(1- x_i y_j)} \left[ \frac{1}{\prod_{i,j} (1-x_i y_j)} \right]
=\sum_{\Lambda} t^{-{\rm Inv}(\pmb a)} k_\Lambda(x;t)  k_\Lambda(y;t),
\end{equation}
where the new product can be located to the right given that it commutes,  by symmetry in the $y$ variables, with  
$ T^{(y)}_{{\mathbf \omega}_m}$.  Comparing with \eqref{scalprod1},
we see immediately that \eqref{reprokernel} is the reproducing kernel that we were seeking.  As such, we have by \eqref{scalprodS} that
 \begin{equation*} 
{{t^{-\ell({\mathbf \omega}_m)} }} T^{(y)}_{{\mathbf \omega}_m} \frac{\prod_{i+j \leq m} (1-t x_i y_j)}{\prod_{i+j \leq m+1}(1- x_i y_j)} \left[ \frac{1}{\prod_{i,j} (1-x_i y_j)} \right]
=\sum_{\Lambda} t^{-{\rm Inv}(\pmb a)} s_\Lambda(x;t)  s_\Lambda^*(y;t).
\end{equation*}
 To consider the limit $t\to 0$, it proves convenient to rewrite the previous equation as
 \begin{equation*} 
      \frac{\prod_{i+j \leq m} (1-t x_i y_j)}{\prod_{i+j \leq m+1}(1- x_i y_j)} \left[ \frac{1}{\prod_{i,j} (1-x_i y_j)} \right]
=\sum_{\Lambda}  s_\Lambda(x;t)  \left( {{t^{\ell({\mathbf \omega}_m) -{\rm Inv}(\pmb a)} }} \bar T^{(y)}_{{\mathbf \omega}_m}  s_\Lambda^*(y;t) \right),
\end{equation*}
where $\bar T_{{\mathbf \omega}_m} =  ( T_{{\mathbf \omega}_m})^{-1}$. Taking the limit  $t\to 0$, we then obtain the following generalization of the Cauchy identity
\begin{equation} \label{genCauchy}
\frac{1}{\left[\prod_{i+j \leq m+1}(1- x_i y_j) \right] \left[\prod_{i,j} (1- x_i y_j) \right] }   =\sum_{\Lambda} s_\Lambda(x;0) \,  {\mathfrak s}_\Lambda^*(y), 
\end{equation}
where
\begin{equation} \label{dualschur0}
\mathfrak s_{\Lambda}^*(x) := \lim_{t \to 0} t^{\ell({\mathbf \omega}_m)-{\rm Inv}(\pmb a)} \bar  T_{{\mathbf \omega}_m}  s_{\Lambda}^* (x;t). 
\end{equation}
We will refer to these functions  as the dual $m$-symmetric Schur functions at $t=0$. 

The goal of this article is to provide a combinatorial proof of the identity \eqref{genCauchy}.  As we will soon see, the functions $\mathfrak s_{\Lambda}^*(x)$ will turn out to be key polynomials.  As such, they will have a natural combinatorial interpretation. On the other hand, we will not be able to give directly a combinatorial interpretation for the $m$-symmetric Schur functions at $t=0$, $s_\Lambda(x;0)$. Remarkably, such a combinatorial interpretation will emerge as a corollary of our combinatorial proof of \eqref{genCauchy}.

We close this section by making explicit the connection between the functions $\mathfrak s_{\Lambda}^*(x)$ and the key polynomials \eqref{defkey}.  When $\Lambda$ is dominant, $s_\Lambda^*(x;t)$ is a flagged Schur function, and is therefore a key polynomial (see \cite{FlaggedAreKey1995}, theorem 23).
\begin{proposition} \label{PropKey}
  Suppose that $\Lambda=(\pmb a; \lambda)$ is dominant and that $\lambda$ is of length $\ell$.
  For any $N \geq \ell(\lambda)$ we have that
\begin{equation}  
  s_\Lambda^*(x_1,\dots,x_m,x_{m+1},\dots,x_N;t) = K_{(0^{N-\ell},\lambda_\ell,\dots,\lambda_1,a_1,\dots,a_m)}(x_{1},\dots,x_N,x_1,\dots,x_m).
\end{equation}  
\end{proposition}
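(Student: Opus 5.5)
Since $\Lambda$ is dominant, Definition~\ref{defdualS} gives $s^*_\Lambda(x;t)=s_\nu(X_1,\dots,X_{\ell(\nu)})$ with $\nu=\pmb a\cup\lambda$; in particular it does not depend on $t$. The plan is to rewrite this multi-Schur function as a flagged Schur function in the doubled alphabet $z=(z_1,\dots,z_{N+m})=(x_1,\dots,x_N,x_1,\dots,x_m)$, restricted to $N$ variables, and then apply the known result that flagged Schur functions with weakly increasing flags are key polynomials (\cite{FlaggedAreKey1995}, Theorem 23).

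First we identify the flags. With $X=x_1+\dots+x_N$, the alphabet $X+x_1+\dots+x_k$ equals $z_1+\dots+z_{N+k}$, an initial segment of $z$. So row $i$ of the determinant \eqref{eqmulti} is $h_{\nu_i-i+j}[z_1+\cdots+z_{\phi_i}]$, where $\phi_i=N+k_i$ and $k_i$ is the number of circles weakly above row $i$. Since $k_i$ is weakly increasing in $i$, the flag $\phi$ is weakly increasing. Hence $s^*_\Lambda$ is the flagged Schur function $s_\nu(\phi)$ in the variables $z$, that is, the generating function of tableaux of shape $\nu$ with row $i$ entries at most $\phi_i$. Write $\ell'=\ell(\nu)$ and pad with zero rows if convenient.

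Next we apply the theorem of \cite{FlaggedAreKey1995}. For a partition $\nu$ and weakly increasing flag $\phi$ with $\phi_i\ge i$, it gives $s_\nu(\phi)=K_{\beta}(z)$, where $\beta$ is the weak composition obtained by placing $\nu_i$ in position $\phi_i$. When several rows share the same flag value, the rows in that block are placed in the positions $\phi_i,\phi_i-1,\dots$ going downward, so that each block is sorted increasingly. We then compute $\beta$ in our case.
\begin{itemize}
\item Rows above the first circle have flag $N$. These are the $\lambda$-rows that are strictly longer than $a_1$, and they occupy the positions ending at $N$.
\item The row carrying circle $1$ (length $a_1$), together with the remaining $\lambda$-rows of equal length placed above it, has flag $N+1$, and so on.
\end{itemize}
Because of the convention that a circle is placed at the bottom-most available row of its length, the row carrying circle $r$ is the last row with flag $N+r$. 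The block sorting therefore puts $a_r$ exactly at position $N+r$. All the $\lambda$-rows, sorted increasingly, fill the positions just below. Thus
\[
\beta=(0^{N-\ell},\lambda_\ell,\dots,\lambda_1,a_1,\dots,a_m),
\]
and the zeros account for the remaining positions. Here $N\ge\ell$ is exactly what is needed for $\beta$ to have nonnegative length $N+m$ with the $0^{N-\ell}$ prefix, and for the condition $\phi_i\ge i$ to hold.

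The main obstacle is bookkeeping in this last step. We must check that the ordering convention for circles in the diagram of $\Lambda$, the block convention in the flagged-to-key correspondence, and the ties between some $a_r$ and some $\lambda_j$ are all compatible. If the cited theorem is stated only with strictly increasing flags, or in a different normalization, the fallback is a direct argument. We first note that a row of length $a_r$ tied with $\lambda$-rows can be moved within its block without changing the determinant. We then use the Demazure-operator characterization $K_{s_i\beta}=\pi_iK_\beta$ to increase a flag by one position at a time.
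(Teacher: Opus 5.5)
Your route is the paper's. The paper gives no argument beyond observing that for dominant $\Lambda$ the function $s^*_\Lambda$ is a flagged Schur function and citing \cite{FlaggedAreKey1995}, Theorem 23. Your identification of the flag $\phi_i=N+k_i$ in the alphabet $z=(x_1,\dots,x_N,x_1,\dots,x_m)$ is correct: it is weakly increasing, and $\phi_i\ge i$ because $i-k_i\le \ell\le N$.

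The trouble is the one step you make explicit, the computation of $\beta$. The final $\beta$ is right, but it does not follow from the rule you state.
\begin{itemize}
\item \emph{Which row carries circle $r$.} The paper puts each circle in the \emph{uppermost} free row of its length (see the example $(2,0,2,1;3,2)$ and the remark defining $\Lambda(T)$), not the bottom-most. More importantly, whatever the convention, $k_i$ counts the circles weakly above row $i$. So the row carrying circle $r$ is the \emph{first} row with flag $N+r$, never the last. Likewise, rows lying above the circle-$1$ row have flag $N$, not $N+1$ as your second bullet says.
\item \emph{The block rule.} Your rule (a block with flag $\phi$ fills positions $\phi,\phi-1,\dots$) is not well defined here. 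For $\Lambda=(1;2,1)$ and $N=2$ the flag is $(2,3,3)$. Your rule sends row $1$ to position $2$ and rows $2,3$ to positions $3,2$: position $2$ is used twice and position $1$ is left empty.
\end{itemize}

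The fix is to use the sequential form of the flagged-Schur/key dictionary. Reading rows from top to bottom, row $i$ is sent to the largest position $p_i\le \phi_i$ not yet used. Then $s_\nu(\phi)=K_\beta(z)$ with $\beta_{p_i}=\nu_i$ and zeros elsewhere. In the example above this gives $\beta=(1,2,1)$, and indeed $K_{(1,2,1)}(z)=z_1z_2z_3(z_1+z_2)$ is the flagged Schur function of shape $(2,1,1)$ with flag $(2,3,3)$.

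With this rule the bookkeeping is immediate:
\begin{itemize}
\item The circle-$r$ row is the first row with flag $N+r$, and every earlier row has flag at most $N+r-1$. So it receives position $N+r$.
\item When a $\lambda$-row is reached, positions $N+1,\dots,N+k_i$ are already taken by circles $1,\dots,k_i$. So it receives the largest free position $\le N$.
\item The $\lambda$-rows, met in weakly decreasing order, therefore receive $N,N-1,\dots,N-\ell+1$.
\end{itemize}
This yields $\beta=(0^{N-\ell},\lambda_\ell,\dots,\lambda_1,a_1,\dots,a_m)$, with a $0$ at position $N+r$ whenever $a_r=0$. Your remark that tied rows can be exchanged without changing the determinant is true, since lowering the flag of the upper of two equal-length rows by one changes nothing. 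But it does not resolve the collisions, so you should replace the block rule by the sequential one.
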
  
In the general case, we are interested in the functions
$ \mathfrak s_{\Lambda}^*(x)$.
We thus have that if $\sigma (\pmb a) = (a_{\sigma^{-1}(1)},\dots, a_{\sigma^{-1}(m)})=\pmb a^+$, then
$$
\mathfrak s_{\Lambda}^*(x) 
= \lim_{t \to 0} t^{\ell({\mathbf \omega}_m)-{\rm Inv}(\pmb a)} \bar  T_{{\mathbf \omega}_m}  T_{\sigma^{-1}} s_{\Lambda^+}^*(x;t) 
= \lim_{t \to 0} t^{\ell({\mathbf \omega}_m \sigma^{-1})} \bar  T_{{\mathbf \omega}_m \sigma^{-1}} s_{\Lambda^+}^*(x;t)    =  \pi_{{\mathbf \omega}_m \sigma^{-1}}  s_{\Lambda^+}^*(x;t).  
$$
We thus get from Proposition~\ref{PropKey} that the functions $\mathfrak s_{\Lambda}^*(x)$ are still key polynomials when the number of variables is finite.
The next proposition will give this relation explicitly.
\begin{proposition} \label{propKeyS}
  Suppose that $\Lambda=(\pmb a; \lambda)$ (not necessarily dominant) is  such that   $\lambda=(\lambda_1,\dots,\lambda_\ell)$.  For any $N \geq \ell(\lambda)$ we have that
\begin{equation}  
\mathfrak s_{\Lambda}^*(x_1,\dots,x_N)
  = K_{(0^{N-\ell},\lambda_\ell,\dots,\lambda_1,a_m,\dots,a_1)}(x_{1},\dots,x_N,x_1,\dots,x_m).
\end{equation}
\end{proposition}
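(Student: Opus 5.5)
We start from the identity derived just before the statement,
$$
\mathfrak s_\Lambda^*(x)=\pi_{\omega_m\sigma^{-1}}\, s^*_{\Lambda^+}(x;t),
$$
where $\sigma$ is the shortest permutation with $\sigma(\pmb a)=\pmb a^+$. By Proposition~\ref{PropKey}, $s^*_{\Lambda^+}$ equals $K_{\pmb c}(x_1,\dots,x_N,x_1,\dots,x_m)$ with $\pmb c=(0^{N-\ell},\lambda_\ell,\dots,\lambda_1,a^+_1,\dots,a^+_m)$.

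The plan is to work in $N+m$ formal variables $z_1,\dots,z_{N+m}$. We first show that the operators $\pi_i$ acting on $x_1,\dots,x_m$ correspond to the operators $\pi_{N+i}$ acting on $z$, and then apply the recursive definition \eqref{defkey} of key polynomials.

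\textbf{Step 1 (transport of operators).} Let $\phi$ be the specialization $z_j\mapsto x_j$ for $j\le N$ and $z_{N+i}\mapsto x_i$ for $i\le m$. The operator $\pi_i$ in the $x$-variables does not intertwine with $\phi$ in general, because $x_i$ appears twice. We use instead that $K_{\pmb c}(z)$ is symmetric in $z_{N+i},z_{N+i+1}$ whenever $c_{N+i}\ge c_{N+i+1}$ is not strict, and more generally the structure of flagged Schur functions. The cleaner route is to introduce a fresh alphabet $\hat y$ as in \eqref{newversion}: replace the last $m$ variables by $\hat x_1,\dots,\hat x_m$, prove the identity with $\pi_i$ acting on $\hat x$ only, and specialize at the end. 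For this we need the $T_i$ defining $s^*_\Lambda$ to act only on the "circle" variables. This holds because $s^*_{\Lambda^+}$, written via \eqref{eqmulti}, depends on $x_1,\dots,x_m$ through the alphabets $X_i=X+x_1+\cdots+x_k$. Here $X$ is symmetric in all variables and contributes the full set $x_1,\dots,x_N$. The extra $x_1+\cdots+x_k$ is exactly the flag part, and this is the part that becomes $\hat x$.

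So the first concrete task is to check that doubling the variables commutes with the Hecke action. That is, writing $s^*_{\Lambda^+}=F(x;\hat x)|_{\hat x=x}$ with $F$ symmetric in $x$, we want $T_i$ on $x$ to equal $T_i$ on $\hat x$ applied before specialization. I expect this to be the main obstacle. A priori $T_i$ sees both copies of $x_i,x_{i+1}$. Since $F$ is symmetric in the $X$ part, the operator $(1-K_{i,i+1})$ kills that dependence. More precisely, for $G(x)=F(x;x)$ with $F$ symmetric in its first argument, the divided difference $\partial_i G$ equals $(\partial_i^{\hat x}F)|_{\hat x=x}$, because $F(x;\hat x)$ is a sum of products of symmetric polynomials in $x$ and polynomials in $\hat x$, and symmetric factors commute with $\partial_i$. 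Since $\pi_i$ and $T_i$ are built from $\partial_i$ multiplied by polynomials in $x_i,x_{i+1}$, this settles it.

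\textbf{Step 2 (key recursion).} With the $\pi$'s now acting on $z_{N+1},\dots,z_{N+m}$ as $\pi_{N+i}$, we compute $\pi_{\omega_m\sigma^{-1}}K_{\pmb c}$ using \eqref{defkey} together with $\pi_i K_{\pmb b}=K_{s_i\pmb b}$ when $b_i>b_{i+1}$, and $\pi_iK_{\pmb b}=K_{\pmb b}$ when $b_i\le b_{i+1}$. We must check that a reduced word of $\omega_m\sigma^{-1}$ successively sends $\pmb a^+$ to the reversal $(a_m,\dots,a_1)$, raising length at each step with strict ascents.

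The permutation side is a combinatorial check. First, $\sigma^{-1}$ takes $\pmb a^+$ back to $\pmb a$. Second, $\omega_m$ reverses the tail positions. Third, the length condition $\ell(\omega_m\sigma^{-1})=\ell(\omega_m)-\ell(\sigma^{-1})$ guarantees that each step sorts a pair into increasing order, so each $\pi$ acts by an honest swap or trivially on ties. The minimality of $\sigma$ handles the ties. Specializing $\hat x=x$ then gives the claimed formula.
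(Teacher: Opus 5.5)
Your overall strategy is the paper's: the operators $\pi_i$ ($i<m$) acting on $x_1,\dots,x_m$ are moved onto the second copy of these variables using a symmetry in the first block of $N$ variables, and the key recursion then finishes the proof. As written, however, Step 1 is not connected to Step 2.

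\textbf{The gap in Step 1.} You prove the transport identity for $F(x;\hat x)=\det\bigl(h_{\nu_i-i+j}[X+\hat x_1+\cdots+\hat x_{k_i}]\bigr)$. Step 2 then applies the key recursion to $K_{\pmb c}(x_1,\dots,x_N,\hat x_1,\dots,\hat x_m)$. Proposition~\ref{PropKey} only gives $F(x;x)=K_{\pmb c}(x;x)$. An equality after setting $\hat x=x$ says nothing about $F(x;\hat x)$ versus $K_{\pmb c}(x;\hat x)$, so $\pi^{\hat x}_{\omega_m\sigma^{-1}}F$ is never actually computed.

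You could close this by invoking the flagged-Schur-equals-key identity in $N+m$ independent variables (flags $N+k_i$), of which Proposition~\ref{PropKey} is the specialization. The simpler repair, which is exactly the paper's proof, is to drop $F$ altogether. Use the symmetry you gestured at in the first lines of Step 1, but in the correct block. $K_{\pmb c}(z)$ is symmetric in $z_1,\dots,z_N$, not in the tail variables, because $c_1\le\cdots\le c_N$ and $\pi_jK_{\pmb c}=K_{\pmb c}$ whenever $c_j\le c_{j+1}$. Your divided-difference argument (factors symmetric in the first block pass through $\partial_i$, hence through $\pi_i$) then applies verbatim to $K_{\pmb c}$. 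It gives $\pi_i\bigl(K_{\pmb c}(x,x)\bigr)=(\pi_{N+i}K_{\pmb c})(x,x)$ for $i<m$. The discussion of the $T_i$ and of the multi-Schur alphabets then becomes unnecessary.

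\textbf{The justification in Step 2.} The conclusion is right, but the reasons you give are not the operative ones. Neither $\ell(\omega_m\sigma^{-1})=\ell(\omega_m)-\ell(\sigma^{-1})$ nor the minimality of $\sigma$ is what matters here. Minimality only enters the derivation of $\mathfrak s^*_\Lambda=\pi_{\omega_m\sigma^{-1}}s^*_{\Lambda^+}$, which you take as given.

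What you need is this: if $\ell(s_iu)>\ell(u)$ then $u^{-1}(i)<u^{-1}(i+1)$. Dominance of $\pmb a^+$ then gives $(u\pmb a^+)_i\ge(u\pmb a^+)_{i+1}$. So at each step $\pi_{N+i}$ either swaps a strict descent, or meets a tie, where it acts trivially and $s_iu(\pmb a^+)=u(\pmb a^+)$. Hence any reduced word of $\omega_m\sigma^{-1}$ turns the tail $\pmb a^+$ into $\omega_m\sigma^{-1}(\pmb a^+)=\omega_m(\pmb a)=(a_m,\dots,a_1)$, as required.
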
  
\begin{proof} Let $\sigma$ be such that
$\sigma (\pmb a) = (a_{\sigma^{-1}(1)},\dots, a_{\sigma^{-1}(m)})=\pmb a^+$.  We have just seen that
$
\mathfrak s_{\Lambda}^*(x) =  \pi_{{\mathbf \omega}_m \sigma^{-1}}  s_{\Lambda^+}^*(x;t)   
$, and Proposition~\ref{PropKey} therefore gives
$$
\mathfrak s_{\Lambda}^*(x_1,\dots,x_N) =  \pi_{{\mathbf \omega}_m \sigma^{-1}} K_{(0^{N-\ell},\lambda_\ell,\dots,\lambda_1,a_{\sigma^{-1}(1)},\dots, a_{\sigma^{-1}(m)})}(x_{1},\dots,x_N,x_1,\dots,x_m). 
$$
The sequence $(0^{N-\ell},\lambda_\ell,\dots,\lambda_1)$ being weakly increasing, we have that
\begin{align*}
  & K_{(0^{N-\ell},\lambda_\ell,\dots,\lambda_1,a_{\sigma^{-1}(1)},\dots, a_{\sigma^{-1}(m)})}(x_{w(1)},\dots,x_{w(N)},x_1,\dots,x_m) \\
  & \qquad \qquad \qquad \qquad= K_{(0^{N-\ell},\lambda_\ell,\dots,\lambda_1,a_{\sigma^{-1}(1)},\dots, a_{\sigma^{-1}(m)})}(x_{1},\dots,x_N,x_1,\dots,x_m) 
\end{align*}
for any $w \in \mathfrak S_N$. The operator $\pi_{{\mathbf \omega}_m \sigma^{-1}}$ thus only acts on the last $m$ entries of the Key polynomial and we get
\begin{align*}
   \mathfrak s_{\Lambda}^*(x_1,\dots,x_N)&= \pi_{{\mathbf \omega}_m \sigma^{-1}}  K_{(0^{N-\ell},\lambda_\ell,\dots,\lambda_1,a_{\sigma^{-1}(1)},\dots, a_{\sigma^{-1}(m)})}(x_{1},\dots,x_N,x_1,\dots,x_m) \\
   &  = K_{(0^{N-\ell},\lambda_\ell,\dots,\lambda_1,a_{\omega_m(1)},\dots, a_{\omega_m(m)})}(x_{1},\dots,x_N,x_1,\dots,x_m) \\
    &  = K_{(0^{N-\ell},\lambda_\ell,\dots,\lambda_1,a_m,\dots, a_1)}(x_{1},\dots,x_N,x_1,\dots,x_m).
\end{align*} 
\end{proof}

\section{Right key tableaux and their different characterizations} \label{seckeys}

Right key tableaux are the combinatorial invariant that governs the tableau expansion of the key polynomials, and they will accordingly govern the admissibility conditions of the next section.  We recall their classical definition in Section~\ref{sseckeyclassical}, and then establish in Section~\ref{sseckeynew} the new characterization on which the rest of the article rests.

\subsection{The classical characterization} \label{sseckeyclassical}

A combinatorial formula for the key polynomials \eqref{defkey} in terms of key tableaux was obtained in \cite{Lascoux1990a}, and it is this characterization that we will use throughout the article.  Unless stated otherwise, all the tableaux appearing in this section are taken in the alphabet $\mathbb N=\{1,2,3,\dots\}$.

\begin{definition}
    Let $T$ be a tableau with $\ell$ columns, and let
$C_1,C_2,\ldots,C_\ell$ denote its columns from left to right.
We say that $T$ is a \emph{key tableau} if
\[
C_i \supseteq C_j
\qquad\text{whenever }1\le i<j\le \ell .
\]
\end{definition}

\begin{example} The tableau 
\begin{ytableau}
1 & 2 & 2 \\
2 & 3 \\
3
\end{ytableau}
is a key tableau because $\{1,2,3 \} \supseteq \{ 2,3 \} \supseteq \{2\}$.
\end{example}

\begin{example} The tableau
\begin{ytableau}
1 & 3 \\
2
\end{ytableau}
is not a key tableau because $\{ 1,2\}$ does not contain $\{ 3 \}$.
\end{example}

Given a tableau $T$ of shape $\lambda$ with column lengths $\pmb c= ( c_{1}, c_{2} , \ldots , c_{\ell} )$, for each permutation $\sigma$ we are interested in the skew tableau $S$ with column lengths $\sigma \pmb c=( c_{\sigma^{-1}(1)}, c_{\sigma^{-1}(2)} , \ldots , c_{\sigma^{-1}(\ell)} )$  whose rectification is $T$.
In appendix A5 of \cite{Fulton1996} it is shown that one can find $S$ using an action of the symmetric group on the columns of $T$.
We will use the jeu de taquin construction found in appendix A1.2 of \cite{Stanley_Fomin_1999}.
\begin{example}
\label{ExampleActionColumns}
The action of the symmetric group on consecutive columns is as follows.
    We start by adding a hole at the bottom of the column to the right.  We then
    use jeu de taquin slides to move the hole to the top of the column to the left as follows:
$$
\tableau[scY]{
1 & 3 \\
4 & 7 \\
5 & 8 \\
6  \\
7  \\
9
}
\qquad
\tableau[scY]{
1 & 3 \\
4 & 7 \\
5 & 8 \\
6 & \bl \times \\
7  \\
9
}
\qquad
\tableau[scY]{
1 & 3 \\
4 & 7 \\
5 & \bl \times \\
6 & 8 \\
7  \\
9
}
\qquad
\tableau[scY]{
1 & 3 \\
4 & \bl \times \\
5 & 7 \\
6 & 8 \\
7  \\
9
}
\qquad
\tableau[scY]{
1 & 3 \\
\bl \times & 4 \\
5 & 7 \\
6 & 8 \\
7  \\
9
}
\qquad
\tableau[scY]{
\bl \times & 3 \\
1 & 4 \\
5 & 7 \\
6 & 8 \\
7  \\
9
}
\qquad
\tableau[scY]{
\bl & 3 \\
1 & 4 \\
5 & 7 \\
6 & 8 \\
7  \\
9
}
$$
We repeat this procedure until the two column lengths have been exchanged. In this example, this corresponds to  the  following steps:
$$
\tableau[scY]{
1 & 3 \\
4 & 7 \\
5 & 8 \\
6 \\
7  \\
9
}
\quad
\longrightarrow
\quad
\tableau[scY]{
\bl & 3 \\
1 & 4 \\
5 & 7 \\
6 & 8 \\
7  \\
9
}
\quad
\longrightarrow
\quad
\tableau[scY]{
\bl & 3 \\
\bl & 4 \\
1 & 5 \\
6 & 7 \\
7 & 8 \\
9
}
\quad
\longrightarrow
\quad
\tableau[scY]{
\bl & 3 \\
\bl & 4 \\
\bl & 5 \\
1 & 7 \\
6 & 8 \\
7 & 9
}
$$
\end{example}    
This defines an action of $\mathfrak{S}_{\ell}$ on skew tableaux.  Every skew tableau produced by this action has an equivalent representative in which any two adjacent columns either start or end on the same row; such a representative is called the compact form of $\sigma(T)$, and we may restrict our attention to those.  The next example, taken from \cite{Fulton1996}, illustrates the distinction.
\begin{example}
Given the tableau
$$
T =
\tableau[scY]{
1 & 1 & 2 & 2 \\
2 & 3 & 3 \\
4
}
$$
both skew tableaux $S$ and $S'$ below have column lengths $(2,3,2,1)$ and rectify to $T$:
$$
S=
\tableau[scY]{
\bl & 1 & 2 & 2 \\
1 & 3 & 3 \\
2 & 4
}
\qquad
S'=
\tableau[scY]{
\bl & \bl & \bl & 2 \\
\bl &  1 & 2 \\
\bl &  3 & 3 \\
1 & 4 \\
2
}
$$
Only $S$, however, is in compact form.
\end{example}

\begin{example}
\label{EjAccionT}
Let
$$
T = \tableau[scY]{
1 & 3 & 4 \\
2 & 6 \\
5
}
$$
The complete orbit of $T$ under the action of $\mathfrak S_3$ is
\begin{center}
\begin{tikzcd}
& \usebox\yTTBAC  \arrow[r, red, dash] & \usebox\yTTCAB \arrow[rd, blue, dash] & \\
\usebox\yTTABC \arrow[ru, blue, dash] \arrow[rd, red, dash]& &   & \usebox\yTTCBA \\
& \usebox\yTTACB \arrow[r, blue, dash] & \usebox\yTTBCA \arrow[ru, red, dash] &
\end{tikzcd}
\end{center}
where the blue lines denote the action that swaps the first and second columns, and the red lines denote the action that swaps the second and third columns.
\end{example}

The construction has the following fundamental property \cite{Fulton1996}, which is what makes the notion of key well defined.
\begin{proposition}
\label{CorKeyRep}
Let $T$ be a tableau with $\ell$ columns and let $\mathfrak{S}_{\ell}(T)$ be the set of tableaux obtained from  the action of  $\mathfrak{S}_{\ell}$ on tableaux described above.
If $T' \in \mathfrak{S}_{\ell}(T)$ and $C$ is the rightmost (leftmost) column of $T'$, then $C$ is determined by $T$ and by the length of $C$ alone.
\end{proposition}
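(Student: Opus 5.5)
Throughout, $T' \in \mathfrak{S}_\ell(T)$ is a skew tableau in compact form. By construction its reading word is Knuth equivalent to $w(T)$, since it is obtained from $T$ by jeu de taquin slides. Its multiset of column lengths is the multiset $\{c_1,\dots,c_\ell\}$ of $T$.

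The plan is to identify the rightmost column of $T'$ through a Knuth-invariant quantity that depends only on $T$ and on the length $k$ of that column. The leftmost case is symmetric: we either reverse the roles, or apply the Schützenberger involution, which exchanges left and right and reverses the alphabet.

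\textbf{Step 1: reduce to two columns.} Let $C$ be the rightmost column of $T'$, of length $k$, and let $S$ be the skew tableau formed by the remaining columns. Because $T'$ is in compact form and the column lengths are a permutation of $\pmb c$, the word $w(T')$ is a shuffle of $w(S)$ and $w(C)$. More precisely, $w(T')$ is Knuth equivalent to $w(\operatorname{rect}(S))\,w(C)$: rectify $S$ first, without touching $C$, and then slide $C$ in. Thus $T$ is the rectification of a two-piece object consisting of a straight tableau $U=\operatorname{rect}(S)$ and a single column $C$ placed to its right.

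\textbf{Step 2: characterize $C$ as an extremal column.} I will show that $C$ is the componentwise largest $k$-element set, ordered increasingly, among all decreasing subwords of $w(T)$ of length $k$ that can be completed to a realisation of the shape. Concretely, I aim for the following characterization. $C$ is the unique increasing column $c^{(1)}<\dots<c^{(k)}$ such that $T$ is the rectification of (some tableau of the shape obtained by deleting a $k$-column) $\cdot\, C$. Moreover, $C$ is the maximal such column in the order $C\le D$ iff $c^{(i)}\le d^{(i)}$ for all $i$.

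To prove this, I would use the sequence of column-exchange moves as in Example~\ref{ExampleActionColumns}. Each jeu de taquin slide that pushes a hole from the bottom of the right column to the top of the left column can only increase the entries of the right column, weakly and componentwise. Consequently the final rightmost column dominates every other candidate column of length $k$ appearing in the orbit. Uniqueness of the maximum would then give independence from the path in $\mathfrak S_\ell$ used to reach $T'$.

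\textbf{Step 3: path independence.} The anticipated main obstacle is showing that two different reduced words for permutations sending the same column length to the last position yield the same last column. I would handle this by the braid relations, checked on three columns. By Step 1, two paths give $U\cdot C$ and $U'\cdot C'$, both rectifying to $T$, with $U,U'$ of the same shape. The extremality of Step 2 gives $C\le C'$ and $C'\le C$, hence $C=C'$. To obtain extremality cleanly, the fallback is the standard fact (Fulton, Appendix A5) that for a fixed straight shape $\nu$ and a fixed $T$, the skew tableaux of shape $\lambda/\mu$ rectifying to $T$ are in bijection with Littlewood–Richardson data. In the two-column case this forces uniqueness of the pair, and hence of $C$ in compact form.
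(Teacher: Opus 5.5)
Your main line of argument, Step 2, does not establish what you need. The slide-by-slide monotonicity only compares the right column with its own earlier versions along one fixed sequence of exchanges. It is not even uniform along a path: when a shorter left column is exchanged with a longer right one, the right column loses entries. More importantly, it says nothing about the rightmost column obtained from a \emph{different} $\sigma$ with the same last column length. So ``consequently the final rightmost column dominates every other candidate'' is a non sequitur. The characterization you aim for (the unique $C$ with $T=\operatorname{rect}(U\cdot C)$ for some $U$ of the shape of $T$ with a $k$-column deleted) is a restatement of the proposition itself, and once it holds the maximality is vacuous. Braid relations do not help either: they only relate reduced words of the same permutation, whereas the statement compares different $\sigma$'s.

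Your fallback in Step 3 is the right route; the paper gives no proof of its own and simply cites Fulton, Appendix A.5. However, the fallback silently assumes the one nontrivial input: that $U=\operatorname{rect}(S)$ has the shape $\nu$ of $T$ with one column of length $k$ removed, i.e.\ that $S$ is frank. A priori you only know that $\nu$ minus the shape of $U$ is a vertical strip of size $k$. For $\nu=(2,1)$ and $k=1$ this allows both $(2)$ and $(1,1)$. Without equality of shapes for $U$ and $U'$, uniqueness of a skew tableau of given shape rectifying to $T$ cannot be invoked, and the Littlewood--Richardson count is $1$ only because the shape is frank.

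You can close this with Greene's theorem applied to the column word $v_1\cdots v_{\ell-1}v_\ell$ of $T'$, which is Knuth equivalent to $w(T)$. Let $\rho$ be the shape of $U$, let $u_{(1)}\ge u_{(2)}\ge\cdots$ be the column lengths of $S$, and let $L_j=u_{(1)}+\cdots+u_{(j)}$.
\begin{itemize}
\item The columns of $S$ give $\rho'_1+\cdots+\rho'_j\ge L_j$.
\item Any $j$ disjoint decreasing subsequences of $v_1\cdots v_{\ell-1}$ are subsequences of the whole word, so $\rho'_1+\cdots+\rho'_j\le \nu'_1+\cdots+\nu'_j=L_{j-1}+\max(u_{(j)},k)$.
\item Adjoining $v_\ell=C$ to such subsequences gives $\rho'_1+\cdots+\rho'_j+k\le \nu'_1+\cdots+\nu'_{j+1}=L_j+\max(u_{(j+1)},k)$.
\end{itemize}
The first upper bound yields $\rho'_1+\cdots+\rho'_j\le L_j$ when $k<u_{(j+1)}$, and the second yields it when $k\ge u_{(j+1)}$. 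Hence $\rho'$ is the sorted list of column lengths of $S$. Then $w_{\mathrm{col}}(U)\,C$ and $w_{\mathrm{col}}(U')\,C'$ are frank words with the same sequence of column lengths in the Knuth class of $w(T)$. The Lascoux--Sch\"utzenberger uniqueness of such words (Fulton, A.5) then gives $C=C'$.
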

It is thus natural to define  $\mathcal{R}_{c}(T)$ (resp. $\mathcal{L}_{c}(T)$) as the
rightmost (resp. leftmost) column of   $T' \in \mathfrak{S}_{k}(T)$ whenever such column is of length $c$.
\begin{proposition}
\label{CorKeyInc}
    If $c<d$, then $\mathcal{R}_{c}(T) \subset \mathcal{R}_{d}(T)$ and $\mathcal{L}_{c}(T) \subset \mathcal{L}_{d}(T)$.
\end{proposition}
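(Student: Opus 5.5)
We need to prove: if $c<d$ then $\mathcal R_c(T)\subset\mathcal R_d(T)$ (and similarly for $\mathcal L$). Fix column lengths $c<d$ occurring in $T$, and choose a permutation $\sigma$ such that in $\sigma(T)$ the column of length $d$ sits second to last and the column of length $c$ is last. By Proposition~\ref{CorKeyRep}, the last column of $\sigma(T)$ is $\mathcal R_c(T)$. Call the second-to-last column $D$; in compact form it has length $d$. Removing the last column, the remaining skew tableau rectifies (under the same column action restricted to its first columns) to a straight tableau $T''$. The last column of the rectified remainder, of length $d$, can therefore be taken to be $D$. The plan is then to show two things: that $D=\mathcal R_d(T)$, and that $\mathcal R_c(T)\subset D$.

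For the second claim, I would use that the two rightmost columns $D$ and $C=\mathcal R_c(T)$ of the compact-form skew tableau, with $d>c$, form a skew tableau with two columns. Compact form means their bottoms align, or their tops align. Since $d>c$ and the configuration arises from the action (equivalently, it rectifies to a two-column straight tableau whose left column is longer), the left column must extend at least as high and as low, with the right column occupying a subinterval of rows. Row weakness gives that each entry of $C$ is at least its left neighbour in $D$. The point, however, is containment, not domination. For that I would swap these two columns with the action of Example~\ref{ExampleActionColumns}: the right column becomes length $d$, and by Proposition~\ref{CorKeyRep} it equals $\mathcal R_d(T)$. So the key step is to check, for a two-column skew tableau $(D,C)$ with $|D|>|C|$, that after the jeu de taquin exchange the new right column $D'$ contains $C$. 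The jeu de taquin slides that move holes from the bottom of the right column to the top of the left column only push entries of the right column down and insert entries from the left column into it; no entry of the right column leaves it. Indeed, a hole in the right column is filled either from below in the right column or from the left. Hence the original entries of $C$ stay in the right column, giving $C\subseteq D'=\mathcal R_d(T)$. Strictness follows since $c<d$.

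Given this observation, I no longer need $D=\mathcal R_d(T)$ directly. It suffices to take any $\sigma(T)$ whose last column has length $c$ and whose penultimate column has length $d$, which is possible as the action realizes all permutations of the column lengths. Applying the adjacent transposition of the last two columns yields an element of the orbit whose last column has length $d$ and contains the old last column. The main obstacle is justifying the claim that during the hole-moving procedure entries never leave the right column. This requires checking the slide rule: the hole in the right column compares its left neighbour with the entry below and moves accordingly. Entries from the right column only move vertically within it, and it is the left column's entries that cross over.

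For $\mathcal L$, the argument is symmetric. Place the length-$d$ column first and the length-$c$ column second, and apply the reverse exchange. Alternatively, use the Schützenberger-type symmetry, rotating by $180^\circ$ and complementing the alphabet, which interchanges left and right columns.
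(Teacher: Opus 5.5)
Your argument is correct. Note that the paper gives no proof of this statement: like Proposition~\ref{CorKeyRep}, it is quoted from Fulton's Appendix~A.5.

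What you supply is a self-contained derivation from the paper's own definition of the column action (Example~\ref{ExampleActionColumns}) together with Proposition~\ref{CorKeyRep}. In the exchange $(d,c)\to(c,d)$ of two adjacent columns, each hole moves only up or left, so entries move only down or right. Hence no entry ever leaves the right column and no entry ever enters the left one. Applied to the last two columns of an orbit element whose last two lengths are $d,c$, this gives $\mathcal R_c(T)\subseteq\mathcal R_d(T)$. Applied to the first two columns of an orbit element whose first two lengths are $d,c$, the \emph{same} exchange gives $\mathcal L_c(T)\subseteq \mathcal L_d(T)$, because the left column only loses entries. That is cleaner than your ``reverse exchange'' sentence. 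Forward slides would require the length-$c$ column to be placed first, not the length-$d$ one. What your route buys over the citation is the observation that nestedness is a cheap consequence of the definition once well-definedness (Proposition~\ref{CorKeyRep}, the genuinely nontrivial input) is granted.

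Two slips to fix:
\begin{itemize}
\item In the procedure, a hole in the right column is filled from \emph{above} or from the left, not from below. Your conclusion that right-column entries only move vertically within that column is unaffected.
\item The plan in your first paragraph fails and should simply be deleted, not merely bypassed. For the tableau $T$ of Example~\ref{EjAccionT} itself, with $c=1$ and $d=2$, the penultimate column is $D=\{3,6\}$. This set neither equals $\mathcal R_2(T)=\{4,6\}$ nor contains $\mathcal R_1(T)=\{4\}$.
\end{itemize}
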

We are now in a position to define left and right keys.
\begin{definition}
  Given a tableau $T$ with columns of length $(c_1,\dots,c_\ell)$, 
  its right key tableau $K_{+} (T)$ (resp. left key tableau $K_{-} (T)$)
  is the tableau whose columns are $\mathcal{R}_{c_1}(T), \dots, \mathcal{R}_{c_\ell}(T)$ (resp. $\mathcal{L}_{c_1}(T), \dots, \mathcal{L}_{c_\ell}(T)$)
\end{definition}

\begin{example}
\label{ExampleLeftRight}
  We see from Example~\ref{EjAccionT} that the right and left key tableaux of 
$$
T = \tableau[scY]{
1 & 3 & 4 \\
2 & 6 \\
5
}
$$
are respectively
$$
K_{+} (T) = \tableau[scY]{
1 & 4 & 4 \\
4 & 6 \\
6
}
\qquad    
    {\rm and}
\qquad    
K_{-}(T) = \tableau[scY]{
1 & 2 & 2 \\
2 & 5 \\
5
}
$$
\end{example}

\subsection{A new characterization of right key tableaux} \label{sseckeynew}

We derive in this subsection a new characterization of right key tableaux, based on the supremum of a certain set of words.

We will use the ordered sets
$\mathcal A= \{1,2,\dots,N\} \cup  \{\hat 1,\dots ,\hat m\}$ and  $\hat {\mathcal A}= \{\hat 1,\dots ,\hat m\}$, with the order
$$
1 < 2 < \cdots < N < \hat 1 < \hat 2 < \cdots < \hat m.
$$
We extend the operations of addition and subtraction to the symbols
$\hat{\mathcal A}$ by treating them formally as integers. For example,
$\hat3-\hat1=\hat2$ and
$\hat2+\hat4=\hat6$ whenever $\hat6\in\hat{\mathcal A}$.

\begin{definition}
\label{DefOrderWords}
A word $w=w_1 \cdots w_t$
is said to be decreasing if $w_1 > w_2 > \cdots > w_t$.
Let $W( \mathcal{A} )$ be the set of \textit{decreasing} words on $\mathcal{A}$. Given $v,w \in W ( \mathcal{A} )$, with $v= v_{1} \ldots v_{s} , w = w_{1} \ldots w_{t}$, we say that $v \leq w$ iff $s \leq t$ and $v_{i} \leq  w_{i}$ for $1 \leq i \leq s$. We write $v\subseteq w$ to indicate that $v$ is a subword of $w$.
For a set of decreasing words $A$, we define $\sup A = w_{1} \cdots w_{t}$ where
$$
w_{i} = \max \{ b_{i} | b_{1} \cdots b_{j} \in A, j \geq i\}.
$$
\end{definition}
We first prove three simple propositions.

\begin{proposition}
    Let $A$ be a set of words in $W( \mathcal{A} )$. Then $\sup A \in W(\mathcal{A})$, that is, $\sup A$ is a decreasing word.
\end{proposition}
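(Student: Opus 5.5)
We need to show that $w=\sup A=w_1\cdots w_t$ satisfies $w_i>w_{i+1}$ for $1\le i<t$. Here $t$ is the maximal length of a word in $A$; this is implicit in the definition, since $w_i$ is defined exactly when some word of $A$ has length at least $i$. We assume $A$ is nonempty and that its words have bounded length, for instance because $\mathcal A$ is finite. Otherwise the supremum is the empty word, or $t$ is the supremum of the lengths.

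The plan is a direct comparison of the two maxima defining $w_i$ and $w_{i+1}$. Fix $i<t$ and consider $w_{i+1}$. By definition it is attained: there is a word $b=b_1\cdots b_j\in A$ with $j\ge i+1$ and $b_{i+1}=w_{i+1}$. The set over which the maximum is taken is finite and nonempty, since $\mathcal A$ is finite and $A$ contains a word of length at least $i+1$.

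Since $b$ is decreasing, $b_i>b_{i+1}$. Since $j\ge i+1\ge i$, the word $b$ is also one of the candidates in the maximum defining $w_i$. Hence
$$w_i\ \ge\ b_i\ >\ b_{i+1}\ =\ w_{i+1}.$$
This holds for every $i<t$, so $w$ is strictly decreasing, and its letters lie in $\mathcal A$. Therefore $\sup A\in W(\mathcal A)$.

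There is no real obstacle here. The only care needed is to state the convention that $t=\max\{j : b_1\cdots b_j\in A\}$, and to note that each maximum is taken over a nonempty finite subset of the totally ordered set $\mathcal A$, so that it is attained.
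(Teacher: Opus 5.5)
Your proof is correct and is essentially the paper's argument: the paper also picks a word $b\in A$ attaining $w_j=b_j$ and concludes $w_{j-1}\geq b_{j-1}>b_j=w_j$. The paper phrases this as an induction on $j$, but never uses the inductive hypothesis, so your direct version is the same proof stated more cleanly, and your remarks on the finiteness of $\mathcal A$ (the maxima are attained and word lengths are bounded) fill in details the paper leaves implicit.
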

\begin{proof}
  Let $\sup A = w_{1}\cdots w_{k}$ and argue by induction on the length of the subword, a one-letter word being trivially decreasing.  Suppose then that $w_1 \cdots w_{j-1}$ is decreasing for some $j \leq k$.  By the very definition of  $\sup A$,  no word in $A$ has its $(j-1)$-th letter larger than $w_{j-1}$.  Since $w_{j} = b_{j}$ for some word $b_1 \cdots b_j \cdots b_t  \in A$, we conclude that $w_{j-1} \geq b_{j-1} > b_{j} =w_{j}$, so that $w_1 \cdots w_j$ is decreasing as well.
\end{proof}

\begin{remark}
    The word $\sup A$ defined above is indeed the supremum of $A$ with
    respect to the partial order on $W(\mathcal{A})$.
\end{remark}
\begin{proposition}
  \label{IncImpLeq} If $v$ and $w$ are two decreasing words such that
   $v \subseteq w$, then $v \leq w$.
\end{proposition}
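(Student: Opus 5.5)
We need to show: if $v=v_1\cdots v_s$ and $w=w_1\cdots w_t$ are decreasing and $v\subseteq w$, then $v\leq w$. By Definition~\ref{DefOrderWords}, this means checking two things: $s\leq t$, and $v_i\leq w_i$ for $1\leq i\leq s$.

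The length condition is immediate. Since $v$ is a subword of $w$, there are indices $1\leq j_1<j_2<\cdots<j_s\leq t$ with $v_i=w_{j_i}$ for every $i$. In particular $s\leq t$.

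For the letterwise comparison, we first note that the strict increase of the indices gives $j_i\geq i$ for every $i$. This follows by a trivial induction: $j_1\geq 1$, and $j_{i}\geq j_{i-1}+1\geq i$.

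Since $w$ is decreasing, $j_i\geq i$ implies $w_{j_i}\leq w_i$, with equality exactly when $j_i=i$. Hence $v_i=w_{j_i}\leq w_i$ for $1\leq i\leq s$, which is what we wanted.

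There is no real obstacle here; the only point requiring care is the index inequality $j_i\geq i$.
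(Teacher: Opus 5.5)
Your proof is correct and follows essentially the same approach as the paper's: write $v$ as $w_{j_1}\cdots w_{j_s}$ with strictly increasing indices, deduce $j_i\geq i$, and use that $w$ is decreasing to get $v_i=w_{j_i}\leq w_i$. You also state the length condition $s\leq t$ explicitly, which the paper leaves implicit.
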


\begin{proof}
  Let $w=w_{1} \dots w_{s}$.  Since $v \subseteq w$, we have that
  $v=w_{i_{1}} \dots w_{i_{r}}$ for some $1 \leq i_{1}<i_{2}  <\ldots  <i_{r}\leq s$.  Hence $j \leq i_{j}$ for all $1 \leq j \leq r$, which implies that
    $w_{j} \geq w_{i_{j}} = v_{j}$ given that $w$ is decreasing.
\end{proof}

\begin{proposition} \label{propaddone}
  Suppose that $v$ and $w$ are two distinct decreasing words such that
  $v \subseteq w$.  If $i > v_1$ and $i > w_1$ ($v_1$ and $w_1$ are the largest letters in $v$ and $w$ respectively), then
  $$
  \sup \{ iv, w \} = iu,
$$
where $u$ is the subword of $w$ obtained by deleting the highest letter of $w$ not in $v$. 
\end{proposition}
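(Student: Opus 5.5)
Write $w=w_1\cdots w_t$ and $v=v_1\cdots v_s$. Since $v\subseteq w$ and $v\neq w$, we have $s<t$, and $v=w_{i_1}\cdots w_{i_s}$ with $i_1<\cdots<i_s$. Let $p$ be the smallest index with $w_p\notin v$; then $w_p$ is the highest letter of $w$ not in $v$, because $w$ is decreasing. The first $p-1$ letters of $w$ all lie in $v$, and $w$ is decreasing, so $i_j=j$ for $j<p$ and $i_j\ge j+1$ for $j\ge p$. Therefore
\[
v=w_1\cdots w_{p-1}\,w_{i_p}\cdots w_{i_s},\qquad u=w_1\cdots w_{p-1}w_{p+1}\cdots w_t .
\]
The word $u$ has length $t-1$, so $iu$ has length $t$, which equals the length of $\sup\{iv,w\}$ because $s+1\le t$.

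We compute $\sup\{iv,w\}$ letter by letter from Definition~\ref{DefOrderWords}. The first letter is $\max(i,w_1)=i$. For $2\le k\le t$, the $k$-th letter is $\max(v_{k-1},w_k)$ when $k-1\le s$, and is $w_k$ otherwise. We then compare with the $k$-th letter of $iu$, which is $u_{k-1}$, and split into cases.

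\textbf{Case $k\le p$.} Here $u_{k-1}=w_{k-1}$ and $v_{k-1}=w_{k-1}$. Since $w$ is decreasing, $w_{k-1}>w_k$, so the maximum is $w_{k-1}=u_{k-1}$. This also holds when $k-1\le s$ automatically: for $k\le p$ we have $k-1\le p-1\le s$.

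\textbf{Case $k>p$.} Here $u_{k-1}=w_k$. If $k-1\le s$, then $v_{k-1}=w_{i_{k-1}}$ with $i_{k-1}\ge k$, hence $v_{k-1}\le w_k$ and the maximum is $w_k$. If $k-1>s$, the letter is $w_k$ directly.

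Thus $\sup\{iv,w\}=iu$. The only delicate point is the index bookkeeping, namely that $i_j\ge j+1$ for all $j\ge p$. This follows because the $i_j$ are strictly increasing and $p$ is skipped, so $i_p>p$ and therefore $i_j\ge j+1$ for every $j\ge p$. Proposition~\ref{IncImpLeq} is the same inequality $w_j\ge w_{i_j}$ in a slightly different form.
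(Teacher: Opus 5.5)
Your proof is correct, and it reaches the result by a slightly different route from the paper's. The paper works inside the partial order. It writes $v=v_1\cdots v_jv'$ and $w=v_1\cdots v_jw_{j+1}w'$ and uses Proposition~\ref{IncImpLeq} to get $v'\le w'$, hence $iv\le iu$. Together with $w\le iu$, this gives $\sup\{iv,w\}\le iu$ via the least-upper-bound property stated in the Remark following Definition~\ref{DefOrderWords}. For the reverse inequality it only asserts that any common upper bound of $iv$ and $w$ dominates $iu$, ``checked directly on the letters.'' You instead evaluate the letterwise formula of Definition~\ref{DefOrderWords} position by position. This way you need neither the least-upper-bound property nor two separate inequalities, and you actually carry out the letter check the paper leaves to the reader. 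The key input is the same in both arguments: after the first letter $w_p$ of $w$ missing from $v$, each remaining letter of $v$ is bounded by the next letter of $w$, i.e.\ $i_j\ge j+1$, equivalently $v'\le w'$. Your version is more self-contained and fully explicit. The paper's makes visible why $iu$ is the natural answer, namely that it is the smallest decreasing word dominating both $iv$ and $w$.
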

\begin{proof} Suppose that $w_{j+1}$ is the largest letter of $w$ not in $v$, and let  $v=v_1 \cdots v_j v'$ and $w=v_1 \cdots v_j w_{j+1} w'$. This gives us that $u=v_1 \cdots v_j w'$.
 We now need to prove that
  $$
   \sup \{ iv, w \} = i u.
  $$
  Set $s = \sup \{ iv, w \}$.
   From $v \subseteq w$ we get $v' \subseteq w'$, and hence $v' \leq w'$ by the previous proposition; therefore $iv \leq iu$.  We also have $w \leq iu$, since $v_1 \cdots v_j w_{j+1} \leq i v_1 \cdots v_j$, and the two inequalities together give $s \leq i u$.  For the reverse inequality it suffices to observe that any decreasing word bounding both $iv$ and $w$ from above must also bound $iu$, which is checked directly on the letters.
\end{proof}

Given a word $w$, we let $W(w)$ be the set of decreasing subwords of $w$.
From now on, a tableau $T$ will always be such that either $T \in {\rm Tab}_{\mathcal A}$ or  $T \in {\rm Tab}_{\{1,\dots,N \}}$.  Recalling that $w(T)$ stands for the reading word of $T$ and that $T_r$ stands for the subtableau made of the columns of $T$ weakly to the right of column $r$, we let $W(T)$ be the set of decreasing subwords of $w(T)$, that is, $W(T)=W(w(T))$.  Finally, if $T \in {\rm Tab}_{\mathcal A}$, we let  $\hat W(T)$ be the set of decreasing subwords of $w(T)$ that only have letters in $\hat{\mathcal A}$.

\begin{example}
Let 
$$
T = \tableau[scY]{
1 & 3 & 3 & \hat{2} \\
2 & \hat{1} & \hat{3} \\
}
$$
Then $w(T) = 2 \hat{1} \hat{3} 1 3 3 \hat{2}$, and
$$
W(T) = \{ \hat{3} \hat{2}, \hat{3}3 , \hat{3}1 , \hat{1} 3,  \hat{1}1, 21 , \hat{3}, \hat{2}, \hat{1}, 3,2,1 \},
$$
$$
\hat{W} (T) = \{ \hat{3} \hat{2}  ,  \hat{3}, \hat{2}, \hat{1} \}.
$$
\end{example}

The whole point of taking a supremum over decreasing subwords is that the result only depends on the Knuth class of the word.  This is the content of the next lemma, in which $(K')$ and $(K'')$ are the elementary Knuth transformations introduced in Section~\ref{ssectab}.
\begin{lemma}
  \label{SupKnuthEquivalent}
If $w_1  \equiv w_2$, then
$$
\sup W(w_1) = \sup W(w_2).
$$
\end{lemma}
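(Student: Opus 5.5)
It suffices to treat the case where $w_2$ is obtained from $w_1$ by a single elementary transformation $(K')$ or $(K'')$ applied to three consecutive letters. The argument is symmetric in $w_1$ and $w_2$, so the plan is to prove the two inequalities $\sup W(w_1)\le \sup W(w_2)$ and $\sup W(w_2)\le\sup W(w_1)$ by showing a stronger local statement: every decreasing subword of one word is bounded above (in the order of Definition~\ref{DefOrderWords}) by some decreasing subword of the other. Since $\sup$ is the least upper bound, this gives equality. This mirrors the classical proof of Greene's theorem, where one shows that a maximal family of decreasing subwords of one word can be matched by one of the other word.

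Write $w_1=p\,yxz\,q$ and $w_2=p\,yzx\,q$ with $x<y\le z$ for $(K')$, and $w_1=p\,xzy\,q$, $w_2=p\,zxy\,q$ with $x\le y<z$ for $(K'')$. Let $d$ be a decreasing subword of one of the two words. If $d$ uses at most one letter among the three transformed positions, or uses two letters whose relative order is the same in both words, then $d$ is literally a subword of the other word and there is nothing to prove. The only problematic case is when $d$ uses the two letters whose order is swapped. For $(K')$ these are $x$ and $z$. In $w_1=\dots yxz\dots$ they appear as $xz$, which is increasing since $x<z$, so no decreasing subword of $w_1$ uses both. In $w_2=\dots yzx\dots$ a decreasing $d=\alpha\, z\, x\,\beta$ is possible, where $\alpha$ comes from $p$ and $\beta$ from $q$. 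There are two subcases. If $y<z$, replace $z$ by $y$: the word $d'=\alpha\, y\, x\,\beta$ is a decreasing subword of $w_1$, since $y$ precedes $x$ there and $\alpha$ ends with letters $>z>y$. Then $d$ and $d'$ have the same length and $d'\le d$ letterwise, which is the wrong direction. So instead we must compare suprema: we need a subword of $w_1$ dominating $d$. This is where the supremum, rather than a single subword, is needed. The letter $z$ occurs in $w_1$, so $\alpha z\beta$ and $\alpha y x\beta$ are both decreasing subwords of $w_1$, and Proposition~\ref{propaddone}-style reasoning on the supremum of these two shows that $\sup W(w_1)$ is at least $\alpha z x\beta$ position by position. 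Concretely, position $|\alpha|+1$ is bounded by $z$ from the first word, and position $|\alpha|+2$ and later by $x\beta$ from the second word, shifted to the same indices. If $y=z$, then $\alpha y x\beta=d$ is directly a subword of $w_1$. The case $(K'')$ is handled symmetrically with the pair $x,z$ in $w_1=\dots xzy\dots$ versus $zx$ in $w_2$, using $\alpha\, z\, y\,\beta$ together with $\alpha x\beta$.

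So the key reformulation is that the set of upper bounds is what must agree: I would prove that for each $i$ the maximum of the $i$th letters over decreasing subwords of length $\ge i$ is the same for both words. For a fixed $i$, this only requires exhibiting, for each problematic $d$, a subword of the other word with length $\ge i$ and $i$th letter $\ge d_i$. The construction above does exactly this: use $\alpha z\beta$ or $\alpha x\beta$ when $i\le|\alpha|+1$, and $\alpha yx\beta$ or $\alpha zy\beta$ otherwise. The main obstacle is checking carefully that these replacement words are decreasing and have the required length, in particular the boundary position $i=|\alpha|+1$ and the equality cases $x=y$ or $y=z$. I expect this bookkeeping, rather than any conceptual step, to be the delicate part.
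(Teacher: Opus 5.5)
Your proof is correct and follows essentially the same route as the paper. You reduce to a single elementary transformation and observe that the only new decreasing subwords are those of $w_2$ using the swapped pair $zx$. For $(K')$ you dominate such a $d=\alpha zx\beta$ by the supremum of two subwords of $w_1$; the paper uses $u_1z$ and $u_1yxv_1$, and your $\alpha z\beta$ works equally well. For $(K'')$ you dominate it by the single subword $\alpha zy\beta$.

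Two small points. Your opening ``stronger local statement'' is false for $(K')$: for instance, $31$ in $231$ is dominated by no decreasing subword of $213$. Also, in $(K'')$ the word $\alpha x\beta$ is useless and would fail at position $|\alpha|+1$ since $x<z$; this does not matter, because $\alpha zy\beta$ already dominates $d$ at every position.
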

\begin{proof}
It suffices to prove the lemma when $w_1$ and $w_2$ differ by a single elementary Knuth transformation. In both cases we can write  $w_{1} = u \cdot \alpha \cdot v$ and $w_{2} = u \cdot \beta \cdot v$, where $\alpha$ and $\beta$ are the two sides of the transformation.  One checks at once that every decreasing subword of $\alpha$ is also a decreasing subword of $\beta$, and that the only decreasing subword of $\beta$ which is not one of $\alpha$ is $zx$.  Consequently, any $w \in W(w_1)$ belongs to $W(w_2)$, which gives in both cases that
$$
\sup W(w_{1}) \leq \sup W(w_{2}).
$$
Only the reverse inequality needs an argument, and only for those $w \in W(w_2)$ that do contain the subword $zx$.  We write any such word as $w= u_{1}zx v_{1}$, with $u_{1}$ a subword of $u$ and $v_{1}$ a subword of $v$.

Consider first the transformation $(K')$, that is, 
$w_{1} = u \cdot y x z \cdot v$ and $w_{2} = u \cdot y z x \cdot v$ with $x < y \leq z$.  Here $u_{1}yxv_{1}$ and $u_{1}z$ both belong to $W(w_{1})$, and $\sup \{ u_{1}yxv_{1},u_{1}z \} = w$. Hence $w  \leq \sup W(w_{1})$, from which we get
$$
\sup W(w_{2}) \leq \sup W(w_{1}).
$$

Consider now the transformation $(K'')$, that is,  $w_{1} = u \cdot xzy \cdot v$ and $w_{2} = u \cdot zxy \cdot v$ with $x \leq y < z$.  This time $u_{1} zy v_{1}$ belongs to $W(w_{1})$ and dominates $w$, so that again
$$
\sup W(w_{2})  \leq \sup W(w_{1}) ,
$$
which concludes the proof.
\end{proof}

\begin{remark}
    The proof is inspired by the classical argument proving the Knuth invariance of Greene's invariants (see \cite{Fulton1996}, Chapter 2).
\end{remark}

In Proposition~\ref{CorKeyInc}, ${\mathcal R}_c(T)$ corresponds to the set of all entries in any column of length $c$ of $K_{+}(T)$. It will be more convenient for our  purposes to use decreasing words instead of sets, and to index the words according  to the columns of $K_+(T)$ instead of to their lengths.
\begin{definition}  Suppose that $T$ has $l$ columns. For $i=1,\dots,l$,
we let ${\mathcal C}_i(T)$ be the decreasing word whose letters are the entries in column $i$ of $K_+(T)$.
\end{definition}

\begin{example}
\label{ExampleKeyCols}
 Let $$
T = \tableau[scY]{
1 & 3 & 4 \\
2 & 6 \\
5
}
$$ as in example \ref{ExampleLeftRight}. Then
$$
K_{+} (T) = \tableau[scY]{
1 & 4 & 4 \\
4 & 6 \\
6
},
$$ hence ${\mathcal C}_{1}(T)=641,\: {\mathcal C}_{2}(T)=64$ and ${\mathcal C}_{3}(T)=4$.
\end{example}

\begin{proposition}
\label{IgualdadKeys0}
For any tableau $T$, we have that 
$$
\sup W (T) = \sup K_{+}(T) = {\mathcal C}_1(T).
$$
\end{proposition}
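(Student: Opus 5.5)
Here $\sup K_+(T)$ means $\sup W(K_+(T))$, the supremum of the decreasing subwords of the reading word of $K_+(T)$. The plan is to prove the two equalities separately, passing through the orbit of $T$ under the column action, and using Lemma~\ref{SupKnuthEquivalent} as the main tool.

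\emph{Second equality.} We show that $\sup W(K)={\mathcal C}_1(T)$ for any key tableau $K$ whose first column is ${\mathcal C}_1(T)$. A decreasing subword of $w(K)$ takes at most one letter from each row, read from the bottom row upwards. Suppose it has $r$ letters $b_1>\cdots>b_r$. Then $b_i$ comes from a row $\geq r-i+1$ (counting from the bottom), so its value is at most the entry of the first column in some row at or below position $r-i+1$ from the bottom. One has to be careful here: the entries in a row of $K$ are not simply bounded by the first-column entry of that row. Instead we use the nesting $C_1\supseteq C_2\supseteq\cdots$ of columns. Every letter of $K$ is a letter of $C_1$, and any decreasing word whose letters lie in $C_1$ is $\leq$ the decreasing word $C_1$ itself; this is the idea behind Proposition~\ref{IncImpLeq}, adapted to decreasing words of distinct letters taken from a set. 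Hence $\sup W(K)\leq {\mathcal C}_1(T)$. Conversely, the first column read from bottom to top is the decreasing subword $C_1$, so equality holds.

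\emph{First equality.} Let $c$ be the length of the first column of $T$, which is the longest column. Choose $\sigma$ moving the first column to the last position, so that $T'=\sigma(T)$ is a skew tableau in compact form whose rightmost column is $\mathcal R_c(T)={\mathcal C}_1(T)$. Since $T'$ rectifies to $T$, we have $w(T')\equiv w(T)$, and Lemma~\ref{SupKnuthEquivalent} gives $\sup W(T)=\sup W(T')$. It therefore remains to show $\sup W(T')=\mathcal R_c(T)$ for a skew tableau $T'$ whose last column is the longest one. The lower bound is immediate, since that column, read from bottom to top, is a decreasing subword.

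\emph{Upper bound (main obstacle).} We must show that every decreasing subword $b_1>\cdots>b_r$ of $w(T')$ satisfies $r\leq c$ and $b_i\leq$ the $i$-th largest entry of the last column. The plan is to use the fact that the compact form makes the last column long and "rightmost," so each entry of $T'$ is weakly dominated by the entries of the last column. Concretely, in a skew semistandard tableau an entry in row $k$ is at most the entry of the last column in row $k$ whenever that cell exists, by row weakness. With the last column the longest and compact, each relevant row reaches it. A decreasing subword uses distinct rows, taken bottom to top. Its $i$-th letter lies in a row that has at least $i-1$ used rows above it, so it is at most the last-column entry of that row. Column strictness of the last column then bounds this by the entry $i-1$ positions from the bottom of that column among the used rows, which is at most the $i$-th largest entry.

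The delicate point is rows of $T'$ lying below the bottom of the last column, since these can occur in the skew shape. Here we need the compact-form property that the bottom of the longest column is at least as low as every other column. If this cannot be arranged, we would instead argue by induction on the number of columns. We would move the longest column one step at a time, using adjacent transpositions as in Example~\ref{ExampleActionColumns}, and track the supremum through each step via Lemma~\ref{SupKnuthEquivalent}.
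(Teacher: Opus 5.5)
Your second equality is fine and is the paper's argument. Every letter of $K_+(T)$ lies in its first column, so each decreasing subword of $w(K_+(T))$ is a subword of ${\mathcal C}_1(T)$, hence $\leq {\mathcal C}_1(T)$ by Proposition~\ref{IncImpLeq}. Reducing the first equality to a skew tableau in the orbit via Lemma~\ref{SupKnuthEquivalent} is also the right move.

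\textbf{The gap is in the upper bound for $\sup W(T')$.} Your row-by-row estimate needs every row of $T'$ to reach the last column, i.e.\ the last column must end on the lowest row of $T'$. This fails for the permutation you chose. If you only move the first column to the end, the remaining columns have weakly decreasing lengths $c_2\geq\cdots\geq c_l$, so in compact form they are top-aligned. The last column, being longer than its left neighbour, is bottom-aligned with the column of length $c_l$. Any column longer than $c_l$ therefore sticks out below the last column.

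Example~\ref{EjAccionT} already shows this. For column lengths $(2,1,3)$ you get $T'$ with columns $25$, $3$, $146$, and the entry $5$ sits one row below the bottom of the last column. Take the decreasing subword $5\,3\,1$ of $w(T')=523641$. Your estimate only gives $b_2=3\leq 6$, the last-column entry in the row of $3$, whereas you need $b_2\leq 4$. Once a word starts below the last column, the index shift in your bound is lost.

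Your fallback does not close this either. Moving the longest column step by step and tracking the supremum with Lemma~\ref{SupKnuthEquivalent} only tells you the supremum is invariant; it never computes it.

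\textbf{The missing idea is the choice of $\sigma$.} Take the element $S\in\mathfrak S_\ell(T)$ whose column lengths are $c_l\leq\cdots\leq c_1$, i.e.\ weakly increasing. In compact form, two adjacent columns with the right one weakly longer must end on the same row. So all columns of $S$ are bottom-aligned, and the rightmost column $\mathcal R_{c_1}(T)$ spans every row of $S$. Then:
\begin{itemize}
\item by row weakness, each entry is at most the last-column entry of its row;
\item a decreasing subword $b_1>\cdots>b_r$ of $w(S)$ takes at most one letter per row;
\item $b_i$ lies at least $i-1$ rows above the bottom row, since the $i-1$ rows already used lie \emph{below} $b_i$, not above as you wrote.
\end{itemize}
Hence $r\leq c_1$ and $b_i$ is at most the $i$-th largest entry of $\mathcal R_{c_1}(T)$. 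This is exactly how the paper concludes.
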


\begin{proof}
The second equality is immediate: every column of $K_{+}(T)$ is contained in its first column, so that $\sup K_{+}(T) = {\mathcal C}_1(T)$.

We now turn to the first one.  Let $c_1,\dots,c_l$ be the lengths of the columns of $T$ (in weakly decreasing order).  Consider the unique skew tableau $S \in \mathfrak{S}_{\ell}(T)$ in compact form whose column lengths are  $c_l,\dots,c_1$ (that is, in weakly increasing order). Since $S$ rectifies to $T$, we have that $w(T) \equiv w(S)$.  By Lemma~\ref{SupKnuthEquivalent}, we thus have that
$\sup W (T)=\sup W (S)$.  Therefore, we only have left to prove that 
${\mathcal C}_1(T) = \sup W (S)$. Now, by definition of $K_{+}(T)$, the rightmost column of $S$ is ${\mathcal R}_{c_1}(T)$.
But since $S$  is in compact form and $c_l,\dots,c_1$ is a weakly increasing sequence, we have that the bottom entry in each column of $S$ lies in the same row as the bottom entry of its rightmost column ${\mathcal R}_{c_1}(T)$ (considered as a column instead of as a set), that is, $S$ is a counter-tableau (refer to Example~\ref{EjAccionT}). Since the rows are weakly increasing in $S$, it is then obvious that $\sup  {\mathcal R}_{c_1}(T)= \sup W (S)$.
The proposition follows immediately given that  ${\mathcal C}_1(T) = \sup  {\mathcal R}_{c_1}(T)$.
\end{proof}

Let $c_1,\dots,c_l$ be the lengths of the columns of $T$ (in weakly decreasing order). By definition of $K_+(T)$, the first column of $T$ does not play any role in how the right keys  ${\mathcal R}_{c_2}(T),\dots,{\mathcal R}_{c_\ell}(T)$ are constructed. Hence, we also have that $\sup W(T_{2}) = {\mathcal C}_2(T)$. Generalizing this idea, we get the following  proposition. 
\begin{proposition}
\label{IgualdadKeys}
    Let $T$ be a tableau with $l$ columns.  We have that
$$
    \sup W(T_{k}) = {\mathcal C}_k(T)  \quad \text{for every } 1 \leq k \leq l .
$$
\end{proposition}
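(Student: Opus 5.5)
The natural approach is to reduce the statement to Proposition~\ref{IgualdadKeys0} applied to the tableau $T_k$. Since $T_k$ is itself a (straight-shape) tableau, Proposition~\ref{IgualdadKeys0} gives $\sup W(T_k) = {\mathcal C}_1(T_k)$. It therefore suffices to prove the purely key-theoretic identity
$$
{\mathcal C}_1(T_k) = {\mathcal C}_k(T), \qquad 1\le k\le l .
$$
Equivalently, we must show ${\mathcal R}_{c_k}(T_k)={\mathcal R}_{c_k}(T)$. In words, the rightmost column of length $c_k$ obtained by the $\mathfrak S$-action on $T_k$ coincides with the one obtained from $T$.

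To establish this, I would use the freedom in choosing the permutation, granted by Proposition~\ref{CorKeyRep}: the rightmost column of length $c$ of any element of the orbit depends only on $T$ and $c$. I would therefore pick a specific sequence of adjacent column swaps that never touches columns $1,\dots,k-1$ of $T$. Concretely, act only on columns $k,\dots,l$ of $T$ by the permutation that reverses them into weakly increasing length order. Each adjacent swap is a local jeu de taquin procedure on two consecutive columns, as in Example~\ref{ExampleActionColumns}, so performing it inside $T$ or inside $T_k$ produces exactly the same two new columns.

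The result is a skew tableau $S$ whose first $k-1$ columns are those of $T$ and whose remaining columns form a skew tableau $S'$. Here $S'$ lies in the orbit of $T_k$, and its rightmost column has length $c_k$. Hence that rightmost column equals ${\mathcal R}_{c_k}(T_k)$. Because $S$ is in the orbit of $T$ and has rightmost column of length $c_k$, the same column also equals ${\mathcal R}_{c_k}(T)$. This gives the identity, and taking decreasing words gives ${\mathcal C}_k(T)={\mathcal C}_1(T_k)=\sup W(T_k)$.

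The main obstacle is justifying that the restricted swaps are legitimate. The intermediate objects in the orbit of $T$ are skew tableaux, not tableaux. I must check that the column-exchange operation on columns $i,i+1$ of a skew tableau in compact form depends only on those two columns, and that leaving the first $k-1$ columns untouched is a valid element of the $\mathfrak S_l$-orbit (the identity permutation on those positions). Possibly I would also need to re-normalize to compact form, which only shifts columns vertically and does not change their contents. I would address this by citing the construction of \cite{Fulton1996} (Appendix A5) and \cite{Stanley_Fomin_1999}, where the action is defined column-pair by column-pair. The one subtle point is that ties among equal lengths $c_j=c_k$ do not affect which column is selected, and Proposition~\ref{CorKeyRep} handles exactly this.
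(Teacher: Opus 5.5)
Your proposal is correct and is essentially the paper's argument: the paper likewise observes that the columns of $T$ to the left of column $k$ play no role in constructing ${\mathcal R}_{c_k}(T)$, so that ${\mathcal C}_k(T)={\mathcal C}_1(T_k)$, and then applies Proposition~\ref{IgualdadKeys0} to $T_k$. Your version spells out, through a choice of swaps touching only columns $k,\dots,l$ and Proposition~\ref{CorKeyRep}, what the paper states in one line.
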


\section[Key tableaux and $m$-symmetric Schur functions at $t=0$]{Key tableaux and (dual) $m$-symmetric Schur functions at $t=0$.} \label{secRSK}

This section contains the heart of the article.  We first attach to every $m$-partition $\Lambda$ two key tableaux, $K_+^*(\Lambda)$ and $K_+(\Lambda)$, exchanged by an order-reversing involution on the alphabet; the first one describes the dual $m$-symmetric Schur functions at $t=0$, while the second one lets us define the functions $\mathfrak s_\Lambda(x)$ as tableau generating functions.  We then single out the pairs of tableaux that these two descriptions pair off, the admissible pairs, and prove that the ordinary RSK correspondence restricts to a bijection between admissible biwords and admissible pairs.  The Cauchy identity \eqref{newversion}, and with it the identification $\mathfrak s_\Lambda(x)=s_\Lambda(x;0)$, follows at once.

Given a composition $\alpha=(\alpha_1,\dots,\alpha_N)$, let $\sigma$
be the shortest permutation in $\mathfrak S_N$ such that
$$
\alpha^+= (\alpha_{\sigma(1)},\dots,\alpha_{\sigma(N)})
$$
is dominant. We can associate a right key tableau $K_+(\alpha)$ of shape $\alpha^+$ in the following way: if column $k$ of $K_+(\alpha)$ is of length $r$, then it is filled with the entries $\sigma(1),\dots,\sigma(r)$.

We say that two tableaux $P$ and $Q$ of the same shape $\lambda$  are such that $P \leq Q$ iff $P(i,j) \leq Q(i,j)$ for any $(i,j) \in \lambda$. 
It is known (see \cite{Lascoux1990a}) that the key polynomials have the following expansion:
\begin{equation} \label{keysum}
K_\alpha(x_1,\dots,x_N) = \sum_{T} x^T,
\end{equation}
where the sum is over all tableaux $T \in {\rm Tab}_{\{1,\dots,N\}}$ of shape $\alpha^+$ such that $K_+(T) \leq K_+(\alpha)$.  In view of Proposition~\ref{propKeyS}, we have that
the dual $m$-symmetric Schur functions at $t=0$ are given by
\begin{equation} \label{eq1.4.2}
  \mathfrak s_{\Lambda}^*(x_1,\dots,x_N)
   = K_{(0^{N-\ell},\lambda_\ell,\dots,\lambda_1,a_m,\dots,a_1)}(x_{1},\dots,x_N,\hat x_1,\dots,\hat x_m) \big |_{\hat x_1 = x_1,\dots, \hat x_m = x_m }.
\end{equation}
It is thus natural to define, for $\Lambda=(a_1,\dots,a_m;\lambda)$,
the key tableau
$$
K_+^*(\Lambda)= K_+(\gamma),
$$
where $\gamma=(0^{N-\ell},\lambda_\ell,\dots,\lambda_1,a_m,\dots,a_1)$.
In this case, $\gamma^+=\Lambda^{(0)}$, which is the partition corresponding to the diagram of $\Lambda$ without its circles.  
Observe that in $\gamma^+=(\gamma_{\sigma(1)},\dots,\gamma_{\sigma(N+m)})$, we have that $\sigma(i)=N+1-i$ if $\gamma_{\sigma(i)}=\lambda_i$.
 Similarly, we have that
 $\sigma(i)=N+m+1-i$ if
  $\gamma_{\sigma(i)}=a_i
$. 
Hence, the key tableau $K_+^*(\Lambda)$ can be easily extracted from the $m$-partition $\Lambda$.
\begin{lemma}
\label{KeyLambdaDual} The key tableau
$K_{+}^{*} (\Lambda)$ is the unique tableau such that, for $l =1,\dots,m$, every column to the left of the circle filled with an $m+1-l$ contains a letter $\hat l \in \hat {\mathcal A}$ and such that the remaining cells (supposing that there are $k$ of them in a given column) are filled with the letters $N,N-1,\dots,N-k+1$ (if $N-k+1 \leq 0$, then the key tableau $K_{+}^{*} (\Lambda)$ does not exist for that $N$).  
\end{lemma}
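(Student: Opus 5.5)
The lemma is a direct unpacking of the definition $K_+^*(\Lambda)=K_+(\gamma)$ with $\gamma=(0^{N-\ell},\lambda_\ell,\dots,\lambda_1,a_m,\dots,a_1)$. The entries $N+1,\dots,N+m$ are identified with $\hat 1,\dots,\hat m$, since in \eqref{eq1.4.2} the variables in positions $N+1,\dots,N+m$ are $\hat x_1,\dots,\hat x_m$. Position $N+j$ of $\gamma$ carries $a_{m+1-j}$, so the letter $\hat l$ is attached to the part $a_{m+1-l}$, that is, to the circle labelled $m+1-l$.

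First I determine the shortest permutation $\sigma$ sorting $\gamma$ into $\gamma^+=\Lambda^{(0)}$. Among equal parts, the shortest sorting permutation keeps the original relative order, so $\sigma(1),\sigma(2),\dots$ lists positions by decreasing part size, ties broken by increasing position. Concretely:
\begin{itemize}
\item rows of $\Lambda^{(0)}$ coming from $\lambda$ receive $N,N-1,\dots$ in order from the top, matching the observation $\sigma(i)=N+1-i$ made before the lemma;
\item rows coming from $\pmb a$ receive hatted letters;
\item ties between an $a$-part and a $\lambda$-part put the smaller position first, so hatted letters come after.
\end{itemize}
This tie rule must be matched with the convention for placing circles in the diagram of $\Lambda$ (circles go to the right of rows of size $a_i$, ordered top to bottom by increasing index). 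I will check this carefully, since the statement only needs the set of letters in each column and is insensitive to the row ordering.

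Second, by definition of $K_+(\gamma)$, a column of length $r$ contains $\{\sigma(1),\dots,\sigma(r)\}$, i.e.\ the labels of the top $r$ rows of $\Lambda^{(0)}$. Fix column $c$. Row $i$ has length at least $c$ exactly when the row is among the first $r=\gamma^+{}'_c$ rows. So the column contains $\hat l$ exactly when $a_{m+1-l}\ge c$, which is exactly when column $c$ lies weakly left of, hence strictly to the left of, the circle $m+1-l$ (that circle sits in column $a_{m+1-l}+1$). It contains $N+1-i$ for each $\lambda_i\ge c$; if there are $k$ such $i$, these are $N,N-1,\dots,N-k+1$, because the $\lambda$-rows of length at least $c$ are exactly $\lambda_1,\dots,\lambda_k$.

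Third, columns are increasing, so the tableau is determined by its column sets, which gives uniqueness. If $N-k+1\le 0$ for some column, then $N<\ell$ and $\gamma$ is not defined with $N$ entries, so the key does not exist. The main obstacle is the tie bookkeeping between equal parts of $\pmb a$ and $\lambda$ in the definition of the shortest $\sigma$. I will resolve it by noting that the column contents depend only on which rows have length at least $c$, so ties never affect the statement.
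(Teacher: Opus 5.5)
Your proof is correct and follows essentially the paper's own route: the paper also obtains the lemma by unpacking $K_+^*(\Lambda)=K_+(\gamma)$ through the sorting permutation of $\gamma$, with position $N+j$ identified with $\hat j$ (see the remarks just before the statement). Your tie-free description of column $c$ as $\{p:\gamma_p\ge c\}$ is what makes this rigorous. Your first bullet, like the paper's remark $\sigma(i)=N+1-i$, is literally inaccurate when $\lambda$ has repeated parts, because a stable sort gives the top row of a repeated block the smaller position; this is harmless for exactly the reason you give.
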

\begin{example}
        Let $m=4$ and
        $$
 \Lambda  =   \tableau[scY]{  &  &  &   &  &   &   \bl \tcercle{3} \\ & &  & &  \bl \tcercle{1} \\ & & \\ & &  \bl \tcercle{4} \\  
 \\  \bl \tcercle{2}
}
$$
 Since the circle filled with a $4=4+1-1$ lies in column $3$, the first two columns contain the letter $\hat{1}$. Likewise, the circle filled with a $3=4+1-2$ lies in column $7$, the first six columns contain the letter $\hat{2}$. Continuing in this manner for the letters $\hat{3}$ and $\hat{4}$, we get after filling the remaining cells with the letters $N$ and $\bar N=N-1$ that
$$
K_{+}^{*} ( \Lambda )  =   \tableau[scY]{ {\bar N} & {N} & {N} &  \hat{2} & \hat{2} & \hat{2} \\ {N} & \hat{1} & \hat{2}  & \hat{4} \\ \hat{1} & \hat{2} & \hat{4} \\ \hat{2} & \hat{4} \\  
 \hat{4} 
}
$$
\end{example}
After defining
\begin{equation} \label{defS*}
\mathcal{S}^{*} (\Lambda) =    \{ T \in {\rm Tab}_{\mathcal A}(\Lambda^{(0)}) \,  : \, K_+(T) \leq K_{+}^{*} (\Lambda) \}, 
\end{equation}
we thus obtain  from \eqref{keysum} and \eqref{eq1.4.2} that  the dual $m$-symmetric Schur function at $t=0$  is given by
\begin{equation} \label{dualSchur}
 \mathfrak s_{\Lambda}^*(x_1,\dots,x_N) = \sum_{T \in \mathcal{S}^{*} (\Lambda) } x^T \big |_{\hat x_1 = x_1,\dots, \hat x_m = x_m }. 
\end{equation}
Here the entry $\hat{i}$ contributes a factor $\hat{x}_{i}$ to the monomial $x^{T}$.
 It is understood that if $N-\ell < 0$, then the key tableau $K_{+}^{*} (\Lambda)$ does not exist for that $N$ and the corresponding dual $m$-symmetric Schur function is equal to 0.

 In order not to lose any generality, we will also define
\begin{equation} \label{dualSchursgen}
  \mathfrak s_{\Lambda}^*(x_1,\dots,x_N;\hat x_1,\dots, \hat x_m)
 = K_{(0^{N-\ell},\lambda_\ell,\dots,\lambda_1,a_m,\dots,a_1)}(x_{1},\dots,x_N,\hat x_1,\dots,\hat x_m) =
 \sum_{T \in \mathcal{S}^{*} (\Lambda) } x^T.  
\end{equation}

 We see from Lemma~\ref{KeyLambdaDual} that, in the definition \eqref{defS*} of $\mathcal S^*(\Lambda)$,
the condition $K_+(T)  \leq K_{+}^{*} (\Lambda)$ can never be violated
in any position of $K_{+}^{*} (\Lambda)$ occupied by a letter that does not belong to $\hat {\mathcal A}$ (since those cells are filled with the largest letters not in $\hat {\mathcal A}$).  It is thus sufficient to restrict ourselves to the positions occupied by the letters $\hat 1,\dots,\hat m$ in $K_{+}^{*} (\Lambda)$, which we write as
\begin{equation} \label{eqrestriction}
\mathcal S^*(\Lambda) =  \left \{ T \in {\rm Tab}_{\mathcal A}(\Lambda^{(0)})\,  : \,  K_+(T)  \big |_{\hat {\mathcal A} }   \leq  K_{+}^{*} (\Lambda) \big |_{\hat {\mathcal A} } \right \}. 
\end{equation}

We also need to introduce another key tableau associated to $\Lambda$ which, as we will see,  is essentially dual to 
$K_{+}^{*} (\Lambda)$.
\begin{definition}
\label{KeyLambda} The key tableau
$K_{+} (\Lambda)$ is the unique tableau such that, for $i=1,\dots,m$, every column to the left of the circle filled with an $i$ contains a letter $i$ and such that the remaining cells (supposing that there are $k$ of them in a given column) are filled with the letters $N,N-1,\dots,N-k+1$
 (if $N-k+1 \leq 0$, then the key tableau $K_{+}(\Lambda)$ does not exist for that $N$). 
\end{definition}

\begin{example}
As in the previous example, we use
        $$
 \Lambda  =   \tableau[scY]{  &  &  &   &  &   &   \bl \tcercle{3} \\ & &  & &  \bl \tcercle{1} \\ & & \\ & &  \bl \tcercle{4} \\  
 \\  \bl \tcercle{2}
}
$$
Since the circle labeled $4$ lies in column $3$, the first two columns contain a letter $4$. Likewise, the circle labeled $3$ lies in column $7$, so the first six columns contain a letter $3$, and since the circle labeled $1$ lies on the column $5$, the first four columns contain the letter $1$. Hence, after filling the remaining cells with the letters $N$ and $\bar N=N-1$, we get
$$
 K_+(\Lambda) =   \tableau[scY]{ 1 & 1 & 1 & 1  & 3 & 3   \\ 3 & 3 & 3  & 3 \\ 4 & 4 & N \\ \bar N & N  \\ N
}
$$
where ${\bar N } = N-1$.
\end{example}

\begin{remark} \label{remark*}
Comparing the constructions of $K_{+}(\Lambda)$ and $K_{+}^{*}(\Lambda)$, we see that if the entries in  column $k$ of $K_+(\Lambda)$ are 
$\{ l_1,\dots,l_r \}$, then those in  
column  $k$ of ${K_{+}^*(\Lambda)}$ are $\{l_1^*,\dots, l_r^*\}$,
where 
$$
l^*= 
\left\{
\begin{array}{ll}
\hat m +  \hat 1-\hat l & {\rm ~if~} l \in \{ 1,\dots,m \} \\
 l &  {\rm ~otherwise}
 \end{array}
 \right ..
$$
The map $*: \{1,\dots,N \}  \to \mathcal A, l \mapsto l^*$ will be called the $*$-operation.
\end{remark}
The following set will prove to be important:
\begin{equation}
\label{SetmSchur}
\mathcal{S} (\Lambda) =    \left \{ T  \in {\rm Tab}_{\{1,\dots,N \}}(\Lambda^{(0)}) \, : \, K_{+}  (T) \big |_m = K_{+} (\Lambda) \big |_m \right \},
\end{equation}
where $Q|_m$ refers to the skew tableau  obtained from $Q$ by only considering the letters $1,\dots,m$.  From this set, we define the functions
\begin{equation} \label{mschur2T}
{\mathfrak s}_{\Lambda} (x_1,\dots,x_N) = \sum_{T \in \mathcal{S} (\Lambda)} x^{T}.
\end{equation}
The relevance of the functions ${\mathfrak s}_{\Lambda} (x_1,\dots,x_N)$ will become clear later when we prove that they correspond to the case $t=0$ of the $m$-symmetric Schur function  ${s}_{\Lambda} (x_1,\dots,x_N;t)$.

\begin{remark}
\label{InclusionRemark}
The dual key polynomials defined in \eqref{defdualkey}
have the following combinatorial interpretation similar to that of the key polynomials given in \eqref{keysum}:
\begin{equation} \label{dualkeysum}
\hat K_\alpha(x_1,\dots,x_N) = \sum_{T} x^T,
\end{equation}
where the sum is over all tableaux $T \in {\rm Tab}_{\{1,\dots,N\}}$ of shape $\alpha^+$ such that $K_+(T) = K_+(\alpha)$.  It is thus immediate that, for
$\Lambda = ( \pmb a, \lambda)$, we have
$$
\mathfrak{s}_{\Lambda} (x_1,\dots,x_N) = \sum_{\beta}  \hat{{K}}_{\pmb a,\beta}(x_1,\dots,x_N),
$$
where $\beta$ ranges over all distinct permutations of $(\lambda_1,\dots,\lambda_\ell,0^{N-\ell})$. 
\end{remark}

\begin{remark} Suppose that $T \in {\rm Tab}_{\{1,\dots,N \}}$
has $a_i$ occurrences of the letter $i$, for $i=1,\dots,m$.  Observe
that there is a unique key tableau in ${\rm Tab}_{\{1,\dots,m \}}$ with such occurrences of the letter $i$, for $i=1,\dots,m$, and that this key tableau corresponds to
$K_{+} (\Lambda) \big |_m$. That is, to any
$T \in {\rm Tab}_{\{1,\dots,N \}}$ corresponds a unique $m$-partition $\Lambda$,
denoted $\Lambda(T)$,
such that $K_{+}  (T) \big |_m = K_{+} (\Lambda) \big |_m$.
The $m$-partition $\Lambda(T)$ can also be obtained explicitly in the following way.  Suppose that $T$ has shape $\mu$.
For $i=1,\dots,m$, let  $a_i$ be the number of $i$'s in the key tableau $K_+(T)$. We define $\Lambda(T)$ to be the $m$-partition whose diagram is obtained from $\mu$ by adding a circle filled with a $1$ in the uppermost row of size $a_1$, and then adding a circle filled with a $2$ in the uppermost row of size $a_2$ that does not already contain a circle, and so on until the $m$ circles have been added. It is indeed possible to do this process because $K_+(T)$, being a key tableau, has columns that are contained into each other.  Therefore, if
the letter $i$ appears $a_i$ times in $K_+(T)$, then they have to  occur in the first $a_i$ columns of $K_+(T)$, which implies that column $a_i+1$ of $K_+(T)$ is shorter than column $a_i$.
\end{remark}
From the previous remark, we immediately get that
$\mathcal{S} (\Lambda)$ can be rewritten in a simplified form as
\begin{equation}
\mathcal{S} (\Lambda) =    \left \{ T  \in {\rm Tab}_{\{1,\dots,N \}} \, : \, \Lambda(T)=\Lambda \right \}.
\end{equation}

\begin{example}
    Let $N=2$, $m=1$ and
$$
\Lambda = \tableau[scY]{&   \\   &
\bl \tcercle{1}  
}
$$
The tableaux contributing to $\mathfrak{s}_{\Lambda}^{*}(x)$ are then
\begin{equation}
\tableau[scY]{ 1 & 1  \\ 
2 & \bl
},
\qquad 
\tableau[scY]{ 1 & 2  \\ 
2 & \bl
},
\qquad 
\tableau[scY]{ 1 & 1  \\ 
\hat{1} & \bl
},
\qquad 
\tableau[scY]{ 1 & 2  \\ 
\hat{1} & \bl
}
\quad {\rm and} \quad
\tableau[scY]{ 2 & 2  \\ 
\hat{1} & \bl
}
\end{equation}
Therefore,
$$
\begin{array}{ll}
\mathfrak{s}_{\Lambda}^{*}(x_{1},x_{2};\hat{x}_{1}) = x_{1}^{2}x_{2} + x_{1}x_{2}^{2} + x_{1}^{2} \hat{x}_{1} + x_{1}x_{2}\hat{x}_{1} +x_{2}^{2}\hat{x}_{1} \quad \text{ and} \\
\\
\mathfrak{s}_{\Lambda}^{*}(x_{1},x_{2})= \mathfrak{s}_{\Lambda}^{*}(x_{1},x_{2};x_{1}) = 2x_{1}^{2}x_{2} + 2 x_{1}x_{2}^{2}+x_{1}^{3}
\end{array}.
$$
On the other hand, there is a single tableau contributing to $\mathfrak{s}_{\Lambda}(x)$
$$
\tableau[scY]{ 1 & 2  \\ 
2 & \bl
}
$$
which gives $\mathfrak{s}_{\Lambda}(x)=x_{1}x_{2}^{2}$. 
\end{example}

\subsection{Admissible pairs}
As we seek to prove that
\begin{equation} \label{niceeq}
  \frac{1}{\left[\prod_{i+j \leq m+1}(1- x_i \hat y_j) \right] \left[\prod_{i,j=1}^N (1- x_i y_j) \right] }  
  =  \sum_{\Lambda}  \mathfrak s_{\Lambda}  (x_1,\dots,x_N)  \mathfrak{s}_{\Lambda}^{*}(y_1,\dots,y_N;\hat y_1,\dots, \hat y_m)
\end{equation}
the elements in
\begin{equation} \label{eqGm}
 G_m = \bigcup_\Lambda \left(\mathcal{S} (\Lambda) \times {\mathcal{S}}^* (\Lambda) \right)  
\end{equation}  
will prove fundamental.  We will say that the pair of tableaux $(P,Q)$
is admissible if $(P,Q) \in G_m$.  We now fix the notation that will be in force for the rest of the section.

For a tableau $Q \in {\rm Tab}_{\{1,\dots,N \}}$, we let $K_+^*(Q) \in {\rm Tab}_{\mathcal A}$ be the tableau obtained by applying the $*$-operation described in Remark~\ref{remark*} on the entries in the columns of $K_+(Q)$ (and then reordering the entries so that the columns are that of a tableau).  

Recall that ${\mathcal C}_r(T)$ is the decreasing word corresponding to column $r$ of the key tableau $K_+(T)$.

For a tableau $P \in  {\rm Tab}_{\mathcal A}$, we let $\hat {\mathcal C}_r(P)$  be the subword
of ${\mathcal C}_r(P)$ obtained by keeping only the letters in $\hat {\mathcal A}$.

For a tableau  $Q \in {\rm Tab}_{\{1,\dots,N \}}$, we let  ${\mathcal C}_r^*(Q)$
be the word obtained by applying the $*$-operation on the letters of 
${\mathcal C}_r(Q)$ and then reordering the letters so that the resulting word  is decreasing.
We will then let $\hat {\mathcal C}_r^*(Q)$  be the subword
of ${\mathcal C}_r^*(Q)$ obtained by keeping only the letters in $\hat {\mathcal A}$.

We also recall that for $P \in {\rm Tab}_{\mathcal A}$,
$\hat W(P)$ stands for the set of subwords of $w(P)$
that only have letters in $\hat {\mathcal A}$.
\begin{example}
Let $m=4$ and
$$
T = \tableau[scY]{
1 & 1 & 2 \\
2 & 5 \\
3
}.
$$
In this case, we have
$$
K_{+}(T)= \tableau[scY]{
1 & 2 & 2 \\
2 & 5 \\
5
},
$$
 which implies that ${\mathcal C}_{1}(T)=521,\: {\mathcal C}_{2}(T)=52$ and ${\mathcal C}_{3}(T)=2$. This yields ${\mathcal C}_{1}^{*}(T)=\hat{4} \hat{3}5,\: {\mathcal C}_{2}^{*}(T)= \hat{3} 5 $ and ${\mathcal C}_{3}^{*}(T)= \hat{3}$, as well as  $\hat{\mathcal C}_{1}^{*}(T)=\hat{4} \hat{3},\: \hat{\mathcal C}_{2}^{*}(T)= \hat{3}  $ and $\hat{\mathcal C}_{3}^{*}(T)= \hat{3}$.
\end{example}
The following characterizations of admissibility will be used constantly in what follows.  The third one, which reads admissibility off suprema of decreasing subwords, is the form in which the condition can actually be propagated through a single RSK insertion.
\begin{proposition} \label{propcriteria}
  Let $P \in  {\rm Tab}_{\mathcal A}$ and $Q \in  {\rm Tab}_{\{1,\dots,N \}}$ be two tableaux of the same shape. Let also
  $l$ be the number of columns in $P$ (and $Q$).
The following statements are all equivalent to the admissibility of $(P,Q)$:
\begin{enumerate}
\item
$K_+(P) \big |_{\hat {\mathcal A} } \leq K_+^*(Q) \big |_{\hat {\mathcal A} }$
\item
  $
 \hat {\mathcal C}_r(P) \leq  \hat{\mathcal C}_r^*(Q) 
 $
 for all $r=1,\dots,l$.
\item
  $
 \sup \hat W(P_r) \leq  \hat{\mathcal C}_r^*(Q) 
 $
 for all $r=1,\dots,l$.
\end{enumerate}
\end{proposition}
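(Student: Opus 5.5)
Three equivalences have to be established: admissibility $\Leftrightarrow$ (1), (1) $\Leftrightarrow$ (2), and (2) $\Leftrightarrow$ (3).

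\emph{Admissibility $\Leftrightarrow$ (1).} By definition, $(P,Q)\in G_m$ means that there is an $m$-partition $\Lambda$ with $Q\in\mathcal S(\Lambda)$ and $P\in\mathcal S^*(\Lambda)$. The remark preceding the definition of $\Lambda(T)$ shows that such a $\Lambda$ is forced: $\Lambda=\Lambda(Q)$, since $K_+(Q)|_m=K_+(\Lambda)|_m$. The shape of $\Lambda^{(0)}$ is then the common shape of $P$ and $Q$.

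Next I would compare $K_+^*(\Lambda)$ with $K_+^*(Q)$. By Remark~\ref{remark*}, column $k$ of $K_+^*(\Lambda)$ is obtained from column $k$ of $K_+(\Lambda)$ by the $*$-operation. The letters $1,\dots,m$ in column $k$ of $K_+(\Lambda)$ coincide with those of $K_+(Q)$, because the two tableaux agree after restriction to $\{1,\dots,m\}$. Since $*$ sends exactly these letters to $\hat{\mathcal A}$, the $\hat{\mathcal A}$-parts agree column by column:
\[
K_+^*(\Lambda)|_{\hat{\mathcal A}}=K_+^*(Q)|_{\hat{\mathcal A}}.
\]
These restrictions occupy the bottom cells of each column, because $\hat{\mathcal A}$ consists of the largest letters. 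Hence condition \eqref{eqrestriction} becomes exactly (1). Conversely, (1) together with $\Lambda:=\Lambda(Q)$ yields $Q\in\mathcal S(\Lambda)$ and $P\in\mathcal S^*(\Lambda)$.

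\emph{(1) $\Leftrightarrow$ (2).} The restriction to $\hat{\mathcal A}$ of a column is a bottom segment of that column. The cellwise order $\leq$ on these skew pieces is therefore the comparison, column by column, of decreasing words read from the largest letter downward. Here I must check that the ``$\leq$'' of skew pieces is intended to allow the $P$ segment to be shorter, matching the word order of Definition~\ref{DefOrderWords}, where $s\le t$ is required. Interpreted this way, the statement is exactly $\hat{\mathcal C}_r(P)\leq\hat{\mathcal C}_r^*(Q)$ for every $r$.

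\emph{(2) $\Leftrightarrow$ (3).} This is the step I expect to require the most care. The key identity I would prove is
\[
\sup \hat W(P_r)=\hat{\mathcal C}_r(P).
\]
By Proposition~\ref{IgualdadKeys}, $\sup W(P_r)=\mathcal C_r(P)$. Since the letters of $\hat{\mathcal A}$ are the largest, every decreasing word splits as a prefix in $\hat{\mathcal A}$ followed by a suffix outside it, and restricting to the $\hat{\mathcal A}$-prefix is compatible with suprema.

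For the bound $\sup\hat W(P_r)\le \hat{\mathcal C}_r(P)$, take $w\in\hat W(P_r)$. Then $w\in W(P_r)$, so $w\le \mathcal C_r(P)$ in the word order. All letters of $w$ lie in $\hat{\mathcal A}$, so each compared letter of $\mathcal C_r(P)$ is at least a hatted letter and is therefore itself hatted. Hence $w\le\hat{\mathcal C}_r(P)$.

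For the reverse bound, each position $i$ of $\hat{\mathcal C}_r(P)$ equals $\max b_i$ over words $b\in W(P_r)$ of length at least $i$, and this maximum is a hatted letter. Truncating such a word $b$ to its hatted prefix gives an element of $\hat W(P_r)$, and since $b$ is decreasing, its letters $b_1,\dots,b_i$ are all hatted and remain in the prefix. The same $i$-th letter is therefore attained inside $\hat W(P_r)$. This proves the identity, and substituting it into (2) gives (3).
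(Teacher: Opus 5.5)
Your proposal is correct and follows the paper's proof step for step. You take $\Lambda=\Lambda(Q)$ and use the $*$-operation to get $K_+^*(\Lambda)|_{\hat{\mathcal A}}=K_+^*(Q)|_{\hat{\mathcal A}}$, read the restricted columns as decreasing words, and deduce $\sup \hat W(P_r)=\hat{\mathcal C}_r(P)$ from Proposition~\ref{IgualdadKeys} using that the hatted letters are the largest. You also supply details the paper only sketches: the bottom-aligned reading of the restricted order, which does follow from the cellwise condition in \eqref{defS*}, and the two-sided argument for $\sup \hat W(P_r)=\hat{\mathcal C}_r(P)$.
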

\begin{proof}
  Suppose that $\Lambda(Q)=\Lambda$.  Since 
  $ K_+(Q)  \big |_m = K_{+} (\Lambda) \big |_m$,
  we then get  after applying the $*$-operation that
  $K_{+}^*(Q) \big |_{\hat {\mathcal A} }
   = K_{+}^* (\Lambda) 
   \big |_{\hat {\mathcal A} }$.  Hence, using \eqref{eqrestriction}, we get that $(P,Q)$ is admissible if and only if $K_+(P) \big |_{\hat {\mathcal A} } \leq K_+^*(Q) \big |_{\hat {\mathcal A} }$.

  Since $\hat {\mathcal C}_r(P)$ and  $\hat{\mathcal C}_r^*(Q)$ are essentially the columns of $K_+(P)$ and  $K_+^*(Q)$, respectively, restricted to their letters in $\hat {\mathcal A}$,  Statement {\it 2} is equivalent to Statement {\it 1}.

  By Proposition~\ref{IgualdadKeys}, we see that  $\sup  W(P_r)={\mathcal C}_r(P)$.  Given that the elements of  $\hat {\mathcal A}$ are the largest elements of $\mathcal A$ and that $\mathcal A \setminus \hat {\mathcal A}$ is finite,
 this yields  $\sup  \hat W(P_r)=\hat {\mathcal C}_r(P)$,
 and Statement {\it 3} is equivalent to Statement {\it 2}.
 \end{proof}

\subsection{Behavior of the recording tableau}
\label{BehaviourRecording}

Understanding the behavior of  $\hat {\mathcal C}_r^*(Q)$ will prove crucial in what follows. We begin by describing explicitly how appending a new letter to $Q$ affects $\mathcal C_r(Q)$.

\begin{remark}  \label{remarklength}
  Suppose that  $Q \in {\rm Tab}_{\{1,\dots,N \}}$ is a tableau whose largest entry is not larger than
  $m$. Then all the letters in $Q$ are smaller or equal to $m$, which implies that the length of $\hat{\mathcal{C}}_{r}^{*}(Q)$ is equal to the length of column $r$ of $Q$.
\end{remark}

\begin{proposition}
  \label{PropInsercion}
Suppose that  $Q \in {\rm Tab}_{\{1,\dots,N \}}$ is a tableau whose largest entry is not larger than
$i$.  Then let 
    $Q'$ be the tableau obtained from $Q$ by adding a letter  $i$ in column $\ell$ (we are supposing that it is possible to do so), where $\ell$
is such that     
there is no letter $i$ to the right of column $\ell$ 
in $Q$. We then have that
\begin{equation}
\label{InsercionRecording}
 {\mathcal C}_r(Q')
=
\left\{
 \begin{array}{ll}
   {\mathcal C}_r(Q) &\quad {\rm if~} r > \ell\\
   i{\mathcal C}_r(Q) &\quad {\rm if~} r =\ell \\ 
   {\mathcal C}_r'  & \quad  {\rm  if~ } r < \ell
\end{array}
\right.,
\end{equation}
where $ {\mathcal C}_r'$ is obtained from ${\mathcal C}_r(Q)$
by replacing by $i$ the largest entry of 
${\mathcal C}_r(Q)$ not in $ {\mathcal C}_\ell(Q)$ and then reordering the letters to get a decreasing word 
(observe that this amounts to doing nothing if this largest entry is $i$). 
\end{proposition}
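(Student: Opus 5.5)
We will compute ${\mathcal C}_r(Q')$ through Proposition~\ref{IgualdadKeys}, that is, as $\sup W(Q'_r)$, and compare it with ${\mathcal C}_r(Q)=\sup W(Q_r)$. Since $i$ is at least every entry of $Q$ and it is appended at the bottom of column $\ell$, the reading word of $Q'_r$ is obtained from that of $Q_r$ by inserting a single letter $i$. For $r>\ell$ the subtableau $Q'_r$ equals $Q_r$, which gives the first case at once.

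For $r\le \ell$, we first determine where the new $i$ sits in $w(Q'_r)$. The new cell is the bottom of column $\ell$, in some row $k$. Row $k$ of $Q'_r$ has no cell to the right of column $\ell$, since $Q'$ is a tableau and there is no $i$ to the right of column $\ell$ in $Q$ (so the entry of row $k$ there would have to be $\ge i$, hence equal to $i$). Thus the new $i$ is the last letter of the reading of row $k$.

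A decreasing subword of $w(Q'_r)$ using the new $i$ must start with it, because $i$ is maximal and decreasing words are strict. It can then continue only with letters read afterwards and strictly smaller than $i$, namely letters from rows above row $k$. The decreasing subwords not using the new letter are exactly $W(Q_r)$. Hence
\[
\sup W(Q'_r)=\sup\{\sup W(Q_r),\; i\,\sup W(R)\},
\]
where $R$ is the part of $Q_r$ strictly above row $k$ with the letters $<i$. Here we use that $\sup$ commutes with prefixing a letter larger than all others.

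We next identify $\sup W(R)$. The rows above row $k$ in $Q_r$ contain the full columns $r,\dots,\ell$ minus their bottom part, and $\sup$ of such a region should be computable by Proposition~\ref{IgualdadKeys0} applied to the straight-shape tableau formed by the first $k-1$ rows of $Q_r$. The claim to establish is that, modulo entries equal to $i$, one has $\sup W(R)={\mathcal C}_\ell(Q)$ restricted to letters $<i$, or at least a word $v\subseteq {\mathcal C}_r(Q)$ with the same suprema effect.

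For $r=\ell$, every decreasing subword of $Q_\ell$ avoids the last row of column $\ell$, by the strict column/length argument. So $\sup W(Q_\ell)$ is dominated by $i\,\sup W(R)$, and we get $i\,{\mathcal C}_\ell(Q)$. If ${\mathcal C}_\ell(Q)$ already contains $i$, the length increase must be checked, using that column $\ell$ of $K_+$ gains a cell.

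For $r<\ell$, we have $v={\mathcal C}_\ell(Q)\subseteq w={\mathcal C}_r(Q)$, by the key property (Proposition~\ref{CorKeyInc}). Proposition~\ref{propaddone} then gives exactly $\sup\{iv,w\}=iu$, where $u$ is $w$ with its largest letter not in $v$ deleted. This is ${\mathcal C}'_r$, namely that letter replaced by $i$ and the result reordered. When the two words coincide, or when that letter is already $i$, the statement is the degenerate "do nothing" case, which is handled separately.

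The main obstacle is the identification of $\sup W(R)$ with ${\mathcal C}_\ell(Q)$, i.e. showing that the letters available after the new $i$ in the reading word contribute exactly the right key column $\ell$ of $Q$. I would first try to prove this by Knuth-moving the new $i$ (via $(K')$ moves and Lemma~\ref{SupKnuthEquivalent}) to the end of the word. Alternatively, I would reorganise the rows above $k$ into a tableau whose right key first column is ${\mathcal C}_\ell(Q)$.
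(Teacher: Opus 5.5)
Your strategy is the paper's. You compute ${\mathcal C}_r(Q')$ as $\sup W(Q'_r)$, split the decreasing subwords according to whether they use the new $i$, and finish with Proposition~\ref{propaddone}. Your decomposition $\sup W(Q'_r)=\sup\{\sup W(Q_r),\, i\sup W(R)\}$, with $R$ the rows $1,\dots,k-1$ of $Q_r$, is actually more precise than the paper's. The paper simply asserts that the tail $w'$ lies in $W(Q_\ell)$, which is false word by word, since $w'$ may use entries of columns $r,\dots,\ell-1$; it is true only after taking suprema.

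The genuine gap is that the case $r<\ell$ rests entirely on $\sup W(R)={\mathcal C}_\ell(Q)$. You leave this as ``the main obstacle'' with only tentative strategies, so that case is not proved. The missing argument is short and needs no Knuth moves:
\begin{itemize}
\item Since the new cell sits at the bottom of column $\ell$ in row $k$, column $\ell$ of $Q$ has length exactly $k-1$.
\item Hence $R$ is a straight-shape tableau whose columns coming from columns $r,\dots,\ell$ of $Q$ all have length exactly $k-1$.
\item The columns of $Q$ weakly right of $\ell$ have length at most $k-1$ and lie entirely inside $R$, so $R_{\ell-r+1}=Q_\ell$.
\item Columns of equal length in a right key coincide, because the column of length $c$ of $K_+(R)$ is $\mathcal R_c(R)$ (Proposition~\ref{CorKeyRep}).
\item Proposition~\ref{IgualdadKeys} then gives $\sup W(R)={\mathcal C}_1(R)={\mathcal C}_{\ell-r+1}(R)=\sup W(Q_\ell)={\mathcal C}_\ell(Q)$.
\end{itemize}

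Several other points are deferred or wrong. First note that column $\ell$ of $Q$ contains no $i$, since otherwise the new $i$ could not be placed below it; by hypothesis there is no $i$ to its right either.
\begin{itemize}
\item When $Q_r$ already contains an $i$, Proposition~\ref{propaddone} does not apply, because its hypothesis $i>w_1$ fails, so you must argue directly. Every row above row $k$ reaches column $\ell$, so it contains no $i$; otherwise its last entry, in a column $\ge \ell$, would be an $i$. The old $i$'s therefore lie in rows $\ge k$ and are read before rows $1,\dots,k-1$. Every word $iw'$ with $w'\in W(R)$ is then already in $W(Q_r)$, the supremum is unchanged, and this agrees with ${\mathcal C}'_r={\mathcal C}_r(Q)$.
\item Your case where ``the two words coincide'' never occurs, since column $r$ of $Q$ has length at least $k>k-1$.
\item For $r=\ell$ your remarks are confused. ${\mathcal C}_\ell(Q)$ can never contain $i$, by the observation above. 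Decreasing subwords of $Q_\ell$ certainly may use the bottom cell of column $\ell$. What is true, and suffices, is that $R=Q_\ell$ here, so $w(Q'_\ell)=i\,w(Q_\ell)$ and $\sup W(Q'_\ell)=i\,{\mathcal C}_\ell(Q)$ follows at once.
\end{itemize}
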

\begin{proof}
  Throughout the proof we will use the fact that $\sup W(T_r)=
   {\mathcal C}_r(T)$ for any tableau $T$, which is the content of Proposition~\ref{IgualdadKeys}.  The three cases are treated in turn.

  When $r>\ell$, the tableaux $Q_r$ and $Q_r'$ coincide, so that $W (Q_r) =  W (Q_r')$ and $\sup W(Q_r) = \sup W(Q_r')$.

  When $r=\ell$, a word $w$ belongs to $W (Q_\ell')$ if and only if either  $w \in W (Q_\ell)$ or
  $w=i w'$ with $w'\in W(Q_\ell)$.  This immediately gives that
  $$
\sup W(Q_\ell') = i \sup W(Q_\ell) 
  $$
as claimed.
  
  Suppose finally that $r<\ell$.  If $W( Q_{r} )$ already contains a letter $i$, then $W(Q_r)=W(Q_r')$, so that once more $\sup W(Q_r) = \sup W(Q_r')$.  This is the degenerate situation described at the end of the statement, in which the largest entry of ${\mathcal C}_r(Q)$ not in ${\mathcal C}_\ell(Q)$ is already an $i$.

  We may thus assume that there is no $i$ in  $W( Q_r)$. Every word $w \in W( Q_r')$ is either $w \in W( Q_r)$ or $w=iw'$ with $w' \in W(Q_{\ell})$.
    Therefore
$$
\sup W( Q_r ' )  =  \sup \{ \sup W( Q_r) , i\sup W( Q_\ell) \}.
$$
Since $\sup W( Q_\ell) \subseteq \sup W( Q_r)$, the proposition follows immediately from Proposition~\ref{propaddone}.
\end{proof}

The previous proposition has the following consequence.
\begin{corollary}
\label{DesigualdadQ}
If $Q$ is a tableau that satisfies the conditions of Proposition~\ref{PropInsercion} then 
$$
 {\mathcal C}_r(Q)  \leq  {\mathcal C}_r(Q') 
\quad {\rm for~all~} r. 
$$
\end{corollary}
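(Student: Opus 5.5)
The plan is to read the inequality off the three cases of Proposition~\ref{PropInsercion}, handling each separately and using only the partial order $\leq$ on decreasing words from Definition~\ref{DefOrderWords}.

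For $r>\ell$ we have ${\mathcal C}_r(Q')={\mathcal C}_r(Q)$, and there is nothing to prove. For $r=\ell$ we have ${\mathcal C}_\ell(Q')=i\,{\mathcal C}_\ell(Q)$. Since $i$ is at least every entry of $Q$, the word ${\mathcal C}_\ell(Q)$ is a subword of $i\,{\mathcal C}_\ell(Q)$, and Proposition~\ref{IncImpLeq} gives ${\mathcal C}_\ell(Q)\leq {\mathcal C}_\ell(Q')$. Here we use that, in this case of the proof of Proposition~\ref{PropInsercion}, $i$ does not occur in $Q_\ell$, so the word $i\,{\mathcal C}_\ell(Q)$ is indeed decreasing.

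The case $r<\ell$ is the only one that needs an argument. In the degenerate situation, where the largest entry of ${\mathcal C}_r(Q)$ not in ${\mathcal C}_\ell(Q)$ is already $i$, we get equality. Otherwise ${\mathcal C}_r'$ is obtained from ${\mathcal C}_r(Q)$ by replacing one letter $c$ with $i\geq c$ and then reordering into a decreasing word. I would rather use the description coming from the proof: ${\mathcal C}_r(Q')=\sup\{{\mathcal C}_r(Q),\, i\,{\mathcal C}_\ell(Q)\}$. Since the supremum is an upper bound for every element of its set (see the remark after the definition of $\sup$), we immediately get ${\mathcal C}_r(Q)\leq{\mathcal C}_r(Q')$.

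Alternatively, one can argue directly that replacing a letter of a decreasing word by a weakly larger letter not already present, and then re-sorting, increases the word letterwise. Indeed, the $k$-th largest element of a set can only increase when one element of the set is increased. The main obstacle is simply to make sure that the supremum description, or equivalently the letterwise monotonicity, is stated cleanly. Both descriptions are routine, so I expect the proof to be short.
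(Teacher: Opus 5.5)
Your proposal is correct and is essentially the paper's own argument: the paper gives no separate proof and simply reads the inequality off the three cases of Proposition~\ref{PropInsercion}, exactly as you do. As an aside, the three cases collapse into one line: for every $r$, $w(Q_r)$ is a subword of $w(Q'_r)$, so $W(Q_r)\subseteq W(Q'_r)$, and Proposition~\ref{IgualdadKeys} then gives ${\mathcal C}_r(Q)=\sup W(Q_r)\leq \sup W(Q'_r)={\mathcal C}_r(Q')$.
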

The behavior of $\hat {\mathcal C}_r^*(Q)$ under the addition of a letter is somewhat more complicated. 
\begin{corollary}
  \label{DesigualdadQstar}
 Let $Q$ be a tableau that satisfies the conditions of Proposition~\ref{PropInsercion}.
If  $i>m$ then
\begin{equation*}
\hat {\mathcal C}_r^*(Q')
=
\left\{
 \begin{array}{ll}
   \hat {\mathcal C}_r^*(Q) &\quad {\rm if~} r \geq \ell\\
       \hat {\mathcal C}_r' & \quad  {\rm  if~ } r  < \ell
\end{array}
\right. \quad,  
\end{equation*}
where $\hat {\mathcal C}_r'$ is obtained from $\hat {\mathcal C}_r^*(Q)$
by deleting the smallest entry of 
$\mathcal C_r^*(Q)$ not in $ \mathcal C_\ell^*(Q)$, if this entry belongs to $\hat{ \mathcal{A}}$
(and by doing nothing otherwise). 

If $i\leq m$, then
  \begin{equation*}
\hat {\mathcal C}_r^*(Q')
=
\left\{
 \begin{array}{ll}
   \hat {\mathcal C}_r^*(Q) &\quad {\rm if~} r > \ell\\ \hat
   {\mathcal C}_r^*(Q)(\hat m +\hat 1-\hat i) &\quad {\rm if~} r =\ell \\ 
    \hat {\mathcal C}_r' & \quad  {\rm  if~ } r  < \ell
\end{array}
\right. \quad,  
\end{equation*}
where $\hat {\mathcal C}_r'$ is obtained from $\hat {\mathcal C}_r^*(Q)$
by replacing by $\hat m +\hat 1-\hat i$ the smallest entry of 
$\hat {\mathcal C}_r^*(Q)$ not in $ \hat {\mathcal C}_\ell^*(Q)$ and then reordering the letters to get a decreasing word (observe that this amounts to doing nothing if this smallest entry is $\hat m +\hat 1-\hat i$). 
\end{corollary}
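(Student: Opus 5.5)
The plan is to derive the statement directly from Proposition~\ref{PropInsercion} by transporting its three cases through the $*$-operation and then restricting to $\hat{\mathcal A}$. The $*$-operation is applied letterwise to the decreasing words ${\mathcal C}_r(Q)$ and the result is reordered. It sends the letters $1,\dots,m$ to $\hat m,\dots,\hat 1$, reversing their order, and fixes the letters $m+1,\dots,N$. The letters $m+1,\dots,N$ all lie below every hatted letter. Consequently, for subsets $A,B$ of $\{1,\dots,N\}$, the largest element of $A\setminus B$ corresponds under $*$ to one of two things. If that element is larger than $m$, it is a letter fixed by $*$, which disappears on restriction to $\hat{\mathcal A}$. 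Otherwise it is the smallest hatted letter of $A^*\setminus B^*$, and in that case $A\setminus B$ contains no letter larger than $m$.

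Consider first the case $i>m$. For $r>\ell$ nothing changes. For $r=\ell$ the new letter $i$ is fixed by $*$ and is not hatted, so $\hat{\mathcal C}^*_\ell$ is unchanged. For $r<\ell$, Proposition~\ref{PropInsercion} replaces the largest entry $c$ of ${\mathcal C}_r(Q)$ not in ${\mathcal C}_\ell(Q)$ by $i$. If $c>m$, then $c$ and $i$ are both unhatted after $*$, and $\hat{\mathcal C}^*_r$ is unchanged. If $c\le m$, then $c^*$ is hatted and is replaced by the unhatted letter $i$, which amounts to deleting $c^*$ from $\hat{\mathcal C}^*_r(Q)$. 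By the observation above, $c^*$ is the smallest entry of ${\mathcal C}^*_r(Q)\setminus{\mathcal C}^*_\ell(Q)$, because every unhatted element of the difference is smaller than every hatted one. The same observation shows that when $c>m$ this smallest entry is unhatted, which gives the ``do nothing'' clause. The degenerate case $c=i$ is covered because $i$ is unhatted.

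Now take $i\le m$. Since every letter of $Q$ is at most $i\le m$, all entries involved are hatted after $*$, so ${\mathcal C}^*={\mathcal C}^*$ restricted to $\hat{\mathcal A}$. This is essentially Remark~\ref{remarklength}. For $r=\ell$, prepending $i$ becomes appending $i^*=\hat m+\hat 1-\hat i$, which is now the smallest letter because $i$ was the largest. For $r<\ell$, the largest entry of ${\mathcal C}_r\setminus{\mathcal C}_\ell$ becomes the smallest entry of $\hat{\mathcal C}^*_r\setminus\hat{\mathcal C}^*_\ell$, since $*$ is order-reversing on $\{1,\dots,m\}$. Replacing it by $i$ then becomes replacing it by $\hat m+\hat1-\hat i$, and after reordering we get the stated word.

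I expect the only delicate step to be the bookkeeping in the case $i>m$, $r<\ell$. There one must check that the ``smallest entry of ${\mathcal C}_r^*(Q)$ not in ${\mathcal C}_\ell^*(Q)$'' is taken in the full starred word, not the hatted one, and that it is hatted exactly when $c\le m$. This rests on the fact that $*$ maps $\{m+1,\dots,N\}$ below $\hat{\mathcal A}$ while reversing $\{1,\dots,m\}$.
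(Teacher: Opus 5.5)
Your proposal is correct and is the argument the paper leaves implicit: the corollary is stated without proof, as an immediate consequence of Proposition~\ref{PropInsercion}, obtained by applying the $*$-operation and restricting to $\hat{\mathcal A}$. The $*$-operation is injective, reverses the order on $\{1,\dots,m\}$, and fixes the letters $m+1,\dots,N$, which lie below all of $\hat{\mathcal A}$. Your bookkeeping in the case $i>m$, $r<\ell$ is the right check: the smallest entry of $\mathcal C_r^*(Q)$ not in $\mathcal C_\ell^*(Q)$ is hatted exactly when the replaced entry $c$ satisfies $c\le m$, and in that case it equals $c^*$.
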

The analog of Corollary~\ref{DesigualdadQ} is then the following (the case
$r=\ell$ does not hold because the length increases when going from
$\hat {\mathcal C}_\ell^*(Q)$ to  $\hat {\mathcal C}_\ell^*(Q')$).  Observe that the inequality is reversed with respect to that of Corollary~\ref{DesigualdadQ}, the $*$-operation being order-reversing.
\begin{corollary}
\label{DesigualdadQ*}
If $Q$ is a tableau that satisfies the conditions of Proposition~\ref{PropInsercion} then 
$$
 \hat {\mathcal C}_r^*(Q')  \leq  \hat {\mathcal C}_r^*(Q) 
\quad {\rm for~all~} r\neq \ell .
$$
\end{corollary}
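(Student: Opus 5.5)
We derive the inequality directly from the explicit description of $\hat{\mathcal C}_r^*(Q')$ in Corollary~\ref{DesigualdadQstar}, treating the cases $i>m$ and $i\le m$ separately. In both cases the columns $r>\ell$ are immediate, since there $\hat{\mathcal C}_r^*(Q')=\hat{\mathcal C}_r^*(Q)$. So the work is confined to $r<\ell$, together with $r=\ell$ when $i>m$. We will use the convention of Definition~\ref{DefOrderWords}: $v\le w$ requires $v$ to be no longer than $w$ and to be letterwise smaller on the common prefix.

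Suppose first that $i>m$. For $r\ge \ell$ there is nothing to prove. For $r<\ell$, $\hat{\mathcal C}_r^*(Q')$ is either $\hat{\mathcal C}_r^*(Q)$ or is obtained from it by deleting one letter. In the latter case $\hat{\mathcal C}_r^*(Q')\subseteq \hat{\mathcal C}_r^*(Q)$, and Proposition~\ref{IncImpLeq} gives the inequality at once.

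Suppose now that $i\le m$, set $j=\hat m+\hat 1-\hat i$, and fix $r<\ell$. Here $\hat{\mathcal C}_r^*(Q')$ is obtained from $w=\hat{\mathcal C}_r^*(Q)$ by replacing its smallest letter $c$ not lying in $\hat{\mathcal C}_\ell^*(Q)$ by $j$, and then reordering. The length is unchanged, so it suffices to show $j\le c$. Indeed, replacing one letter of a decreasing word by a smaller letter and re-sorting yields a word that is letterwise smaller: the sorted sequence is monotone in each entry, which is checked by a standard exchange argument. If there is no such $c$, or if $c=j$, then nothing changes.

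To prove $j\le c$, undo the $*$-operation. The condition that $Q$ has no entry larger than $i$ means every letter of $\mathcal C_r(Q)$ is at most $i$, so every hatted letter of $\mathcal C_r^*(Q)$ is at least $j$. In particular $c\ge j$, which is what we need.

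The only delicate point is to confirm that the letter replaced in Corollary~\ref{DesigualdadQstar} is indeed the $*$-image of the largest entry of $\mathcal C_r(Q)$ not in $\mathcal C_\ell(Q)$. This holds because $*$ is order-reversing on $\{1,\dots,m\}$, so "largest" becomes "smallest". Once that is checked, the bound $c\ge j$ follows from the hypothesis that $Q$ has no entry exceeding $i$, and the argument is complete.
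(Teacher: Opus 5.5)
Your argument is correct, but it takes a different route from the one the paper indicates. The paper gives no separate proof. It presents the statement as the analogue of Corollary~\ref{DesigualdadQ}, with the inequality reversed because ``the $*$-operation being order-reversing'', so it transports $\mathcal C_r(Q)\leq \mathcal C_r(Q')$ through $*$.

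You instead read the inequality off the explicit description in Corollary~\ref{DesigualdadQstar}. A deletion is handled by Proposition~\ref{IncImpLeq}. A replacement is handled by noting that every hatted letter of $\mathcal C_r^*(Q)$ is at least $\hat m+\hat 1-\hat i$. That bound holds because the letters of $\mathcal C_r(Q)=\sup W(Q_r)$ (Proposition~\ref{IgualdadKeys}) are entries of $Q$, hence at most $i$; you should say this explicitly rather than leave it implicit.

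The paper's route is uniform in $i$, but its one-line justification glosses over a point. The map $*$ is order-reversing only on $\{1,\dots,m\}$: it fixes $m+1,\dots,N$ and sends $\{1,\dots,m\}$ above them. To make it rigorous, you must use that for $r\neq\ell$ the words $\mathcal C_r(Q)$ and $\mathcal C_r(Q')$ have the same length. You then compare their $k$-th smallest letters, which also shows that $\hat{\mathcal C}_r^*(Q')$ is no longer than $\hat{\mathcal C}_r^*(Q)$. Your route avoids this bookkeeping at the cost of a case split on $i$ versus $m$.

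Finally, your last paragraph is unnecessary. The bound $c\geq \hat m+\hat 1-\hat i$ holds for every hatted letter of $\mathcal C_r^*(Q)$, so it does not matter which letter is the one replaced.
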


\begin{remark}
    Proposition~\ref{PropInsercion} shows that the right key tableau $K_{+}(Q')$ is completely determined by $K_{+}(Q)$, the inserted letter $i$, and the insertion column $\ell$; no further information about $Q$ is required.
\end{remark}

\begin{example}
Let $m=6$, $N \geq m$, and
$$
Q= 
\tableau[scY]{ 1   & 2  & 2  & 3 & 4  \\   2  & 3 & 4  \\   4 & 5 \\   5  
}
$$
Then
$$
K_{+}(Q) = \tableau[scY]{  2 & 3  & 3  & 4 & 4  \\  3  & 4 & 4  \\   4 & 5 \\   5   
} \qquad {\rm and}
\qquad
K_{+}^{*}(Q) = \tableau[scY]{  \hat{2} & \hat{2}  & \hat{3}  & \hat{3} & \hat{3}  \\  \hat{3}  & \hat{3} & \hat{4}  \\   \hat{4} & \hat{4} \\   \hat{5}   
}
$$
Since $Q$ contains no entry equal to $6$, we may append a $6$ to the end of the second row, obtaining
$$
Q ' = 
\tableau[scY]{ 1   & 2  & 2  & 3 & 4  \\   2  & 3 & 4 & 6  \\   4 & 5 \\   5  
}
$$
Then
$$
K_{+}(Q') =
\tableau[scY]{  2 & 3  & 4  & 4 & 4  \\  3  & 4 & 6 & 6  \\   4 & 6 \\   6   
}
\qquad {\rm and}
\qquad
K_{+}^{*}(Q') =
\tableau[scY]{  \hat{1} & \hat{1}  &  \hat{1} & \hat{1} & \hat{3}  \\  \hat{3}  & \hat{3} & \hat{3} & \hat{3}  \\   \hat{4} & \hat{4} \\   \hat{5}
}
$$
Now consider the same tableau $Q$, but with $m=3$. In this case,
$$
K_{+}(Q) = \tableau[scY]{  2 & 3  & 3  & 4 & 4  \\  3  & 4 & 4  \\   4 & 5 \\   5   
}
\qquad {\rm and}
\qquad
K_{+}^{*}(Q) = \tableau[scY]{  4 & 4  & 4  & 4 & 4  \\  5  & 5 & \hat{1}  \\  \hat{1} & \hat{1} \\   \hat{2}   
}
$$
Moreover,
$$
K_{+}(Q') =
\tableau[scY]{  2 & 3  & 4  & 4 & 4  \\  3  & 4 & 6 & 6  \\   4 & 6 \\   6   
}
\qquad {\rm and}
\qquad
K_{+}^{*}(Q') =
\tableau[scY]{  4 & 4 & 4 & 4 & 4  \\  6  & 6 & 6 & 6  \\   \hat{1} & \hat{1} \\   \hat{2}
}
$$
\end{example}

\subsection{The RSK algorithm and admissible pairs}

\label{Sec:RSK}

We now come to the technical core of the article: the compatibility between admissibility and a single step of the RSK insertion.  Recall from Section~\ref{ssectab} the row insertion algorithm $T \leftarrow j$ and the associated notion of insertion path.

We say that $(P',Q')=(P,Q) \leftarrow {i \choose j}$ is RSK-compatible if the following conditions are satisfied
\begin{itemize}
\item $(P,Q)$ and $(P',Q')$ are pairs of tableaux of the same shape in
  ${\rm Tab}_{\mathcal A} \times {\rm Tab}_{\{1,\dots,N \}}$.
\item $P'=P \leftarrow j$, the insertion of the letter $j$ in $P$.
\item  $Q$ is a subtableau of $Q'$, and the only cell in  $Q'/Q$ is filled with the letter $i$.
\item No letter in $Q$ is larger than $i$, and no letter $i$ in $Q$ occurs in a column to the
  right of the column in which $Q'/Q$ lies.
\end{itemize}  

We will also say that the biletter ${i \choose j}$ with $j \in \mathcal A$ and $i \in \{ 1,\dots,N\}$
is admissible if whenever $j \in \hat {\mathcal A}$ we have that $j \leq \hat m+\hat 1-\hat i$.

 The goal of this section is to show that
 \begin{equation} \label{equivalence}
\left[(P,Q)  \quad {\rm and} \quad {i \choose j}  \quad {\rm are~both~admissible} \right]  \iff (P',Q') \quad {\rm is~admissible},  
 \end{equation}
where we recall that admissible pairs $(P,Q)$ were defined after \eqref{eqGm}.
This will be a consequence of Propositions~\ref{PropInsPQ}, \ref{PropReciproca} and \ref{LimiteBuenoYMalo}.  Establishing them is where the work lies, as it relies on rather delicate properties of the words in $\hat W(P)$ and $\hat W(P')$.

We begin with a lemma providing the bounds on words on which all the subsequent arguments rest. 
\begin{lemma}
\label{LemIns}
    Consider $T'= T \leftarrow j$, the insertion of the letter $j$ in a tableau $T$. 
     Let $c_{k-1}, \ldots , c_{2}, c_{1}, c_{0} (=j)$ be the insertion path in $T'$ (which ends in row $k$ and column $\ell$). We have the following:
\begin{enumerate}     
\item[(1)]     
  For $r \leq \ell$,  let $w \in W(T_r')$ be such
  that $w$ has a letter in the first $k$ rows of $T'$.  
Then, $w=w_s \cdots w_1$ (with $s\geq k$) is such that 
$$
w_{k} w_{k-1} \ldots w_{2} w_{1} \leq c_{k-1} c_{k-2} \ldots c_{1} c_{0}.
$$
\item[(2)] If $w \in  W(T_r')$ is such that $w_i$ is weakly to the left of $c_s$ (in the same row) for some $c_s$ in the insertion path, 
and such that $w_i, w_{i-1}, \ldots, w_{\ell+1-i}$ are in consecutive rows (all within the first $k$ rows of $T$)
then
$$
w_i w_{i-1} \cdots w_{\ell+1-i} \leq c_s c_{s-1} \cdots c_{\ell+1-s}. 
$$
\end{enumerate}
\end{lemma}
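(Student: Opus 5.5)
Both parts come down to a row-by-row comparison between a decreasing subword of $w(T_r')$ and the entries of the insertion path. The basic observation is this. In the reading word (rows read from bottom to top), a decreasing subword $w=w_s\cdots w_1$ takes at most one letter per row, because rows are weakly increasing. It also climbs strictly upward: if $w_{t+1}$ lies in row $p$, then $w_t$ lies in some row $q<p$.

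\emph{Key claim.} If $w_t$ lies in row $q\le k$ of $T'$, then $w_t\le c_{q-1}$.

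In row $q$ of $T'$, the path letter $c_{q-1}$ occupies the cell where it landed: it either bumped $c_q$ or was appended at the end. The row is weakly increasing, so every entry strictly to the left of that cell is $\le c_{q-1}$. The entries strictly to the right of that cell are unchanged from $T$; they are $\ge c_q>c_{q-1}$, or they do not exist when $q=k$.

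Now suppose $w_t$ sits strictly to the right of $c_{q-1}$'s cell, so it is an old entry. We need a contradiction. I would exploit the standard shape of the insertion path: it moves weakly left as it descends. The column of $c_q$ in row $q+1$ is weakly left of the column of $c_{q-1}$ in row $q$. Hence the cells below $w_t$ in column $\ge$ that of $w_t$ are old entries that are also strictly right of the path, and column-strictness makes them larger than $w_t$.

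This does not forbid a letter $w_{t+1}>w_t$ lower down and further left, so the pointwise claim cannot be proved this way. Instead I would prove the aggregate statement directly by induction along $w$, reading from top to bottom, as follows.

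\emph{Part (1).} Take the top letter $w_1$. If it is in row $q$, then $w_1\le$ the maximal entry of $T'$ in rows $\le q$ within the relevant columns. For row $1$, this is bounded by $c_0$ or by entries right of $c_0$.

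I expect the real mechanism to be a Greene/Knuth-type argument rather than cell chasing. Write $w(T')\equiv w(T)\,j$, and note that by Lemma~\ref{SupKnuthEquivalent} suprema of decreasing subwords are Knuth invariant. However, the restriction to $T'_r$ and to the first $k$ rows breaks this invariance. So I would fall back on a concrete induction on the number of rows, as follows.

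Remove the first row of $T'$. The remaining rows form exactly $T''=T^{\downarrow}\leftarrow c_1$, where $T^{\downarrow}$ is $T$ without its first row, and the insertion path there is $c_{k-1},\dots,c_1$. By induction, the part of $w$ lying in rows $2,\dots$ satisfies $w_k\cdots w_2\le c_{k-1}\cdots c_1$. The columns also stay $\ge r$, because the path drifts left and $r\le\ell$.

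It remains to bound the single letter $w_1$ coming from row $1$ of $T'_r$, given that $w_2$ is in a lower row and $w_1<w_2$. If $w_1$ is left of or at $c_0$'s cell, then $w_1\le c_0$. If $w_1$ is to the right, then $w_1\ge c_1$. But $w_1<w_2\le c_1$ by induction, a contradiction. This requires shifting the indices so that the letter of $w$ in row $q$ is compared with $c_{q-1}$; the hypothesis that $w$ touches the first $k$ rows fixes this alignment. If $w$ skips rows, its letters shift to even higher path letters, and since the path is increasing the inequality only improves.

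\emph{Part (2).} This is the same induction, started at row $s+1$ instead of row $1$. Inside those consecutive rows, the hypothesis that $w_i$ is weakly left of $c_s$ gives the base bound $w_i\le c_s$. Then the step "one row up, one path letter down" is repeated, with the same left-drift argument at each step.

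\emph{Main obstacle.} The delicate point is the case where a letter of $w$ lies to the right of the path in some row. The bound must then be forced by the letter immediately below it through $w_{t}<w_{t+1}\le c_{q}$, and I expect the index bookkeeping there to be the hardest part.
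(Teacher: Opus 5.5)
Your final argument is correct and is the paper's proof in different packaging. You peel off the first row, use that rows $2,3,\dots$ of $T'$ form $T^{\downarrow}\leftarrow c_1$, and then place the top letter using the dichotomy that every entry of row $q$ of $T'_r$ is either $\le c_{q-1}$ (weakly left of the path cell) or $\ge c_q$ (strictly right of it). The paper runs the same chain directly from the bottom. It starts from the fact that $c_{k-1}$ is the largest entry of row $k$, and then derives $w_q\le c_{q-1}$ from $w_q<w_{q+1}\le c_q$. Part (2) is done the same way in both, with $w_i\le c_s$ as the starting bound.

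The step that fails is the sentence ``if $w$ skips rows, its letters shift to even higher path letters, and since the path is increasing the inequality only improves.'' The direction is backwards.

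\begin{itemize}
\item If the $t$-th smallest letter of $w$ sits in row $q>t$, the most you can get is $w_t\le c_{q-1}$. This is \emph{weaker} than the required $w_t\le c_{t-1}$.
\item Even that bound is lost once the letter below is not in row $q+1$. Then $w_t<w_{t+1}$ no longer rules out the entries $\ge c_q$ to the right of the path.
\end{itemize}

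Concretely, insert $j=2$ into the tableau with rows $(1,3)$, $(2)$, $(4)$. You get $T'$ with rows $(1,2)$, $(2,3)$, $(4)$, path $c_1c_0=32$, and $k=\ell=2$. Take $w=42\in W(T'_1)$, with the $4$ from row $3$ and the $2$ from row $1$. It has a letter in the first $k$ rows and $s=k$, yet $42\not\le 32$.

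So skipping rows is exactly where the literal statement breaks down. The lemma must be read with the hypothesis that $w$ has a letter in \emph{each} of the first $k$ rows, so that $w_t$ lies in row $t$ for $t\le k$. That is what the paper's proof silently uses, and it is what makes your induction go through: the rest of $w$ then meets each of rows $2,\dots,k$, so the induction hypothesis applies to $T^{\downarrow}\leftarrow c_1$. State that hypothesis explicitly and drop the skipping remark.
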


\begin{proof}

We will only prove {\it (1)}, as {\it (2)} can be proven in a similar fashion.

The letter $c_{k-1}$ is the largest entry in row $k$ of $T'$ since it is the endpoint of the insertion algorithm. Hence, $w_{k} \leq c_{k-1}$.
By the insertion algorithm,  $c_{k-2}$ is the largest entry  smaller than $c_{k-1}$ in row $k-1$ of $T'$. This implies that 
$w_{k-1} \leq c_{k-2}$ since otherwise we would have the contradiction $w_{k-1} \geq c_{k-1} \geq w_k$ (recall that $w$ is a decreasing word since it belongs to $W(T_r')$). By the same reasoning, we conclude that  $w_{k-2} \leq c_{k-3}, \ldots , w_{2} \leq c_{1}, w_{1} \leq c_{0} (=j)$.
\end{proof}

The next two propositions allow us to construct, from a word 
$w \in W(P_r')$ a word $v \geq w$ that almost belongs to  $W(P_r)$ (it belongs to $W(P_r)$ if we do not consider its last letter).
\begin{proposition} \label{propvuj}
 Let  $c_{k-1}, \ldots , c_{2}, c_{1}, c_{0} (=j)$
  be the insertion path of  $P'=P \leftarrow j$. Given a word $w=w_t\cdots w_1  \in W(P_r')$, let $p$ be the largest integer such that $w_p$ intersects the insertion path (if there is no such integer, then $p=1$). Let $v=w_t\cdots w_p v_{p-1} \cdots v_1 \in W(P'_r)$, where $v_{p-1} \cdots v_1$ is defined in the following way: 
    if $w_i$ lies in row $s+1$ weakly to the left (in the same row) of $c_{s}$, then let $v_i=c_{s}$. Otherwise, let $v_i=w_i$.  We then have that  $w\leq v$,  with  either
  $v \in W(P_r)$ or $v=uj$ for some $u \in W(P_r)$.
  Observe that the proposition still holds with $W$ replaced everywhere by $\hat W$ since for any $w \in \hat W(P_r')$, the relation $w\leq v$  implies that $v$ also belongs to $\hat W(P'_r)$. 
\end{proposition}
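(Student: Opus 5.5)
We will verify three things in turn: that $v$ is decreasing and lies in $W(P'_r)$, that $w\leq v$, and that removing at most the last letter $j$ from $v$ gives a word of $W(P_r)$. Throughout we use the structure of row insertion. The path $c_{k-1},\dots,c_0$ occupies, in $P'$, one cell per row among the first $k$ rows. These cells move weakly left as we go down. In row $s+1$ of $P'$, the entry $c_s$ replaced $c_{s+1}$, and $c_{k-1}$ was appended at the end of row $k$. Apart from these cells and the new cell, $P'$ and $P$ coincide.

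First, the comparison $w\le v$. Since $w$ is decreasing, its letters lie in weakly increasing rows as we move right in the word, with $w_t$ in the lowest row. Hence the letters $w_{p-1},\dots,w_1$ lie strictly above the row of $w_p$, that is, in rows where the path exists. For a letter $w_i$ in row $s+1$ weakly left of $c_s$, row weakness gives $w_i\le c_s$. So $v$ is obtained from $w$ by weakly increasing some letters and keeping the others, and coordinatewise $w\le v$ with equal lengths.

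Second, $v$ is decreasing. The positions $i\ge p$ are untouched. For the replaced letters, we adapt the argument of Lemma~\ref{LemIns}. Consecutive replaced letters in consecutive rows $s+1,s$ become $c_s>c_{s-1}$, which holds since $c_{s-1}<c_s$ along the path. When a replaced letter $c_s$ is followed by an unreplaced $w_{i-1}$ in a higher row $s'$, the letter $w_{i-1}$ lies strictly right of $c_{s'-1}$ in that row. So $w_{i-1}\ge c_{s'-1}$, and we must compare it with $c_s$. Conversely, when an unreplaced letter is followed by a replaced one, we must show $w_i>c_s$.

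This is the delicate step, and we expect it to be the main obstacle. The plan is to use the column-strictness of $P'$ together with the fact that the path moves weakly left going down. From these, an unreplaced letter right of the path in a lower row is strictly greater than the path entry in any higher row, because that entry sits above-left of it. Symmetrically, a path entry in a lower row exceeds entries weakly left of the path in higher rows; this is exactly how Lemma~\ref{LemIns}(2) is used. The same facts show that replacing letters in a left-to-right-reading-compatible way yields a genuine subword of $w(P'_r)$. Since $c_s$ sits in row $s+1$ weakly right of $w_i$, the reading order is preserved: each row is read left to right, rows from bottom to top. The column of $c_s$ is at least the column of $w_i\ge r$, so every letter of $v$ indeed lies in $P'_r$.

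Third, the return to $P$. Every letter of $v$ outside the path cells lies in $P$ at the same position. Each path letter $c_s$ with $s\ge1$ appears in $P'$ at row $s+1$, and it also sits in $P$ at row $s$, strictly right of the cell of $c_{s-1}$. So we realize $c_s$ in $P$ one row higher. Since the replaced letters are exactly the ones lying above row of $w_p$, shifting them up one row keeps the reading order relative to the letters $w_t\cdots w_p$. Here we use the maximality of $p$: $w_p$ meets the path, so its row in $P$ is unchanged, and rows above are reorganized consistently. The column constraint also survives, since the cell of $c_s$ in $P$ lies weakly right of its cell in $P'$.

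The only path letter with no preimage in $P$ is $c_0=j$. This letter can occur only as the last letter $v_1$, because it is in row $1$, the highest row. Hence either $v\in W(P_r)$ or $v=uj$ with $u\in W(P_r)$.

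The statement for $\hat W$ follows as remarked: a decreasing word dominating a word of hatted letters has only hatted letters, because the hatted letters are the largest elements of $\mathcal A$.
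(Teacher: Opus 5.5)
Your outline is the paper's: check $w\le v$, check that $v$ is decreasing, then realize $v$ in $P$ by lifting path letters one row up. The gap is at the step you yourself single out as delicate, which is justified by a false claim.

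\textbf{The decreasing step.} For an unreplaced $v_i=w_i$ followed by a replaced $v_{i-1}=c_s$, you argue that $c_s$ ``sits above-left'' of $w_i$. The path moves weakly left going down, so it moves weakly \emph{right} going up. Hence $c_s$ can lie above and to the right of $w_i$, and column-strictness then gives no comparison.

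Here is a concrete case. Insert $j=2$ into $P$ with rows $1\,1\,1\,5$, $6\,7$, $8$. This gives $P'$ with rows $1\,1\,1\,2$, $5\,7$, $6$, $8$, and path $c_0=2,c_1=5,c_2=6,c_3=8$ in columns $4,1,1,1$. Take $w=871$, with the $1$ in cell $(1,3)$. Then $p=3$ and $v=872$. Here $c_0=2$ sits in $(1,4)$, above-right of the $7$ in $(2,2)$.

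The correct argument is the paper's one-liner. The letter $w_i$ lies in some row $t+1$ strictly right of $c_t$, so $w_i\ge c_t$ by row-weakness. Since $t>s$, strict increase along the path gives $c_t>c_s$.

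The other mixed case, which you leave as ``we must compare it with $c_s$'', is immediate from $c_s\ge w_i>w_{i-1}$. The lower bound $w_{i-1}\ge c_{s'-1}$ you wrote there is the one actually needed in the case above.

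\textbf{The passage to $P$.} The claim that ``$w_p$ meets the path, so its row in $P$ is unchanged'' is wrong. The letter $w_p$ is itself a path letter $c_{s'}$, and for $s'\ge 1$ it sits in row $s'$ of $P$, not row $s'+1$. Its cell in $P'$ holds $c_{s'+1}$ in $P$, or does not exist if it is the new cell. In the example, the $8$ moves from $(4,1)$ to $(3,1)$. So $w_p$ must be lifted as well. This is harmless, because every letter below it is a non-path cell in row $\ge s'+2$.

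You also check reading order only against $w_t\cdots w_p$. You must additionally handle an unreplaced letter $w_{i-1}$ lying in row $s$ of $P'$ when $v_i=c_s$ is lifted into that same row $s$. The paper rules this out via Lemma~\ref{LemIns}. Such a $w_{i-1}$ would lie strictly right of $c_{s-1}$ in row $s$ of $P'$, and all those entries are $\ge c_s$. This gives $w_{i-1}\ge c_s\ge w_i$, contradicting that $w$ is decreasing. Your phrase ``rows above are reorganized consistently'' does not cover this.
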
  
\begin{proof}
  We first show that $v$ is decreasing.  The only case which could potentially be problematic is when $v_{i+1}=w_{i+1}$ and $v_{i}=c_{s}$. Since $w_{i+1}$ is to the right of the insertion path, we have that $w_{i+1}$ is in the row (and to the right) of some $c_t$.  Hence, we have by the insertion algorithm that
  $v_i=c_{s} < c_t \leq w_{i+1}=v_{i+1}$.  It is then immediate by construction that $w \leq v$.  Since all the $c_s$'s belong to $P'_r$ we then conclude that
   $v \in W(P'_r)$.

  All the $v_i$'s (except possibly $v_1=c_0=j$) belong to $P_r$.  We will thus have that  $v \in W(P_r)$ or $v=uj$ with $u \in W(P_r)$ if we can show that they belong to distinct rows in $P_r$.  The only case to check is when
   $v_{i}=c_s$ and $v_{i-1}=w_{i-1}$ since in this case $c_s$ lies in $P_r$  one row above its position in $P'_r$.  But by Lemma~\ref{LemIns}, $w_{i-1}$ cannot be in the row that follows that of $w_i$, since otherwise we would have the contradiction that $w_{i-1} \leq c_{s-1} < c_s$ (that is, $w_{i-1}$ would not be to the right of the insertion path).
  \end{proof}  

\begin{example}
 If we insert the letter $j=3$ in  
 $$
 P=   \tableau[scY]{  1 & 2 & 4 & 5 \\  3  \\  6   
} 
 $$
then the insertion path (highlighted in blue) is given by
$$
P' = \tableau[scY]{  1 & 2 & {\color{blue} 3} & 5 \\  3 & {\color{blue} 4}  \\  6   
}
$$
Now, consider $w=42 \in W(P_{2}')$ as highlighted in red in $\tableau[scY]{  1 & {\color{red} 2} & 3 & 5 \\  3 & {\color{red} 4}  \\ 6
}$. The procedure in Proposition~\ref{propvuj} then gives us
 $v=43 \in W (P_{1}')$ as highlighted in red in $\tableau[scY]{  1 & 2 & {\color{red} 3} & 5 \\  3 & {\color{red} 4}  \\ 6
}$, where we see that $v=uj$ with $u=4 \in W(P_{2})$ and $j=3$.

If we had chosen instead  $w=632  \in W(P_{1}')$ as highlighted in red in 
$\tableau[scY]{  1 & {\color{red} 2} & 3 & 5 \\  {\color{red} 3} & 4  \\ {\color{red} 6}    
}$, which does not intersect the insertion path,
the procedure would then have given us $v = w \in W (P_{1}')$, with $v \in W ( P_{1} )$.
\end{example}

In the special case where $w$ has no entry below the last row of the insertion path, the construction simplifies.
\begin{proposition} \label{propvuj2}
 Let  $c_{k-1}, \ldots , c_{2}, c_{1}, c_{0} (=j)$
  be the insertion path of  $P'=P \leftarrow j$. Given a word $w=w_t\cdots w_1  \in W(P_r')$ that does not have any entry below row $k$, then  let $v=v_t\cdots v_1  \in W(P_r')$ be constructed in the following way:
    if $w_i$ lies in row $s+1$ weakly to the left (in the same row) of $c_s$, then let $v_i=c_s$. Otherwise, let $v_i=w_i$.  We then have that  $w\leq v$,  with  either
  $v \in W(P_r)$ or $v=uj$ for some $u \in W(P_r)$.
  Observe that the proposition still holds with $W$ replaced everywhere by $\hat W$, since for any $w \in \hat W(P_r')$, the relation
  $w\leq v$  implies that $v$ also belongs to $\hat W(P'_r)$. 
\end{proposition}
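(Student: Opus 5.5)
This is the special case $p$ maximal of Proposition~\ref{propvuj}, so I would redo that argument directly, with the hypothesis that $w$ has no entry below row $k$ replacing the cutoff index $p$. Since the insertion path $c_{k-1},\dots,c_0$ occupies rows $1,\dots,k$ of $P'$, every letter $w_i$ lies in one of these rows. So each $w_i$ is either weakly left of the path element $c_s$ in its row ($s+1$ being the row), or strictly to its right. The construction replaces the first kind by $c_s$ and keeps the second kind. The proof then has three steps, in the same order as for Proposition~\ref{propvuj}.

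\emph{Step 1: $v$ is decreasing and $w\le v$.} The order of $P'$ and the rows of $T$ are such that consecutive letters $w_{i+1}>w_i$ of a decreasing subword of the reading word lie in strictly higher rows as $i$ decreases; more precisely, $w_{i+1}$ is in a row strictly below that of $w_i$.
\begin{itemize}
\item If both letters are replaced, by $c_t$ and $c_s$ say, then $t>s$, so $c_t>c_s$, since the path entries decrease upward.
\item If $w_{i+1}$ is kept and $w_i$ is replaced by $c_s$, then $w_{i+1}$ lies strictly right of some $c_t$ with $t>s$. Hence $c_s<c_t\le w_{i+1}$, exactly as in Proposition~\ref{propvuj}.
\item If $w_{i+1}$ is replaced by $c_t$ and $w_i$ is kept, then $c_t\ge w_{i+1}>w_i$.
\end{itemize}
For $w\le v$: a replaced letter $w_i$ sits weakly left of $c_s$ in a weakly increasing row, so $w_i\le c_s=v_i$. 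All the $c_s$ used lie weakly right of $w_i$, hence in $P'_r$, and they occupy distinct rows in the right order. Therefore $v\in W(P'_r)$.

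\emph{Step 2: $v$ (apart from a possible final $j$) is a subword of $w(P_r)$.} Every $v_i$ other than $c_0=j$ is an entry of $P_r$, since $c_s$ for $s\ge1$ sits in $P$ one row higher, in a column weakly to the right. It remains to check that the letters still occur in reading order in $P_r$, i.e. in distinct rows in the correct vertical order. The only danger is $v_i=c_s$ (moved up to row $s$ in $P$) followed by a kept $v_{i-1}=w_{i-1}$ sitting in row $s$ of $P'$.

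This case is excluded by Lemma~\ref{LemIns}(2). The letters $w_i,w_{i-1}$ would be in consecutive rows with $w_i$ weakly left of $c_s$, which forces $w_{i-1}\le c_{s-1}$. Combined with $w_{i-1}$ being in row $s$ of $P'$, strictly right of $c_{s-1}$, this would give $w_{i-1}\ge c_{s-1}$ with the letter positioned right of the path. That contradicts the insertion rule, since $c_{s-1}$ bumped the leftmost entry larger than itself, namely $c_s$.

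If instead the row-$s$ letter is also replaced, its replacement is $c_{s-1}$, which in $P$ lies in row $s-1$, so no collision occurs. Thus the letters $v_t,\dots,v_2$ form a subword of $w(P_r)$. Finally, $v_1$ is either an entry of $P_r$ (giving $v\in W(P_r)$) or equals $c_0=j$ (giving $v=uj$ with $u\in W(P_r)$).

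\emph{Step 3: the $\hat W$ version.} If $w\in\hat W(P'_r)$, then $v\ge w$ letterwise. Since $\hat{\mathcal A}$ is an upper set of $\mathcal A$, all letters of $v$ lie in $\hat{\mathcal A}$.

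The main obstacle is Step 2, the collision case. There I would carefully verify the strictness in Lemma~\ref{LemIns}(2) when $w_{i-1}=c_{s-1}$ with repeated letters. Equal entries in a row are handled by "leftmost strictly larger", and $w_{i-1}$ strictly right of $c_{s-1}$ in the same row means that $w_{i-1}$ was not bumped. This would contradict $w_{i-1}<w_i\le c_s$, because $c_s$ is the leftmost entry of that row exceeding $c_{s-1}$.
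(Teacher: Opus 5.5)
Your proof is correct and follows essentially the paper's route. The paper gives no separate proof of this statement and instead relies on the argument written for Proposition~\ref{propvuj}: decreasingness and $w\le v$ come from the monotonicity of the insertion path, and the only collision case ($v_i=c_s$ followed by a kept $w_{i-1}$ in row $s$) is excluded via Lemma~\ref{LemIns}. Your closing observation is the cleanest way to settle that case and makes the appeal to Lemma~\ref{LemIns}(2) unnecessary: a kept $w_{i-1}$ lying strictly right of $c_{s-1}$ in row $s$ of $P'$ sits strictly right of the old position of $c_s$ in $P$, so $w_{i-1}\ge c_s\ge w_i>w_{i-1}$, a contradiction.
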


\begin{example}
 If we insert the letter $j=3$ in  
$$
 P=   \tableau[scY]{  1 & 2 & 4 & 5 \\  3 & 7  \\  6   
}
$$
then the insertion path (highlighted in blue) is given by
$$
P' = \tableau[scY]{  1 & 2 & {\color{blue} 3} & 5 \\  3 & {\color{blue} 4}  \\  6 & {\color{blue} 7}  
}
$$
Now, let $w=632  \in W(P_{1}')$ be the word highlighted in red in
$\tableau[scY]{  1 & {\color{red} 2} & 3 & 5 \\  {\color{red} 3} & 4  \\ {\color{red} 6} & 7  
}$. Then $v=uj=743$ is the word highlighted in red in $\tableau[scY]{  1 & 2 & {\color{red} 3} & 5 \\  3 & {\color{red} 4}  \\ 6 & {\color{red} 7}  
}$, where $u=74 \in W(P_{1})$. Observe that $w = 632 \leq 743 = v$.
If we had chosen instead $w=65 \in W(P_{1}')$ as highlighted in red in $\tableau[scY]{  1 & 2 & 3 & { \color{red}5 } \\  3 &  5  \\  {\color{red} 6}   & 7
}$,  then $v=75$ would  be the word highlighted in red in $\tableau[scY]{  1 & 2 & 3 & { \color{red}5 } \\  3 &  5  \\  6   & {\color{red} 7}
}$.
\end{example}
We are now in a position to show the forward implication ($\implies$) in
\eqref{equivalence}.

\begin{proposition}
  \label{PropInsPQ}
Let $(P',Q')=(P,Q) \leftarrow {i \choose j}$ be RSK-compatible.
If $(P,Q)$ and ${i \choose j}$ are both admissible, then
$(P',Q')$ is also an admissible pair.
\end{proposition}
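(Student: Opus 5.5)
We use criterion~3 of Proposition~\ref{propcriteria}: it suffices to show that $\sup \hat W(P'_r) \leq \hat{\mathcal C}_r^*(Q')$ for every column $r$ of $P'$. Let $\ell$ be the column of the new cell $Q'/Q$; it is also where the insertion path of $P'=P\leftarrow j$ ends, in row $k$. Since $\sup$ is a least upper bound, it is enough to fix $w\in\hat W(P'_r)$ and prove $w\leq \hat{\mathcal C}_r^*(Q')$. We split into the cases $r>\ell$, $r=\ell$ and $r<\ell$.

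\emph{Case $r>\ell$.} Here $P'_r$ is obtained from $P_r$ only by replacing some path entries $c_s$ by the smaller entries $c_{s-1}$. The words of $\hat W(P'_r)$ are then dominated letterwise by words of $\hat W(P_r)$. This also follows from Proposition~\ref{propvuj}: since the cell containing $j$ is not in $P'_r$, the case $v=uj$ cannot occur. Corollary~\ref{DesigualdadQstar} gives $\hat{\mathcal C}_r^*(Q')=\hat{\mathcal C}_r^*(Q)$, so admissibility of $(P,Q)$ concludes.

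\emph{Cases $r\leq\ell$.} Given $w$, we apply Proposition~\ref{propvuj} (in its $\hat W$ version) to get $v\geq w$. Either $v\in\hat W(P_r)$, or $v=uj$ with $u\in\hat W(P_r)$ and $j\in\hat{\mathcal A}$. In the first subcase we get $w\leq v\leq \sup\hat W(P_r)\leq \hat{\mathcal C}^*_r(Q)$. For $r=\ell$ the length of $\hat{\mathcal C}^*_\ell$ does not shrink: it either stays the same or gains the letter $\hat m+\hat 1-\hat i$ at the end. So the bound persists, since appending a letter to the right end of a decreasing word only enlarges it in our order.

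In the second subcase with $r=\ell$, we first need $u\leq \hat{\mathcal C}^*_\ell(Q)$, which holds by admissibility of $(P,Q)$. Then we need $j\leq \hat m+\hat 1-\hat i$, which is the admissibility of the biletter. Together with the explicit form $\hat{\mathcal C}_\ell^*(Q')=\hat{\mathcal C}_\ell^*(Q)(\hat m+\hat1-\hat i)$ from Corollary~\ref{DesigualdadQstar} (for $i\le m$; if $i>m$ then $j\in\hat{\mathcal A}$ is impossible), we get $uj\leq \hat{\mathcal C}_\ell^*(Q')$. One must still check that the length of $u$ does not exceed the length of $\hat{\mathcal C}^*_\ell(Q)$. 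We argue this by the counting in Remark~\ref{remarklength}: the letters of $Q$ are at most $i\le m$, so the length of $\hat{\mathcal C}^*_\ell(Q)$ is the full column length of $Q$.

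\emph{Case $r<\ell$ (the main obstacle).} Here $\hat{\mathcal C}^*_r(Q')$ can strictly decrease: by Corollary~\ref{DesigualdadQ*} we only have $\hat{\mathcal C}_r^*(Q')\leq\hat{\mathcal C}_r^*(Q)$. The new bound is obtained from the old one by replacing (or deleting) the smallest letter of $\hat{\mathcal C}^*_r(Q)$ not in $\hat{\mathcal C}^*_\ell(Q)$. The plan is to use the structure of $v$ together with Proposition~\ref{propaddone}.

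If $v=uj$, then $v$ passes through the column-$\ell$ part of the path. Its portion $u'$ lying weakly right of column $\ell$ is controlled by $\hat{\mathcal C}^*_\ell(Q)$, while its remaining letters are controlled by $\hat{\mathcal C}^*_r(Q)$. The goal is to show that $v\leq\sup\{\hat{\mathcal C}^*_r(Q),\,\hat{\mathcal C}^*_\ell(Q)(\hat m+\hat1-\hat i)\}$ and that this supremum equals $\hat{\mathcal C}^*_r(Q')$. The second claim is the dual of Proposition~\ref{propaddone}, with the new letter now smallest rather than largest.

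If $v\in\hat W(P_r)$ but $w$ uses the bumped entries, we use Lemma~\ref{LemIns} to compare $w$ letter by letter with the path word $c_{k-1}\cdots c_0$. This should show that $w$ cannot occupy the single position where $\hat{\mathcal C}^*_r(Q')$ dropped. That position corresponds to a row at or below the path end in column $\ell$.

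I expect this matching of the deleted or replaced letter's position with the geometry of the insertion path to be the hardest step, and would attack it by induction on $\ell-r$, moving one column left at a time.
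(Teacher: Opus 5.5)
Your framework (criterion~3 of Proposition~\ref{propcriteria}, the dominating word $v$ of Proposition~\ref{propvuj}, Corollary~\ref{DesigualdadQstar}, and the split by $r$ versus $\ell$) is the paper's, and your case $r=\ell$ is fine. The case $r<\ell$, however, which is the heart of the proof, fails as planned.

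\textbf{The case $r<\ell$.} Set $a_0=\hat m+\hat 1-\hat i$. The identity you aim for, $\hat{\mathcal C}^*_r(Q')=\sup\{\hat{\mathcal C}^*_r(Q),\hat{\mathcal C}^*_\ell(Q)a_0\}$, is false. Indeed, $\hat{\mathcal C}^*_\ell(Q)$ is a subword of $\hat{\mathcal C}^*_r(Q)$, and every letter of $\hat{\mathcal C}^*_r(Q)$ is at least $a_0$ because all letters of $Q$ are at most $i$. So this supremum is just $\hat{\mathcal C}^*_r(Q)$, the \emph{old} bound. In the paper's example with $m=6$ and $r=1$, one has $\hat{\mathcal C}^*_1(Q)=\hat{5}\hat{4}\hat{3}\hat{2}$ but $\hat{\mathcal C}^*_1(Q')=\hat{5}\hat{4}\hat{3}\hat{1}$.

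Because the $*$-operation reverses the order, Proposition~\ref{propaddone} dualizes to a right-aligned letterwise \emph{minimum}, not a supremum. Write $\hat{\mathcal C}^*_\ell(Q)=a_{k-1}\cdots a_1$ and $\hat{\mathcal C}^*_r(Q)=b_t\cdots b_{s+1}b_sa_{s-1}\cdots a_1$. The new bound $b_t\cdots b_{s+1}a_{s-1}\cdots a_1a_0$ is strictly smaller at each of its last $s$ positions, not at a single one. So you must bound the letters of $w$ in those positions by the right-aligned letters of $\hat{\mathcal C}^*_\ell(Q)a_0$.

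The idea you are missing is a localization by rows:
\begin{enumerate}
\item Since $P'_r$ has $t$ rows, those letters of $w$ lie in the first $k$ rows, where the insertion path stays weakly right of column $\ell$.
\item Push each such letter lying weakly left of the path onto the path entry of its row (the construction of Proposition~\ref{propvuj2}). This yields a dominating word in $\hat W(P'_\ell)$.
\item That word is bounded by $\hat{\mathcal C}^*_\ell(Q')=\hat{\mathcal C}^*_\ell(Q)a_0$, by your case $r=\ell$ (equivalently, by admissibility of $(P,Q)$ at column $\ell$ together with $j\le a_0$).
\item At most $t-k$ letters of $w$ lie below row $k$, so the indices line up. The first $t-s$ positions are controlled by the unchanged letters $b_t,\dots,b_{s+1}$ via $w\le v$.
\end{enumerate}
The same row count controls the length when $i>m$, where the bound loses a letter instead. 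This is the paper's argument. Neither your supremum identity nor an induction on $\ell-r$ supplies it.

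\textbf{The case $r>\ell$.} You claim $v=uj$ cannot occur because $j$ is not in $P'_r$; this is false. The insertion path moves weakly left as it descends and ends in column $\ell$, so $j$ sits in row $1$ in a column $\ge\ell$, possibly $\ge r$. For instance, inserting $j=\hat{3}$ into $P$ with rows $\hat{1}\,\hat{2}\,\hat{4}$ and $\hat{3}$ bumps $\hat{4}$ to the end of row $2$, so $\ell=2$ while $j$ lies in $P'_3$.

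Your domination claim is nevertheless true, but it is not automatic. Replacing entries in place can break strict decrease: take a letter $y$ of row $s+1$ with $c_{s-1}<y\le c_s$, read just before the path entry $c_{s-1}$ of row $s$. There are two ways to repair this.
\begin{itemize}
\item Push such letters onto the path cell of their row and take the entry that $P$ has in that cell.
\item Argue as the paper does. If $i>m$, then $j\notin\hat{\mathcal A}$ and the subcase is void. If $i\le m$, then $u\le\hat{\mathcal C}^*_r(Q)$ by admissibility. Also $|uj|$ is at most the number of rows of $P'_r$, which by Remark~\ref{remarklength} is the length of $\hat{\mathcal C}^*_r(Q)$. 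Finally, every letter of $\hat{\mathcal C}^*_r(Q)$ is $\ge a_0\ge j$. Hence $uj\le\hat{\mathcal C}^*_r(Q)=\hat{\mathcal C}^*_r(Q')$.
\end{itemize}
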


\begin{proof}
  From Proposition~\ref{propcriteria}, we
  need to show that, for all $r$, any $w\in \hat W(P_r')$ is such that
$w \leq \hat {\mathcal C}_r^*(Q')$.
  By Proposition~\ref{propvuj}, we can find a $v \in \hat W(P_r')$ such that  $w \leq v$, where either  $v \in \hat W(P_r)$ or $v=uj$ with $u \in \hat W(P_r)$.  We assume throughout that $c_{k-1}, \ldots , c_{2}, c_{1}, c_{0} (=j)$
  is the insertion path of  $P'=P \leftarrow j$, and that $\ell$ is the column of the cell in $Q'/Q$.

Three observations will be used over and over, and it is worth isolating them once and for all.  First, the admissibility of $(P,Q)$ says that
  $\sup \hat W(P_r) \leq \hat {\mathcal C}_r^*(Q)$ (see Proposition~\ref{IgualdadKeys}), so that every $w' \in  \hat W(P_r)$
  satisfies $w' \leq \hat {\mathcal C}_r^*(Q)$.  Second, if $j \not \in \hat {\mathcal A}$ and $v=uj$, then $w \leq v$ forces $w\leq u$, all the letters of $w$ lying in $\hat {\mathcal A}$.  Third, $j \not \in \hat {\mathcal A}$ is automatic as soon as $i>m$: the admissibility of the biletter ${i \choose j}$ requires $j \leq \hat m+\hat 1-\hat i$, an inequality that no $j \in \hat{\mathcal A}$ can satisfy when $i>m$.

\smallskip
\noindent
{\it The case $i >m$ and $r \geq \ell$.} Corollary~\ref{DesigualdadQstar} gives here
  $\hat {\mathcal C}_r^*(Q')=\hat {\mathcal C}_r^*(Q)$.
  If $v \in \hat W(P_r)$, the first observation yields at once
  $w \leq v \leq \hat {\mathcal C}_r^*(Q)=\hat {\mathcal C}_r^*(Q')$.
If instead $v=uj$, the last two observations give $w\leq u$, and we conclude in the same way that $w \leq u \leq \hat {\mathcal C}_r^*(Q)=\hat {\mathcal C}_r^*(Q')$.

\noindent
The case $i>m$ and $r < \ell$ will be treated at the end of the proof.

\smallskip
\noindent
{\it The case $i \leq m$ and $r > \ell$.} Corollary~\ref{DesigualdadQstar} again gives  $\hat {\mathcal C}_r^*(Q')=\hat {\mathcal C}_r^*(Q)$, so that the situations $v\in \hat W(P_r)$, and $v=uj$ with $j \not \in \hat {\mathcal A}$, are settled exactly as above.  There remains the case $v=uj$ with $j \in \hat {\mathcal A}$.  Suppose then that $P_r$ has $s$ rows. We have that
$c_s c_{s-1} \cdots c_1 \in \hat W(P_r)$ since all the letters in the insertion path are larger than $j \in   \hat {\mathcal A}$.   We thus have that
$\hat {\mathcal C}_r^*(Q)=a_s a_{s-1} \cdots a_1$
for some word $a_s a_{s-1} \cdots a_1$
such that  $c_s c_{s-1} \cdots c_1\leq a_s a_{s-1} \cdots a_1$.
 Since $u$ has at most $s-1$ entries and $j =c_0 < c_1 \leq a_1$, we have that $u j \leq  a_s a_{s-1} \cdots a_1 $. Hence  $w \leq uj \leq  \hat {\mathcal C}_r^*(Q) = \hat {\mathcal C}_r^*(Q')$ as desired.

\smallskip
\noindent
{\it The case $i \leq m$ and $r=\ell$.} Corollary~\ref{DesigualdadQstar} now gives
 $\hat {\mathcal C}_\ell^*(Q')= \hat {\mathcal C}_\ell^*(Q) a_0$
with $a_0=\hat m + \hat 1-\hat i$, and in particular
$\hat {\mathcal C}_\ell^*(Q) \leq \hat {\mathcal C}_\ell^*(Q')$.
The chain $w\leq v \leq \hat {\mathcal C}_\ell^*(Q) \leq \hat {\mathcal C}_\ell^*(Q')$ thus disposes of the case $v\in \hat W(P_\ell)$, and the same chain with $u$ in place of $v$ disposes of the case $v=uj$ with $j \not \in \hat {\mathcal A}$.
We are left with $v=uj$ and $j\in \hat {\mathcal A}$,
 for which we immediately get that
$w \leq uj \leq \hat {\mathcal C}_r^*(Q)a_0$  since $j \leq a_0=\hat m + \hat 1-\hat i$ by the admissibility of ${i \choose j}$.

\smallskip
\noindent
{\it The case $i \leq m$ and $r < \ell$.} If column $r$ of $Q$ has length $t$,
 then we know by Remark~\ref{remarklength} (recall that we are in the case $i\leq m$) that $\hat {\mathcal C}_r^*(Q)$ is also of length $t$.
 Let $\hat {\mathcal C}_\ell^*(Q)= a_{k-1} \cdots a_1$
 and $\hat {\mathcal C}_r^*(Q)= b_t \cdots b_{s+1}b_s a_{s-1} \cdots a_1$ with $a_{s-1}<b_s < a_s$ ($\hat {\mathcal C}_\ell^*(Q)$ is a subword of $\hat {\mathcal C}_r^*(Q)$ since they are columns of a key tableau), so that
$b_s$ is the smallest letter in $\hat {\mathcal C}_r^*(Q)$ that is not in $\hat {\mathcal C}_\ell^*(Q)$.  
By Corollary~\ref{DesigualdadQstar}, we thus have in this case that
$$
\hat {\mathcal C}_r^*(Q')= b_t \cdots b_{s+1} a_{s-1} \cdots a_1 a_0,
$$
where $a_0=\hat m +\hat 1- \hat i$.
Suppose that $v \in \hat W(P_r)$. Given that $v \leq \hat {\mathcal C}_r^*(Q)$,
in order to show that
$v \leq \hat {\mathcal C}_r^*(Q')$  it suffices to check that if 
 $v=v_t \cdots v_s$, then $v_s \leq a_{s-1}$. Let $v'=v_k \cdots v_s$.
Given that $v_k$ lies in a row weakly above row $k$,  using the construction in Proposition~\ref{propvuj2}, we have that
$v_k \cdots v_s \leq x_k \cdots x_s$ with $x_k \cdots x_s \in \hat W(P_\ell)$.
This implies that $x_k \cdots x_s \leq \hat {\mathcal C}_\ell^*(Q)=a_{k-1} \cdots a_1$.
Hence $v_s \leq x_s \leq a_{s-1}$ as wanted, which gives that $w \leq v \leq \hat {\mathcal C}_r^*(Q')$.  If instead $v= uj$ with $u \in \hat W(P_r)$, we have similarly that $u \leq  \hat {\mathcal C}_r^*(Q')$.  Given that $j \leq  \hat m+\hat 1-\hat i=a_0$ by the statement of the theorem, we have that $w\leq uj \leq  \hat {\mathcal C}_r^*(Q')$. 

\smallskip
\noindent
{\it The case $i>m$ and $r < \ell$.} It remains to treat the case that we had set aside.  We have by Corollary~\ref{DesigualdadQstar}, that $\hat {\mathcal C}_r^*(Q') = \hat {\mathcal C}_r'$, where $\hat {\mathcal C}_r'$ is obtained from $\hat {\mathcal C}_r^*(Q)$ by deleting the smallest entry of 
$ {\mathcal C}_r^*(Q)$ not in $ {\mathcal C}_\ell^*(Q)$ (if this smallest entry belongs to $\hat{\mathcal{A}}$). If this entry is not in $\hat{\mathcal{A}}$, then $\hat {\mathcal C}_r' =\hat {\mathcal {C}}_r^*(Q)$ and we can proceed as in the case $i>m$ and $r \geq \ell$.  We thus only have left to prove the case when the smallest entry is in $\hat{\mathcal{A}}$.  As we will see, this case is very similar to the case $i\leq m$ and $r <\ell$.  Since the smallest entry of 
${\mathcal C}_r^*(Q)$ not in $ {\mathcal C}_\ell^*(Q)$ belongs to $\hat{\mathcal{A}}$, 
the entries of 
${\mathcal C}_r^*(Q)$ and $ {\mathcal C}_\ell^*(Q)$
in $\mathcal A \setminus \hat{\mathcal{A}}$ are exactly the same
(since those entries are the smallest entries).
Suppose that there are $p-1$ such entries in $\mathcal A \setminus \hat{\mathcal{A}}$.
We thus have
 $\hat {\mathcal C}_\ell^*(Q)= a_{k-1} \cdots a_p$
 and $\hat {\mathcal C}_r^*(Q)= b_t \cdots b_{s+1}b_s a_{s-1} \cdots a_p$ with $a_{s-1}<b_s < a_s$ ($\hat {\mathcal C}_\ell^*(Q)$ is a subword of $\hat {\mathcal C}_r^*(Q)$ since they are columns of a key tableau), so that
$b_s$ is the smallest letter in $\hat {\mathcal C}_r^*(Q)$ that is not in $\hat {\mathcal C}_\ell^*(Q)$.  
By Corollary~\ref{DesigualdadQstar}, we thus have in this case that
$$
\hat {\mathcal C}_r^*(Q')= b_t \cdots b_{s+1} a_{s-1} \cdots a_p.
$$
Suppose that $v \in \hat W(P_r)$. By the same argument as in the case $i\leq m$ and $r <\ell$, we can deduce that $w \leq v \leq \hat {\mathcal C}_r^*(Q')$.  If instead $v= uj$ with $u \in \hat W(P_r)$, we have similarly that $u \leq  \hat {\mathcal C}_r^*(Q')$.  Given that 
$j \not \in \hat {\mathcal A}$ (as in the case $i>m$ and $r \geq \ell$ above), it is immediate that
$w\leq u \leq  \hat {\mathcal C}_r^*(Q')$
since $w\leq v=uj$ and 
$u \in \hat W(P_r)$. 
\end{proof}

Before being able to show the backward implication ($\Longleftarrow$) in
\eqref{equivalence}, we need a few elementary results.  The first is an easy consequence of Lemma~\ref{SupKnuthEquivalent}.
\begin{lemma}
\label{LemaDesfasaje} Given
a tableau  $P$ and a letter $j$, let  $P'=P \leftarrow j$ be such that the insertion path ends in column $\ell$.  For all $r \leq \ell$ we have that
$\sup W(P_r) \leq \sup W(P_r')$ and that $\sup \hat W(P_r) \leq \sup \hat W(P_r')$.
\end{lemma}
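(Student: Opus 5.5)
The plan is to reduce everything to the Knuth invariance of Lemma~\ref{SupKnuthEquivalent}, applied not to $P$ and $P'$ themselves but to suitable tableaux built from their columns $r,\dots$. The key observation is that $P'_r$ is obtained from $P_r$ by a row insertion when $r\leq \ell$.

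Fix $r \leq \ell$. We claim that
\[
P'_r = \operatorname{rect}\bigl(w(P_r)\,c_{q}\bigr)
\]
for a suitable bumped letter $c_q$ of the insertion path. Here $c_q$ is the first letter of the path that enters the part of the tableau weakly to the right of column $r$. We argue as follows.

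\begin{itemize}
\item Columns of the insertion path move weakly to the left as one goes down. Hence the cells of the path lying in columns $\geq r$ form an initial segment $c_0,\dots,c_{q'}$ of the path.
\item Since $r\leq \ell$, this segment is nonempty.
\item Restricting the row insertion of $j$ to columns $\geq r$ gives a row insertion of $c_0=j$ into the skew shape formed by the columns $\geq r$. Row insertion is compatible with taking the right part: the bumped entry in a row is the leftmost entry strictly larger than the inserted letter, and that entry lies in columns $\geq r$ exactly when the path there does.
\end{itemize}

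The cleaner way to say this, which we will actually use, is at the level of words. Let $W(T)$ depend only on the Knuth class via Lemma~\ref{SupKnuthEquivalent}. Then it suffices to show that $W(P_r)$ is contained in $W(u)$ for some word $u$ Knuth equivalent to $w(P'_r)$. Indeed, a set inclusion $A\subseteq B$ of decreasing words gives $\sup A\leq \sup B$ directly from Definition~\ref{DefOrderWords}.

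Concretely, we want a word $u \equiv w(P'_r)$ that contains $w(P_r)$ as a subword. The natural candidate is $u = w(P_r)\,c$, where $c$ is the letter entering the region weakly right of column $r$ along the insertion path. The argument has three steps.

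\begin{enumerate}
\item The part of $P$ strictly to the left of column $r$ is affected by the insertion only in the lower rows of the path.
\item Using the row-by-row Knuth relation $w(\text{row})\,x \equiv (\text{bumped letter})\,w(\text{new row})$, we check row by row, from the top, that $w(P_r)\,c_0 \equiv w(P'_r)$ in the case where the whole path stays in columns $\geq r$. This holds automatically when $r\leq$ the column at the bottom of the path, i.e.\ $r \leq \ell$. The bottom cell lies in column $\ell$ and path columns weakly decrease downward, so every path cell lies in a column $\geq \ell \geq r$.
\item Hence the entire insertion happens inside $P_r$, and $P'_r = P_r \leftarrow j$ as skew tableaux. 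This in turn gives $w(P'_r)\equiv w(P_r)\,j$.
\end{enumerate}

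The main obstacle is step 3, and specifically making sure the skew insertion into $P_r$ (whose leftmost column may be longer than the others, but which is of straight-like "right-justified" shape) behaves exactly as the restriction of the ordinary insertion. We must also check that the Knuth relations used for row insertion, $w(R)\,x \equiv y\, w(R')$, continue to give $w(P'_r)\equiv w(P_r)\,j$ for the reading word of this skew shape. This reading word reads rows bottom to top, so the standard proof applies row by row. We would verify it by the usual induction on the rows of the path, noting that each row of $P_r$ is a suffix of the corresponding row of $P$ and that the bumped entry always lies in that suffix.

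Once $w(P'_r)\equiv w(P_r)\,j$ is established, we conclude as follows. Lemma~\ref{SupKnuthEquivalent} gives $\sup W(P'_r)=\sup W(w(P_r)\,j)$. We also have $W(P_r)\subseteq W(w(P_r)\,j)$, which yields $\sup W(P_r)\leq \sup W(P'_r)$.

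For the hatted version, restrict both words to the letters of $\hat{\mathcal A}$. Deleting all letters outside $\hat{\mathcal A}$ preserves Knuth equivalence, since $\hat{\mathcal A}$ is an upper interval of $\mathcal A$ and restriction to an interval is compatible with $(K')$ and $(K'')$. Moreover, $\hat W(T)$ is exactly $W$ of the restricted word. The same inclusion argument then gives $\sup \hat W(P_r)\leq \sup\hat W(P'_r)$.
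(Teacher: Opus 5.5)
Your proof is correct and is essentially the paper's. For $r\leq \ell$ the whole bumping route lies in columns $\geq \ell\geq r$, so $P'_r=P_r\leftarrow j$ and $w(P'_r)\equiv w(P_r)\,j$. Lemma~\ref{SupKnuthEquivalent} together with the inclusion $W(P_r)\subseteq W(w(P_r)\,j)$ then gives $\sup W(P_r)\leq \sup W(P'_r)$. For the hatted case, your argument via restriction to the upper interval $\hat{\mathcal A}$ is more explicit than the paper's ``the proof is the same''.

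The opening detour about a letter $c_q$ and your step 1 are unnecessary. Your own step 2 shows $c_q=j$, and the columns to the left of $r$ are in fact untouched, so step 1 is slightly off as stated. Also, $P_r$ is simply a tableau of straight shape.
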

\begin{proof}
If $r \leq \ell$, then $w(P_{r})j$ is Knuth equivalent to $w(P_{r}')$. Since $W(P_{r}) \subset W(P_{r}) \cup  W(P_{r})j$, we then have from Lemma~\ref{SupKnuthEquivalent} that
$$
\sup W(P_r)  \leq \sup \bigl(  W(P_{r}) \cup  W(P_{r})j \bigr)= \sup W(P_r').
$$
The proof is the same when we replace $W$ by $\hat W$.
\end{proof}
The next lemma will allow us to assume that the length of the words in $w \in \hat W(P_r)$ is not larger than the length of $ \hat {\mathcal C}_r^*(Q)$ when we insert a letter $i \leq m$ in $Q$.

\begin{lemma}
  \label{PropReciproca0}
  Let $(P',Q')=   (P,Q) \leftarrow {i \choose j}$ be RSK-compatible, with $i \leq m$.
  Suppose that, for a given $r$, there exists a word $w \in \hat W(P_r)$
 whose length is larger than the length of $ \hat {\mathcal C}_r^*(Q)$.
    Then the pair $(P',Q')$ is not admissible.
\end{lemma}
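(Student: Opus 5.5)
Since $i\le m$, Remark~\ref{remarklength} shows that $\hat{\mathcal C}_r^*(Q)$ has length equal to the length of column $r$ of $Q$, say $t$. The same remark applied to $Q'$, whose entries are also at most $i\le m$, shows that $\hat{\mathcal C}_r^*(Q')$ has length equal to the length of column $r$ of $Q'$. Column $r$ of $Q'$ equals column $r$ of $Q$ if $r\neq\ell$, and has length $t+1$ if $r=\ell$.

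The plan is to produce a word in $\hat W(P'_r)$ that is longer than $\hat{\mathcal C}_r^*(Q')$. This violates criterion 3 of Proposition~\ref{propcriteria}, since a word cannot be $\leq$ a shorter word. So $(P',Q')$ is not admissible.

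\emph{Step 1 (transport the long word to $P'$).} The given $w\in\hat W(P_r)$ has length at least $t+1$. For $r\le\ell$, Lemma~\ref{LemaDesfasaje} gives $\sup\hat W(P_r)\le\sup\hat W(P'_r)$. Hence $\sup\hat W(P'_r)$ has length at least $t+1$. For $r>\ell$, the insertion path lies weakly left of column $\ell$ in every row, so $P_r$ and $P'_r$ differ only where letters of the path are replaced. I would argue directly that row insertion replaces each bumped entry $c_{s}$ by the smaller $c_{s-1}$ only at positions left of $c_s$'s original column. Therefore $P_r$ and $P'_r$ coincide when $r>\ell$, because the bumped positions lie in columns $\le \ell$ (the path moves weakly left going down and ends in column $\ell$). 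Then $\hat W(P'_r)=\hat W(P_r)$. In both cases $\sup\hat W(P'_r)$ has length at least $t+1$.

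\emph{Step 2 (conclude except when $r=\ell$).} For $r\neq\ell$, the word $\hat{\mathcal C}_r^*(Q')$ has length $t$. So $\sup\hat W(P'_r)\not\le\hat{\mathcal C}_r^*(Q')$, and we are done.

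\emph{Step 3 (the case $r=\ell$, main obstacle).} Here the length of $\hat{\mathcal C}_\ell^*(Q')$ grows to $t+1$, so I need a word in $\hat W(P'_\ell)$ of length at least $t+2$. Since $P'_\ell$ gains exactly one cell (row $k$, column $\ell$), I would first try Step 1 with $r=\ell$. The decreasing words in $w(P_\ell)j$ include $w'j$ whenever $w'\in\hat W(P_\ell)$ ends in letters greater than $j$. If $j\in\hat{\mathcal A}$ is smaller than the last letter of the long word $w$, Knuth invariance (Lemma~\ref{SupKnuthEquivalent}) gives a decreasing subword of length $t+2$.

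If this fails, I would instead exploit that column $\ell$ of $P$ has length $k-1$ while $P'_\ell$ has column $\ell$ of length $k$. A long word in $\hat W(P_\ell)$ combined with the new cell (entry $c_{k-1}$) should extend. I would use Lemma~\ref{LemIns} and Proposition~\ref{propvuj} to compare with the path. A fallback is to show that in this configuration column $\ell-1$, or some column $r<\ell$, already carries a too-long word, whose length exceeds the unchanged $\hat{\mathcal C}_r^*(Q')$. This uses $\hat W(P_\ell)\subseteq\hat W(P_{r})$ together with the fact that $Q$'s columns to the left are at least as long but their $\hat{\mathcal C}^*$ lengths are exactly the column lengths. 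I expect this $r=\ell$ case to need the most care.
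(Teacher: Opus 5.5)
Step 3 does not give a proof, and what it aims to build cannot exist. Column $\ell$ of $P'$ has the length $t+1$ of column $\ell$ of $Q'$, so $P'_\ell$ has $t+1$ rows. A strictly decreasing subword of a row reading word uses at most one letter per row, because rows are weakly increasing and are read left to right. Hence $\hat W(P'_\ell)$ contains no word of length $t+2$.

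This same observation is the idea missing from the whole argument. $P_r$ has exactly as many rows as column $r$ of $P$. That is the length $t$ of column $r$ of $Q$, since $P$ and $Q$ have the same shape. By Remark~\ref{remarklength}, applicable because every entry of $Q$ is at most $i\le m$, this is also the length of $\hat{\mathcal C}_r^*(Q)$. So no $w\in\hat W(P_r)$ can be longer than $\hat{\mathcal C}_r^*(Q)$: the hypothesis of the lemma is never satisfied, and the statement holds vacuously. That remark is the paper's entire proof, and it makes your case analysis unnecessary. The one branch of your Step 3 that closes (appending $j$ to $w$ to get a word of length $t+2$) closes only because its premise is already contradictory.

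Separately, your justification in Step 1 for $r>\ell$ is wrong. The bumping route moves weakly left as it goes down and ends in column $\ell$. So in the upper rows it lies weakly to the right of column $\ell$, and $P_r\neq P'_r$ is possible for $r>\ell$.

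For example, insert $j=1$ into the tableau with rows $1\,\hat 1$ and $\hat 2$. The result $P'$ has rows $1\,1$, $\hat 1$, $\hat 2$, so $\ell=1$. But $P_2$ consists of $\hat 1$ while $P'_2$ consists of $1$, so the hatted word of length one in $\hat W(P_2)$ has no counterpart in $\hat W(P'_2)$. The transfer of long hatted words from $P_r$ to $P'_r$ claimed in Step 1 therefore fails in general for $r>\ell$. This is exactly why the paper's Proposition~\ref{PropReciproca} has to pass through $\hat W(P'_\ell)$ in that case.
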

\begin{proof}
  By Remark~\ref{remarklength}, if $i \leq m$,
  we have that the length of $\hat {\mathcal C}_r^*(Q)$ is equal to the length of column $r$ of $Q$. Since $P$ and $Q$ have the same shape, it is thus impossible to find a word $w \in \hat W(P_r)$ whose length is larger than the length of 
$ \hat {\mathcal C}_r^*(Q)$.
\end{proof}  

The next two propositions will prove the backward implication 
($\Longleftarrow$) in  \eqref{equivalence}.
\begin{proposition}
  \label{PropReciproca}
 Let $(P',Q')=   (P,Q) \leftarrow {i \choose j}$ be RSK-compatible.
If the pair $(P,Q)$ is not admissible, then neither is the pair $(P',Q')$.
\end{proposition}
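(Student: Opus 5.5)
We prove the contrapositive through criterion (3) of Proposition~\ref{propcriteria}. Assume $(P,Q)$ is not admissible. Then there are an index $r$ and a word $w\in \hat W(P_r)$ with $w\not\leq \hat{\mathcal C}_r^*(Q)$. Our goal is to produce a word $w'\in\hat W(P'_r)$ with $w'\not\leq \hat{\mathcal C}_r^*(Q')$. Let $\ell$ be the column of the cell $Q'/Q$; since $P'$ and $Q'$ have the same shape, this is also the column in which the insertion path of $P\leftarrow j$ ends. The argument splits according to whether $r\le\ell$ or $r>\ell$, and according to whether $i\le m$ or $i>m$, using Corollary~\ref{DesigualdadQstar} to describe $\hat{\mathcal C}_r^*(Q')$.

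\emph{Case $r\le \ell$.} Lemma~\ref{LemaDesfasaje} gives $\sup\hat W(P_r)\le\sup\hat W(P'_r)$.

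If $r\neq\ell$, Corollary~\ref{DesigualdadQ*} gives $\hat{\mathcal C}_r^*(Q')\le\hat{\mathcal C}_r^*(Q)$. Since $\sup\hat W(P_r)\not\le \hat{\mathcal C}_r^*(Q)$, we get $\sup\hat W(P'_r)\not\le\hat{\mathcal C}_r^*(Q')$, and $(P',Q')$ is not admissible.

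If $r=\ell$ and $i>m$, Corollary~\ref{DesigualdadQstar} gives $\hat{\mathcal C}_\ell^*(Q')=\hat{\mathcal C}_\ell^*(Q)$, and the same argument applies.

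If $r=\ell$ and $i\le m$, then $\hat{\mathcal C}_\ell^*(Q')=\hat{\mathcal C}_\ell^*(Q)a_0$, which is longer than $\hat{\mathcal C}_\ell^*(Q)$. The failure $w\not\le\hat{\mathcal C}_\ell^*(Q)$ comes either from a letter comparison $w_p>c_p$ at some position $p$, or from $w$ being too long. A too-long $w$ is excluded by Lemma~\ref{PropReciproca0}. A letter comparison failing at a position $p\le$ the length of $\hat{\mathcal C}_\ell^*(Q)$ persists in $\hat{\mathcal C}_\ell^*(Q)a_0$, since appending $a_0$ at the end leaves the first letters unchanged. Hence $w$ still witnesses non-admissibility, because $w\in\hat W(P'_\ell)$ up to the comparison furnished by Lemma~\ref{LemaDesfasaje}. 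Throughout this case we use that the supremum is taken letter by letter, so a failure at one position of the supremum is realized by some single word of $\hat W(P'_\ell)$.

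\emph{Case $r>\ell$.} Here $P_r$ is changed by the insertion only through the bumping of entries into rows below. Corollary~\ref{DesigualdadQstar} gives $\hat{\mathcal C}_r^*(Q')=\hat{\mathcal C}_r^*(Q)$ in both cases $i\le m$ and $i>m$. It therefore suffices to show $\sup\hat W(P_r)\le\sup\hat W(P'_r)$ again.

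The plan is to map each $w\in\hat W(P_r)$ to a word of $\hat W(P'_r)$ that dominates it. If an entry of $w$ is an entry $c_s$ that was bumped out of the columns $\ge r$, we replace it by the entry now sitting in that cell. That entry is $c_{s-1}$, or $j=c_0$ in the first row, and it is larger than or equal to nothing smaller in the relevant comparison. The rest of $w$ we keep, adjusting as in Proposition~\ref{propvuj} but in reverse.

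This is the step I expect to be the main obstacle. Replacing $c_s$ by the smaller letter $c_{s-1}$ lowers the word, so instead I would shift the relevant letters one row down. Concretely, $c_s$ now occupies a cell of row $s+1$ in a column $\le$ its old column, and that cell must be shown to still lie in $P'_r$. The insertion path moves weakly left as it goes down, so this can fail, and when it does I would argue with a column argument for the shape. Failing that, I would use Lemma~\ref{LemIns} together with the fact that columns $\ge r$ of the key $K_+(P)$ can only grow under insertion, which is the analogue of Corollary~\ref{DesigualdadQ} for $P$ obtained from Lemma~\ref{SupKnuthEquivalent}, since $w(P_r')$ differs from $w(P_r)$ by Knuth moves plus deletion.
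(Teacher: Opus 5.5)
Your cases $r\le\ell$ are essentially the paper's argument and are fine. The gap is the case $r>\ell$.

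\textbf{The reduction fails.} You reduce this case to $\sup\hat W(P_r)\leq\sup\hat W(P'_r)$, and that inequality is false. Your fallback, that the columns $r>\ell$ of $K_+(P)$ can only grow, is also false. The Knuth argument behind Lemma~\ref{LemaDesfasaje} rests on $P'_r=P_r\leftarrow j$, which holds only for $r\leq\ell$. For $r>\ell$ the path entries of $P_r$ are simply replaced by smaller ones.

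Worse, a failure at a column $r>\ell$ need not survive at column $r$ at all. Take $m=2$, $N=3$, $P$ the one-row tableau $1\,\hat 2$, $Q$ the one-row tableau $1\,2$, and the (admissible) biletter with $i=3$, $j=1$. Then $\hat 2$ is bumped into the cell $(2,1)$, so $\ell=1$. The tableau $P'$ has rows $1\,1$ and $\hat 2$, and $Q'$ has rows $1\,2$ and $3$.
\begin{itemize}
\item Column $2$ fails for $(P,Q)$, since $\sup\hat W(P_2)=\hat 2\not\leq\hat 1=\hat{\mathcal C}_2^*(Q)$.
\item Column $2$ holds for $(P',Q')$, since $P'_2$ contains no hatted letter while $\hat{\mathcal C}_2^*(Q')=\hat 1$.
\item The pair $(P',Q')$ is non-admissible only through column $1$, where $\sup\hat W(P'_1)=\hat 2\not\leq\hat 1=\hat{\mathcal C}_1^*(Q')$.
\end{itemize}
Your own remark that the path drifts left out of $P'_r$ is exactly the reason: the failure migrates to column $\ell$. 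So no argument that keeps the column $r$ fixed can work.

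\textbf{The missing idea} is to follow the witness down the insertion path into column $\ell$. Fix $s$ with $(\sup\hat W(P_r))_s>(\hat{\mathcal C}_r^*(Q))_s$, and take $w=w_s\cdots w_1\in\hat W(P_r)$ with $w_1=(\sup\hat W(P_r))_s$. Here $w_s>\cdots>w_1$ and $w_1$ lies in the highest row.
\begin{itemize}
\item If $w$ avoids the insertion path, then $w\in\hat W(P'_r)$. Since $\hat{\mathcal C}_r^*(Q')=\hat{\mathcal C}_r^*(Q)$, column $r$ still fails.
\item Otherwise, let $w_t$ be the letter of $w$ on the path lying in the highest row, say $w_t=c_{p-1}$ in row $p-1$. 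In $P$ the entries $c_{k-1},\dots,c_p$ occupy rows $k-1,\dots,p$ in columns $\geq\ell$. Hence $c_{k-1}\cdots c_p\,w_t\cdots w_1\in\hat W(P_\ell)$; this word has length $k-p+t$ and ends with $w_1$.
\end{itemize}
Now count. Let $q$ be the length of column $r$. At most $q-p+1$ letters of $w$ lie below row $p-1$, so $s+k-1-q\leq k-p+t$. Column $\ell$ of $Q$ has length $k-1$, so $\hat{\mathcal C}_r^*(Q)$ is a subword of $\hat{\mathcal C}_\ell^*(Q)$ with at most $k-1-q$ extra letters. Hence $(\hat{\mathcal C}_r^*(Q))_s\geq(\hat{\mathcal C}_\ell^*(Q))_{s+k-1-q}$, a missing letter counting as $-\infty$. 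Therefore
\[
(\sup\hat W(P_\ell))_{s+k-1-q}\geq w_1>(\hat{\mathcal C}_\ell^*(Q))_{s+k-1-q}.
\]
So the pair $(P,Q)$ already fails at column $\ell$, and your case $r=\ell$ concludes.

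The paper performs the same splice, $u=c_{k-1}\cdots c_p w_t\cdots w_1$, directly in $\hat W(P'_\ell)$ and compares with $\hat{\mathcal C}_\ell^*(Q')$. Doing the row count in $P$, as above, is a little more robust. The reason is that a letter of $w$ lying in row $p$ of $P$ to the right of the path does not move down in $P'$, so the paper's count in $P'$ can be off by one there.
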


\begin{proof}

For a decreasing word $w=w_1 \cdots w_\ell$, it  will be convenient to use the notation  $(w)_s=w_s$.  Throughout the proof, $\ell$ denotes the column of the cell in $Q'/Q$, and we suppose that $\sup \hat W(P_r) \not\leq \hat {\mathcal C}_r^*(Q)$ for some $r$.  The three possible positions of $r$ relative to $\ell$ are treated separately.

\smallskip
\noindent
{\it The case $r < \ell$.}  Lemma~\ref{LemaDesfasaje} gives 
   $
    \sup \hat W(P_r)  \leq  \sup \hat W(P_{r}') 
    $.
    This implies that $\sup \hat W (P_r') \not\leq \hat {\mathcal C}_r^*(Q') $ since otherwise we would get the contradiction from Corollary~\ref{DesigualdadQstar}
that
    $$
\sup \hat W(P_r) \leq \sup \hat W(P_r' ) \leq \hat {\mathcal C}_r^*(Q') 
\leq \hat {\mathcal C}_r^*(Q).
$$

\smallskip
\noindent
{\it The case $r=\ell$.} We have again that $w(P_{\ell}')$ is Knuth equivalent to $w(P_{\ell})j$, which implies that $\sup \hat W(P_\ell)  \leq \sup \hat W(P_{\ell}')$  by Lemma~\ref{LemaDesfasaje}.
If we suppose that $\sup \hat W (P_\ell') \leq  \hat {\mathcal C}_r^*(Q')$,
then  we get from Corollary~\ref{DesigualdadQstar} that
    $$
\sup \hat W(P_\ell) \leq \sup \hat W(P_\ell' ) \leq  \hat {\mathcal C}_\ell^*(Q') 
 =  \hat {\mathcal C}_\ell^*(Q) (\hat m+\hat 1-\hat i) \qquad {\rm if~} i \leq m
$$
or
 $$
\sup \hat W(P_\ell) \leq \sup \hat W(P_\ell' ) \leq  \hat {\mathcal C}_\ell^*(Q')  =  \hat {\mathcal C}_\ell^*(Q)  \qquad {\rm if~} i > m.
$$

In the first case, using the previous lemma, we can assume that there is no word $w \in \hat W(P_r)$ whose length is larger than the length of $ \hat {\mathcal C}_r^*(Q)$ since otherwise  $(P',Q')$ would not be admissible. Hence we can assume that, for all $r$,  the length of $\sup \hat W(P_r )$  is not larger than the length of $ \hat {\mathcal C}_r^*(Q)$.
In both cases, it leads to the contradiction that
$\sup \hat W(P_\ell) \leq  \hat {\mathcal C}_\ell^*(Q) $ from which we conclude that $\sup \hat W (P_\ell') \not\leq  \hat {\mathcal C}_\ell^*(Q')$.

\smallskip
\noindent
{\it The case $r > \ell$.}
Since $\sup \hat W(P_r) \not\leq \hat {\mathcal C}_r^*(Q) $, we can suppose that
$( \sup \hat W(P_r))_{s} > (  \hat {\mathcal C}_r^*(Q) )_{s}$ for some $s$. Let  $w=w_{s} \ldots w_{1} \in \hat W(P_r)$ be such that  $(w)_s=w_{1} = ( \sup \hat W(P_r))_{s} $.
  Suppose first that $w$ does not intersect the insertion path. Then $w \in \hat W(P_r')$, so that
$$
  ( \sup \hat W(P_r') )_{s} \geq (w)_{s} = ( \sup \hat W(P_r) )_{s} >
( \hat {\mathcal C}_r^*(Q) )_{s} \geq  ( \hat {\mathcal C}_r^*(Q') )_{s}
$$
from Corollary~\ref{DesigualdadQstar}. We thus have that
$\sup \hat W (P_r') \not\leq  \hat {\mathcal C}_r^*(Q')$ in that case.

Suppose now, on the contrary, that $w$ intersects the insertion path $c_{k-1} \ldots c_{1}$ in $P$. Let $t$ be the smallest integer such that
    $w_{t}$  intersects the insertion path, and suppose that $w_t=c_{p-1}$. Observe that the entry $w_t$ lies in row  $p-1$  in  $P$ while it lies in row $p$ in $P'$.
    
We consider the word $u=c_{k-1} \cdots c_p w_t \cdots w_1$ of length  $(k-p)+t$.   The word $u$ is decreasing since
$c_{k-1} \cdots c_1$ is decreasing, $w$ is decreasing and $c_p > c_{p-1} =w_t$. We also have that  $u \in \hat W(P_\ell')$ since $w_t$ lies in the row above that of $c_p$ in $P'$ and since 
$w_{t-1},\dots, w_1$ belong to $P'$ given that they do not intersect the insertion path.

Suppose that $P_r$ has $q$ rows.
Within rows  $q$ and $p+1$ of $P'$  lie the decreasing words  $c_{q-1} \ldots c_p$ and $w_{s} \dots w_{t+1}$. Since the former has a letter in each of those rows, we have that
 $ s-t \leq q-p $, or equivalently, that $k-q+s \leq k-p+t$. Since $u$ is a decreasing word, we thus have that
$ (u)_{k-q+s} \geq (u)_{k-p+t} =w_{1}=(w)_s$.  Because $u \in \hat W(P_\ell')$,  this means that   $(\sup \hat W(P_\ell') )_{k-q+s} \geq  (u)_{k-q+s} \geq (w)_s$.  Hence, Proposition~\ref{PropInsercion} yields that
$$
(\sup \hat W(P_\ell') )_{k-q+s}   \geq (w)_{s} >
 (\hat {\mathcal C}_r^*(Q))_s   \geq  (\hat {\mathcal C}_r^*(Q'))_s \geq  (\hat {\mathcal C}_\ell^*(Q'))_{k-q+s},
$$
which implies that
$\sup \hat W(P_\ell') \not \leq  \hat {\mathcal C}_\ell^*(Q')$.
Note that $ (\hat {\mathcal C}_r^*(Q'))_s \geq  (\hat {\mathcal C}_\ell^*(Q'))_{k-q+s}$
since $\hat {\mathcal C}_r^*(Q')$ is a subword of $\hat {\mathcal C}_\ell^*(Q')$
and since the difference in length between the two words is 
at most $k-q$.
\end{proof}

\begin{example}
    Let $m=3$,
 $$
\left(P , Q \right)=  \left(  \tableau[scY]{   \hat{2} & \hat{3}  \\  \hat{3}    
} , 
 \tableau[scY]{  1 & 1  \\  3     
}
\right) \text{ and } (i,j)=(4,1).
$$
Then $\mathcal{C}^{*}_{1}(Q)= \hat{3}\hat{1}$ so $\hat{3}\hat{2} \in W (P_{1})$ breaks the condition.
$$
(P',Q') = \left( \tableau[scY]{   {\color{blue}  1 } & \hat{3}  \\   {\color{blue} \hat{2}}  \\   {\color{blue} \hat{3}}  
} ,  \tableau[scY]{  1 & 1  \\  3   \\ 4
}
\right)
$$
Observe that $(4,1)$ does not create any problem, being an admissible biletter. We can pick the word $w$ highlighted in red $ \tableau[scY]{    1  & \hat{3}  \\   {\color{red} \hat{2}}  \\   {\color{red} \hat{3}}
}$. Observe $\mathcal{C}^{*}_{1}(Q')= \hat{3}\hat{1}$ so $\hat{3}\hat{2} \in W (P_{1}')$ breaks the condition.

\end{example}

\begin{example}
    Let $m=3$,
 $$
\left(P , Q \right)=  \left(  \tableau[scY]{   \hat{2} & \hat{3}  \\  \hat{3}    
} , 
 \tableau[scY]{  1 & 1  \\  4     
}
\right) \text{ and } (i,j)=(5,1).
$$
Then $\mathcal{C}^{*}_{1}(Q)=\hat{3}$ so $\hat{3}\hat{2} \in W (P_{1})$ breaks the condition because it is larger than it.
$$
(P',Q') = \left( \tableau[scY]{   {\color{blue}  1 } & \hat{3}  \\   {\color{blue} \hat{2}}  \\   {\color{blue} \hat{3}}  
} ,  \tableau[scY]{  1 & 1  \\  4   \\ 5
}
\right)
$$
Observe that $(5,1)$ does not create any problem, being an admissible biletter. We can pick the word $w$ highlighted in red $ \tableau[scY]{    1  & \hat{3}  \\   {\color{red} \hat{2}}  \\   {\color{red} \hat{3}}
}$. Observe that $\mathcal{C}^{*}_{1}(Q')= \hat{3}$ so $\hat{3}\hat{2} \in W (P_{1}')$ breaks the condition.

\end{example}

\begin{proposition}
  \label{LimiteBuenoYMalo}
 Let $(P',Q')=   (P,Q) \leftarrow {i \choose j}$ be RSK-compatible. 
 If  the biletter ${i \choose j}$ is not admissible then neither is the pair $(P',Q')$.
\end{proposition}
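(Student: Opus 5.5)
The plan is to exhibit, directly in $P'_\ell$, a decreasing word of hatted letters that violates criterion \emph{3} of Proposition~\ref{propcriteria} in column $\ell$, where $\ell$ is the column of the cell in $Q'/Q$ (equivalently, the column where the insertion path of $P\leftarrow j$ ends). If ${i \choose j}$ is not admissible, then $j\in\hat{\mathcal A}$ and $j>\hat m+\hat 1-\hat i$. When $i>m$, the letter $\hat m+\hat 1-\hat i$ is not a letter of $\hat{\mathcal A}$, so this condition just says $j\in\hat{\mathcal A}$.

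\textbf{Step 1: the insertion path gives a long word in $\hat W(P'_\ell)$.} Let $c_{k-1},\dots,c_1,c_0(=j)$ be the insertion path, ending in row $k$ and column $\ell$. Every bumped letter is larger than the letter that bumps it, so $c_{k-1}>\dots>c_1>c_0=j$, and all the $c_s$ lie in $\hat{\mathcal A}$ because $\hat{\mathcal A}$ consists of the largest letters of $\mathcal A$. By the standard property of row insertion, the path moves weakly to the left as it goes down and ends in column $\ell$. Hence all its cells lie in columns $\geq \ell$, one in each of rows $1,\dots,k$ of $P'$. Reading $w(P')$ from bottom to top, the word $c_{k-1}\cdots c_1c_0$ is therefore a decreasing subword of $w(P'_\ell)$ made only of hatted letters, so $c_{k-1}\cdots c_0\in\hat W(P'_\ell)$. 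This gives that $\sup\hat W(P'_\ell)$ has length at least $k$ and that its $k$-th letter is $\geq c_0=j$.

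\textbf{Step 2: compare with $\hat{\mathcal C}^*_\ell(Q')$ using Corollary~\ref{DesigualdadQstar}.} Column $\ell$ of $Q$ has length $k-1$, and column $\ell$ of $Q'$ has length $k$. Since $\hat{\mathcal C}^*_\ell(Q)$ comes from a column of $K_+(Q)$, which has the same shape as $Q$, its length is at most $k-1$.
\begin{itemize}
\item If $i>m$, Corollary~\ref{DesigualdadQstar} gives $\hat{\mathcal C}^*_\ell(Q')=\hat{\mathcal C}^*_\ell(Q)$. This word has length at most $k-1<k$, so $\sup\hat W(P'_\ell)\not\leq\hat{\mathcal C}^*_\ell(Q')$ for reasons of length alone.
\item If $i\leq m$, Corollary~\ref{DesigualdadQstar} gives $\hat{\mathcal C}^*_\ell(Q')=\hat{\mathcal C}^*_\ell(Q)\,a_0$ with $a_0=\hat m+\hat 1-\hat i$. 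If this word has length $<k$, we conclude as before. Otherwise its length is exactly $k$, and its $k$-th (smallest) letter is $a_0<j\leq(\sup\hat W(P'_\ell))_k$. So again $\sup\hat W(P'_\ell)\not\leq\hat{\mathcal C}^*_\ell(Q')$.
\end{itemize}
In both cases criterion \emph{3} of Proposition~\ref{propcriteria} fails at $r=\ell$, so $(P',Q')$ is not admissible.

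\textbf{Main obstacle.} The only delicate point is Step 1: making sure the whole path lies in $P'_\ell$ and forms a subword of the reading word in the right order. This rests on the path moving weakly left, together with the bottom-to-top reading order. The comparison in Step 2 then reduces to the length bound plus the fact that $a_0$ is appended as the smallest letter of $\hat{\mathcal C}^*_\ell(Q')$.
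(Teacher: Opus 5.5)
Your proposal is correct and is essentially the paper's proof: the insertion path $c_{k-1}\cdots c_1 j$ is a hatted decreasing word of length $k$ in $\hat W(P'_\ell)$, and Corollary~\ref{DesigualdadQstar} shows that $\hat{\mathcal C}^*_\ell(Q')$ is either too short (when $i>m$) or has $k$-th letter $\hat m+\hat 1-\hat i<j$ (when $i\leq m$). Your extra sub-case for $i\leq m$, in which $\hat{\mathcal C}^*_\ell(Q')$ would have length less than $k$, is in fact vacuous by Remark~\ref{remarklength}, but including it does no harm.
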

\begin{proof}
First observe that  $j \in \hat {\mathcal A}$ since ${i \choose j}$ is not admissible.
Now, let $c_{k-1},\dots,c_1,c_0(=j)$ be the insertion path.
Given that $j \in \hat {\mathcal A}$,
$c_{k-1} \dots c_{1}j$ is a word of length $k$ in $\hat W(P_\ell')$, where
$\ell$ is the column of the cell in $Q'/Q$.
If $i \leq m$, we then have that $j > \hat m+\hat 1-\hat i$ since ${i \choose j}$ is not admissible. Hence,   $(P',Q')$ is not admissible since
$(\sup \hat W(P_\ell') )_{k} \geq j > \hat m+\hat 1-\hat i = (\hat {\mathcal C}_\ell^*(Q'))_k$ by Corollary~\ref{DesigualdadQstar}.
Finally, if $i>m$, we have  that the length of $\hat {\mathcal C}_\ell^*(Q')$ is smaller than $k$ by
Corollary~\ref{DesigualdadQstar} (which says that the length of  $\hat {\mathcal C}_\ell^*(Q')$ is that of  $\hat {\mathcal C}_\ell^*(Q)$).  Hence,
$(P',Q')$ cannot be admissible
given that
$\sup \hat W(P_\ell')$ has length $k$. 
\end{proof}

\subsection{The RSK correspondence and its consequences}

Having established \eqref{equivalence} one insertion at a time, we can now iterate and reap the consequences.

An  admissible biword is a biword of the form
$
\left(
\begin{array}{cccc}
i_1 & i_2 & \cdots & i_r \\
j_1 & j_2 & \cdots & j_r
\end{array}  
\right)
$
where every biletter ${i_k \choose j_k}$ is admissible.  The order of the biletters within a biword is immaterial; the biword is said to be in lexicographic order if
$i_1 \leq i_2 \leq \cdots \leq i_r$ and $j_k\leq j_{k+1}$ whenever $i_k = i_{k+1}$.

The following families of biwords will be useful.
\begin{itemize}
\item  $B_{{\mathcal A}}$ is the set of biwords in the admissible biletters  ${i \choose j}$
\item  $B_{\hat {\mathcal A}}$ is the set of biwords in the admissible biletters  ${i \choose j}$  such that $j \in \hat {\mathcal A}$
\item  $B_{\{1,\dots,N \}}$ is the set of biwords in the biletters ${i \choose j}$ such  that $i,j\in \{1,\dots,N \}$
\item  $ {\bar B_{{\mathcal A}}}$ is the set of biwords in the biletters ${i \choose j}$ such  that $i\in \{1,\dots,N \}$, and $j \in \mathcal A$ (without any admissibility condition).
\end{itemize}

The RSK correspondence provides a bijection between the set of biwords in $\bar B_{\mathcal A}$
and pairs of tableaux of the same shape in ${\rm Tab}_{\mathcal A} \times {\rm Tab}_{\{1,\dots,N\} }$.  From the biword $
\left(
\begin{array}{cccc}
i_1 & i_2 & \cdots & i_r \\
j_1 & j_2 & \cdots & j_r
\end{array}  
\right)
$ in lexicographic order, one builds successively a pair $(P,Q)=(P^{(r)},Q^{(r)})$
from the empty pair by repeatedly applying the RSK insertion
$(P^{(k)},Q^{(k)})=(P^{(k-1)},Q^{(k-1)}) \leftarrow {i_k \choose j_k}$.  The lexicographic order guarantees that each of these steps is RSK-compatible.

Recalling that $G_m$ stands for the set of admissible pairs of tableaux, our previous results can now be assembled into the following statement.
\begin{theorem}
\label{TeoremaKeys}
The RSK correspondence, when restricted to $B_{{\mathcal A}}$,  provides a bijection between the sets $B_{{\mathcal A}}$ and $G_{m}$.
\end{theorem}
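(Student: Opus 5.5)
The plan is to deduce the theorem from the full RSK bijection between $\bar B_{\mathcal A}$ and pairs of tableaux of the same shape in ${\rm Tab}_{\mathcal A}\times{\rm Tab}_{\{1,\dots,N\}}$. Since $B_{\mathcal A}\subseteq \bar B_{\mathcal A}$ and $G_m$ is a subset of those pairs, the restricted map is automatically injective. It therefore suffices to show that a biword in $\bar B_{\mathcal A}$ lies in $B_{\mathcal A}$ if and only if its RSK image lies in $G_m$. Both directions will come from iterating \eqref{equivalence} along the sequence of insertions $(P^{(k)},Q^{(k)})=(P^{(k-1)},Q^{(k-1)})\leftarrow {i_k \choose j_k}$. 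Here the biword is taken in lexicographic order, so that each step is RSK-compatible.

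First I check the base case. The empty pair $(\emptyset,\emptyset)$ is admissible, since all conditions in Proposition~\ref{propcriteria} are vacuous; it lies in $\mathcal S(\Lambda)\times\mathcal S^*(\Lambda)$ for $\Lambda=(0^m;\emptyset)$.

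For the forward direction, suppose every biletter is admissible. By induction on $k$, Proposition~\ref{PropInsPQ} shows that $(P^{(k)},Q^{(k)})$ is admissible for all $k$, hence $(P,Q)\in G_m$.

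For the reverse direction, start from $(P,Q)\in G_m$ and let $w\in\bar B_{\mathcal A}$ be its RSK preimage, which exists because RSK is surjective onto pairs of the same shape. Suppose some biletter ${i_k\choose j_k}$ is not admissible, and take the first such $k$. Proposition~\ref{LimiteBuenoYMalo} gives that $(P^{(k)},Q^{(k)})$ is not admissible. Then Proposition~\ref{PropReciproca}, applied repeatedly to the later RSK-compatible steps, propagates non-admissibility up to $(P^{(r)},Q^{(r)})=(P,Q)$, a contradiction. Hence $w\in B_{\mathcal A}$, and the restriction is onto $G_m$.

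The only point needing care is that every step of the iteration really is RSK-compatible in the sense of Section~\ref{Sec:RSK}. In particular, no letter of $Q^{(k-1)}$ may exceed $i_k$, and no $i_k$ may occur to the right of the new cell. This is the standard fact that, for a lexicographically ordered biword, equal top letters are recorded as a horizontal strip filled from left to right (row bumping of a weakly increasing sequence), which I would quote from \cite{Fulton1996}. The substantive difficulty is already contained in Propositions~\ref{PropInsPQ}, \ref{PropReciproca} and \ref{LimiteBuenoYMalo}, so the assembly itself is routine.
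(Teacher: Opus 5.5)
Your proposal is correct and follows the paper's proof: both assemble Propositions~\ref{PropInsPQ}, \ref{PropReciproca} and \ref{LimiteBuenoYMalo} along the RSK-compatible insertions of a lexicographically ordered biword, going forward by induction and backward by propagating non-admissibility to the final pair (your contrapositive is equivalent to the paper's step-by-step undoing of insertions). Your explicit base case (the empty pair, for $\Lambda=(0^m;\emptyset)$) and your justification of RSK-compatibility via the row bumping lemma supply details that the paper leaves implicit.
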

\begin{proof}
  Let $(P',Q')=   (P,Q) \leftarrow {i \choose j}$ be RSK-compatible.
 As mentioned earlier,  Propositions~\ref{PropInsPQ}, \ref{PropReciproca} and \ref{LimiteBuenoYMalo} tell us that
  $$
\left[(P,Q)  \quad {\rm and} \quad {i \choose j}  \quad {\rm are~both~admissible} \right]  \iff (P',Q') \quad {\rm is~admissible}.  
  $$
Starting from a biword  $\mathbf b \in B_{{\mathcal A}}$ in lexicographic order and applying the RSK insertion one biletter at a time, the forward implication guarantees that the resulting pair $(P,Q)$ lies in $G_m$.  Conversely, undoing the insertions one at a time, the backward implication guarantees that the biword recovered from a pair $(P,Q) \in G_m$ is admissible, that is, that it belongs to $B_{{\mathcal A}}$.
\end{proof}

It is well known that
$$
\frac{1}{\prod_{i,j=1}^N (1- x_i y_j) } = \sum_{\mathbf b \in B_{\{1,\dots,N \}} }  (xy)^{\mathbf b},
$$
where $(xy)^{\mathbf b}$ is the monomial in the variables $x_1,\dots,x_N,y_1,\dots,y_N$
whose power of $x_i$ (resp. $y_i$) is the number of occurrences of the letter $i$ in the upper (resp. lower) row of the biword
$\mathbf b$.  Similarly, it can be easily seen that
$$
\frac{1}{\prod_{i+j\leq m+1} (1- x_i \hat y_j) } = \sum_{\mathbf b \in B_{\hat {\mathcal A}} }  (x\hat y)^{\mathbf b}.
$$
Combining those two expansions, we get that
\begin{equation} \label{xyy}
 \frac{1}{\left[\prod_{i+j \leq m+1}(1- x_i \hat y_j) \right] \left[\prod_{i,j=1}^N (1- x_i y_j) \right] }  
  =  \sum_{\mathbf b \in B_{ {\mathcal A}} }  (xy\hat y)^{\mathbf b},
\end{equation}
where  $(xy\hat y)^{\mathbf b}$ is the monomial in the variables $x_1,\dots,x_N,y_1,\dots,y_N,\hat y_1,\dots,\hat y_m$
whose power of $x_i$ (resp. $y_i$) is the number of occurrences of the letter $i$, for $i \in \{ 1,\dots,N\}$,  in the upper (resp. lower) row of the biword $\mathbf b$, and whose power of  $\hat y_{i}$  is the number of occurrences of the letter $\hat i$, for $\hat i \in \hat {\mathcal A}$,  in the lower row of the biword $\mathbf b$.

Owing to \eqref{xyy}, Theorem~\ref{TeoremaKeys}  has this important corollary.
\begin{corollary} \label{coromain} The following generalization of the Cauchy identity holds
\begin{equation} \label{cauchygene}
  \frac{1}{\left[\prod_{i+j \leq m+1}(1- x_i \hat y_j) \right] \left[\prod_{i,j=1}^N (1- x_i y_j) \right] }  
  =  \sum_{\Lambda}  \mathfrak s_{\Lambda}  (x_1,\dots,x_N)  \mathfrak{s}_{\Lambda}^{*}(y_1,\dots,y_N;\hat y_1,\dots, \hat y_m).
\end{equation}
\end{corollary}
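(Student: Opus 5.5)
The plan is to compute the generating function of admissible biwords in two ways and compare them through Theorem~\ref{TeoremaKeys}. By \eqref{xyy}, the left-hand side of \eqref{cauchygene} equals $\sum_{\mathbf b \in B_{\mathcal A}} (xy\hat y)^{\mathbf b}$. Every biword has a unique lexicographic ordering, and admissibility concerns only individual biletters. So I may apply RSK to the lexicographically ordered biword. Theorem~\ref{TeoremaKeys} then gives a bijection $\mathbf b \mapsto (P,Q)$ from $B_{\mathcal A}$ onto $G_m = \bigcup_\Lambda \mathcal S(\Lambda) \times \mathcal S^*(\Lambda)$. Here $P \in {\rm Tab}_{\mathcal A}$ is the insertion tableau of the lower row and $Q \in {\rm Tab}_{\{1,\dots,N\}}$ is the recording tableau of the upper row.

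The next step is the weight bookkeeping, which is standard for RSK. The multiset of letters of $P$ is exactly the multiset of lower-row letters of $\mathbf b$, since insertion only permutes letters. Likewise, the multiset of entries of $Q$ is the multiset of upper-row letters. Therefore $(xy\hat y)^{\mathbf b} = x^Q\, y^P$, where $y^P$ assigns $y_i$ to an entry $i\in\{1,\dots,N\}$ and $\hat y_i$ to an entry $\hat i$.

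Then I regroup the sum over $G_m$ according to $\Lambda$. I need the union defining $G_m$ to be disjoint. This holds because $Q$ determines $\Lambda$ uniquely: $\Lambda=\Lambda(Q)$ by the remark preceding the simplified form of $\mathcal S(\Lambda)$, and the sets $\mathcal S(\Lambda)$ are pairwise disjoint. The shape of $P$ matches the shape $\Lambda^{(0)}$ of $Q$, as both sets require. The sum therefore factors as
$$\sum_\Lambda \Bigl(\sum_{Q\in\mathcal S(\Lambda)} x^Q\Bigr)\Bigl(\sum_{P\in\mathcal S^*(\Lambda)} y^P\Bigr).$$
The first factor is $\mathfrak s_\Lambda(x_1,\dots,x_N)$ by \eqref{mschur2T}. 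The second is $\mathfrak s^*_\Lambda(y_1,\dots,y_N;\hat y_1,\dots,\hat y_m)$ by \eqref{dualSchursgen}, with $x$ renamed $y$.

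The only point needing care, and the one I would check most closely, is the bookkeeping for the sets. In the definition of admissible pairs, $(P,Q)\in \mathcal S(\Lambda)\times\mathcal S^*(\Lambda)$ must be read with $Q$ as the element of $\mathcal S(\Lambda)$ (letters in $\{1,\dots,N\}$) and $P$ as the element of $\mathcal S^*(\Lambda)$ (letters in $\mathcal A$). I will confirm this against Proposition~\ref{propcriteria}, which uses $\Lambda(Q)$ and compares with $K_+(P)$.

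I also need to handle $m$-partitions $\Lambda$ for which $K_+(\Lambda)$ or $K_+^*(\Lambda)$ does not exist for the given $N$. For these, the corresponding set is empty, which is consistent with the convention that the function is $0$. All sums are formal power series with finitely many terms in each degree, so the regrouping is legitimate.
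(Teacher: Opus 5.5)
Your proposal is correct and follows the paper's argument: by \eqref{xyy} the left-hand side is the generating function of admissible biwords, and the RSK bijection of Theorem~\ref{TeoremaKeys} onto $G_m$ regroups it as $\sum_\Lambda \mathfrak s_\Lambda\,\mathfrak s^*_\Lambda$, with $x$ weighting the recording tableau $Q$ and $y,\hat y$ weighting the insertion tableau $P$. The details you add are exactly what the paper leaves implicit: the disjointness of the union via $\Lambda(Q)$, and reading $G_m$ with $Q\in\mathcal S(\Lambda)$ and $P\in\mathcal S^*(\Lambda)$, as in Proposition~\ref{propcriteria}, even though the factors in \eqref{eqGm} are written in the opposite order.
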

Letting $y_i=0$ for $i=1,\dots,N$ amounts to restricting the correspondence to the hatted alphabet.  What survives is precisely Lascoux's nonsymmetric Cauchy identity \eqref{nonsymCauchy} for Demazure characters and atoms \cite{Lascoux2003}, of which we thus obtain a proof relying on nothing but ordinary RSK insertion.
\begin{corollary} \label{coroKeys} We have that
  $$
  \frac{1}{\prod_{i+j \leq m+1}(1- x_i \hat y_j)}   =\sum_{\pmb a}
 \hat K_{\pmb a} (x_1,\dots,x_m)  K_{ {\mathbf \omega}_m (\pmb a)} (\hat y_1,\dots,\hat y_m).   
$$
\end{corollary}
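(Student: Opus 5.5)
The plan is to specialize Corollary~\ref{coromain} at $y_1=\cdots=y_N=0$ and identify both sides. On the left, the rectangular factor $\prod_{i,j}(1-x_iy_j)^{-1}$ becomes $1$, leaving the staircase kernel $\prod_{i+j\le m+1}(1-x_i\hat y_j)^{-1}$. Note that only $x_1,\dots,x_m$ occur in it, since $i+j\le m+1$ with $j\ge1$ forces $i\le m$. Combinatorially, this amounts to restricting the bijection of Theorem~\ref{TeoremaKeys} to $B_{\hat{\mathcal A}}$. Since RSK insertion preserves the multiset of letters, the image of $B_{\hat{\mathcal A}}$ is exactly the set of admissible pairs $(P,Q)$ with $P\in{\rm Tab}_{\hat{\mathcal A}}$. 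Any admissible biletter ${i\choose j}$ with $j\in\hat{\mathcal A}$ has $i\le m$, so $Q\in{\rm Tab}_{\{1,\dots,m\}}$.

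On the right, I would set $y=0$ in \eqref{dualSchursgen}. This keeps only the $T\in\mathcal S^*(\Lambda)$ with all entries hatted. Such a $T$ has columns of length at most $m$, so $\Lambda^{(0)}$ has at most $m$ rows. Next, using $Q\in{\rm Tab}_{\{1,\dots,m\}}$, i.e. $\mathfrak s_\Lambda$ evaluated with $x_{m+1}=\cdots=x_N=0$, I need $\lambda=\emptyset$. The argument: for a tableau $Q$ in letters $\le m$, every column of $K_+(Q)$ consists only of letters $\le m$. The remark defining $\Lambda(Q)$ then places $m$ circles, and every row of $\Lambda^{(0)}$ (at most $m$ rows) receives one. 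Rows of length $0$ receive the remaining circles, so $\lambda$ is empty and $\Lambda=(\pmb a;\emptyset)$. The sum therefore runs over compositions $\pmb a\in\mathbb Z_{\ge0}^m$.

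It remains to identify the two factors. First, $\mathfrak s_{(\pmb a;\emptyset)}(x_1,\dots,x_m)$ is the sum over $T\in{\rm Tab}_{\{1,\dots,m\}}$ with $K_+(T)=K_+(\Lambda)|_m$. By Definition~\ref{KeyLambda}, with $\lambda=\emptyset$ and no large letters present, $K_+(\Lambda)$ is exactly $K_+(\pmb a)$: column $k$ contains $i$ iff $a_i\ge k$. Then \eqref{dualkeysum} (Remark~\ref{InclusionRemark} with only the trivial $\beta$) gives $\hat K_{\pmb a}(x_1,\dots,x_m)$. Second, the $y=0$ part of $\mathfrak s^*_{(\pmb a;\emptyset)}(y;\hat y)$ is $K_{(0^N,a_m,\dots,a_1)}(0,\dots,0,\hat y_1,\dots,\hat y_m)$. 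Here I would argue directly from \eqref{keysum}: tableaux avoiding the first $N$ letters, with $K_+(T)\le K_+(\gamma)$, correspond after relabelling $N+k\mapsto k$ to tableaux $T$ with $K_+(T)\le K_+(a_m,\dots,a_1)$. This relabelling uses the fact that $K_+$ commutes with order-preserving relabelling of the alphabet. The result is $K_{\omega_m(\pmb a)}(\hat y_1,\dots,\hat y_m)$.

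The main obstacle is bookkeeping rather than a genuine difficulty. It lies in checking that, under $y=0$, the key $K_+^*(\Lambda)$ of Lemma~\ref{KeyLambdaDual} (entries $\hat l$ indexed via circles $m+1-l$) matches $K_+(\omega_m\pmb a)$ in the hatted alphabet. Concretely, column $k$ contains $\hat l$ iff $a_{m+1-l}\ge k$, which is precisely the key of $(a_m,\dots,a_1)$; I would verify this carefully. I would also note that when no $N$-letters can appear the condition \eqref{eqrestriction} is the full condition. Comparing coefficients then yields the stated identity.
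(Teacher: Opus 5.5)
Your proposal is correct and follows essentially the paper's proof: specialize Corollary~\ref{coromain} at $y=0$, reduce the sum to $m$-partitions $\Lambda=(\pmb a;\emptyset)$, and identify the two factors through \eqref{dualkeysum} and \eqref{dualSchursgen}. The differences are minor. The paper forces $\lambda=\emptyset$ on the dual side: an all-hatted $T$ with $K_+(T)\le K_+^*(\Lambda)$ forces $K_+^*(\Lambda)$ to be entirely hatted. You force it on the $Q$ side through $\Lambda(Q)$. You also justify $K_{(0^N,a_m,\dots,a_1)}(0,\dots,0,\hat y_1,\dots,\hat y_m)=K_{\omega_m(\pmb a)}(\hat y_1,\dots,\hat y_m)$ by relabelling the alphabet, where the paper simply asserts it.
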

\begin{proof}
  Letting $y_i=0$ for $i=1,\dots,N$ means that in $\mathcal S^*(\Lambda)$ we will only admit tableaux $T \in {\rm Tab}_{\hat {\mathcal A}}$. From \eqref{eqrestriction}, we get that $K_+(T) \leq K_+^*(\Lambda)\big |_{\{\hat 1,\dots,\hat m \}}$
  is only possible if $K_+^*(\Lambda)$ only has entries in $\hat {\mathcal A}$, which implies that $\Lambda$ can only be of the form $\Lambda=(\pmb a; \emptyset)$. In this case, we get from \eqref{dualSchursgen} that
  $$
  \mathfrak{s}_{(\pmb a;\emptyset)}^{*}(\hat y_1,\dots, \hat y_m)= K_{(0^N,a_m,\dots,a_1)}(0,\dots,0;\hat y_1,\dots,\hat y_m) =
  K_{ {\mathbf \omega}_m (\pmb a)} (\hat y_1,\dots,\hat y_m).   
 $$
 On the other hand, we get that
 $$
 \mathfrak{s}_{(\pmb a;\emptyset)}(x_1,\dots, x_m)= \sum_T x^T, 
 $$
 where the sum is over all tableaux $T \in {\rm Tab}_{\{1,\dots,m \}}$ such that
 $
 K_+(T)=K_+(\pmb a)
 $ (given that none of the letters in $K_+(\pmb a)$ are larger than $m$). Comparing with \eqref{dualkeysum}, we recognize a dual key polynomial:
 $$
 \mathfrak{s}_{(\pmb a;\emptyset)}(x_1,\dots, x_m)= \hat K_{\pmb a}(x_1,\dots,x_m). 
$$
\end{proof}

Letting $\hat y_1=y_1,\dots, \hat y_m=y_m$  in  \eqref{cauchygene}, and 
using the definition
\eqref{dualSchur} of the dual $m$-symmetric Schur function at $t=0$,
we obtain a further Cauchy identity, this time in the original variables.
\begin{corollary} \label{coroDual} The following generalization of the Cauchy identity holds
\begin{equation}
  \frac{1}{\left[\prod_{i+j \leq m+1}(1- x_i  y_j) \right] \left[\prod_{i,j=1}^N (1- x_i y_j) \right] }  
  =  \sum_{\Lambda}  \mathfrak s_{\Lambda}  (x_1,\dots,x_N)  \mathfrak{s}_{\Lambda}^{*}(y_1,\dots,y_N).
\end{equation}
\end{corollary}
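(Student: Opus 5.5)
The plan is to specialize Corollary~\ref{coromain} at $\hat y_1=y_1,\dots,\hat y_m=y_m$; the identity should follow with no new combinatorics. First, the left-hand side of \eqref{cauchygene} becomes, under this substitution, exactly the left-hand side of the statement. The factor $\prod_{i+j\leq m+1}(1-x_i\hat y_j)$ involves only the hatted variables, and these are sent to the $y_j$ with $j\leq m$. The rectangular factor does not involve the hatted variables, so it is unchanged.

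Second, on the right-hand side the functions $\mathfrak s_\Lambda(x_1,\dots,x_N)$ do not involve $y$ or $\hat y$ at all, so they are untouched. For each $\Lambda$, the second factor is $\mathfrak s^*_\Lambda(y_1,\dots,y_N;\hat y_1,\dots,\hat y_m)$. By \eqref{dualSchursgen}, this is the generating function $\sum_{T\in\mathcal S^*(\Lambda)} y^T$, where an entry $\hat i$ contributes $\hat y_i$. Setting $\hat y_i=y_i$ then gives precisely the right-hand side of \eqref{dualSchur}, which defines (equivalently, computes via Proposition~\ref{propKeyS} and \eqref{eq1.4.2}) $\mathfrak s^*_\Lambda(y_1,\dots,y_N)$.

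The only point needing care is that the substitution is legitimate termwise in the infinite sum over $\Lambda$. Both sides are formal power series, and each monomial of a given total degree receives contributions from only finitely many $\Lambda$, namely those with $|\Lambda|$ equal to that degree. The substitution $\hat y_i\mapsto y_i$ is a ring homomorphism of formal power series that preserves degree, so it commutes with the sum.

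Finally, recalling \eqref{genCauchy}, one may note as a by-product that comparing with the reproducing-kernel identity gives $\mathfrak s_\Lambda=s_\Lambda(x;0)$, since the $\mathfrak s^*_\Lambda$ form a basis. That comparison is not needed for the corollary itself. I expect no real obstacle: this is a direct specialization, and the only thing to check is the bookkeeping that the hatted entries in \eqref{dualSchursgen} correspond to the variables $\hat x_i$ being identified with $x_i$ in \eqref{eq1.4.2}.
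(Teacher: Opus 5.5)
Your proposal is correct and matches the paper's argument: specialize $\hat y_i=y_i$ in Corollary~\ref{coromain}, then use \eqref{dualSchursgen} and \eqref{dualSchur} to identify the specialized second factor with $\mathfrak s^*_\Lambda(y_1,\dots,y_N)$. Your remark that the substitution commutes with the sum over $\Lambda$ (each degree involves only finitely many $\Lambda$) is a harmless detail that the paper leaves implicit.
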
  
Finally, comparing the previous equation with \eqref{genCauchy} in the case of $N$ variables, we get that  $\mathfrak s_{\Lambda}  (x_1,\dots,x_N)$ is an $m$-symmetric Schur function at $t=0$.
\begin{corollary} For every $m$-partition $\Lambda$, we have that
  $$
\mathfrak s_{\Lambda}  (x_1,\dots,x_N) = s_{\Lambda}  (x_1,\dots,x_N;0).
  $$ 
\end{corollary}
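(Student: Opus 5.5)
The plan is to compare the two Cauchy-type expansions with the same kernel and extract coefficients against a common basis. Corollary~\ref{coroDual} gives
\[
\frac{1}{\left[\prod_{i+j \leq m+1}(1- x_i  y_j) \right] \left[\prod_{i,j=1}^N (1- x_i y_j) \right] } = \sum_{\Lambda}  \mathfrak s_{\Lambda}(x_1,\dots,x_N)\, \mathfrak{s}_{\Lambda}^{*}(y_1,\dots,y_N).
\]
Specializing \eqref{genCauchy} to $N$ variables (setting $x_i=y_i=0$ for $i>N$) gives the same left-hand side equal to $\sum_\Lambda s_\Lambda(x_1,\dots,x_N;0)\,\mathfrak s^*_\Lambda(y_1,\dots,y_N)$. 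Subtracting the two expansions yields
\[
\sum_\Lambda \bigl(\mathfrak s_\Lambda(x)-s_\Lambda(x;0)\bigr)\,\mathfrak s^*_\Lambda(y)=0 .
\]

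To conclude that each coefficient vanishes, I need the family $\{\mathfrak s^*_\Lambda(y_1,\dots,y_N)\}$, over those $\Lambda$ with $\ell(\lambda)\le N$, to be linearly independent. By Proposition~\ref{propKeyS}, $\mathfrak s^*_\Lambda$ is the key polynomial $K_\gamma(y_1,\dots,y_N,y_1,\dots,y_m)$ with $\gamma=(0^{N-\ell},\lambda_\ell,\dots,\lambda_1,a_m,\dots,a_1)$. I would exploit the unitriangularity of key polynomials in $N+m$ variables: $K_\gamma=z^\gamma+$ lower terms. I would then check that, after the identification $z_{N+k}\mapsto y_{m+1-k}$, independence is preserved. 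This is the main obstacle, since the specialization could in principle create dependencies.

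The route I would try first avoids the specialization altogether. Working in the graded pieces of $R_m$ restricted to $N$ variables, I would use the fact that the $s_\Lambda^*(y;t)$ form a basis of $R_m$ \cite{ConchaLapointe}. The transformation $t^{\ell(\omega_m)-{\rm Inv}(\pmb a)}\bar T_{\omega_m}$ is, up to the scalar, invertible, so I would argue that it still yields a basis after the limit $t\to 0$. Concretely, I would show $\mathfrak s^*_\Lambda=\pi_{\omega_m\sigma^{-1}}s^*_{\Lambda^+}(y;0)$ has a distinct leading term in a suitable monomial order, for instance the one induced by the dominance-type order on $m$-partitions used for the $k_\Lambda$. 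Unitriangularity then gives independence.

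Alternatively, I would reduce to the statement for the dual basis. Since \eqref{genCauchy} is the $t\to0$ limit of a reproducing kernel, $s_\Lambda(x;0)$ and $\mathfrak s^*_\Lambda$ are dual bases for the limiting pairing. It then suffices that the expansion of the kernel in $y$ has unique coefficients, which again reduces to linear independence of the $\mathfrak s^*_\Lambda$.

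Once independence is established, comparing coefficients gives $\mathfrak s_\Lambda(x_1,\dots,x_N)=s_\Lambda(x_1,\dots,x_N;0)$ for every $\Lambda$ with $\ell(\lambda)\le N$. When $\ell(\lambda)>N$, both sides vanish in $N$ variables: $\mathcal S(\Lambda)$ is empty because the shape $\Lambda^{(0)}$ cannot be filled, and $s_\Lambda(x;0)$ vanishes by stability of the specialization.
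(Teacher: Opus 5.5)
Your strategy (compare Corollary~\ref{coroDual} with \eqref{genCauchy} and cancel the common factors $\mathfrak s^*_\Lambda(y)$) is the paper's own one-line argument, and you are right that it hinges on a linear independence statement. The gap is that the independence you propose to prove is false as soon as $m\ge 1$, so none of your routes can succeed. Each $\mathfrak s^*_\Lambda(y_1,\dots,y_N)$ with $\ell(\lambda)\le N$ is nonzero, being a specialization of a key polynomial with nonnegative coefficients. It is also symmetric in $y_{m+1},\dots,y_N$. But the degree-$d$ polynomials in $y_1,\dots,y_N$ that are symmetric in $y_{m+1},\dots,y_N$ have the basis $y_1^{a_1}\cdots y_m^{a_m}s_\lambda(y_{m+1},\dots,y_N)$ with $\ell(\lambda)\le N-m$. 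This is strictly smaller than your index set once $d>N-m$. Concretely, for $m=N=1$ in degree one, $\mathfrak s^*_{(1;\emptyset)}(y_1)=K_{(0,1)}(y_1,y_1)=2y_1$ while $\mathfrak s^*_{(0;1)}(y_1)=K_{(1,0)}(y_1,y_1)=y_1$. The specialization really does create dependencies, and the comparison in one variable only yields $2\mathfrak s_{(1;\emptyset)}(x_1)+\mathfrak s_{(0;1)}(x_1)=2s_{(1;\emptyset)}(x_1;0)+s_{(0;1)}(x_1;0)$. Your last paragraph has the same defect. The correct vanishing range is $\ell(\lambda)>N-m$: column~$1$ of $K_+(T)$ would need $\ell(\lambda)$ distinct letters from $\{m+1,\dots,N\}$, so $\mathcal S(\Lambda)=\emptyset$. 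Moreover, the vanishing of $s_\Lambda(x_1,\dots,x_N;0)$ in that range is not available a priori. It is an output of the corollary, not something ``stability'' gives you.

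The fix is to compare in $R_m$, i.e.\ in infinitely many variables, which is where \eqref{genCauchy} lives. The condition defining $\mathcal S(\Lambda)$ involves only the letters $1,\dots,m$, and by \eqref{eqrestriction} the one defining $\mathcal S^*(\Lambda)$ involves only hatted letters. Hence both $\mathfrak s_\Lambda$ and $\mathfrak s^*_\Lambda$ are stable under $x_N\mapsto 0$, and Corollary~\ref{coroDual} holds in $R_m$. In the degree-$d$ component $R_m^{(d)}$ the index set now has exactly $\dim R_m^{(d)}$ elements, and independence is cheap. The same kernel equals $\sum_\Omega \mathfrak h_\Omega(x)\,\mathfrak h^*_\Omega(y)$ (Corollary~\ref{coroKeys} at $\hat y=y$ times the classical Cauchy identity), where $\{\mathfrak h_\Omega\}$ and $\{\mathfrak h^*_\Omega\}$ are bases of $R_m$. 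Applying to the $x$-side a linear functional dual to $\{\mathfrak h_\Omega\}_{|\Omega|=d}$ shows that every $\mathfrak h^*_\Omega$ lies in the span of $\{\mathfrak s^*_\Lambda\}_{|\Lambda|=d}$. So these $\dim R_m^{(d)}$ functions span $R_m^{(d)}$ and hence form a basis. Comparing coefficients then gives $\mathfrak s_\Lambda=s_\Lambda(x;0)$ in $R_m$. Setting $x_{N+1}=x_{N+2}=\cdots=0$ yields the statement for every $\Lambda$ and every $N$, including the vanishing for $\ell(\lambda)>N-m$.
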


\section[Almost Symmetric Schur functions]{Connection with the almost symmetric Schur functions}
\label{Sec:AlmostSym}

In \cite{AlmostSym}, the \emph{almost symmetric Schur function} $\mathfrak{as}_{(\pmb a|\lambda)}(x_1,x_2,\dots)$ was introduced. It is indexed by a weak composition  $\pmb a=(a_1,\dots,a_m)$ of any length $m$ such that $a_m\neq 0$ and a  partition $\lambda$. The almost symmetric Schur functions form a basis of the space $\mathcal A \mathcal S$ of almost symmetric functions which, in our language, corresponds to
$$\mathcal A \mathcal S=
\bigcup_{m \geq 0} R_m, 
$$
where $R_m$ is the ring of $m$-symmetric functions.

Since we want to consider the almost symmetric 
functions $\mathfrak{as}_{(\pmb a|\lambda)}(x)$
as a basis of $R_m$, we will extend their definition to include the case where $a_m=0$. This family of almost symmetric Schur functions will be naturally indexed by   
 $m$-partitions and denoted
$\mathfrak{as}_\Lambda (x)$.
We will see in this section how they relate to the 
 $m$-symmetric Schur basis $\{\mathfrak{s}_{\Lambda}(x)\}_{\Lambda}$.

We will need the symmetrization operator 
\begin{equation} \label{eqW}
W_{m-1}^{(N)}= (\pi_{N-1} \cdots \pi_m)(\pi_{N-1} \cdots \pi_{m+1}) \cdots (\pi_{N-1} \pi_{N-2}) \pi_{N-1}, 
\end{equation}
which is simply a shifted version of the usual operator $\pi_{w_0}$, where $w_0$ is the longest permutation in $S_N$. By idempotency of the $\pi_i$'s, we have that
\begin{equation} \label{eqpiw0}
W_{m-1}^{(N)} \pi_i = \pi_i W_{m-1}^{(N)} = W_{m-1}^{(N)}
\end{equation}
for any $i=m,\dots,N-1$.

We now define the almost symmetric Schur functions in $N$ variables (the infinite case is then obtained via an inverse limit). Recall that the key polynomials were defined in \eqref{defkey}.
\begin{definition} \label{defirecursion}
For a weak composition $\pmb a=(a_1,\dots,a_m)$ and a partition $\lambda=(\lambda_1,\dots,\lambda_\ell)$, the
almost symmetric Schur function $\mathfrak{as}_{(\pmb a;\lambda)}(x_1,\dots,x_N)$ is defined recursively as follows.
\begin{enumerate}
    \item If $\lambda=\emptyset$, then
    \[
    \mathfrak{as}_{(\pmb a;\emptyset)}(x_1,\dots,x_N)
    = K_{\pmb a}(x_1,\dots,x_m).
    \]
    \item If $a_m\ge \lambda_1$ then
    \[
     \mathfrak{as}_{(a_1,\dots,a_{m-1} ; \mu)}(x)
    =
    W_{m-1}^{(N)}\,
    \mathfrak{as}_{(\pmb a;\lambda)}(x).
    \]
    where $\mu=(a_m, \lambda_1,\dots,\lambda_\ell)$.
\end{enumerate}
\end{definition}
Remarkably, the $m$-symmetric Schur functions $\mathfrak s_\Lambda(x)$ obey the very same recursion, with the key polynomials replaced by the dual key polynomials.  This is what will allow us to compare the two families. 

\begin{proposition} \label{proporecursion}
The family $\{\mathfrak{s}_{\Lambda}(x)\}_{\Lambda}$ is characterized by the following two properties:
\begin{enumerate}
    \item If $\lambda=\emptyset$, then
    \[
    \mathfrak{s}_{(\pmb a;\emptyset)}(x_1,\dots,x_N)
    =
   \hat K_{\pmb a}(x_1,\dots,x_m).
   \]

    \item If $a_m\ge \lambda_1$ then
    \[
    \mathfrak{s}_{(a_1,\dots,a_{m-1} ; \mu)}(x)
    =
    W_{m-1}^{(N)}\,
    \mathfrak{s}_{(\pmb a;\lambda)}(x).
    \]
    where $\mu=(a_m, \lambda_1,\dots,\lambda_\ell)$. 
\end{enumerate}
\end{proposition}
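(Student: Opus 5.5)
Property (1) is already established: for $\lambda=\emptyset$ the tableaux in $\mathcal S((\pmb a;\emptyset))$ are exactly those with $K_+(T)=K_+(\pmb a)$, all of whose entries are at most $m$. By \eqref{dualkeysum} their generating function is $\hat K_{\pmb a}(x_1,\dots,x_m)$; this is the argument already given in the proof of Corollary~\ref{coroKeys}. Uniqueness is also clear. Every $m$-partition $(\pmb b;\mu)$ is reached from some $(\pmb a;\emptyset)$ in finitely many applications of (2): set $a_m=\mu_1$, then increase $m$ and peel off the remaining parts of $\mu$ one at a time, working in $R_{m+\ell(\mu)}$ and descending. So the real content of the proposition is property (2), and I will work in a fixed number $N$ of variables.

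For (2) I plan to use Remark~\ref{InclusionRemark}, which writes $\mathfrak s_{(\pmb a;\lambda)}$ as the sum of the atoms $\hat K_{(\pmb a,\beta)}$ over the distinct rearrangements $\beta$ of $(\lambda,0^{N-m-\ell})$. Applying this to both sides, the claim becomes
\[
W_{m-1}^{(N)}\sum_{\beta}\hat K_{(\pmb a,\beta)}(x)=\sum_{\gamma}\hat K_{(a_1,\dots,a_{m-1},\gamma)}(x),
\]
where $\gamma$ ranges over the distinct rearrangements of $(a_m,\lambda,0^{N-m-\ell})$. The tool is the standard relation between atoms and Demazure characters: for any composition $\delta$, the sum of $\hat K_{\delta'}$ over all $\delta'$ in the Bruhat interval below $\delta$ within its $\mathfrak S$-orbit equals $K_\delta$. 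I would only use the version restricted to the parabolic positions $m,\dots,N$.

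Concretely, take the antidominant arrangement $\beta_0=(0^{N-m-\ell},\lambda_\ell,\dots,\lambda_1)$ of the tail. Every rearrangement $\beta$ of the tail is obtained from $\beta_0$ by sorting moves $s_i$ with $i\ge m+1$, so $\sum_\beta \hat K_{(\pmb a,\beta)}=\pi_{w}\hat K_{(\pmb a,\beta_0)}$ for the appropriate maximal parabolic word $w$, using $\pi_i=\hat\pi_i+1$. Then \eqref{eqpiw0} absorbs $\pi_w$ into $W_{m-1}^{(N)}$, so the left side reduces to $W_{m-1}^{(N)}\hat K_{(\pmb a,\beta_0)}$. Next I will show that this equals the corresponding parabolic sum over positions $m,\dots,N$ with $a_m$ included. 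The hypothesis $a_m\ge\lambda_1$ is used exactly here: it makes $(\ldots,a_m,\beta_0)$ maximal in position $m$ relative to the tail, so the Bruhat interval of $\hat K$'s generated by $W_{m-1}^{(N)}$ from $(\pmb a,\beta_0)$ is all rearrangements of $(a_m,\beta_0)$ in the last $N-m+1$ slots, each appearing once.

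The main obstacle is this last identity, namely that $\pi_{w_0}$, acting on slots $m..N$, applied to a single atom equals the sum of all atoms in the orbit. It is false in general, and holds here only because of the maximality of the entry in slot $m$. If the operator argument becomes messy, I will instead argue combinatorially with tableaux. For fixed $T$, I would compare the conditions $\Lambda(T)=(\pmb a;\lambda)$ and $\Lambda(T)=(a_1,\dots,a_{m-1};\mu)$ and show that $W_{m-1}^{(N)}$ corresponds to forgetting the constraint on the letter $m$. The justification would be Proposition~\ref{PropInsercion}, which shows that the keys restricted to letters $<m$ are unaffected by the larger letters, together with the fact that $\pi_i$ symmetrizes crystal strings.
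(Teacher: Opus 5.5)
Your overall strategy for (2) is the paper's: reduce to Demazure atoms via Remark~\ref{InclusionRemark}, then compute with the action of the $\pi_i$ on atoms. Your proof of (1) via the tableau argument of Corollary~\ref{coroKeys} is fine; the paper instead lets $t\to 0$ in $s_{(\pmb a;\emptyset)}(x;t)=H_{\pmb a}(x;t)$.

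\textbf{The operator step for (2) fails as written.} Recall the rules:
\begin{itemize}
\item $\pi_i\hat K_\omega=\hat K_\omega+\hat K_{s_i\omega}$ if $\omega_i>\omega_{i+1}$;
\item $\pi_i\hat K_\omega=\hat K_\omega$ if $\omega_i=\omega_{i+1}$;
\item $\pi_i\hat K_\omega=0$ if $\omega_i<\omega_{i+1}$, because then $\hat\pi_i\hat K_\omega=-\hat K_\omega$.
\end{itemize}
So atoms are generated by the $\pi_i$ from the \emph{dominant} arrangement, not the antidominant one. With your $\beta_0=(0^{N-m-\ell},\lambda_\ell,\dots,\lambda_1)$, any strict ascent gives $\pi_w\hat K_{(\pmb a,\beta_0)}=\pi_w\pi_i\hat K_{(\pmb a,\beta_0)}=0$. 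By \eqref{eqpiw0} it also gives $W_{m-1}^{(N)}\hat K_{(\pmb a,\beta_0)}=0$. For instance, with $m=1$, $N=3$, $\Lambda=(1;1)$, one gets $\pi_2\hat K_{(1,0,1)}=\pi_2(x_1x_3)=0$, while $\mathfrak s_{(1;1)}=x_1x_2+x_1x_3$.

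The fix is to take $\beta^+=(\lambda_1,\dots,\lambda_\ell,0,\dots,0)$. The hypothesis $a_m\ge\lambda_1$ is then what makes the whole block $(a_m,\beta^+)$ weakly decreasing in slots $m,\dots,N$. That is the condition actually needed; maximality of $a_m$ relative to an unsorted tail is not enough.

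\textbf{The key identity is left unproved.} Even after this correction, the identity you call the main obstacle is
\[
W_{m-1}^{(N)}\hat K_\alpha=\sum_\gamma\hat K_{(a_1,\dots,a_{m-1},\gamma)}
\]
for $\alpha$ weakly decreasing in slots $m,\dots,N$. This is the entire content of (2), and the crystal-string fallback is only a sketch.

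The paper supplies exactly this computation:
\begin{itemize}
\item Since $\mathfrak s_{(\pmb a;\lambda)}$ is symmetric in $x_{m+1},\dots,x_N$, the operator $W_{m-1}^{(N)}=(\pi_{N-1}\cdots\pi_m)W_m^{(N)}$ acts on it as $\pi_{N-1}\cdots\pi_m$.
\item Because $\hat\pi_i$ kills functions symmetric in $x_i,x_{i+1}$, this acts as $1+\hat\pi_m+\hat\pi_{m+1}\hat\pi_m+\cdots+\hat\pi_{N-1}\cdots\hat\pi_m$.
\item On $\hat K_{(\pmb a,\beta)}$, each chain slides $a_m$ to the right. It is killed when $a_m$ meets an equal entry, and it never meets a larger entry because $a_m\ge\lambda_1$, so no negative terms arise.
\item Hence each distinct rearrangement $\gamma$ of $(a_m,\lambda_1,\dots,\lambda_\ell,0,\dots,0)$ appears exactly once. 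It comes from the $\beta$ obtained by deleting the first occurrence of $a_m$ in $\gamma$.
\end{itemize}
Running this same computation inductively on the block length would also prove your parabolic identity and complete your version of the argument.
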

\begin{proof}
First suppose that $\lambda=\emptyset$. We have 
from proposition 23 in \cite{mSym} that the $m$-symmetric Schur function is such that
$$
s_{(\pmb a;\emptyset)}(x;t)=H_{\pmb a}(x;t).
$$
Recall that $H_{\pmb a}(x;t)$ is obtained from the dominant case $x^{\pmb a^+}$ by acting with a string of $T_i$ operators. Since $\lim_{t \to 0} T_i=\hat \pi_i$, this readily implies that
$$
 \mathfrak{s}_{(\pmb a;\emptyset)}(x_1,\dots,x_N)= s_{(\pmb a;\emptyset)}(x;0)=\lim_{t\to 0} H_{\pmb a}(x;t)=\hat K_{\pmb a}(x),
$$
given that $\hat K_{\pmb a}(x)$ is obtained from the dominant case $x^{\pmb a^+}$ by acting with a string of $\hat \pi_i$ operators instead of a string of $T_i$ operators.

We now prove the recursion 
\[
     \mathfrak{as}_{(a_1,\dots,a_{m-1} ; \mu)}(x)
    =
    W_{m-1}^{(N)}\,
    \mathfrak{as}_{(\pmb a;\lambda)}(x).
    \]
We have
$$
 W_{m-1}^{(N)} \mathfrak{s}_{  ( \pmb a; \lambda ) } ( x ) =
\pi_{N-1} \ldots \pi_{m} W_{m}^{(N)} \mathfrak{s}_{ ( \pmb a; \lambda )  } (x ) =
\pi_{N-1} \ldots \pi_{m} \mathfrak{s}_{ ( \pmb a; \lambda )  } ( x ), 
$$
since $\mathfrak{s}_{ ( \pmb a; \lambda )  } ( x )$ is symmetric in the variables $x_{m+1},\dots,x_N$. Therefore, using $\pi_i=1+\hat \pi_i$, we get
\begin{align*}
W_{m-1}^{(N)} \mathfrak{s}_{  ( \pmb a; \lambda ) } ( x ) & = 
(1+ \hat{\pi}_{N-1}) \ldots (1+ \hat{\pi}_{m}) \mathfrak{s}_{ ( \pmb a; \lambda ) } ( x ) \\
& =
\Big( 1+\hat \pi_m + \hat \pi_{m+1} \hat \pi_m+\cdots+\hat \pi_{N-1} \cdots \hat \pi_m
\Big)  \mathfrak{s}_{ ( \pmb a; \lambda )  } ( x ), 
\end{align*}
which again follows from the fact that $\mathfrak{s}_{ ( \pmb a; \lambda )  } ( x )$ is symmetric in the variables $x_{m+1},\dots,x_N$ given that $\hat \pi_i f=0$ if $f$ is symmetric in the variables $x_i$ and $x_{i+1}$. From Remark~\ref{InclusionRemark}, this yields
$$
W_{m-1}^{(N)} \mathfrak{s}_{  ( \pmb a; \lambda ) } ( x ) = \Big( 1+\hat \pi_m + \hat \pi_{m+1} \hat \pi_m+\cdots+\hat \pi_{N-1} \cdots \hat \pi_m
\Big)  \sum_\beta \hat K_{\pmb a,\beta}( x ),  
$$
where the sum is over all distinct permutations of $(\lambda_1,\dots,\lambda_\ell,0^{N-\ell})$. Hence, we finally have
$$
W_{m-1}^{(N)} \mathfrak{s}_{  ( \pmb a; \lambda ) } ( x ) = \Big( 1+\hat \pi_m + \hat \pi_{m+1} \hat \pi_m+\cdots+\hat \pi_{N-1} \cdots \hat \pi_m
\Big)  \sum_\beta \hat K_{\pmb a',a_m,\beta}( x ) =  \sum_\gamma \hat K_{\pmb a',\gamma}( x ),   
$$
where $\pmb a'=(a_1,\dots,a_{m-1})$, and where the sum is over all distinct permutations of 
$(a_m,\lambda_1,\dots,\lambda_\ell,0^{N-\ell-1})$. Note that there is no overcounting given that $\hat \pi_i \hat K_\omega(x)=0$ whenever $\omega_i=\omega_{i+1}$. Note also that we need the condition $a_m \geq \lambda_1$ for all 
distinct permutations of 
$(a_m,\lambda_1,\dots,\lambda_\ell,0^{N-\ell-1})$
to appear.  Using again Remark~\ref{InclusionRemark}, we immediately get that
$$
W_{m-1}^{(N)} \mathfrak{s}_{  ( \pmb a; \lambda ) } ( x ) = \mathfrak{s}_{  ( a_1,\dots,a_{m-1}; a_m,\lambda_1,\dots,\lambda_{\ell} ) } ( x ). 
$$
\end{proof}
We can now make explicit the connection between the almost symmetric Schur functions and the $m$-symmetric Schur functions at $t=0$.  For completeness, we admit here almost symmetric Schur functions indexed by pairs $(\pmb a,\lambda)$ whose last entry $a_m$ vanishes, so that they form a basis of $R_m$.
\begin{proposition} \label{propoMilo} We have 
\begin{equation}
\label{MiloEnLuc}
\mathfrak{as}_{(\pmb a ;\lambda)}(x_1,\dots,x_N)=
\sum_{(\pmb b;\mu) : K_+(\pmb b,\mu) \leq K_+(\pmb a,\lambda)} \mathfrak{s}_{(\pmb b;\mu)} (x_1,\dots,x_N),
\end{equation}
where the sum is over all $m$-partitions $(\pmb b;\mu)$.   
\end{proposition}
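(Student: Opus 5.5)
Since both families obey the same recursion (Definition~\ref{defirecursion} and Proposition~\ref{proporecursion}), the plan is to prove \eqref{MiloEnLuc} by induction along that recursion, that is, on the number of symmetric parts absorbed from the nonsymmetric block. Here $K_+(\pmb b,\mu)$ is read as the right key of the composition obtained by concatenating $\pmb b$ with $\mu$ in the variables $x_1,\dots,x_N$. In terms of atoms this key is $K_+(\gamma)$ for $\gamma=(\pmb b,\beta)$, with $\beta$ any rearrangement of $(\mu,0^{N-\ell})$.

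\emph{Base case $\lambda=\emptyset$.} Here $\mathfrak{as}_{(\pmb a;\emptyset)}=K_{\pmb a}(x_1,\dots,x_m)$. By \eqref{keysum} and \eqref{dualkeysum}, a key polynomial is the sum of the atoms $\hat K_\gamma$ over all $\gamma$ with $K_+(\gamma)\le K_+(\pmb a)$, since each tableau $T$ lies in exactly one atom, namely the one indexed by its own right key. Every such $\gamma$ uses only letters at most $m$, so $\gamma=(\pmb b,0^{N-m})$. By Proposition~\ref{proporecursion}(1), $\hat K_{\pmb b}=\mathfrak s_{(\pmb b;\emptyset)}$. Moreover, no $m$-partition $(\pmb b;\mu)$ with $\mu\ne\emptyset$ can satisfy $K_+(\pmb b,\mu)\le K_+(\pmb a,\emptyset)$: letters larger than $m$ would appear in its key, while the key of $(\pmb a,\emptyset)$ has only letters at most $m$. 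This settles the base case.

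\emph{Inductive step.} Assume \eqref{MiloEnLuc} holds for $(\pmb a;\lambda)$ with $a_m\ge\lambda_1$. Apply $W=W^{(N)}_{m-1}$ to both sides. The left side becomes $\mathfrak{as}_{(\pmb a';\mu)}$, where $\pmb a'=(a_1,\dots,a_{m-1})$ and $\mu=(a_m,\lambda)$. On the right, write each $\mathfrak s_{(\pmb b;\nu)}$ via Remark~\ref{InclusionRemark} as $\sum_\beta\hat K_{(\pmb b,\beta)}$, so that the right side is $\sum\hat K_\gamma$ over the set $\Gamma=\{\gamma : K_+(\gamma)\le K_+(\pmb a,\lambda)\}$ (all $\gamma$ of the right form). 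Applying $W$ to this sum and using $\pi_i=1+\hat\pi_i$ together with \eqref{eqpiw0}, exactly as in the proof of Proposition~\ref{proporecursion}, the result is the sum of $\hat K_{\gamma'}$ over all $\gamma'$ obtained by sorting the tail $(\gamma_m,\dots,\gamma_N)$ in every way. Equivalently, $W\sum_{\gamma\in\Gamma}\hat K_\gamma=\sum \hat K_{\gamma'}$ over $\gamma'$ whose first $m-1$ entries equal those of some $\gamma\in\Gamma$ and whose tail is a rearrangement of that $\gamma$'s tail, with each orbit counted once. Grouping by orbits, this gives $\sum_{(\pmb b';\nu)}\mathfrak s_{(\pmb b';\nu)}$ with $m-1$ nonsymmetric parts.

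\emph{Main obstacle.} The key combinatorial step, and the one I expect to be hardest, is to show that the indexing sets match. Concretely, the $(m-1)$-partitions $(\pmb b';\nu)$ satisfying $K_+(\pmb b',\nu)\le K_+(\pmb a',\mu)$ must be exactly those arising from some $(\pmb b;\nu_0)$ with $K_+(\pmb b,\nu_0)\le K_+(\pmb a,\lambda)$, and each must arise as exactly one orbit, so there is no overcounting. I would argue this on keys directly. Using $a_m\ge\lambda_1$, the key $K_+(\pmb a',\mu)$ equals the key $K_+(\pmb a,\lambda)$ with its letter $m$ and the letters in positions $m+1,\dots,N$ replaced by the largest available letters $N,N-1,\dots$, as in Definition~\ref{KeyLambda}. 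The Bruhat-type inequality $K_+(\gamma)\le K_+(\alpha)$ is then equivalent to the flag condition that, for each $k$, the sorted first $k$ entries of $\gamma$ are dominated by those of $\alpha$. Using this criterion, sorting the last $N-m+1$ entries to be increasing maximizes $\alpha$ in Bruhat order within its orbit. Hence the orbit of $\gamma'$ meets the lower interval of $(\pmb a,\lambda)$ exactly when its maximal representative lies below $(\pmb a',\mu)$. To avoid double counting, I would also check that each such orbit meets $\Gamma$ in a set whose $W$-image, by idempotence, yields each $\hat K$ exactly once. This can be cross-checked against the tableau description, since $W$ applied to a lower interval of atoms is again a lower interval.
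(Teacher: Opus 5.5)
Your framework (expand into atoms via Remark~\ref{InclusionRemark}, apply the symmetrizer, compare with the recursion) is the paper's. However, the step you call the main obstacle rests on false claims, and the fact that actually drives it is missing. That fact is $\pi_i\hat K_\alpha=0$ whenever $\alpha_i<\alpha_{i+1}$: write $\hat K_\alpha=\hat\pi_i\hat K_{s_i\alpha}$ and use $\pi_i\hat\pi_i=\hat\pi_i+\hat\pi_i^2=0$. Combined with \eqref{eqpiw0}, it gives $W^{(N)}_{m-1}\hat K_\gamma=0$ as soon as $(\gamma_m,\dots,\gamma_N)$ has an ascent.

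So $W$ does not send every $\hat K_\gamma$ to its orbit sum, as you assert. Only atoms with weakly decreasing tail survive, and that, not idempotence or a bookkeeping check, is why each orbit is counted once. In particular, $W^{(N)}_{m-1}\mathfrak s_{(\pmb b;\nu)}$ equals $\mathfrak s_{(\pmb b';(b_m,\nu))}$ if $b_m\ge\nu_1$ (Proposition~\ref{proporecursion}). It equals $0$ if $b_m<\nu_1$, since then no atom $\hat K_{(\pmb b,\beta)}$ of Remark~\ref{InclusionRemark} has a weakly decreasing tail $(b_m,\beta)$.

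Once this is in place, the matching of index sets is a tautology. The map $(\pmb b;\nu)\mapsto(\pmb b';(b_m,\nu))$ is a bijection from the $m$-partitions with $b_m\ge\nu_1$ onto all $(m-1)$-partitions, and it leaves the concatenated composition unchanged. Moreover, $(\pmb a',\mu)$ is literally the same composition as $(\pmb a,\lambda)$. The paper uses the same vanishing in one stroke: it writes $\mathfrak{as}_{(\pmb a;\lambda)}=W^{(N)}_mK_{(\pmb a,\lambda)}$, expands the key polynomial over the lower interval of $(\pmb a,\lambda)$, and discards every atom whose entries in positions $m+1,\dots,N$ are not weakly decreasing.

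The specific claims you make in place of this fail under your own reading of $K_+(\pmb b,\mu)$ as the key of the concatenation.

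\emph{The set $\Gamma$.} The right side of the induction hypothesis is not $\sum\hat K_\gamma$ over $\Gamma=\{\gamma: K_+(\gamma)\le K_+(\pmb a,\lambda)\}$. For $m=1$, $\Lambda=(1;1)$, $N=3$, the right side is $\mathfrak s_{(1;1)}=x_1x_2+x_1x_3$, whereas $\Gamma=\{(1,1,0)\}$ yields only $x_1x_2$. The correct index set is the saturation of $\Gamma$ under permutations of positions $m+1,\dots,N$.

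\emph{The relation between the two keys.} $K_+(\pmb a',\mu)$ is not obtained from $K_+(\pmb a,\lambda)$ by replacing letters with $N,N-1,\dots$; the two are equal. What you describe is the tableau $K_+(\Lambda)$ of Definition~\ref{KeyLambda}, which is a different object from the key appearing in the statement.

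\emph{The ``maximal representative'' criterion.} It gives the wrong answer. Continuing the example, $W^{(3)}_0\mathfrak{as}_{(1;1)}=s_{11}(x_1,x_2,x_3)=\mathfrak s_{(;1,1)}$, so the orbit of $(1,1,0)$ must contribute. Yet its maximal representative $(0,1,1)$ has key column $\{2,3\}\not\le\{1,2\}=K_+(1,1,0)$. The representative that matters is the minimal one, with weakly decreasing tail. Since sorting the tail decreasingly can only lower the right key, an orbit meets the lower interval exactly when that representative lies in it.
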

\begin{proof}
Concatenating $\pmb a$ and $\lambda$, 
and using the well known expansion of key polynomials into dual key polynomials,
we have from 
Definition~\ref{defirecursion} and Proposition~\ref{proporecursion} that
\begin{equation} \label{eqKcomp}
\mathfrak{as}_{(\pmb a,\lambda ; \emptyset)}(x)
=
K_{\pmb a,\lambda}(x)=
\sum_{\pmb b,\gamma : K_+(\pmb b,\gamma) \leq K_+(\pmb a,\lambda)} \hat K_{\pmb b,\gamma}(x),
\end{equation}
where the sum is over all pairs of compositions $\pmb b$ and $\gamma$ that satisfy the condition. 
Supposing the $\lambda$ is of length $\ell$, we have from Definition~\ref{defirecursion} that
$$
\mathfrak{as}_{(\pmb a ;\lambda)}(x) = W_{m}^{(N)} W_{m+1}^{(N)} \cdots W_{m+\ell-1}^{(N)} \mathfrak{as}_{(\pmb a,\lambda , \emptyset)}(x)= W_{m}^{(N)}  \mathfrak{as}_{(\pmb a,\lambda ; \emptyset)}(x).
$$
Hence, using \eqref{eqpiw0} and the fact that $\pi_i \hat K_{\alpha}(x)=0$ if $\alpha_i<\alpha_{i+1}$, we get from \eqref{eqKcomp} that
$$
\mathfrak{as}_{(\pmb a ;\lambda)}(x) = W_{m}^{(N)} 
\sum_{\pmb b,\gamma : K_+(\pmb b,\gamma) \leq K_+(\pmb a,\lambda)} \hat K_{\pmb b,\gamma}(x)=
\sum_{(\pmb b;\mu) : K_+(\pmb b,\mu) \leq K_+(\pmb a,\lambda)} \hat K_{\pmb b,\mu} (x),
$$
where the sum is now only over  pairs $\pmb b ,\mu$ such that $\mu$ is a partition. Since in this case $(\pmb b; \mu)$ is an $m$-partition,
 the proposition then follows immediately from Proposition~\ref{proporecursion}.
\end{proof}
As a corollary, we obtain a tableau generating function for the almost symmetric Schur functions.  It is not obviously the same as the combinatorial interpretation given in \cite{AlmostSym}, which rests on an HHL-type formula for key polynomials. 
\begin{corollary}
\label{CorMiloKey}
The almost symmetric Schur functions are such that
\begin{equation}
\label{SchurMilo2}
\mathfrak{as}_{(\pmb a ; \lambda)}( x ) := \sum_{ T \in \mathcal{AS} (\Lambda) } x^{T},
\end{equation}
where
\begin{equation}
\label{SetmSchurMilo}
\mathcal{AS} (\Lambda) =    \left \{ T  \in {\rm Tab}_{\{1,\dots,N \}}(\Lambda^{(0)}) \, : \, K_{+}  (T)  \leq K_{+} (\Lambda)  \right \}
\end{equation}
for $\Lambda=(\pmb a;\lambda)$.
\end{corollary}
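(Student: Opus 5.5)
The plan is to combine Proposition~\ref{propoMilo} with the tableau generating function \eqref{mschur2T} for the functions $\mathfrak s_\Omega(x)$, and then to reorganize the resulting double sum as a single sum over tableaux. Proposition~\ref{propoMilo} gives
\[
\mathfrak{as}_{\Lambda}(x_1,\dots,x_N)=\sum_{\Omega : K_+(\Omega)\leq K_+(\Lambda)} \mathfrak s_\Omega(x_1,\dots,x_N)=\sum_{\Omega : K_+(\Omega)\leq K_+(\Lambda)}\ \sum_{T\in\mathcal S(\Omega)} x^T .
\]
The index $\Omega=(\pmb b;\mu)$ runs over $m$-partitions, where $K_+(\Omega)$ is read as $K_+(\pmb b,\mu)$, the right key of the concatenated composition. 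Since the sets $\mathcal S(\Omega)$ are disjoint, with $T\in \mathcal S(\Omega)$ iff $\Lambda(T)=\Omega$, the double sum is a sum over all $T$ such that $K_+(\Lambda(T))\leq K_+(\Lambda)$. It therefore suffices to show that, for a tableau $T$ with entries in $\{1,\dots,N\}$,
\[
K_+(\Lambda(T))\leq K_+(\Lambda) \iff \bigl(T \text{ has shape } \Lambda^{(0)} \text{ and } K_+(T)\leq K_+(\Lambda)\bigr).
\]
Note that the comparison $\leq$ requires the shapes to agree. This means $\Lambda(T)^{(0)}=\Lambda^{(0)}$, which is exactly the shape condition on $T$.

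The key step is to identify $K_+(\pmb b,\mu)$, viewed as a key of the composition $(\pmb b,\mu)$ in $N$ letters, with the tableau of Definition~\ref{KeyLambda}. Its letters $1,\dots,m$ sit exactly where the circles of $\Omega$ dictate. The other cells are filled with the positions of the parts of $\mu$, namely $m+1,m+2,\dots$, ordered so that the remaining cells of each column hold the smallest available such indices compatible with $\mu$ being a partition. Fix $T$ of the right shape and set $\Omega=\Lambda(T)$, so that $K_+(T)|_m=K_+(\Omega)|_m$. I would then compare $K_+(T)$ column by column with $K_+(\Omega)$. On the letters $\le m$ the two agree. On the letters $>m$, $K_+(T)$ has arbitrary entries, whereas $K_+(\pmb b,\mu)$ has the minimal entries $m+1,\dots$ and $K_+(\pmb a,\lambda)$ likewise has minimal entries in its non-$\{1,\dots,m\}$ cells. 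Here Definition~\ref{KeyLambda} uses $N,N-1,\dots$, the maximal letters, which reflects the $\mathcal S$ versus $\mathcal S^*$ convention. I expect the relevant object in \eqref{SetmSchurMilo} to be the composition key $K_+(\pmb a,\lambda)$ used in Proposition~\ref{propoMilo}, and I will work with that one.

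The main obstacle is this componentwise comparison. One has to show that $K_+(T)\leq K_+(\pmb a,\lambda)$ holds iff $K_+(\pmb b,\mu)\leq K_+(\pmb a,\lambda)$, where $(\pmb b;\mu)=\Lambda(T)$. Both conditions can be rephrased as conditions on sorted columns. Comparing two decreasing words of equal length letterwise is equivalent to requiring, for every threshold $h$, that the number of letters $\le h$ in the first word is at least the number in the second. For a threshold $h\le m$, the counts in column $r$ are the same for $K_+(T)$ and $K_+(\pmb b,\mu)$, because their restrictions to $\{1,\dots,m\}$ coincide. For $h>m$ the situation differs. In $K_+(\pmb b,\mu)$ the entries above $m$ are the smallest letters $m+1,m+2,\dots$, so every such entry is $\le h$ as soon as $h\ge m+\ell(\mu)$. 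In $K_+(\pmb a,\lambda)$, on the other hand, the entries above $m$ in column $r$ are $m+1,\dots,m+\lambda'_r$, packed at the bottom of the alphabet in the same way. I would verify that, after the restriction to $\{1,\dots,m\}$ agrees, the count inequalities at thresholds $h>m$ reduce to the threshold-$m$ inequality together with the equality of column lengths. That reduction gives both implications at once and finishes the proof.
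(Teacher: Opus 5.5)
Your overall route is the right one: combine Proposition~\ref{propoMilo} with the expansion \eqref{mschur2T}, use that the sets $\mathcal S(\Omega)$ are disjoint and indexed by $\Lambda(T)$, and then compare conditions on $K_+(T)$. The gap is how you read $K_+(\Lambda)$ in \eqref{SetmSchurMilo}. You take it to be the composition key $K_+(\pmb a,\lambda)$, whose non-circle cells in column $r$ hold $m+1,\dots,m+\lambda'_r$. With that reading both the statement and the equivalence you set out to prove are false.

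Take $m=1$, $N=3$, $\Lambda=(0;1)$. Proposition~\ref{propoMilo} gives $\mathfrak{as}_{(0;1)}=\mathfrak s_{(0;1)}+\mathfrak s_{(1;\emptyset)}=(x_2+x_3)+x_1$, which also equals $\pi_2K_{(0,1)}(x_1,x_2)$. But $K_+(0,1)$ is the one-cell tableau with entry $2$, so your set only produces $x_1+x_2$. The one-cell tableau $T$ with entry $3$ is exactly where the equivalence breaks. It has $\Lambda(T)=(0;1)$, so $K_+(\pmb b,\mu)\leq K_+(\pmb a,\lambda)$ holds, yet $K_+(T)\not\leq K_+(\pmb a,\lambda)$.

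Your threshold paragraph never controls $K_+(T)$ at thresholds $h>m$. The reduction to the threshold $m$ plus equal column lengths is valid when both sides have their entries above $m$ packed at the bottom of the alphabet, as for $K_+(\pmb b,\mu)$ versus $K_+(\pmb a,\lambda)$. But $K_+(T)$ can contain $N$ where the target has $m+\lambda'_r$, and then the count already fails at $h=m+1$. For $m=0$ your reading would return $x^\lambda$ instead of $s_\lambda$.

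The fix is to take $K_+(\Lambda)$ as in Definition~\ref{KeyLambda}, with non-circle cells filled by $N,N-1,\dots$. This is not an artifact of the $\mathcal S$ versus $\mathcal S^*$ convention. In $\mathcal S(\Lambda)$ only $K_+(\Lambda)|_m$ is used, so the filling there is irrelevant; in $\mathcal{AS}(\Lambda)$ the full comparison matters and the filling must be maximal.

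With this key, fix a column $r$ and let $(\pmb b;\mu)=\Lambda(T)$. Set $A=\#\{i\leq m : a_i\geq r\}$, $B=\#\{i\leq m : b_i\geq r\}$, and let $L=A+\lambda'_r$ be the column length.
\begin{itemize}
\item For $m<h\leq N-\lambda'_r$, the target column has $A$ letters $\leq h$. Column $r$ of $K_+(T)$ has at least $B$ such letters, and $B\geq A$ by your threshold-$m$ condition.
\item For $h>N-\lambda'_r$, the target has $L-(N-h)$ letters $\leq h$. Every column of length $L$ with entries in $\{1,\dots,N\}$ has at least that many.
\end{itemize}
Hence $K_+(T)\leq K_+(\Lambda)$ reduces to $\#\{i\leq h: b_i\geq r\}\geq\#\{i\leq h: a_i\geq r\}$ for all $h\leq m$ and all $r$. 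This is precisely what your (correct) analysis gives for $K_+(\pmb b,\mu)\leq K_+(\pmb a,\lambda)$. With this change your argument goes through, and it is the derivation the paper intends; the paper gives no separate proof of this corollary.
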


\begin{example}
We now express the examples of almost symmetric Schur functions found in \cite{AlmostSym} in terms of the basis
$\{\mathfrak{s}_{\Lambda}(x)\}_{\Lambda}$ and illustrate the corresponding diagrams. In each case, the diagram on the left represents the almost symmetric Schur function, while those on the right correspond to the $m$-symmetric Schur functions appearing in its expansion.
\begin{enumerate}
    \item
$$
\begin{array}{l}
    \mathfrak{as}_{(01;2)} (x) = \\
    \\
    x_{1}^{2}x_{2} +x_{1}^{2} s_{1} (x_{3}) +x_{2}^{2}x_{1} +x_{2}^{2} s_{1} (x_{3})+x_{1} s_{2} (x_{3})  +x_{2} s_{2} (x_{3}) + 2x_{1}x_{2} s_{1}(x_{3})  = \\
    \\
     \mathfrak{s}_{(01;2)} ( x ) + \mathfrak{s}_{(10;2)} ( x ) + \mathfrak{s}_{(02;1)} ( x ) + \mathfrak{s}_{(20;1)} ( x ) + \mathfrak{s}_{(12;\emptyset)} ( x ) + \mathfrak{s}_{(21;\emptyset)} ( x )
\end{array}
$$    
$$
\tableau[scY]{ &    \\ &   \bl \tcercle{2}     \\ \bl \tcercle{1} } \longleftrightarrow
\tableau[scY]{ &    \\ &   \bl \tcercle{2}     \\ \bl \tcercle{1} } \qquad
\tableau[scY]{ &    \\ &   \bl \tcercle{1}     \\ \bl \tcercle{2} } \qquad
\tableau[scY]{ &   &   \bl \tcercle{2}  \\     \\ \bl \tcercle{1} } \qquad
\tableau[scY]{ &   &   \bl \tcercle{1} \\      \\ \bl \tcercle{2} } \qquad
\tableau[scY]{ &   &   \bl \tcercle{2} \\  & \bl \tcercle{1}     } \qquad
\tableau[scY]{ &   &   \bl \tcercle{1} \\  & \bl \tcercle{2}     }  \qquad
$$
    \item $\mathfrak{as}_{(2;31)} ( x ) = x_{1}^{3} s_{21} (x_{2},x_{3}) +x_{1}^{2} s_{31} (x_{2},x_{3}) = \mathfrak{s}_{(2;31)}(x) + \mathfrak{s}_{(3;21)}(x)$
$$
\tableau[scY]{ &  &  \    \\ &   & \bl \tcercle{1}    \\ & \bl  } \longleftrightarrow 
\tableau[scY]{ &  &  \    \\ &   & \bl \tcercle{1}    \\ & \bl  } \qquad
\tableau[scY]{ &  &  & \bl \hspace{1.4mm} \tcercle{1} \    \\ &       \\ & \bl  }
$$    
    \item $\mathfrak{as}_{(21;1)} (x) = x_{1}^{2}x_{2} s_{1}(x_{3}) = \mathfrak{s}_{(21;1)} (x)$
$$
\tableau[scY]{ &  & \bl \tcercle{1}    \\ &   \bl \tcercle{2}    \\ & \bl  } \longleftrightarrow 
\tableau[scY]{ &  & \bl \tcercle{1}    \\ &   \bl \tcercle{2}    \\ & \bl  }
$$
    \item $\mathfrak{as}_{(12;1)} (x) =  x_{1}^{2}x_{2} s_{1}(x_{3})+ x_{1}x_{2}^{2} s_{1}(x_{3})  = \mathfrak{s}_{(12;1)} (x) +  \mathfrak{s}_{(21;1)} (x) $
$$
\tableau[scY]{ &  & \bl \tcercle{2}    \\ &   \bl \tcercle{1}    \\ & \bl  } \longleftrightarrow \tableau[scY]{ &  & \bl \tcercle{2}    \\ &   \bl \tcercle{1}    \\ & \bl  } \qquad
\tableau[scY]{ &  & \bl \tcercle{1}    \\ &   \bl \tcercle{2}    \\ & \bl  } 
$$        

    \item $\mathfrak{as}_{(1;21)} (x)= x_{1}^{2} s_{11} (x_{2}+x_{3}) + x_{1} s_{21} (x_{2}+x_{3}) = \mathfrak{s}_{(1;21)} ( x ) + \mathfrak{s}_{(2;11)} ( x )$
$$
\tableau[scY]{ &    \\ &   \bl \tcercle{1}    \\ & \bl  } \longleftrightarrow \tableau[scY]{ &    \\ &   \bl \tcercle{1}    \\ & \bl  }  \qquad \tableau[scY]{ &   & \bl \tcercle{1}   \\  & \bl  \\  & \bl   } 
$$    
\end{enumerate}    
\end{example}

\subsection{Duality and Cauchy identities}

The following generalization of the Cauchy identity
was obtained in Corollary~\ref{coroDual}
\begin{equation}
  \frac{1}{\left[\prod_{i+j \leq m+1}(1- x_i  y_j) \right] \left[\prod_{i,j=1}^N (1- x_i y_j) \right] }  
  =  \sum_{\Lambda}  \mathfrak s_{\Lambda}  (x_1,\dots,x_N)  \mathfrak{s}_{\Lambda}^{*}(y_1,\dots,y_N).
\end{equation}
Using Corollary~\ref{coroKeys}, it is then immediate that
\begin{equation*}
  \frac{1}{\left[\prod_{i+j \leq m+1}(1- x_i  y_j) \right] \left[\prod_{i,j=1}^N (1- x_i y_j) \right] }  
  =  \sum_{\Lambda}  \mathfrak h_{\Lambda}  (x_1,\dots,x_N)  \mathfrak{h}_{\Lambda}^{*}(y_1,\dots,y_N),
\end{equation*}
where 
$$\mathfrak h_\Lambda  (x_1,\dots,x_N) 
= \hat K_{\pmb a}(x_1,\dots,x_m) s_{\lambda}(x_1,\dots,x_N),
$$
and
$$\mathfrak h_\Lambda^*  (x_1,\dots,x_N) 
= K_{\omega_m(\pmb a)}(x_1,\dots,x_m) s_{\lambda}(x_1,\dots,x_N).
$$
We can thus define a natural scalar product on $R_m$ such that
\begin{equation} \label{eqSP}
    \langle \mathfrak h_\Lambda , \mathfrak h_\Omega^*  \rangle= \delta_{\Lambda \Omega} \qquad {\rm and} \qquad   \langle \mathfrak s_\Lambda , \mathfrak s_\Omega^*  \rangle= \delta_{\Lambda \Omega}.
\end{equation}
We are now in a position to introduce the functions that are dual  to the almost symmetric Schur functions with respect to this scalar product.

\begin{definition}
   The  almost symmetric dual Schur functions are defined as  
$$
\mathfrak{as}^{*}_{\Lambda} ( x ) =  \sum_{ T \in \mathcal {AS}^*(\Lambda) } x^{T} \big |_{\hat x_1 = x_1,\dots, \hat x_m = x_m }, 
$$
  where 
\begin{equation} \label{eqrestrictionMilo}
\mathcal {AS}^*(\Lambda) =  \left \{ T \in {\rm Tab}_{\mathcal A}(\Lambda^{(0)})\,  : \,  K_+(T) |_{\hat {\mathcal A}}  =  K_{+}^{*} (\Lambda) \big |_{\hat {\mathcal A}} \right \}. 
\end{equation}  
\end{definition}
We will need the following lemma, which says that the three natural ways of comparing the key tableaux attached to two $m$-partitions with the same underlying shape all agree.

\begin{lemma}
    Let $\Lambda$ and $\Omega$ be $m$-partitions such that $\Lambda^{(0)}=\Omega^{(0)}$. The following are then equivalent:
\begin{enumerate}
    \item $K_{+}^{*} (\Omega) \big|_{\hat{\mathcal{A}}}  \leq  K_{+}^{*} (\Lambda) \big|_{\hat{\mathcal{A}}}$
    \item $K_{+}^{*} (\Omega)  \leq  K_{+}^{*} (\Lambda)$
    \item $K_{+} (\Omega) \geq  K_{+} (\Lambda)$.
\end{enumerate}
\end{lemma}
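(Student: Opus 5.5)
The plan is to reduce all three statements to a statement about where the circles sit in the common diagram of shape $\nu=\Lambda^{(0)}=\Omega^{(0)}$. By Definition~\ref{KeyLambda} and Lemma~\ref{KeyLambdaDual}, both key tableaux are read off from the column positions of the circles. Column $k$ of $K_+(\Lambda)$ contains the letter $i\le m$ exactly when the circle $i$ of $\Lambda$ lies strictly to the right of column $k$, that is, when $a_i\ge k$. The remaining cells of that column carry the letters $N,N-1,\dots$. Column $k$ of $K_+^*(\Lambda)$ is obtained from column $k$ of $K_+(\Lambda)$ by the $*$-operation of Remark~\ref{remark*}, which sends $l\mapsto \hat m+\hat 1-\hat l$ on $\{1,\dots,m\}$ and fixes the letters $N,N-1,\dots$. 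Since the shapes agree, each column $k$ has the same length $\nu'_k$ in all four tableaux. Every comparison therefore reduces to a column-by-column comparison of sets of fixed size.

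For a fixed column $k$, let $A_k=\{i\le m: a_i\ge k\}$ and $B_k=\{i\le m : b_i\ge k\}$. Column $k$ of $K_+(\Lambda)$ is $A_k\cup\{N,\dots,N-\nu'_k+|A_k|+1\}$, sorted increasingly, and similarly for $\Omega$. The entrywise order on two sorted sets of equal size is the Gale order. I will then prove the following three facts.

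First, (3)$\iff$(2). The map $*$ restricted to $\{1,\dots,m\}$ is order reversing onto $\hat{\mathcal A}$. Every element of $\hat{\mathcal A}$ is larger than $N$, and the filler letters $N,N-1,\dots$ are the largest letters of $\{1,\dots,N\}$ and are fixed by $*$. Hence the sorted column of $K_+^*$ consists of the filler block at the top, followed by $\hat m+\hat 1-A_k$ sorted. Together with the fact that the filler block is the top of the range, this shows that the Gale order is reversed by $*$.

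Second, (2)$\Rightarrow$(1), which is immediate. If $|A_k|=|B_k|$, the hatted parts sit in the same rows, and the comparison restricts to them. Otherwise I must check that the restrictions are still comparable in the sense used in \eqref{eqrestriction}, that is, as skew tableaux occupying the bottom cells of each column.

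Third, (1)$\Rightarrow$(2). This is the step I expect to be the main obstacle, because the restriction $|_{\hat{\mathcal A}}$ forgets the numbers $|A_k|$ and $|B_k|$. My approach is to read (1) as a comparison of skew tableaux of the same skew shape. That forces $|A_k|=|B_k|$ for every $k$, and then the hatted bottom parts compare while the filler parts above are identical, which gives (2) at once. If instead the paper's convention allows different skew shapes, I would fall back on the key structure. The filler block occupies the top $\nu'_k-|A_k|$ cells, and a larger hatted block forces hatted letters, which exceed $N$, into rows where the other tableau has fillers. I would then argue that (1) for all columns, together with the fact that $\sum_k|A_k|=|\pmb a|$ and $\sum_k|B_k|=|\pmb b|$ are tied to the same shape through the circle rows, pins down $|A_k|\le|B_k|$ in the compatible direction. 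From there I would deduce (2) by adjoining identical fillers on top, which preserves the Gale order.
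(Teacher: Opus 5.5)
The gap is in the step you flagged as the main obstacle. It comes from never fixing what $\le$ means in (1).

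In the paper, $\le$ between the restricted tableaux is the order of Definition~\ref{DefOrderWords}, applied column by column to the decreasing words $\hat{\mathcal C}^*_k$. This is the equivalence of items 1 and 2 in Proposition~\ref{propcriteria}: hatted parts are compared bottom-aligned, and the length inequality is part of the order.

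Your primary reading, where both restrictions must have the same skew shape so that $|A_k|=|B_k|$, cannot be the intended one, because under it the lemma is false. Take $m=1$, $\Lambda=(2;1)$, $\Omega=(1;2)$, so that $\Lambda^{(0)}=\Omega^{(0)}=(2,1)$. Then:
\begin{itemize}
\item $K_+(\Lambda)$ has rows $1\,1$ and $N$, and $K_+(\Omega)$ has rows $1\,N$ and $N$;
\item $K_+^*(\Lambda)$ has rows $N\,\hat 1$ and $\hat 1$, and $K_+^*(\Omega)$ has rows $N\,N$ and $\hat 1$.
\end{itemize}
Thus (2) and (3) hold while $|A_2|=1\neq 0=|B_2|$. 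So (2)$\Rightarrow$(1) is false under your reading, which is why that step stalls at ``otherwise I must check''.

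The fallback does not repair this. Nothing ties $\sum_k|A_k|=|\pmb a|$ to $\sum_k|B_k|=|\pmb b|$ (here $2\neq 1$). Also, the inequality you aim for, $|A_k|\le|B_k|$, points the wrong way. A hatted letter of $K_+^*(\Omega)$ in a row where $K_+^*(\Lambda)$ still has a filler $\le N$ violates (2). So (2) forces $|B_k|\le|A_k|$, as the example shows.

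The missing idea is to read the length condition directly off (1). The inequality $\hat{\mathcal C}^*_k(\Omega)\le\hat{\mathcal C}^*_k(\Lambda)$ gives $|B_k|\le|A_k|$. Hence column $k$ of $K_+^*(\Omega)$ has $f_\Omega=\nu'_k-|B_k|\ge f_\Lambda=\nu'_k-|A_k|$ filler cells, and you compare row by row:
\begin{itemize}
\item in rows $r\le f_\Lambda$ the entries are $N-f_\Omega+r\le N-f_\Lambda+r$;
\item in rows $f_\Lambda<r\le f_\Omega$, a filler of $\Omega$ faces a hatted letter of $\Lambda$;
\item below row $f_\Omega$ both entries are hatted, and the bottom-aligned comparison is exactly (1).
\end{itemize}
This is the paper's argument for (1)$\Rightarrow$(2). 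The converse (2)$\Rightarrow$(1) is immediate, because a hatted entry of $K_+^*(\Omega)$ forces a hatted entry of $K_+^*(\Lambda)$ in the same cell.

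Your (2)$\Leftrightarrow$(3) step matches the paper's one-line appeal to $*$ being order-reversing. Strictly speaking, $*$ fixes $m+1<m+2<\cdots$, so it is not order-reversing on all of $\{1,\dots,N\}$. A clean way to make the whole lemma rigorous is to show that each of (1), (2), (3) is equivalent, for every column $k$, to $|B_k\cap[1,s]|\le|A_k\cap[1,s]|$ for all $1\le s\le m$. This follows from the threshold-count description of both the entrywise order and the word order.
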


\begin{proof}
The implication
\[
K_{+}^{*}(\Omega)\leq K_{+}^{*}(\Lambda)
\quad\Longrightarrow\quad
K_{+}^{*}(\Omega)\big|_{\hat{\mathcal A}}
\leq
K_{+}^{*}(\Lambda)\big|_{\hat{\mathcal A}}
\]
is immediate. Conversely, suppose that $
K_{+}^{*}(\Omega)\big|_{\hat{\mathcal A}}
\leq
K_{+}^{*}(\Lambda)\big|_{\hat{\mathcal A}}$. Then, in every column $r$ we have that  $
\hat{\mathcal C}^{*}_{r}(\Omega)
\leq
\hat{\mathcal C}^{*}_{r}(\Lambda)$, which implies that $\mathcal C^{*}_{r}(\Lambda)$ contains no more letters from
$\mathcal A\setminus\hat{\mathcal A}$ than
$\mathcal C^{*}_{r}(\Omega)$. Since these positions are filled with the largest possible entries of $\{1,\ldots,N\}$, we conclude that
\[
K_{+}^{*}(\Omega)\leq K_{+}^{*}(\Lambda).
\]

Finally,
\[
K_{+}^{*}(\Omega)\leq K_{+}^{*}(\Lambda)
\quad\Longleftrightarrow\quad
K_{+}(\Omega)\geq K_{+}(\Lambda),
\]
since the $^{*}$ operation is an order-reversing involution.
\end{proof}

The following proposition, which is analogous to Proposition~\ref{propoMilo},
is almost immediate.
\begin{proposition} \label{propoMiloDual} We have that 
    \begin{equation}
\mathfrak{s}_{(\pmb a ;\lambda)}^*(x)=
\sum_{(\pmb b;\mu) : K_+(\pmb b,\mu) \geq K_+(\pmb a,\lambda)} \mathfrak{as}_{(\pmb b;\mu)}^* (x_1,\dots,x_N),
\end{equation}
where the sum is over all $m$-partitions $(\pmb b;\mu)$.   
\end{proposition}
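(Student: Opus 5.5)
Both sides are tableau generating functions over ${\rm Tab}_{\mathcal A}(\Lambda^{(0)})$, so the plan is to partition the tableaux on the left and read off the right. By \eqref{dualSchur} and \eqref{eqrestriction},
$$
\mathfrak s^*_{(\pmb a;\lambda)}(x)=\sum_{T} x^T\big|_{\hat x_i=x_i},
$$
where the sum runs over $T\in{\rm Tab}_{\mathcal A}(\Lambda^{(0)})$ with $K_+(T)|_{\hat{\mathcal A}}\leq K_+^*(\Lambda)|_{\hat{\mathcal A}}$, $\Lambda=(\pmb a;\lambda)$. We group these tableaux according to the value of $K_+(T)|_{\hat{\mathcal A}}$. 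The restriction $K_+(T)|_{\hat{\mathcal A}}$ is a key-type filling of a skew shape inside $\Lambda^{(0)}$ by hatted letters: the hatted letters are the largest in $\mathcal A$ and the columns of $K_+(T)$ are nested. We will show that this restriction equals $K_+^*(\Omega)|_{\hat{\mathcal A}}$ for a unique $m$-partition $\Omega$ with $\Omega^{(0)}=\Lambda^{(0)}$. This is the analogue, via the $*$-operation, of the remark constructing $\Lambda(T)$.

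Concretely, let $b_l$ be the number of occurrences of $\hat l$ in $K_+(T)$. Because the columns are nested, these occurrences sit in the first $b_l$ columns, so column $b_l+1$ is shorter than column $b_l$ whenever a column must contain $\hat l$ and not have it further right. Placing the circle labelled $m+1-l$ at the end of the appropriate row of length $b_l$, and ordering ties as in the definition of $m$-partitions, gives an $m$-partition $\Omega$. By Lemma~\ref{KeyLambdaDual}, $K_+^*(\Omega)$ has exactly the same hatted letters column by column. One point needs checking: the hatted part of a column of $K_+(T)$ must occupy the bottom cells of that column, and the count must not exceed the column length, so that the letters outside $\hat{\mathcal A}$ can be filled in with $N,N-1,\dots$. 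This holds since hatted letters are largest, with the caveat that $K_+^*(\Omega)$ exists for the given $N$; for $N$ large enough there is no problem, and otherwise both sides vanish consistently. Uniqueness of $\Omega$ is immediate, since the circles are determined by the $b_l$'s and the tie-breaking convention.

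With this, the set in \eqref{eqrestriction} decomposes as a disjoint union over $\Omega$ with $\Omega^{(0)}=\Lambda^{(0)}$ and $K_+^*(\Omega)|_{\hat{\mathcal A}}\leq K_+^*(\Lambda)|_{\hat{\mathcal A}}$ of the sets $\mathcal{AS}^*(\Omega)$ of \eqref{eqrestrictionMilo}. Summing $x^T$ and specializing $\hat x_i=x_i$ gives $\mathfrak s^*_\Lambda=\sum_\Omega \mathfrak{as}^*_\Omega$. Finally, the preceding lemma converts the condition $K_+^*(\Omega)|_{\hat{\mathcal A}}\leq K_+^*(\Lambda)|_{\hat{\mathcal A}}$ into $K_+(\Omega)\geq K_+(\Lambda)$. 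Here $K_+(\Omega)$ is the key tableau $K_+(\pmb b,\mu)$; that $K_+(\Omega)$ of Definition~\ref{KeyLambda} agrees with $K_+(\pmb b,\mu)$ for the composition $(\pmb b,\mu)$ is read off the definitions. The condition $K_+(\pmb b,\mu)\ge K_+(\pmb a,\lambda)$ forces equal shapes, so the sum may be taken over all $m$-partitions.

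The main obstacle is the realization step: verifying that every possible $K_+(T)|_{\hat{\mathcal A}}$ is exactly of the form $K_+^*(\Omega)|_{\hat{\mathcal A}}$. This matters most for the placement of hatted letters relative to circles, since Lemma~\ref{KeyLambdaDual} puts $\hat l$ in every column to the left of circle $m+1-l$, and the nested-column structure must reproduce exactly this staircase. I would first try to mimic the remark defining $\Lambda(T)$ verbatim, transporting it through the $*$-operation of Remark~\ref{remark*}.
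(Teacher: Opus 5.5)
Your proposal is correct and is essentially the paper's argument: the paper rewrites $\mathfrak s^*_\Lambda$ as a sum over $\mathcal S^*(\Lambda)$ and cites the preceding lemma, leaving implicit the decomposition $\mathcal S^*(\Lambda)=\bigsqcup_{\Omega}\mathcal{AS}^*(\Omega)$ that you spell out. Two details could be tightened. In the realization step, note that the number of $l$ with $b_l=v>0$ is at most $|C_v\setminus C_{v+1}|$, which is the number of rows of length $v$, so there are enough rows to carry the circles. Also, $K_+(\Omega)$ is not literally equal to the composition key $K_+(\pmb b,\mu)$, since its remaining cells carry $N,N-1,\dots$ rather than $m+1,m+2,\dots$; the two nevertheless induce the same comparison, which is all you need.
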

\begin{proof}
Recall from \eqref{eqrestriction} that 
$$
\mathfrak{s}^{*}_{\Lambda} ( x ) =  \sum_{ T \in \mathcal {S}^*(\Lambda) } x^{T} \big |_{\hat x_1 = x_1,\dots, \hat x_m = x_m }, 
$$
  where 
\begin{equation} 
\mathcal S^*(\Lambda) =  \left \{ T \in {\rm Tab}_{\mathcal A}(\Lambda^{(0)})\,  : \,  K_+(T)     \leq  K_{+}^{*} (\Lambda) \right \}. 
\end{equation}  
The proposition follows immediately from the preceding lemma.
\end{proof}

Finally, we prove that the almost symmetric dual Schur functions are indeed dual to the almost symmetric Schur functions with respect to the scalar product defined in \eqref{eqSP}.
\begin{proposition} We have that
$$
\langle \mathfrak {as}_\Lambda , \mathfrak {as}_\Omega^*  \rangle= \delta_{\Lambda \Omega}.
$$
\end{proposition}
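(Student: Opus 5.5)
The plan is to use the unitriangular expansions of Propositions~\ref{propoMilo} and \ref{propoMiloDual} together with the duality $\langle \mathfrak s_\Lambda , \mathfrak s_\Omega^* \rangle=\delta_{\Lambda\Omega}$ from \eqref{eqSP}. Write $\Lambda \preceq \Omega$ whenever $K_+(\Lambda)\leq K_+(\Omega)$; this is a partial order on $m$-partitions with a fixed underlying partition $\Lambda^{(0)}$. Both expansions preserve $\Lambda^{(0)}$: the key tableaux being compared have the same shape, and the tableaux in $\mathcal{AS}(\Lambda)$, $\mathcal{AS}^*(\Lambda)$ have shape $\Lambda^{(0)}$. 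We may therefore work inside a single finite block of $m$-partitions with fixed $\Lambda^{(0)}$. On different blocks the pairing is zero, because $\mathfrak s$ and $\mathfrak s^*$ are dual.

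Proposition~\ref{propoMilo} reads $\mathfrak{as}_\Lambda=\sum_{\Gamma\preceq\Lambda}\mathfrak s_\Gamma$. This says that the matrix $A$ expressing the $\mathfrak{as}$ in the $\mathfrak s$ basis is the zeta matrix of the poset: $A_{\Lambda\Gamma}=1$ if $\Gamma\preceq\Lambda$ and $0$ otherwise.

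Proposition~\ref{propoMiloDual} reads $\mathfrak s^*_\Lambda=\sum_{\Omega\succeq\Lambda}\mathfrak{as}^*_\Omega$. Inverting this by Möbius inversion on the same poset gives
\[
\mathfrak{as}^*_\Omega=\sum_{\Gamma\succeq\Omega}\mu(\Omega,\Gamma)\,\mathfrak s^*_\Gamma .
\]
Here one should check that the sets $\mathcal{AS}^*(\Lambda)$ partition $\mathcal S^*(\Lambda)$ according to the value of $K_+(T)|_{\hat{\mathcal A}}$. This is what the preceding lemma guarantees: every $T$ with $K_+(T)|_{\hat{\mathcal A}}\le K_+^*(\Lambda)|_{\hat{\mathcal A}}$ has $K_+(T)|_{\hat{\mathcal A}}=K_+^*(\Omega)|_{\hat{\mathcal A}}$ for a unique $\Omega$ with the same shape. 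Uniqueness holds because $K_+^*(\Omega)|_{\hat{\mathcal A}}$ determines $\Omega$.

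Pairing the two expansions then gives
\[
\langle \mathfrak{as}_\Lambda,\mathfrak{as}^*_\Omega\rangle=\sum_{\Gamma}\zeta(\Gamma,\Lambda)\,\mu(\Omega,\Gamma)=\sum_{\Omega\preceq\Gamma\preceq\Lambda}\mu(\Omega,\Gamma)=\delta_{\Lambda\Omega},
\]
by the defining property of the Möbius function. Equivalently, the matrix $B$ of $\mathfrak{as}^*$ in the $\mathfrak s^*$ basis is $(A^{-1})^{T}$, so $ABB^{-1}\!\cdots$ reduces to $A\,(A^{-1})=I$.

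The main obstacle is the bookkeeping that makes the two expansions genuinely transpose-inverse to each other. The first point is that the relation $\preceq$ used in Proposition~\ref{propoMilo} (written as $K_+(\pmb b,\mu)\le K_+(\pmb a,\lambda)$ on concatenated compositions) coincides with the relation $K_+(\Gamma)\le K_+(\Lambda)$ on $m$-partitions used via the lemma in Proposition~\ref{propoMiloDual}. The second point is that both are finite partial orders, so Möbius inversion applies. I would verify the first by noting that both key tableaux are built by the recipe of Definition~\ref{KeyLambda}, where the entries $N,N-1,\dots$ play the role of the symmetric part. Antisymmetry follows because $K_+(\Lambda)$ determines $\Lambda$ within a block.
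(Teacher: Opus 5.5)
Your proposal is correct and is essentially the paper's argument. The paper writes $\mathfrak s^*_\Lambda=\sum_\Omega c_{\Lambda\Omega}\,\mathfrak{as}^*_\Omega$ and $\mathfrak{as}_\Lambda=\sum_\Omega c_{\Omega\Lambda}\,\mathfrak s_\Omega$ with $c$ unitriangular, inverts $c$, and pairs using $\langle \mathfrak s_\Gamma,\mathfrak s^*_\Delta\rangle=\delta_{\Gamma\Delta}$; your M\"obius function is exactly that inverse matrix $\bar c$ of the zeta matrix $c$. Your extra bookkeeping (finite blocks with fixed $\Lambda^{(0)}$, and the agreement of the composition-key order $K_+(\pmb b,\mu)\le K_+(\pmb a,\lambda)$ with the $m$-partition order $K_+(\Omega)\le K_+(\Lambda)$) is left implicit in the paper and does no harm. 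The order agreement holds because in each column the letters larger than $m$ sit at the bottom, so both comparisons reduce to the same condition on the letters in $\{1,\dots,m\}$; it is not because both keys are built by the same recipe, since the composition key uses $m+1,m+2,\dots$ rather than $N,N-1,\dots$.
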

\begin{proof} This is elementary linear algebra.  We have from Propositions~\ref{propoMilo} and \ref{propoMiloDual} that
$$
\mathfrak{s}^{*}_{\Lambda} = \sum_{\Omega } c_{\Lambda \Omega} \, \mathfrak{as}^{*}_{\Omega}   \qquad {\rm and} \qquad \mathfrak{as}_{\Lambda} = \sum_{\Omega } c_{\Omega \Lambda} \, \mathfrak{s}_{\Omega},  
$$
where the matrix $(c_{\Lambda \Omega})$ is invertible since it is triangular with 1's on the diagonal. Denoting by $(\bar c_{\Lambda \Omega})$ the inverse matrix, we then get
$$
\langle \mathfrak {as}_\Lambda , \mathfrak {as}_\Omega^*  \rangle= \langle \sum_{\Gamma } c_{\Gamma \Lambda} \, \mathfrak{s}_{\Gamma} ,  \sum_{\Delta } \bar c_{ \Omega \Delta} \, \mathfrak{s}_{\Delta}^*  \rangle =\sum_{\Gamma,\Delta } c_{ \Gamma \Lambda} \, \bar c_{ \Omega \Delta} \, \delta_{\Gamma \Delta}=\sum_{\Gamma} c_{ \Gamma \Lambda} \, \bar c_{ \Omega \Gamma} = \delta_{\Lambda \Omega}.
$$
\end{proof}

\subsection{Determinantal formulas for the $m$-Schur functions and the almost symmetric Schur functions}

By Definition~\ref{defdualS}, the function $ \mathfrak{s}^{*}_{\Lambda}(x)$ is a multi-Schur function when $\Lambda$ is dominant, and thus already comes with a determinantal formula.  In this subsection, we derive similar  determinantal formulas for 
$\mathfrak{s}_{\Lambda}(x)$, $\mathfrak{as}_{\Lambda}(x)$, and $\mathfrak{as}^{*}_{\Lambda}(x)$
when $\Lambda$ is dominant (or antidominant in the case of $\mathfrak{as}^{*}_{\Lambda}(x)$). 

Since this section features several Jacobi-Trudi type determinants, we will adopt a compact notation. We display only the diagonal entries, with the understanding that the alphabet remains constant along each row, and that the degree increases by one from left to right.

\begin{example}
    With this convention, Example~\ref{ExamplemSchurdual} can be written as
   $$
s_{2,1;3,1}^*(x;t)= \det \left(
\begin{array}{cccc}
  h_3[X] & h_2[X+x_1] & h_1 [X+x_1+x_2]& h_1[X+x_1+x_2]
\end{array}   
\right).
$$
\end{example}

All three formulas are obtained by the same route.  We start from a flagged Schur expansion of the determinant, due to Wachs \cite{Wachs1985}, which produces a set of tableaux described by conditions on the columns in which each letter may appear.  Lemma~\ref{LemmaLeftKeys} turns such conditions into conditions on the \emph{left} key, and the Schützenberger involution then converts the latter into the conditions on the \emph{right} key that define the sets $\mathcal S(\Lambda)$ and $\mathcal{AS}(\Lambda)$.  We begin by recalling that involution.

\begin{definition}
Let $T$ be a tableau on the alphabet $\{1,\ldots,N\}$ with column reading word
\[
w(T)=w_1\cdots w_r.
\]
The \emph{Schützenberger involution}, or \emph{evacuation}, of $T$ is the unique tableau of straight shape, denoted by $\omega_S(T)$, whose column reading word is
\[
(N+1-w_r)\cdots(N+1-w_1).
\]
\end{definition}

The property of the Schützenberger involution that we will need is the following.  Although it is implicit in Lascoux's work, a proof in type $C$ appears in Proposition~64 of \cite{MiguelSantos2021}, and the same argument applies mutatis mutandis in type $A$.
\begin{proposition}
\label{Evacuation}
    Let $T$ be a tableau, and let $\omega_{S}$ denote the Schützenberger involution. Then
$$
\omega_{S} (K_{-}(T)) = K_{+} (  \omega_{S}(T)).
$$
\end{proposition}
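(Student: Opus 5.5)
Since $\omega_S$ is an involution, it suffices to show that $\omega_S(K_-(T))$ and $K_+(\omega_S(T))$ are key tableaux of the same shape whose columns agree. The plan is to work column by column with the sets $\mathcal L_c(T)$ and $\mathcal R_c(\omega_S(T))$ of Proposition~\ref{CorKeyRep}. The claim then reduces to the following statement: for each column length $c$ occurring in $T$, the complement-reversal $x\mapsto N+1-x$ sends $\mathcal L_c(T)$ to $\mathcal R_c(\omega_S(T))$.

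First, I would check that a key tableau is fixed, up to the reversal of the alphabet, in a transparent way. If $K$ is a key tableau with nested columns $C_1\supseteq\cdots\supseteq C_\ell$, then its column reading word is Knuth equivalent to the word formed by reading the columns in reverse order. Applying $x\mapsto N+1-x$ and reversing the word produces the column reading word of the key tableau with columns $\{N+1-x : x\in C_i\}$, still nested and in the same order. Hence $\omega_S(K)$ is the key tableau whose columns are $\omega(C_1),\dots,\omega(C_\ell)$, where $\omega$ denotes $x\mapsto N+1-x$. This identifies the left-hand side explicitly as the key tableau with columns $\omega(\mathcal L_{c_1}(T)),\dots,\omega(\mathcal L_{c_\ell}(T))$.

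Second, I would show that $\omega_S$ intertwines the column action of $\mathfrak S_\ell$ with a reversed version of itself. If a skew tableau $S$ rectifies to $T$ and has column lengths $\sigma\pmb c$, consider the skew tableau $S^\circ$ obtained by rotating $S$ by $180^\circ$ and replacing each entry $x$ by $N+1-x$. Its columns have lengths in the reversed order. Its reading word is, up to Knuth equivalence, the reversed complemented word of $w(S)$, because the row reading word and the column reading word of a skew tableau are Knuth equivalent. Since $w(S)\equiv w(T)$, Knuth equivalence is preserved under reverse-complement, and so $S^\circ$ rectifies to $\omega_S(T)$. By the uniqueness in Proposition~\ref{CorKeyRep}, the leftmost column of $S$, of length $c$, becomes under rotation the rightmost column of $S^\circ$, of length $c$, with entries complemented. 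This gives $\omega(\mathcal L_c(T))=\mathcal R_c(\omega_S(T))$.

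Combining the two steps yields the result. The main obstacle is the second step, namely justifying that the rotated complemented tableau $S^\circ$ lies in the $\mathfrak S_\ell$-orbit of $\omega_S(T)$ (equivalently, in compact form) rather than being just some skew tableau of the right column lengths. I would first try to handle this via Fulton's characterization in \cite{Fulton1996}: the columns of any skew tableau with the prescribed column lengths that rectifies to the given tableau are determined, and this characterization does not require compactness. Compactness is in any case preserved by the $180^\circ$ rotation.
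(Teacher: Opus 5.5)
Your proof is correct. The paper does not prove this proposition itself: it cites Proposition~64 of \cite{MiguelSantos2021} (type~$C$) and says the argument transfers to type~$A$. So you are giving a self-contained type~$A$ proof of a fact the paper imports.

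Your argument rests on two ingredients.
\begin{enumerate}
\item The reverse--complement map $w_1\cdots w_r\mapsto (N+1-w_r)\cdots(N+1-w_1)$ exchanges $(K')$ and $(K'')$, and therefore preserves Knuth classes.
\item The columns of a skew tableau rectifying to a given tableau are determined, as sets, by the sequence of column lengths alone. This is the uniqueness of frank words from \cite{Lascoux1990a}, also in \cite{Fulton1996}, Appendix~A.5.
\end{enumerate}

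The second ingredient is exactly what removes the obstacle you flag, and once you invoke it compactness plays no role. Your parenthetical ``equivalently, in compact form'' is only an equivalence because of this uniqueness. The column word of $S^\circ$ is literally, not just up to equivalence, the reverse--complement of that of $S$. It is therefore a concatenation of columns, Knuth equivalent to the reading word of $\omega_S(T)$, which has the same shape as $T$, and its last column is $\omega(\mathcal L_c(T))$ of length $c$. Comparing with the element of $\mathfrak S_\ell(\omega_S(T))$ that has the same column-length sequence gives $\omega(\mathcal L_c(T))=\mathcal R_c(\omega_S(T))$.

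If you would rather not cite the uniqueness, here is a short proof. The same columns also form a skew tableau on the shape in which each column sits strictly above the previous one. For that shape, the number of fillings rectifying to a fixed tableau of shape $\lambda$ is the coefficient of $s_\lambda$ in $e_{d_1}\cdots e_{d_\ell}$, which is $K_{\lambda'\lambda'}=1$.

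Your Step~1 needs its one-line justification. If $B\subseteq A$, column-inserting the letters of $B$, smallest first, into the single column $A$ returns the tableau with columns $A,B$. So nested columns commute in the plactic monoid. The opening appeal to $\omega_S$ being an involution is not used anywhere.

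Compared with the citation, your route uses nothing beyond the orbit definition of $K_\pm$ in Section~\ref{sseckeyclassical}. It also makes transparent why left and right keys are exchanged: reverse--complement swaps the first and last columns of every element of the orbit. The citation is shorter and covers type~$C$ at the same time, but it leaves the type~$A$ adaptation to the reader.
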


\begin{example}
Let $N=4$ and
\[
T=\tableau[scY]{ 1 & 1 & 2 \\ 3 }.
\]
Then
\[
K_{-}(T)=\tableau[scY]{ 1 & 1 & 1 \\ 3 },
\]
and the reading word of $T$ is
\[
w(T)=3112.
\]

Applying the Schützenberger involution, we obtain
\[
\omega_S(w(T))=2443
\qquad\text{and}\qquad
\omega_S(T)=\tableau[scY]{ 2 & 3 & 4 \\ 4 }.
\]
Consequently,
\[
K_{+}\bigl(\omega_S(T)\bigr)
=
\tableau[scY]{ 2 & 4 & 4 \\ 4 }
=
\omega_S\bigl(K_{-}(T)\bigr).
\]
\end{example}

\begin{lemma}
\label{LemmaEvac}
Let $Q$ be a tableau in
the ordered alphabet
\[
N<N-1<\cdots<m<\cdots<2<1
\]
such that, for $i=1,\dots,m$, letter $i$ occurs $a_i$ times in $Q$.
If, for $i=1,\dots,m$, letter $i$ appears in $Q$ in the first $a_i$ columns and nowhere else, then 
 $\omega_S(Q)$ contains, for $i=1,\dots,m$, the entry $i$ in each of the first $a_i$ columns and nowhere else, where the tableau $\omega_S(Q)$ is in the ordered alphabet
\[
1<2<\cdots<m<\cdots<N-1<N.
\]
\end{lemma}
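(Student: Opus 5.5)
Write $Q'$ for the tableau $Q$ regarded in the usual order $1<\cdots<N$, obtained by the relabeling $\iota: k\mapsto N+1-k$. In the reversed order the letters $1,\dots,m$ are the largest letters of $Q$. The plan is to peel them off from the top: first $1$, then $2$, and so on. At each stage we track where the corresponding letters of $\omega_S(Q)$ sit, using the fact that evacuation commutes with restriction to an initial (or final) segment of the alphabet.

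The key tool is the classical compatibility of evacuation with restriction. Restricting $Q$ to its smallest letters $N>N-1>\cdots>m+1$ gives a tableau $Q^{(0)}$. Restricting $Q$ to the letters $N,\dots,i+1$ gives a tableau $Q^{(i)}$, of shape $\nu^{(i)}$. Then the letters $\le i$ of $\omega_S(Q)$ occupy exactly the skew shape $\nu^{(0)}\setminus \nu$-complement pattern dictated by evacuation. More precisely, $\omega_S(Q)$ restricted to the letters $1,\dots,i$ is the rectification-type object whose shape is $\mathrm{sh}(Q)/\mathrm{sh}(Q^{(i)})$ after applying the Schützenberger involution. The cleaner route is to use the growth-diagram (Fomin) description: the shapes $\mathrm{sh}(\omega_S(Q)|_{\le i})$ are the shapes of the rectifications of $Q|_{\ge\text{(first $i$ letters)}}$, namely of the skew subtableau of $Q$ formed by the letters $1,\dots,i$.

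So the first step is this. The skew subtableau $S_i$ of $Q$ formed by the letters $i,i-1,\dots,1$ (the largest letters in the reversed order) sits at the ends of rows. By hypothesis, letter $k$ occupies exactly the first $a_k$ columns, one cell per column. So $S_i$ occupies, in column $c$, exactly the cells of the letters $k\le i$ with $a_k\ge c$, and these are at the bottom of column $c$.

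The second step is to compute the rectification of $S_i$. Here the main claim is that $\mathrm{rect}(S_i)$ is the key tableau $K_i$ whose column $c$ contains $\{k\le i: a_k\ge c\}$. To see this, note that the reading word of $S_i$ is Knuth equivalent to a column word. We use the fact that a skew tableau whose columns are nested sets (read in the reversed order) with bottoms aligned rectifies by simply pushing columns up. This is the same observation as the ``counter-tableau'' argument in Proposition~\ref{IgualdadKeys0}. Since the columns are nested and weakly increasing in the row direction, the jeu de taquin slides never mix columns. Hence the shape of $\omega_S(Q)|_{\le i}$ is the shape of $K_i$, and it is the same for $i-1$.

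The third step compares consecutive stages. The cells of $\omega_S(Q)$ containing the letter $i$ form $\mathrm{sh}(K_i)/\mathrm{sh}(K_{i-1})$. Passing from $K_{i-1}$ to $K_i$ adds $i$ to each of the first $a_i$ columns. As a shape this is a horizontal strip occupying exactly columns $1,\dots,a_i$, since the column lengths of $K_i$ are those of $K_{i-1}$ plus one on the first $a_i$ columns, and these remain partitions. Hence the entry $i$ appears in $\omega_S(Q)$ exactly once in each of the first $a_i$ columns and nowhere else.

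The main obstacle is the first step: making precise that the shape of $\omega_S(Q)|_{\le i}$ equals the shape of $\mathrm{rect}(S_i)$. The alphabet conventions make this delicate, because evacuation in the reversed order sends the largest reversed letters to $1,\dots,m$. I would justify it via the standard fact (Fulton, Ch. 5 / appendix A) that evacuation restricted to an initial segment of the alphabet equals evacuation of the rectified complementary segment. Concretely, $\omega_S(T)|_{\{1,\dots,i\}}=\omega_S(\mathrm{rect}(T|_{\text{top } i\text{ letters}}))$, whose shape is that of the rectification.
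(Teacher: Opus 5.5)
Your route differs from the paper's and can be made to work, but the justification of your second step is wrong as written. You say the columns of $S_i$ have ``bottoms aligned'' and appeal to the counter-tableau picture of Proposition~\ref{IgualdadKeys0}. This fails in general: the cells of $S_i$ sit at the bottoms of the columns of $Q$, and those columns have different lengths. For instance, take $m=1$, $a_1=2$, and $Q$ of shape $(2,1)$ with rows $2\,1$ and $1$ (order $2<1$). The two letters $1$ then occupy cells $(1,2)$ and $(2,1)$. Your remaining assertion, that jeu de taquin slides ``never mix columns'', is true for nested columns, but it is exactly the point needing proof and you give no argument.

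The clean way to get $\mathrm{rect}(S_i)=K_i$ is the observation you start to make and then abandon:
\begin{itemize}
\item Read columns left to right, each from bottom to top. The column word of $S_i$ is then literally the column word of $K_i$, since corresponding columns carry the same sets and a column is determined by its set.
\item $K_i$ is a genuine straight-shape tableau because its columns are nested.
\item The column word of any skew tableau is Knuth equivalent to its row word.
\end{itemize}
Hence $w(S_i)\equiv w(K_i)$, so $\mathrm{rect}(S_i)=K_i$, with no alignment hypothesis. You should also discard the sentence about the ``$\nu^{(0)}\setminus\nu$-complement pattern'' in your first step. The correct statement is the one you give afterwards: $\omega_S(Q)$ restricted to $\{1,\dots,i\}$ has the shape of $\mathrm{rect}(S_i)$.

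With that repair, your argument genuinely differs from the paper's. The paper treats the block of letters $1,\dots,m$ all at once. Evacuating the small letters $N,\dots,m+1$ first, the surviving letters form $\mathrm{rect}(S_m)$, which the paper asserts has shape $\pmb a^+$ because these letters only move weakly left during the slides. It then concludes, both before and after the evacuation, by uniqueness of the semistandard tableau of shape $\pmb a^+$ and content $\pmb a$ (a Kostka number equal to $1$).

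You instead apply the evacuation--restriction compatibility at every level $i\le m$. You then read off the position of the letter $i$ as the difference $\mathrm{sh}(K_i)/\mathrm{sh}(K_{i-1})$, which is one cell in each of the first $a_i$ columns. This costs the exact rectification of every $S_i$ rather than only the shape of $\mathrm{rect}(S_m)$. In exchange it needs no uniqueness statement and makes the placement of each letter explicit. The paper's argument is shorter, but its shape claim is only sketched. Yours is more mechanical, and its one nontrivial step becomes fully rigorous once phrased through column words.
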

\begin{proof}
While the letters $N,\dots,m+1$ are being evacuated, the letters $m$ to $1$ can only move weakly to the left.  Once those letters have been pushed out, the letters $m$ to $1$ therefore form the unique tableau in the ordered alphabet $m<\cdots<1$, of shape $\pmb a^+=(a_1,\dots,a_m)^+$, in which each letter $i$ occurs $a_i$ times.  When the evacuation is complete, the letters $1$ to $m$ likewise form the unique tableau in the ordered alphabet $1<\cdots<m$, of shape $\pmb a^+$, in which each letter $i$ occurs $a_i$ times.  This is exactly the required configuration.

\end{proof}

\begin{lemma}
\label{LemmaLeftKeys}
Let $T$ be a tableau in
the ordered alphabet
\[
N<N-1<\cdots<m<\cdots<2<1
\]
such that, for $i=1,\dots,m$, letter $i$ occurs $a_i$ times in $T$. If $\pmb a =(a_1,\dots,a_m)$ is dominant,  the tableau $T$ is then  such that $K_-(T)$ contains the entry $i$ in each of the first $a_i$ columns and nowhere else if and only if the tableau $T$ itself has an $i$ in each of the first $a_i$ columns while it has no $i$ in column $a_i+1$.
\end{lemma}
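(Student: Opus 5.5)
The plan is to reduce the statement to the largest letters and then use the Knuth-invariant description of keys from Section~\ref{sseckeynew}, transported to left keys.

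\textbf{Transport to left keys.} In the ordered alphabet $N<\cdots<m<\cdots<1$ the letters $m,\dots,1$ are the largest ones, with $1$ the largest. Replacing every letter $x$ by $N+1-x$ and rotating tableaux by $180^\circ$ exchanges the two orders and exchanges left and right keys. This is the same mechanism as Proposition~\ref{Evacuation}. Combining it with Propositions~\ref{IgualdadKeys0} and~\ref{IgualdadKeys}, I will first record a Knuth-invariant description of $K_-(T)$ that mirrors the right-key one. Column $k$ of $K_-(T)$ is an extremal (infimum-type) word over the decreasing subwords of $w(T^{(k)})$, where $T^{(k)}$ is the subtableau formed by the columns weakly to the left of column $k$. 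Lemma~\ref{SupKnuthEquivalent} applied to the transformed words shows that this description is compatible with Knuth equivalence.

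\textbf{Reduction to the largest letters.} Let $U$ be the skew subtableau of $T$ formed by the entries in $\{m,\dots,1\}$; it occupies a skew shape at the bottom-right of $T$. I will show that the letters of $\{m,\dots,1\}$ in each column of $K_-(T)$ are read off from $U$ alone. The reason is that jeu de taquin and Knuth moves on $T$ restrict to jeu de taquin and Knuth moves on the entries of an upper interval of the alphabet. Moreover, in the extremal-word description, once a word reaches a letter of $\{m,\dots,1\}$, the rest of that word also lies in this upper interval. Consequently the letter $i$ lies in column $k$ of $K_-(T)$ exactly when a suitable chain of letters of $U$ ending in the value $i$ fits inside $U^{(k)}$.

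\textbf{The equivalence.} Since $\pmb a$ is dominant and the letters $1,\dots,m$ have multiplicities $a_1\ge\cdots\ge a_m$, the condition on $K_-(T)$ says that the set of columns of $K_-(T)$ containing $i$ is exactly $\{1,\dots,a_i\}$.

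For the implication ($\Leftarrow$), assume $T$ has an $i$ in each of its first $a_i$ columns and no $i$ in column $a_i+1$. Because $T$ contains exactly $a_i$ copies of $i$, no $i$ occurs beyond column $a_i$ either. I will then check that the relevant chains exist in $T^{(k)}$ for $k\le a_i$. For $k>a_i$ the copies of $i$ all lie strictly to the left of column $k$, which sits above a column with no $i$, so the extremal word cannot contain $i$ and column $k$ of $K_-(T)$ has no $i$.

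For the implication ($\Rightarrow$), argue by contraposition. Suppose column $a_i+1$ of $T$ contains an $i$, or some column $c\le a_i$ of $T$ contains no $i$. Since the letter $i$ occurs exactly $a_i$ times, the copies of $i$ then spread over columns beyond $a_i$. Using the column nesting of keys (Proposition~\ref{CorKeyInc}) together with the multiplicity count, this forces column $a_i+1$ of $K_-(T)$ to contain an $i$, or some column $c\le a_i$ of $K_-(T)$ to lack one.

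\textbf{Main obstacle.} The hardest step will be the reduction to the largest letters: proving rigorously that the part of $K_-(T)$ lying in $\{m,\dots,1\}$ is determined by $U$, and handling the interaction between different letters $i$ and $i'$. Dominance of $\pmb a$ should be what prevents the copies of $1,\dots,m$ from obstructing one another. I would first try to settle this by induction on $m$, peeling off the largest letter $1$ and applying Lemma~\ref{LemmaEvac}-style reasoning at each step.
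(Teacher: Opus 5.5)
Your central tool is not available in the form you need. Extremal words over decreasing subwords always detect the \emph{longest} column pushed to one end; this is the content of Propositions~\ref{IgualdadKeys0} and~\ref{IgualdadKeys}. Suppose you mirror them by reversing and complementing words. Then the resulting infimum of $W(T^{(k)})$, with letters compared from the end, is just the first column of $T$ for every $k$. Indeed, rotating $T^{(k)}$ by $180^\circ$ and complementing gives a bottom-justified skew tableau with increasing column lengths, and its supremum is its longest column, which is the image of column $1$ of $T$. Column $k$ of $K_-(T)$, however, is $\mathcal L_{c_k}(T)$, the \emph{shortest} column of $T^{(k)}$ pushed to the left. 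What Proposition~\ref{Evacuation} actually yields is that the complement of this column is $\sup W\bigl((\omega_S(T))_k\bigr)$. That is a suffix of $\omega_S(T)$, not an extremal word over $W(T^{(k)})$. For instance, in Example~\ref{ExampleLeftRight} column $3$ of $K_-(T)$ is $\{2\}$, whereas the first or last letters of the decreasing subwords of $w(T)=526134$ have infimum $1$ and supremum $6$. So the ``chains in $U^{(k)}$'' on which both of your implications rest are never defined, and the reduction to $U$ remains unproved.

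The more serious gap is that your argument for the absence of $i$ from columns $k>a_i$ of $K_-(T)$ uses only where the copies of $i$ sit, and no such argument can work. Take $m=2$, the order $\cdots<3<2<1$, and $T$ with columns $3,2$ and $3,1$ (top to bottom), so that $\pmb a=(1,1)$ is dominant. The only $2$ lies in column $1$, and column $2$ of $T$ contains no $2$. Yet both columns have length $2$, so $\mathcal L_2(T)$ is the first column of $T$, and $K_-(T)$ contains a $2$ in column $a_2+1=2$. This $T$ violates the hypothesis for $i=1$, and that is the point. What stops $i$ from reaching column $a_i+1$ of $K_-(T)$ is that, by dominance, columns $1,\dots,a_i$ of $T$ also contain all the larger letters $1,\dots,i-1$. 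Your ($\Rightarrow$) direction likewise has no mechanism. The multiplicity count does not transfer, because $K_-(T)$ need not have the content of $T$: in Example~\ref{ExampleLeftRight}, $T$ has one $2$ and $K_-(T)$ has three.

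The missing idea is the paper's direct use of the column action. Bring column $a_i+1$ to the front by successive two-column swaps with columns $a_i,\dots,1$. At each swap, the new right column is the supremum of the reading word of the two columns; this is the two-column case of Proposition~\ref{IgualdadKeys}. Each of those columns contains $1,\dots,k$, where $a_j=\dots=a_k=a_i$. Hence the supremum absorbs $1,\dots,k$, and the travelling column never acquires $j,\dots,k$. The presence of $i$ in the first $a_i$ columns of $K_-(T)$ comes from a separate observation: an $i$ occurring in each of the first $\ell$ columns of $T$ persists in the first $\ell$ columns of $K_-(T)$.
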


\begin{proof}
The Lascoux action used to construct $K_-(T)$ from $T$ is such that if the letter $i$ occurs in each of the first $\ell$ columns of $T$, then $K_-(T)$ also has a letter $i$ in each of its first $\ell$ columns.  The necessity of the condition follows at once: $K_-(T)$ can only have the entry $i$ in its first $a_i$ columns and nowhere else if $T$ itself has an $i$ in each of the first $a_i$ columns while having none in column $a_i+1$.

It thus remains to prove that such a $T$ always produces a $K_-(T)$ of the desired form.  By the observation above, $K_-(T)$ does contain an $i$ in each of its first $a_i$ columns, so that only the absence of further $i$'s needs to be established.

Because $\pmb a$ is dominant, for each $j\le a_i$, the $j$-th column of $T$ contains the entries
\[
i,i-1,\dots,2,1.
\]

Before continuing, let us observe that, as a consequence of Proposition~\ref{IgualdadKeys}, when we permute  columns $C_{1}$ and $C_{2}$ (with $C_1$ to the left of $C_2$) to get $C_{1}'$ and $C_{2}'$, column $C_{2}'$ is equal to the supremum of the column reading word of $C_{1}C_{2}$.

We will now see that  column $a_{i}+1$ of $K_{-}(T)$ does not have $i$ as an entry. Let $a_{j}, a_{j+1}, \ldots , a_{k}$ be all the entries of $\pmb a$ that are equal to $a_{i}$ (with $j \leq i \leq  k$).
Let $C_{1}$ (resp. $C_2$) be column $a_{i}$ (resp. $a_i+1$) of $T$.  Column $C_{1}$ contains the entries $k, \ldots, 1$, while  $C_{2}$ only contains $j-1,\ldots,1$.  As the supremum of those two columns starts with $1,2,\ldots,k$, column $C_{2}'$ contains all those entries, which implies that $C_{1}'$ does not have  the entries $j, \ldots, k$. Repeating this process until $C_{1}'$ is the first column of $T$, we get that column $a_{i}+1$ of $K_{-}(T)$ does not have the entries $j, \ldots, k$, which implies in particular that it does not contain the letter $i$.

The left key $K_-(T)$ therefore has the entry $i$ in the first $a_i$ columns and nowhere else, which is what we had to prove.
\end{proof}

\begin{proposition}
\label{NoDualSchurDom}
For every dominant $m$-partition $\Lambda=(\pmb a;\lambda)$, the $m$-symmetric Schur function at $t=0$ admits the determinantal expression
\begin{equation}
{s}_{\Lambda}(x;0)=\mathfrak{s}_{\Lambda}(x)
=
\left(\prod_{i=1}^{m} x_i^{a_i}\right)
\det\!\left(
e_{\lambda_i'-i+j}
\left[
X+X_{i}
\right]
\right)_{1 \leq i,j \leq \ell (\Lambda)},
\end{equation}
where 
$$
X_{i} = \sum_{j \, : \, a_j < i } x_j .
$$
\end{proposition}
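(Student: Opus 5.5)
The first equality $s_\Lambda(x;0)=\mathfrak s_\Lambda(x)$ is the last corollary of Section~\ref{secRSK}, so only the determinantal formula for $\mathfrak s_\Lambda(x)$ needs proof. The route is the one announced at the start of this subsection: expand the determinant over tableaux, reinterpret the result through left keys, and apply the Schützenberger involution to reach the set $\mathcal S(\Lambda)$ of \eqref{SetmSchur}.

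\textbf{Step 1: expand the determinant.} I will read the dual Jacobi--Trudi determinant $\det\bigl(e_{\lambda'_i-i+j}[X+X_i]\bigr)$ as a flagged (multi-)Schur function in the $e$-form. Wachs' expansion \cite{Wachs1985}, applied to the conjugate (column) version of the flag, identifies it with a sum of $x^T$ over tableaux $T$ of shape $\Lambda^{(0)}$. In these tableaux the letters $x_{m+1},\dots,x_N$ are unrestricted. A letter $x_j$ with $j\le m$ may occur in column $i$ only when $a_j<i$, that is, only to the right of the first $a_j$ columns.

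The natural ambient alphabet here is $N<\cdots<m<\cdots<1$, so that the flagged letters $1,\dots,m$ are the largest and sit at the bottom of columns. In that order the restrictions say that the column strips carrying the $x_j$ start after column $a_j$.

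\textbf{Step 2: absorb the prefactor.} Multiplying by $\prod_i x_i^{a_i}$ amounts to adding an $i$ in each of the first $a_i$ columns. Since $\pmb a$ is dominant, for $j\le a_i$ the $j$-th column then contains $i,i-1,\dots,1$ at its bottom in the reversed order. I expect to show this insertion is a bijection from Wachs' tableaux (of the shape obtained by removing those cells) onto tableaux $T$ of shape $\Lambda^{(0)}$ with the following property: $T$ has an $i$ in each of its first $a_i$ columns and no $i$ in column $a_i+1$.

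The absence of an $i$ in column $a_i+1$ must come from the flag. Since $a_i<a_i+1$, the flag does allow $x_i$ in column $a_i+1$. So I will have to check that semistandardness together with the shifted flag index $i$ versus $i+1$ forbids it, or that a sign-reversing involution cancels those terms. Matching this shape and flag bookkeeping with the indices $X_i=\sum_{a_j<i}x_j$ is the step I expect to be the main obstacle. If it fails, I would first try the row-$h$ version of the multi-Schur expansion and conjugate.

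\textbf{Step 3: pass to left keys, then to right keys.} By Lemma~\ref{LemmaLeftKeys} (which uses dominance of $\pmb a$), the condition from Step 2 is equivalent to $K_-(T)$ containing $i$ exactly in its first $a_i$ columns. Now apply $\omega_S$, which sends the reversed alphabet to the usual order $1<\cdots<N$ and preserves weights up to the reversal $x_i\mapsto x_{N+1-i}$. This reversal must be accounted for: I would relabel so that $\omega_S$ acts as the identity on letters while reversing the order. By Proposition~\ref{Evacuation}, $K_+(\omega_S(T))=\omega_S(K_-(T))$. By Lemma~\ref{LemmaEvac}, $\omega_S(K_-(T))$ has $i$ exactly in its first $a_i$ columns for each $i\le m$, so $K_+(\omega_S(T))|_m=K_+(\Lambda)|_m$ by Definition~\ref{KeyLambda}.

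Hence $\omega_S$ is a weight-preserving bijection from the tableaux of Step 2 onto $\mathcal S(\Lambda)$. Summing $x^T$ over $\mathcal S(\Lambda)$ and using \eqref{mschur2T} then gives the formula.
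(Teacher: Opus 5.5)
\textbf{There is a genuine gap, and it sits exactly at the Step~2 obstacle you flagged; neither of your proposed remedies can close it.} Your overall route is the paper's: Wachs' column-flagged expansion, appending the $i$'s, Lemma~\ref{LemmaLeftKeys}, then Proposition~\ref{Evacuation} with Lemma~\ref{LemmaEvac}. Step~3 is fine.

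Read literally, $X_i=\sum_{a_j<i}x_j$ admits $x_i$ in column $a_i+1$. Semistandardness does not exclude such fillings. A sign-reversing involution cannot remove them either: the flags $X+X_i$ are nested initial segments of $N<\cdots<1$, so Wachs' expansion is a positive sum with no cancellation. After appending, you therefore get \emph{all} tableaux with an $i$ in each of the first $a_i$ columns and no further restriction. That is the set used for $\mathfrak{as}_\Lambda$ in the proof of \eqref{MiloDom}, not the set for $\mathfrak s_\Lambda$.

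A concrete check: take $\Lambda=(1;2)$, $m=1$, $N=2$, so $X=x_2$, $X_1=0$, $X_2=x_1$. The right-hand side is $x_1\bigl(e_1[X]\,e_1[X+x_1]-e_2[X]\bigr)=x_1^2x_2+x_1x_2^2$. But $\mathfrak s_{(1;2)}(x_1,x_2)=\hat K_{(1,2)}(x_1,x_2)=x_1x_2^2$. The extra term comes from the one-row filling $2\,1$ (order $2<1$). After appending it becomes the semistandard tableau with rows $(2,1),(1)$, which has a $1$ in column $a_1+1=2$. Its monomial $x_1^2x_2$ belongs to the tableau with rows $(1,1),(2)$, which lies in $\mathcal S((2;1))$. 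So the identity as displayed is false: the index in $X_i$ is off by one.

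\textbf{The missing ingredient is the correct flag:} letter $i$ must be forbidden in the first $a_i+1$ columns, i.e.\ $X_i=\sum_{j:\,a_j<i-1}x_j$.
\begin{itemize}
\item This is the flag the paper's proof actually takes from Theorem~3.5* of \cite{Wachs1985}.
\item It is also the flag shown in Example~\ref{ExmSchurDom}: $X,X,X+x_2,X+x_1+x_2$ for $\Lambda=(2,1;4)$. By contrast, $\sum_{a_j<i}x_j$ gives $X,X+x_2,X+x_1+x_2,X+x_1+x_2$, which is the flag in the corresponding $\mathfrak{as}_\Lambda$ example.
\end{itemize}
With the corrected flag, your Step~2 bijection holds verbatim and the rest of your argument coincides with the paper's.

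Two smaller slips:
\begin{itemize}
\item The Wachs tableaux in Step~1 have shape $\lambda$, not $\Lambda^{(0)}$.
\item The determinant should have size $\ell(\lambda')=\lambda_1$ rather than $\ell(\Lambda)$; the paper's own example is $4\times 4$ although $\ell(\Lambda)=3$.
\end{itemize}
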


\begin{proof}
Fix the ordered alphabet
\[
N<N-1<\cdots<2<1.
\]
By Theorem~3.5* of \cite{Wachs1985}, the right-hand side of the identity in Proposition~\ref{NoDualSchurDom} is equal to 
\[
\left(\prod_{i=1}^{m} x_i^{a_i}\right)\sum_T x^T,
\]
where the sum ranges over all tableaux $T$ of shape $\lambda$ such that, for each $i=1,\dots,m$, letter $i$ does not appear in any of the first $a_{i}+1$ columns.

Given such a tableau $T$, append an entry $i$ to each of the first $a_i$ columns. This defines a weight-preserving bijection
\[
\left(\prod_{i=1}^{m} x_i^{a_i}\right)\sum_T x^T
=
\sum_{T'} x^{T'},
\]
where the sum on the right ranges over all tableaux $T'$ of shape $\pmb a \cup \lambda$ such that for each $i=1,\dots,m$, letter $i$ appears in each of the first $a_i$ columns while it does not appear in column $a_i+1$. 

By Lemma~\ref{LemmaLeftKeys}, these are precisely the tableaux whose left key has letter $i$  in each of the first $a_i$ columns and nowhere else. We then get from Lemma~\ref{LemmaEvac} and Proposition~\ref{Evacuation} that the set $\{ \omega_S(T')\}$  is  the set of tableaux on the alphabet
\[
1<2<\cdots<N
\]
such that, for each $i$, the right key $K_+(\omega_S (T'))=\omega_S(K_-(T'))$ contains the entry $i$  in each of the first $a_i$ columns and nowhere else. But this is the same as saying that $\omega_S (T') \in \mathcal S(\Lambda)$, where
$\mathcal S(\Lambda)$ was introduced in \eqref{SetmSchur}.
Therefore, we get from \eqref{mschur2T} that
\[
\left(\prod_{i=1}^{m} x_i^{a_i}\right)\sum_T x^T
=
\sum_{T'} x^{T'}
=
\sum_{\omega_S (T') \in \mathcal S(\Lambda)} x^{\omega_S (T')}=\mathfrak s_{\Lambda}(x).
\]
\end{proof}

\begin{example}
\label{ExmSchurDom}
   Let $\Lambda = (2,1;4)$. The Young diagram associated to $\Lambda$ is then
$$
    \tableau[scY]{ & & &   \\   &  & \bl \tcercle{1} \\   &
\bl \tcercle{2}  
}
.
$$
and we have
$$
\mathfrak{s}_{\Lambda} (x) 
=
x_{1}^{2}x_{2}
\det
\begin{pmatrix}
e_{1}[X] & e_{1} [X] &  e_{1}[X+x_{2}] & e_{1}[X+x_{1}+x_{2}]  
\end{pmatrix}.
$$
\end{example}

\begin{proposition}
For every dominant $(\pmb a,\lambda)$, the almost symmetric Schur function admits the determinantal expression
\begin{equation}
\label{MiloDom}
\mathfrak{as}_{(\pmb a;\lambda)}(x)
=
\left(\prod_{i=1}^{m} x_i^{a_i}\right)
\det\!\left(
e_{\lambda_i'-i+j}
\left[
X+X_{i}
\right]
\right)_{1 \leq i,j \leq \ell (\Lambda)},
\end{equation}
where
$$
X_{i} = \sum_{j: a_{j} \leq  i } x_{j} .
$$
\end{proposition}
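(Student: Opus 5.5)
The plan is to mirror the proof of Proposition~\ref{NoDualSchurDom} step by step, changing only the flag: the strict inequality $a_j<i$ becomes $a_j\le i$. Fix the reversed alphabet $N<N-1<\cdots<2<1$. By Theorem~3.5* of \cite{Wachs1985}, the determinant on the right of \eqref{MiloDom}, with $X_i=\sum_{j:a_j\le i}x_j$, is the generating function of the tableaux $T$ of shape $\lambda$ in which, for each $j=1,\dots,m$, the letter $j$ does not occur in any of the first $a_j$ columns. In the previous proposition the excluded range was the first $a_j+1$ columns; here it is the first $a_j$. The first step is to check this translation of the column flags carefully, with the index conventions exactly as in Proposition~\ref{NoDualSchurDom}.

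Next, append a letter $j$ to each of the first $a_j$ columns, for every $j\le m$. Since $\pmb a$ is dominant and $j$ is among the largest letters of the reversed alphabet, the result is a tableau $T'$ of shape $\pmb a\cup\lambda$. The map is weight preserving up to the factor $\prod_i x_i^{a_i}$. Its image is exactly the set of tableaux $T'$ that contain $j$ in each of the first $a_j$ columns, with no further condition on column $a_j+1$. The converse direction (removing one $j$ from each of the first $a_j$ columns) must also be checked to give a valid tableau of shape $\lambda$ satisfying the flag; this follows because the letters $1,\dots,m$ sit at the bottoms of the columns in the reversed order.

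Then translate this condition into one on keys. Dropping the clause ``no $j$ in column $a_j+1$'' from Lemma~\ref{LemmaLeftKeys} leaves the condition ``$K_-(T')$ contains $j$ in each of the first $a_j$ columns'', with possibly more $j$'s further right. The necessity part of the proof of Lemma~\ref{LemmaLeftKeys} already gives one direction: the Lascoux action preserves $j$ in each of the first $\ell$ columns. For the converse, the left key is the leftmost column of an orbit element, and Proposition~\ref{CorKeyRep} gives $\mathcal L_c(T')$. If some column among the first $a_j$ of $T'$ lacked $j$, then the leftmost columns $\mathcal L_c(T')$ should also lack it, since $\mathcal L_c$ is componentwise the minimum. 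I would prove this using the Knuth invariance of Lemma~\ref{SupKnuthEquivalent}, replacing suprema by infima of increasing words. This left-key step is the one I expect to be the main obstacle.

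Finally, apply the Schützenberger involution. By Proposition~\ref{Evacuation} and the argument of Lemma~\ref{LemmaEvac} (letters $1,\dots,m$ only move left during evacuation), the set $\{\omega_S(T')\}$ becomes the set of tableaux on $1<\cdots<N$ of shape $\Lambda^{(0)}$ whose right key contains $j$ in each of the first $a_j$ columns. Since $K_+(\Lambda)$ has $j$ exactly in those columns and the largest letters elsewhere, this is the same as $K_+(T)\le K_+(\Lambda)$. Hence the set is $\mathcal{AS}(\Lambda)$, and Corollary~\ref{CorMiloKey} identifies the sum with $\mathfrak{as}_{(\pmb a;\lambda)}(x)$.

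As a cross-check, and possibly a cleaner route, one can instead sum the formula of Proposition~\ref{NoDualSchurDom} over the $\Omega$ in Proposition~\ref{propoMilo}. That route requires showing that the resulting alternating sum telescopes to the relaxed flag, which is less direct; I would use it only if the key argument above stalls.
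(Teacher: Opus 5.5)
Your proposal is correct and follows the paper's proof essentially step for step. The steps are: Wachs' column-flagged expansion with the relaxed flag (letter $j$ excluded only from the first $a_j$ columns), appending one $j$ to each of the first $a_j$ columns, passing to the left key, and then applying the Schützenberger involution with Proposition~\ref{Evacuation} to land in $\mathcal{AS}(\Lambda)$ and conclude by Corollary~\ref{CorMiloKey}. The only place you go beyond the paper is the relaxed ``at least'' version of Lemma~\ref{LemmaLeftKeys}, which the paper cites without adjustment. Your split is the right one: the Lascoux-action property gives one direction and the entrywise inequality $K_-(T)\le T$ gives the other. That inequality is a standard fact, so you do not need a new infimum characterization.
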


\begin{proof}
The argument closely parallels that of Proposition~\ref{NoDualSchurDom}, the only change being in the flag.  In the flagged Schur expansion produced by Theorem~3.5* of \cite{Wachs1985}, the entry $i$ may now already appear in column $a_{i}+1$ of $T$, whereas in Proposition~\ref{NoDualSchurDom} it could only appear \emph{strictly after} that column.

After multiplying by $\prod_{i=1}^m x_i^{a_i}$, we therefore obtain a generating function over the set of tableaux $T'$ in which, for each $i$, the entry $i$ appears in all of the first $a_i$ columns and is \emph{free to appear or not} in the remaining ones.  By Lemma~\ref{LemmaLeftKeys}, this says exactly that the left key of $T'$ contains $i$ at least in the first $a_i$ columns.

Applying the Schützenberger involution to each $T'$ and using Proposition~\ref{Evacuation}, we obtain a weight-preserving bijection from this set onto the set of tableaux $T'' \in {\rm Tab}_{\{1,\dots,N \}}(\Lambda^{(0)})$ satisfying
\[
K_+(T'')
\;\le\;
K_+(\Lambda),
\]
which is to say that $T'' \in \mathcal{AS}(\Lambda)$.  These tableaux are therefore precisely those contributing to $\mathfrak{as}_{(\pmb a ; \lambda)}(x)$, which proves the proposition.
\end{proof}

\begin{corollary}
    For a dominant $m$-partition $\Lambda$, the functions $\mathfrak{s}_{\Lambda}(x)$ and $\mathfrak{as}_{\Lambda}(x)$ are the product of a column flagged Schur function and a monomial $x^{\pmb a}$.
\end{corollary}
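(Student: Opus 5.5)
This corollary is essentially a reformulation of Proposition~\ref{NoDualSchurDom} and of \eqref{MiloDom}, so the plan is to read each determinant as a column-flagged Schur function in the sense of Wachs \cite{Wachs1985}. The $m$ variables $x_1,\dots,x_m$ play the role of the flagged letters.

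First, for $\mathfrak s_\Lambda(x)$ with $\Lambda=(\pmb a;\lambda)$ dominant, I would take the expression of Proposition~\ref{NoDualSchurDom}:
\[
x^{\pmb a}\det\bigl(e_{\lambda'_i-i+j}[X+X_i]\bigr),
\]
where $X_i=\sum_{j:a_j<i}x_j$. Since $\pmb a$ is dominant, the indices $j$ with $a_j<i$ form a final segment of $\{1,\dots,m\}$, and these segments grow with $i$. So the alphabets $X+X_i$ are nested, $X+X_1\subseteq X+X_2\subseteq\cdots$. This is precisely the situation of the dual (column) Jacobi--Trudi form of a flagged Schur function. Theorem~3.5* of \cite{Wachs1985}, already invoked in the proof of Proposition~\ref{NoDualSchurDom}, identifies it with a generating function of column-flagged tableaux of shape $\lambda$: the letter $i\le m$ is forbidden in the first $a_i+1$ columns, and the letters of $X$ are unrestricted. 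Hence $\mathfrak s_\Lambda(x)$ equals $x^{\pmb a}$ times a column flagged Schur function.

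Second, for $\mathfrak{as}_\Lambda(x)$ I would use \eqref{MiloDom} in the same way, now with $X_i=\sum_{j:a_j\le i}x_j$. These are again nested final segments by dominance, so Wachs' theorem applies verbatim, with the flag shifted by one column.

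The only point requiring care is checking that the flags really are monotone. The determinant must match the hypotheses of Wachs' theorem (nested alphabets along the rows indexed by the columns $\lambda'_i$), and this nesting is exactly where dominance of $\pmb a$ enters. Once that is verified, both statements follow immediately, since the monomial factor $x^{\pmb a}$ is already explicit in both formulas.
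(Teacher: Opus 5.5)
Your proposal is correct and is essentially the paper's argument: the paper states the corollary without a separate proof, as an immediate reading of Proposition~\ref{NoDualSchurDom} and \eqref{MiloDom}, whose proofs already obtain those determinants from the column-flagged expansion of Theorem~3.5* in \cite{Wachs1985}. There are two harmless imprecisions: the nesting $X+X_1\subseteq X+X_2\subseteq\cdots$ holds for any $\pmb a$ (what dominance buys is that each $X_i$ is an interval $x_k+\cdots+x_m$, hence together with $X$ a genuine flag for the order $N<\cdots<1$), and the stated $X_i=\sum_{j:a_j<i}x_j$ admits $x_j$ already in column $a_j+1$, so it is off by one from your description ``forbidden in the first $a_j+1$ columns'' (the paper's example for $(2,1;4)$ follows the latter), which does not affect the conclusion.
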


\begin{example}

   Let $\Lambda = (2,1;4)$ as in Example~\ref{ExmSchurDom}. We then have
$$
\mathfrak{as}_{\Lambda} (x) 
=
x_{1}^{2}x_{2}
\det
\begin{pmatrix}
e_{1}[X] & e_{1} [X+x_{2}] &  e_{1}[X+x_{1}+x_{2}] & e_{1}[X+x_{1}+x_{2}]  
\end{pmatrix}.
$$
\end{example}

\begin{proposition}
Let $\Lambda=(\pmb a;\lambda)$ be an antidominant $m$-partition. Assume that $\pmb a$ is nonzero, and let $a_{r}$ be the first nonzero entry of $\pmb a$. Set $\pmb b= (a_{m},\ldots, a_{r})$, which has length $k=m-r+1$ and is dominant because $\Lambda$ is antidominant, and let
\[
\pmb b^{-} =
(b_{1}-1,\ldots,b_{k}-1).
\]
Define
\[
\mu=\pmb b^{-}\cup\lambda.
\]
Then
\begin{equation}
\label{SchurMiloDet}
\mathfrak{as}^{*}_{\Lambda}(x)
=
\left(\prod_{i=1}^{k} x_i\right)
\det\!\bigl(
h_{\mu_i-i+j}[X+X_i]
\bigr)_{1 \leq i,j \leq \ell (\Lambda)},
\end{equation}
where
\[
X_i=  \sum_{j:b_{j} \ge \Lambda^{(0)}_i } x_{j}  .
\]
\end{proposition}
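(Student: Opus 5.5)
The plan is to follow the same route as the proof of Proposition~\ref{NoDualSchurDom}, but with row flags and homogeneous functions in place of column flags and elementary functions. The definition of $\mathfrak{as}^*_\Lambda$ is a sum over tableaux $T\in{\rm Tab}_{\mathcal A}(\Lambda^{(0)})$ whose right key, restricted to $\hat{\mathcal A}$, equals $K_+^*(\Lambda)|_{\hat{\mathcal A}}$. These tableaux live in the alphabet $1<\dots<N<\hat 1<\dots<\hat m$, and we specialize $\hat x_i=x_i$ at the end.

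\textbf{Step 1: describe $K_+^*(\Lambda)|_{\hat{\mathcal A}}$.} Since $\Lambda$ is antidominant, Lemma~\ref{KeyLambdaDual} says that $\hat l$, which corresponds to the circle $m+1-l$ carrying $a_{m+1-l}=b_l$ for $l\le k$, occupies exactly the first $b_l$ columns. The letters $\hat l$ with $l>k$ do not occur, because the corresponding $a$'s vanish. As $\pmb b$ is dominant, $K_+^*(\Lambda)|_{\hat{\mathcal A}}$ is a key tableau whose hatted part is the ``largest-letters'' key of shape $\pmb b^+$. The condition $K_+(T)|_{\hat{\mathcal A}}=K_+^*(\Lambda)|_{\hat{\mathcal A}}$ should therefore translate, via Proposition~\ref{IgualdadKeys} applied to the subtableaux $T_r$, into a condition on the positions of hatted letters in $T$ itself. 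The claim I would aim for is the right-key analogue of Lemma~\ref{LemmaLeftKeys}: $T$ qualifies iff, for each $l\le k$, column $b_l$ of $T$ contains $\hat l$, no $\hat l'$ with $l'>k$ appears, and the hatted entries in column $r$ are bounded by $\hat{\mathcal C}_r^*$. Because the hatted letters are the largest, they sit at the bottoms of columns. The forced $\hat l$ then occupies the last cell of each of the $k$ rows $\Lambda^{(0)}$ of length $b_l$, which pins down one cell per such row.

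\textbf{Step 2: remove the forced cells.} Stripping the forced cell from each of these $k$ rows produces a tableau of shape $\mu=\pmb b^-\cup\lambda$ and accounts for the prefactor $\prod_{i=1}^k x_i$ (after $\hat x_i=x_i$). The remaining conditions should be row-flag conditions: a row $i$ of $\Lambda^{(0)}$ may contain the letter $\hat l$ only if $b_l\ge \Lambda^{(0)}_i$, i.e.\ the circle giving $\hat l$ lies weakly to the right of the end of that row. After identifying $\hat l$ with $x_l$, this is exactly the alphabet $X+X_i$ in the statement.

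\textbf{Step 3: apply the flagged Jacobi--Trudi identity.} Wachs' row-flagged Jacobi--Trudi identity (Theorem~3.5 of \cite{Wachs1985}, the $h$-version), used with increasing flags $X+X_i$, then yields $\det(h_{\mu_i-i+j}[X+X_i])$. To apply it, we must check that the flags are weakly increasing down the rows; this holds because $\Lambda^{(0)}_i$ decreases in $i$, so $X_i$ grows.

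\textbf{Main obstacle.} The hard step is Step 1: the exact equivalence between ``equal right key on hatted letters'' and the row-flag and forced-cell description. Equality, not just $\le$, is what forces the $\hat l$ to appear in column $b_l$. My first attempt would be to use Proposition~\ref{IgualdadKeys}, computing $\sup\hat W(T_r)$ column by column. A decreasing hatted subword of $T_r$ essentially reads hatted letters along distinct rows at column positions $\ge r$. Equality with $\hat{\mathcal C}_r^*(\Lambda)=\widehat{m'}\cdots$ (the $\hat l$ with $b_l\ge r$) then requires both that no larger hatted letter appears in $T_r$, which gives the row flag, and that each required $\hat l$ be present, which gives the forced cells. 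If this direct sup-argument becomes messy, the fallback is to mirror Lemma~\ref{LemmaLeftKeys} through the Schützenberger involution and Proposition~\ref{Evacuation}.
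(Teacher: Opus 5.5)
Your plan is the paper's argument run in reverse. The paper starts from Wachs's flagged tableaux of shape $\mu$, appends $\hat i$ in column $b_i$, and proves both inclusions using $\sup\hat W(T_t)$. Your Step~1 description of $\mathcal{AS}^*(\Lambda)$ is correct: $T$ qualifies iff every hatted letter of $T_r$ is at most $\widehat{p_r}$, where $p_r=\#\{l: b_l\ge r\}$, and each $\hat l$ ($l\le k$) occurs in column $b_l$.

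\textbf{The gap} is the word ``exactly'' in Step~2. Stripping the forced cells and reading off the flags $b_l\ge\Lambda^{(0)}_i$ gives only one inclusion. For the other you must show that re-attaching $\hat l$ in column $b_l$ to an \emph{arbitrary} flagged tableau of shape $\mu$ gives a semistandard tableau. Left neighbours are fine, by column strictness in column $c-1$ together with the flag of the lowest forced row. The cell directly above the forced block is not fine. If $b_j=\dots=b_q=c$, the letters $\hat j<\dots<\hat q$ fill the \emph{bottom} cells of column $c$. So whenever $\lambda$ has a part equal to $c$, a row of length $c$ sits just above them. Its last cell is a corner of $\mu$, and the flag $\sum_{l:b_l\ge c}x_l$ allows it to be $\hat j$ or larger, which breaks column strictness after re-attachment.

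This cannot be patched inside the statement as written, because the identity itself fails in that case. Take $m=1$ and $\Lambda=(1;1)$.
\begin{itemize}
\item $K_+^*(\Lambda)$ is the column with entries $N,\hat 1$. A one-column tableau is its own right key, so $\mathcal{AS}^*(\Lambda)$ consists of the columns with entries $a<\hat 1$, $a\in\{1,\dots,N\}$. Hence $\mathfrak{as}^*_\Lambda=x_1(x_1+\cdots+x_N)$.
\item On the other side, $k=1$, $\mu=(1)$ and $X_1=X_2=x_1$, so the right-hand side is $x_1h_1[X+x_1]$.
\item The right-hand side carries an extra $x_1^2$. It comes from the flagged tableau $\hat 1$, whose completion is the non-semistandard column $\hat 1,\hat 1$.
\end{itemize}
The paper's proof has the same hole, in its claim that every entry above the row of the new cell is strictly smaller than $\hat i$.

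What your argument does establish, once the converse is checked, is a corrected formula. Among the rows of $\Lambda^{(0)}$ of a given length $c$, only the lowest $\#\{j:b_j=c\}$ rows (those losing a cell) get $X_i=\sum_{j:b_j\ge c}x_j$. The remaining rows of length $c$ get the strict flag $X_i=\sum_{j:b_j>c}x_j$. These flags are still weakly increasing, so Wachs's theorem applies. They coincide with the stated ones whenever $\lambda$ and $\pmb b$ have no common part, as in the paper's example $\Lambda=(2,3;5)$.
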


\begin{proof}
By Theorem~3.5 of \cite{Wachs1985}, the determinant in \eqref{SchurMiloDet} is the generating function
\[
\sum_T x^T,
\]
where $T$ ranges over all semistandard tableaux of shape $\mu$ with flag $\pmb c=(c_1,\dots,c_k)$, where $c_i$ is the index of the row of the diagram of $\Lambda$ containing the circle filled with an $i$. We emphasize that the flag restrictions are imposed on the alphabet $\hat{\mathcal{A}}$.

Given such a tableau $T$, construct a tableau $T'$ by appending an entry $\hat{i}$ in column $b_i$ for each $1\le i\le k$. This operation preserves the semistandard conditions: the flag restrictions imply that every entry in the row of the new cell in column $b_i$ is at most $\hat{i}$, while every entry above that row is strictly smaller than $\hat{i}$. Consequently, $T'$ is a semistandard tableau of shape $\pmb b \cup\lambda$. The factor $\prod_{i=1}^{k}x_i$ 
in \eqref{SchurMiloDet}
accounts  for the weights of the newly added entries. 

Let $\mathcal B$ denote the set of tableaux obtained from the construction described in the previous paragraph. 
 Let $T \in \mathcal{B}$; we show that $T \in \mathcal{AS}^{*} (\Lambda)$. We will see that, for every $t$, $\sup \hat{W} (T_{t}) = \hat{i} \cdots \hat{2} \hat{1}$, where $i$ is the largest integer such that the entry $\hat{i}$ is allowed to lie in $T_{t}$ (because of the flag conditions).

By construction of $\mathcal{B}$, for every $j \leq i$ the entry $\hat{j}$ appears in column $b_{j}$ (which is weakly to the right of column $t$). Consequently, since $\pmb b$ is dominant, the decreasing word $w = \hat{i}\,\cdots \hat{2}\,\hat{1}$ belongs to $\hat{W}(T_{t})$. Given that $w$ is the largest word in $W(\hat{\mathcal{A}})$ without entries larger than $\hat{i}$, it follows that $w = \sup  \hat{W}(T_{t})$.
Hence, $\mathcal{B} \subseteq \mathcal{AS}^{*} (\Lambda)$.

We now prove the other inclusion. Let $T \in \mathcal{AS}^{*}(\Lambda)$, fix $i$ and let $t=b_i$. Let $q$ be the largest integer such that $b_{i}=b_{q}$. Then
\begin{equation}
\label{SupLastProof}
    \hat{q} \ldots \hat{i} \ldots \hat{2} \hat{1} = \sup \hat{W} (T_{t}).
\end{equation}
In particular, $\hat{i}$ belongs to $T_{t}$. We claim that the entry $\hat{i}$ must lie exactly in column $t$. If $t$ is the last column of $T$, then it is immediate. Assume now that $T$ has a column $t+1$. Due to \eqref{SupLastProof}, we know that $\hat{i}$ belongs to $T_{t}$. On the other hand, since $b_{i}<t+1$,
$$
 \sup \hat{W} (T_{t+1})=\hat{j} \cdots \hat{2} \hat{1}.
$$
for some $j<i$, which implies that $\hat{i}\notin T_{t+1}$. Therefore, $\hat{i}$ must appear in column $t$, as claimed. It follows that every tableau $T \in \mathcal{AS}^{*}(\Lambda)$ contains exactly the distinguished entries prescribed in the construction of $\mathcal{B}$. Hence,
 $\mathcal{AS}^{*}(\Lambda)\subseteq \mathcal B$. 

Combining the two inclusions gives
$
\mathcal B=\mathcal{AS}^{*}(\Lambda)
$.
Since $\mathfrak{as}^{*}_{\Lambda}(x) = \sum_{T \in \mathcal{AS}^{*}(\Lambda)} x^{T}$, this concludes the proof.
\end{proof}

\begin{remark}
If $\pmb a=\pmb0$, then the product is empty and $\mu=\lambda$. Hence
\[
\mathfrak{as}^{*}_{\Lambda}(x)
=
\det\!\bigl(
h_{\lambda_i-i+j}[X]
\bigr)_{1 \leq i,j \leq \ell (\Lambda)}
=
s_\lambda[X],
\]
by the classical Jacobi-Trudi identity. Thus the proposition also holds in this case.
\end{remark}

\begin{example}
\label{EjemploDetMilo}
If $\Lambda = ( 2,3 ; 5 )$, we get
$$
\mathfrak{as}^{*}_{ ( 2,3 ; 5  )} (x) =
x_{1}x_{2}
\det
\begin{pmatrix}
 h_{5}[X] & h_{2}[X+x_{1}] &  h_{1}[X+x_{1}+ x_{2}] 
\end{pmatrix}.
$$    
Observe that this can be rewritten as
$$
\mathfrak{as}^{*}_{ ( 2,3 ; 5  )} (x) =
\det
\begin{pmatrix}
 h_{5}[X] & x_{1}h_{2}[X+x_{1}] &  x_{2}h_{1}[X+x_{1}+ x_{2}] 
\end{pmatrix},
$$
which has a similar a form as  
$$
\mathfrak{s}^{*}_{ ( 3,2 ; 5  )} (x) =
\det
\begin{pmatrix}
 h_{5}[X] & h_{3}[X+x_{1}] &  h_{2}[X+x_{1}+ x_{2}] 
\end{pmatrix}.
$$
\end{example}

\bibliographystyle{plain}
\bibliography{Biblio}

@article{AlmostSym,
  author  = {Bechtloff Weising, Milo},
  title   = {Almost symmetric schur functions},
  note    = {arXiv:2405.01049}
}

@article{ConchaLapointe,
  author  = {Concha, Manuel and Lapointe, Luc},
  title   = {The $m$-symmetric {M}acdonald polynomials},
  note    = {arXiv:2311.12625}
}

@article{mSym,
  author  = {Lapointe, Luc},
  title   = {$m$-symmetric functions, non-symmetric {M}acdonald polynomials and positivity conjectures},
  journal = {Trans. Amer. Math. Soc.},
  volume  = {378},
  number  = {12},
  pages   = {8319--8359},
  year    = {2025}
}

@article{Demazure1974a,
  author  = {Demazure, Michel},
  title   = {D\'esingularisation des vari\'et\'es de {S}chubert g\'en\'eralis\'ees},
  journal = {Ann. Sci. \'Ecole Norm. Sup. (4)},
  volume  = {7},
  pages   = {53--88},
  year    = {1974}
}

@article{Demazure1974b,
  author  = {Demazure, Michel},
  title   = {Une nouvelle formule des caract\`eres},
  journal = {Bull. Sci. Math. (2)},
  volume  = {98},
  number  = {3},
  pages   = {163--172},
  year    = {1974}
}

@incollection{Lascoux1990a,
  author    = {Lascoux, Alain and Sch\"utzenberger, Marcel-Paul},
  title     = {Keys \& standard bases},
  booktitle = {Invariant Theory and Tableaux (Minneapolis, MN, 1988)},
  series    = {IMA Vol. Math. Appl.},
  volume    = {19},
  pages     = {125--144},
  publisher = {Springer, New York},
  year      = {1990}
}

@article{FlaggedAreKey1995,
  author  = {Reiner, Victor and Shimozono, Mark},
  title   = {Key polynomials and a flagged {L}ittlewood-{R}ichardson rule},
  journal = {J. Combin. Theory Ser. A},
  volume  = {70},
  number  = {1},
  pages   = {107--143},
  year    = {1995}
}

@article{Wachs1985,
  author  = {Wachs, Michelle L.},
  title   = {Flagged {S}chur functions, {S}chubert polynomials, and symmetrizing operators},
  journal = {J. Combin. Theory Ser. A},
  volume  = {40},
  number  = {2},
  pages   = {276--289},
  year    = {1985}
}

@article{Willis2013,
  author  = {Willis, Matthew J.},
  title   = {A direct way to find the right key of a semistandard {Y}oung tableau},
  journal = {Ann. Comb.},
  volume  = {17},
  number  = {2},
  pages   = {393--400},
  year    = {2013}
}

@article{MiguelSantos2021,
  author  = {Santos, Jo\~ao Miguel},
  title   = {Symplectic keys and {D}emazure atoms in type {C}},
  journal = {Electron. J. Combin.},
  volume  = {28},
  number  = {2},
  pages   = {Paper No. 2.29, 42 pp.},
  year    = {2021}
}

@book{Fulton1996,
  author    = {Fulton, William},
  title     = {Young Tableaux. {W}ith Applications to Representation Theory and Geometry},
  series    = {London Mathematical Society Student Texts},
  volume    = {35},
  publisher = {Cambridge University Press, Cambridge},
  year      = {1997}
}

@book{Stanley_Fomin_1999,
  author    = {Stanley, Richard P.},
  title     = {Enumerative Combinatorics. {V}ol. 2},
  series    = {Cambridge Studies in Advanced Mathematics},
  volume    = {62},
  note      = {With a foreword by Gian-Carlo Rota and appendix 1 by Sergey Fomin},
  publisher = {Cambridge University Press, Cambridge},
  year      = {1999}
}

@misc{Positivityt1,
  author  = {Lapointe, Luc and Pena, Luis},
  title   = {A proof of the $m$-symmetric {M}acdonald positivity at $t=1$},
  note    = {arXiv:2608.10223}
}

@incollection{Lascoux2003,
  author    = {Lascoux, Alain},
  title     = {Double {C}rystal {G}raphs},
  booktitle = {Studies in Memory of {I}ssai {S}chur},
  series    = {Progr. Math.},
  volume    = {210},
  pages     = {95--114},
  publisher = {Birkh\"auser Boston, Boston, MA},
  year      = {2003}
}

@article{FuLascoux2009,
  author  = {Fu, Amy M. and Lascoux, Alain},
  title   = {Non-symmetric {C}auchy kernels for the classical groups},
  journal = {J. Combin. Theory Ser. A},
  volume  = {116},
  number  = {4},
  pages   = {903--917},
  year    = {2009}
}

@article{AzenhasEmami2015,
  author  = {Azenhas, Olga and Emami, Aram},
  title   = {An analogue of the {R}obinson-{S}chensted-{K}nuth correspondence and non-symmetric {C}auchy kernels for truncated staircases},
  journal = {European J. Combin.},
  volume  = {46},
  pages   = {16--44},
  year    = {2015}
}

@article{ChoiKwon2018,
  author  = {Choi, Seung-Il and Kwon, Jae-Hoon},
  title   = {{L}akshmibai-{S}eshadri paths and non-symmetric {C}auchy identity},
  journal = {Algebr. Represent. Theory},
  volume  = {21},
  number  = {6},
  pages   = {1381--1394},
  year    = {2018}
}

@article{Mason2009,
  author  = {Mason, Sarah},
  title   = {An explicit construction of type {A} {D}emazure atoms},
  journal = {J. Algebraic Combin.},
  volume  = {29},
  number  = {3},
  pages   = {295--313},
  year    = {2009}
}

@book{Macdonald1995,
  author    = {Macdonald, Ian G.},
  title     = {Symmetric Functions and {H}all Polynomials},
  edition   = {Second},
  series    = {Oxford Mathematical Monographs},
  publisher = {The Clarendon Press, Oxford University Press, New York},
  year      = {1995}
}

@article{Haiman2001,
  author  = {Haiman, Mark},
  title   = {{H}ilbert schemes, polygraphs and the {M}acdonald positivity conjecture},
  journal = {J. Amer. Math. Soc.},
  volume  = {14},
  number  = {4},
  pages   = {941--1006},
  year    = {2001}
}

\end{document}